\def\ls{\lesssim}
\def\gs{\gtrsim}
\def\XXint#1#2#3{{\setbox0=\hbox{$#1{#2#3}{\int}$ }
\vcenter{\hbox{$#2#3$ }}\kern-.6\wd0}}
\def\({\left(}
\def \){ \right)}
\newtheorem{theorem}{Theorem}[section]
\newtheorem{lemma}[theorem]{Lemma}
\newtheorem{proposition}[theorem]{Proposition}
\theoremstyle{definition}
\newtheorem{remark}[theorem]{Remark}
\newtheorem{definition}[theorem]{Definition}
\renewcommand{\appendix}{\par
   \setcounter{section}{0}%
   \setcounter{subsection}{0}%
   \setcounter{subsubsection}{0}%
   \gdef\thesection{\@Alph\c@section}%
   \gdef\thesubsection{\@Alph\c@section.\@arabic\c@subsection}%
   \gdef\theHsection{\@Alph\c@section.}%
   \gdef\theHsubsection{\@Alph\c@section.\@arabic\c@subsection}%
   \csname appendixmore\endcsname
 }
\numberwithin{equation}{section}
\begin{document}

\arraycolsep=1pt

\title{\bf\Large  Bilinear Hilbert Transforms and (Sub)Bilinear Maximal Functions along Convex Curves
\footnotetext{\hspace{-0.35cm} 2010 {\it
Mathematics Subject Classification}. Primary 42B20;
Secondary 47B38.
\endgraf {\it Key words and phrases.} Bilinear Hilbert transform, (sub)bilinear maximal function, convex curve, time frequency analysis.
\endgraf Junfeng Li is supported by NSFC-DFG  (\#\, 11761131002), Haixia Yu is supported by "the Fundamental Research Funds for the Central University" (\#\, 20lgpy144).}}
\author{Junfeng Li and Haixia Yu\footnote{Corresponding author.}}
\date{}
\maketitle

\vspace{-0.7cm}

\begin{center}
\begin{minipage}{13cm}
{\small {\bf Abstract}\quad In this paper, we determine the $L^p(\mathbb{R})\times L^q(\mathbb{R})\rightarrow L^r(\mathbb{R})$ boundedness of the bilinear Hilbert transform $H_{\gamma}(f,g)$ along a convex curve $\gamma$
$$H_{\gamma}(f,g)(x):=\mathrm{p.\,v.}\int_{-\infty}^{\infty}f(x-t)g(x-\gamma(t))
\,\frac{\textrm{d}t}{t},$$
where $p$, $q$, and $r$ satisfy $\frac{1}{p}+\frac{1}{q}=\frac{1}{r}$, and $r>\frac{1}{2}$, $p>1$, and $q>1$. Moreover, the same $L^p(\mathbb{R})\times L^q(\mathbb{R})\rightarrow L^r(\mathbb{R})$ boundedness property holds for the corresponding (sub)bilinear maximal function $M_{\gamma}(f,g)$ along a convex curve $\gamma$
$$M_{\gamma}(f,g)(x):=\sup_{\varepsilon>0}\frac{1}{2\varepsilon}\int_{-\varepsilon}^{\varepsilon}|f(x-t)g(x-\gamma(t))|
\,\textrm{d}t.$$
}
\end{minipage}
\end{center}

\tableofcontents

\section{Introduction}

\subsection{Main problem and main result}

The \emph{bilinear Hilbert transform $H_{\gamma}(f,g)$} along a curve $\gamma$ is defined as
$$H_{\gamma}(f,g)(x):=\mathrm{p.\,v.}\int_{-\infty}^{\infty}f(x-t)g(x-\gamma(t))\,\frac{\textrm{d}t}{t}$$
for $f$ and $g$ in the Schwartz class $\mathcal{S}(\mathbb{R})$. The corresponding \emph{(sub)bilinear maximal function $M_{\gamma}(f,g)$} is defined as
$$M_{\gamma}(f,g)(x):=\sup_{\varepsilon>0}\frac{1}{2\varepsilon}\int_{-\varepsilon}^{\varepsilon}|f(x-t)g(x-\gamma(t))|\,\textrm{d}t.$$
The $L^p(\mathbb{R})\times L^q(\mathbb{R})\rightarrow L^r(\mathbb{R})$ boundedness property for these two operators with some general curves $\gamma$ are of great interest to us. We start with a special case $\gamma:=\textrm{P}$, a polynomial of degree $d$ with no linear term and constant term, where $d\in \mathbb{N}$ and $d>1$. In \cite{LX}, Li and Xiao set up the $L^p(\mathbb{R})\times L^q(\mathbb{R})\rightarrow L^r(\mathbb{R})$ boundedness for $H_{\gamma}(f,g)$ and $M_{\gamma}(f,g)$ where $p$, $q$, and $r$ satisfy $\frac{1}{p}+\frac{1}{q}=\frac{1}{r}$, and $r>\frac{d-1}{d}$, $p>1$, and $q>1$ .  Moreover, they showed that $r>\frac{d-1}{d}$ is sharp up to the end point. By replacing $\gamma$ with a homogeneous curve $t^d$ with $d\in \mathbb{N}$, $d>1$, the range of $r$ was extended by Li and Xiao to $r>\frac{1}{2}$. Furthermore, they believe that with some special conditions on $\gamma$, the full range boundedness of $H_{\gamma}(f,g)$ and $M_{\gamma}(f,g)$ can be obtained. Here and hereafter, we omit the relationship that $p$, $q$, and $r$ satisfy $\frac{1}{p}+\frac{1}{q}=\frac{1}{r}$, $p>1$, and $q>1$ and the fact that $d>1$ and $d\in\mathbb{N}$. We call the full range bounded if the range of $r$ is $(\frac{1}{2},\infty)$. In this paper, we provide some sufficient conditions of $\gamma$ regarding this concern.

\begin{theorem}\label{theorem 1.1}
Let $\gamma\in C^{3}(\mathbb{R})$ be either odd or even, with $\gamma(0)=\gamma'(0)=0$, $\lim_{t\rightarrow 0^+}\gamma'(t)/\gamma''(t)=0$, and convex on $(0,\infty)$. Furthermore, if
\begin{align}\label{eq:1.0}
there~ exist~ positive ~constants~ C_1~ and~ C_2 ~such~ that~ C_1\leq (\frac{\gamma'}{\gamma''})'(t)\leq C_2~ on~ (0,\infty).
\end{align}
Then, there exists a positive constant $C$ such that
$$\left\|H_{\gamma}(f,g)\right\|_{L^{r}(\mathbb{R})}\leq C \|f\|_{L^{p}(\mathbb{R})}\|g\|_{L^{q}(\mathbb{R})}$$
for any $f\in L^{p}(\mathbb{R})$ and $g\in L^{q}(\mathbb{R})$, where $p,q,r$ satisfy $\frac{1}{p}+\frac{1}{q}=\frac{1}{r}$, and $r>\frac{1}{2}$, $p>1$, $q>1$.
\end{theorem}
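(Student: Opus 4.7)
The plan is to adapt the Li-Xiao framework for the homogeneous case $\gamma(t)=t^d$ by using condition \eqref{eq:1.0} as a uniform quasi-homogeneity across dyadic scales. Using the odd/even hypothesis on $\gamma$, one first reduces the principal value to one-sided integrals on $(0,\infty)$. Then introduce a smooth partition of unity $\sum_{k\in\ZZ}\psi(t/2^k)\equiv 1$ with $\psi$ supported on $[1/2,2]$ and write $H_\gamma(f,g)=\sum_{k\in\ZZ}H_k(f,g)$ where
\[ H_k(f,g)(x):=\int f(x-t)\,g(x-\gamma(t))\,\psi(t/2^k)\,\frac{dt}{t}. \]
The target is a uniform $L^p\times L^q\to L^r$ bound for each $H_k$, together with geometric decay in an auxiliary parameter that renders the $k$-sum convergent throughout the range $r>1/2$.

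From \eqref{eq:1.0} and $\lim_{t\to 0^+}\gamma'(t)/\gamma''(t)=0$, integration yields $\gamma'(t)/\gamma''(t)\asymp t$ on $(0,\infty)$, whence the doubling estimates $\gamma(2t)\asymp\gamma(t)$, $\gamma'(2t)\asymp\gamma'(t)$, and $\gamma''(2t)\asymp\gamma''(t)$. The change of variable $t=2^k s$ gives
\[ H_k(f,g)(x)=\int f(x-2^k s)\,g(x-\gamma(2^k)\,\gamma_k(s))\,\psi(s)\,\frac{ds}{s},\qquad \gamma_k(s):=\gamma(2^k s)/\gamma(2^k). \]
The family $\{\gamma_k\}_{k\in\ZZ}$ on $[1/2,2]$ sits in a compact set in the $C^3$-topology with $\gamma_k''$ bounded below uniformly in $k$. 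This provides the substitute for the exact self-similarity $\gamma(2^k s)=2^{kd}s^d$ used in the homogeneous case.

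Next, apply Fourier inversion to $f$ and $g$ and insert Littlewood-Paley projections at the dual scales $2^{-k-j}$ for $\xi$ and $\gamma(2^k)^{-1}2^{-\ell}$ for $\eta$. Three regimes arise. In the high-frequency regime where at least one of $j,\ell$ is strongly negative, van der Corput applied to the phase $\xi t+\eta\gamma(t)$, using the uniform lower bound $|\gamma_k''|\gtrsim 1$, produces polynomial decay in $|j|+|\ell|$ with constants uniform in $k$. In the low-frequency regime where both $j,\ell$ are sufficiently large and positive, $|H_k(f,g)(x)|$ is pointwise dominated by a product of Hardy-Littlewood maximal functions of $f$ and $g$, from which the desired estimate follows by H\"older. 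In the remaining resonant window, after linearizing $\gamma_k$ around a point of $[1/2,2]$, $H_k$ is modeled by a localization of the classical bilinear Hilbert transform along a line (with a remainder handled by non-stationary phase), so the Lacey-Thiele $L^p\times L^q\to L^r$ theorem in the full range $r>1/2$, $p,q>1$ applies with constants uniform in $k$. Summing over $j,\ell$ and then over $k$ produces the claimed bound.

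The hardest step is the resonant window. In the homogeneous case a single rescaling identifies every $H_k$ with the prototype operator, whereas here the linearization $\gamma_k$ depends on $k$ and one must show that the constants produced by the Lacey-Thiele machinery (wave-packet decomposition, tree selection, size and energy estimates) do not deteriorate as $k\to\pm\infty$. Condition \eqref{eq:1.0} is exactly what delivers this uniformity: it confines $\{\gamma_k\}$ to a compact family in $C^3([1/2,2])$ with uniformly nondegenerate second derivative, so the time-frequency estimates transfer with constants depending only on $C_1$ and $C_2$. Without \eqref{eq:1.0} the resonant window is the only step where the argument could break down, which identifies it as the central technical point.
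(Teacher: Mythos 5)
There is a genuine gap, and it sits exactly where you predicted the difficulty would be: the resonant window. Your plan is to linearize $\gamma_k$ on $[1/2,2]$, model the diagonal piece by a localized flat bilinear Hilbert transform, and invoke the Lacey--Thiele theorem ``in the full range $r>1/2$.'' But Lacey--Thiele only gives $r>\frac{2}{3}$, and Lacey's counterexample shows the flat bilinear Hilbert transform (and its maximal variant) actually \emph{fails} to be bounded for $\frac12<r<\frac23$. So no argument that reduces the diagonal regime to the linear model can reach $r>\frac12$; the extra range must come from the curvature itself. Moreover, the linearization error is not a harmless non-stationary-phase remainder: on the frequency diagonal $|\xi|\approx|\eta|\,\gamma'(2^{-j})$ with $|\xi|\sim 2^{m+j}$, the quadratic part of the phase $\xi t+\eta\gamma(t)$ is of size $\sim 2^{m}$ over the unit $t$-interval, i.e.\ it is precisely the dominant oscillation. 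The paper exploits this: for the diagonal piece it proves an $L^2\times L^2\to L^1$ bound with decay $2^{-\varepsilon_0 m}$ (stationary phase, a $TT^*$ argument, H\"ormander's nondegenerate-phase theorem, and a $\sigma$-uniformity dichotomy, Section 5), then a weak-type $L^p\times L^q\to L^{r,\infty}$ bound with only a polynomial loss $m$ via a localized tree/size/BMO time-frequency analysis (Section 6), and interpolates the two to sum in $m$ for all $r>\frac12$. Your proposal contains no mechanism that converts the nondegeneracy $|\gamma_k''|\gtrsim 1$ into a gain on the diagonal, which is the heart of the theorem.

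Two secondary points. First, in your low-frequency regime, a pointwise bound $|H_k(f,g)|\lesssim Mf\cdot Mg$ uniform in $k$ cannot be summed over $k\in\mathbb{Z}$; one needs the Taylor-expansion gains in $j,\ell$ \emph{together with} Littlewood--Paley orthogonality in the scale parameter (this is what the paper's square-function estimates \eqref{eq:3.7}--\eqref{eq:3.08} accomplish, and the reason for the three-parameter partition of unity \eqref{eq:dwfj}). Second, your preliminary reductions (the doubling estimates $\gamma(2t)\asymp\gamma(t)$, $\gamma'(2t)\asymp\gamma'(t)$ from \eqref{eq:1.0}, the rescaled family $\gamma_k$ with uniformly nondegenerate second derivative, and the off-diagonal decay by integration by parts) are sound and parallel Subsections 2.1 and Sections 3--4 of the paper; the failure is confined to, but fatal at, the diagonal part.
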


\begin{theorem}\label{theorem 1.2}
Under the same conditions of $\gamma$, we have
$$\left\|M_{\gamma}(f,g)\right\|_{L^{r}(\mathbb{R})}\leq C \|f\|_{L^{p}(\mathbb{R})}\|g\|_{L^{q}(\mathbb{R})}.$$
\end{theorem}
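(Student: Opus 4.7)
The plan is to mirror the time-frequency analysis of Theorem \ref{theorem 1.1} while exploiting the positivity of the kernel to handle the supremum over $\varepsilon$. First, since $M_{\gamma}(f,g)$ is monotone in $|f|$ and $|g|$, one may assume $f,g \geq 0$. The parity hypothesis on $\gamma$ lets us split the integration into $t>0$ and $t<0$ parts and, after a change of variables, reduce everything to the analysis on $(0,\infty)$. A standard Calderón-type reduction to dyadic scales then bounds
$$M_{\gamma}(f,g)(x) \lesssim \sup_{k \in \mathbb{Z}} M_k(f,g)(x), \qquad M_k(f,g)(x) := \int_{\mathbb{R}} f(x-t)\, g(x-\gamma(t))\, 2^{-k}\psi(2^{-k}t)\, dt,$$
where $\psi \in C_c^\infty$ is a positive bump adapted to $[1/2,2]$.

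The next step is to linearize the supremum. Either replace $\sup_k |M_k|$ by a measurable selection $k:\mathbb{R}\to\mathbb{Z}$ and seek uniform bounds for the linearized operator, or dominate pointwise by the $\ell^2$ square function $\bigl(\sum_k |M_k(f,g)|^2\bigr)^{1/2}$ and prove a vector-valued estimate. In either formulation, on each single scale we perform the same frequency decomposition as in the proof of Theorem \ref{theorem 1.1}: condition \eqref{eq:1.0} combined with $\lim_{t\to 0^+}\gamma'(t)/\gamma''(t)=0$ gives $\gamma'(t)/\gamma''(t)\sim t$ and therefore a sharp identification of the natural frequency scales $2^{-k}$ in $\xi$ and $1/\gamma'(2^k)$ in $\eta$ at time-scale $t\sim 2^k$. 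Van der Corput, with the uniform convexity estimate on $\gamma''$ supplied by \eqref{eq:1.0}, gives decay for the multiplier
$$\widehat{M_k}(\xi,\eta) = \int e^{-i(\xi t + \eta\gamma(t))}\,2^{-k}\psi(2^{-k}t)\,dt$$
away from the diagonal tile where $\xi + \eta\gamma'(t)$ vanishes; this decay drives both the off-diagonal summation/supremum and the $\ell^2$ orthogonality between scales.

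The main obstacle, exactly as in the Hilbert transform case, is the diagonal contribution: the region in $(\xi,\eta)$ where the phase is stationary on the support of $\psi(2^{-k}\cdot)$. There, stationary phase is not enough, and one must invoke the Lacey--Thiele style tree/tile selection, modified for the curve $\gamma$ rather than a line, to treat the uniform (in $k$) bilinear estimate. Compared with Theorem \ref{theorem 1.1}, two simplifications appear for $M_{\gamma}$: there is no singularity at $t=0$ to manage, and positivity of the kernel allows for crude $L^p\times L^q\to L^r$ control on a single tile before almost-orthogonal summation. Once the diagonal piece is handled by a tile-selection argument adapted from the bilinear Hilbert case and the off-diagonal pieces are summed using the van der Corput decay, the supremum (or the $\ell^2$ square function) passes through and yields the stated $L^p\times L^q\to L^r$ boundedness in the full range $r>\tfrac12$, $p,q>1$.
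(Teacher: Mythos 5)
Your high-level plan (reduce to dyadic scales, reuse the frequency decomposition of Theorem \ref{theorem 1.1}, treat the stationary-phase/diagonal region by time-frequency analysis) is the right skeleton, but the step that actually makes this a theorem about a \emph{maximal} operator is missing. You offer two devices for the supremum --- linearizing with a measurable scale $k(x)$, or dominating by the bilinear square function $\bigl(\sum_k|M_k(f,g)|^2\bigr)^{1/2}$ --- and then assert that after the tile argument ``the supremum passes through.'' Neither device is carried out, and neither is automatic: a linearized scale forces you to redo the whole tree/size analysis with $j$ depending on $x$, and a bilinear $\ell^2$ square-function bound down to $r>\frac12$ is a strictly stronger statement than what is needed, with no triangle inequality available below $L^1$. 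The paper's mechanism is much more elementary and is what you should supply: for the low-frequency and off-diagonal high-frequency parts, Taylor expansion plus $\sup_j|\check{\bar{\phi}}_{u,m+j}\ast f|\ls C^u Mf$ gives the pointwise bound $M^1_{\gamma}(f,g)+M^2_{\gamma}(f,g)\ls Mf\cdot Mg$, and H\"older finishes; for the diagonal part, since the kernel already carries absolute values one simply bounds the supremum over $j$ by the sum over $m$ of the operators $\sum_{j,k}|\cdots|$ as in \eqref{eq:7.8}.

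The second, more serious gap is in the diagonal piece itself. You index only by the scale $k$ and treat the stationary region as one block, invoking ``Lacey--Thiele style tree/tile selection'' plus positivity. But the tile/tree (restricted weak-type) machinery of Section \ref{section 6} only yields a bound that \emph{grows} linearly in the diagonal frequency parameter $m$ (see \eqref{eq:6.5} and \eqref{eq:7.10}); summed over $m$ this diverges, and van der Corput gives nothing on the diagonal because the phase is stationary there. The summability comes from a separate ingredient you never mention: the $L^{2}\times L^{2}\rightarrow L^{1}$ estimate with exponential decay $2^{-\varepsilon_0 m}$ (Proposition \ref{proposition 3.2}, proved via $TT^*$, H\"ormander's theorem, stationary phase and $\sigma$-uniformity, using the curvature condition \eqref{eq:1.0}), which must then be \emph{interpolated} against the $m$-growth weak-type bound, exactly as in \eqref{eq:7.9}--\eqref{eq:7.11}. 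Without introducing the parameter $m$, proving the decaying $L^2$ bound uniformly in $j,k$, and running this interpolation, the claim that the argument ``yields the stated boundedness in the full range $r>\frac12$'' is unsubstantiated.
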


\begin{remark}\label{remark 1.0}
It is easy to see that $H_{\gamma}(f,g)$ does not map $L^\infty(\mathbb{R})\times L^\infty(\mathbb{R})$ into $ L^\infty(\mathbb{R})$. Moreover, it is trivial that $M_{\gamma}(f,g)$ is bounded from $L^\infty(\mathbb{R})\times L^\infty(\mathbb{R})$ into $ L^\infty(\mathbb{R})$. Therefore, we can restrict the range of $r$ as $(\frac{1}{2},\infty)$ in the rest of the paper.
\end{remark}

\begin{remark}\label{remark 1.00}
The following are some curves satisfying the conditions of Theorem \ref{theorem 1.1}; we here write only the part for $t\in [0,\infty)$ based on its odd or even property:
\begin{enumerate}
\item[\rm(i)] for any $t\in [0,\infty)$, $\gamma_1(t):=t^\alpha$ under $\alpha\in(1,\infty)$;
\item[\rm(ii)] for any $t\in [0,\infty)$, $\gamma_2(t):=t^\alpha\log(1+t)$ under $\alpha\in(1,\infty)$;
\item[\rm(iii)] for any $t\in [0,\infty)$ and $K\in \mathbb{N}$, $\gamma_3(t):=\sum\limits_{i=1}^{K} t^{\alpha_i}$ under $\alpha_i\in(1,\infty)$ for all $i=1,2, \cdots, K$.
\end{enumerate}
\end{remark}

\begin{remark}\label{remark 1.000}
For a more general curve $\gamma$, Lie \cite{Lie1} introduced a set $\mathcal{NF}^C$ and obtained the $L^2(\mathbb{R})\times L^2(\mathbb{R})\rightarrow L^1(\mathbb{R})$ boundedness of $H_{\gamma}(f,g)$ for $\gamma\in \mathcal{NF}^C$. Later, it was extended to the $L^p(\mathbb{R})\times L^q(\mathbb{R})\rightarrow L^r(\mathbb{R})$ boundedness with $r\geq 1$ in \cite{Lie2}. Furthermore, Gaitan and Lie \cite{GaL} obtained the same boundedness for $M_{\gamma}(f,g)$. It is worth noting that these results are sharp in the sense that we cannot take $\frac{1}{2}<r<1$, since the polynomial $\textrm{P}$ stated in \cite{LX} belongs to the set $\mathcal{NF}^C$. More recently, Guo and Xiao \cite{GX} obtained the $L^2(\mathbb{R})\times L^2(\mathbb{R})\rightarrow L^1(\mathbb{R})$ boundedness of $H_{\gamma}(f,g)$ and $M_{\gamma}(f,g)$, where $\gamma\in \textbf{F}(-1,1)$; the definition of the set $\textbf{F}(-1,1)$ can be found on P. $970$ in \cite{GX}.

The argument of this paper is based on the works of Guo and Xiao \cite{GX}, Li \cite{L2}, Li and Xiao \cite{LX} and Lie \cite{Lie1, Lie2}, but we also make several contributions:

\begin{itemize}

\item[$\star$] Our conditions may be easier to check than $\mathcal{NF}^C$ in Lie \cite{Lie1, Lie2} and $\textbf{F}(-1,1)$ in Guo and Xiao \cite{GX}. Moreover, we require less regularity on $\gamma$ than that in \cite{Lie1, Lie2, GX}. On the other hand, we only require that the curve $\gamma$ belongs to $ C^{3}$, but $\mathcal{NF}^C\subset C^{4}$ and $\textbf{F}(-1,1)\subset C^{5}$.

\item[$\star$] We obtain the full range boundedness for $H_{\gamma}(f,g)$ and $M_{\gamma}(f,g)$. Therefore, our results extend the results of Li \cite{L2} and Li and Xiao \cite[Theorem 3]{LX}, which concern the homogeneous curve $\gamma(t):=t^d$, to more general classes of curves.

\item[$\star$] The main difference between this paper and the abovementioned works is a partition of unity. We split our multiplier by the following partition of unity; i.e.,
    \begin{align*}
\sum_{m,n,k\in \mathbb{Z}}\phi\left(\frac{\xi}{2^{m+j}}\right)\phi\left(\frac{\eta}{2^{k}}\right)\phi\left(\frac{\gamma'(2^{-j})}{2^{n+j-k}}\right)=1,
\end{align*}
    see \eqref{eq:dwfj}, instead of
    \begin{align*}
\sum_{m,n\in \mathbb{Z}}\phi\left(\frac{\xi}{2^{m+j}}\right)\phi\left(\frac{\eta\gamma'(2^{-j})}{2^{n+j}}\right)=1,
\end{align*}
where $\phi$ is a standard bump function supported on $\{t\in \mathbb{R}:\ \frac{1}{2}\leq |t|\leq 2\}$ such that $0\leq\phi(t)\leq1$ and $\Sigma_{l\in \mathbb{Z}} \phi (2^{-l}t)=1$ for all $t\neq 0$. The aim of this partition of unity \eqref{eq:dwfj} is to avoid using uniform paraproduct estimates at the low-frequency part. In \cite{L2}, Li used a uniform paraproduct estimate, i.e., \cite[Theorem 4.1]{L2}, to bound the low-frequency part. In this paper, we present an easy way to dispose of the low-frequency part by using this partition of unity and the Littlewood-Paley theory together with the uniform estimates \eqref{eq:3.7} and \eqref{eq:3.6}.

\end{itemize}
\end{remark}

\subsection{Background and motivation}

There are rich backgrounds from which to study the boundedness property of $H_{\gamma}(f,g)$ and $M_{\gamma}(f,g)$.

\begin{itemize}

\item[$\diamondsuit$] If we take $\gamma(t):=t$, the boundedness of these two operators is trivial. This follows from the boundedness of the classical Hilbert transform, the Hardy-Littlewood maximal function and the H\"older inequality.

\item[$\diamondsuit$] If $\gamma(t):=-t$, these operators turn out to be the standard bilinear Hilbert transform and the corresponding (sub)bilinear maximal function whose boundedness is not easy to obtain. Lacey and Thiele \cite{LT1,LT2} obtained the boundedness with $r>\frac{2}{3}$ for the standard bilinear Hilbert transform. For the same boundedness of the corresponding maximal function, we refer to Lacey \cite{L}. In the same paper, a counterexample showed that if $\frac{1}{2}<r<\frac{2}{3}$ the boundedness fails for these operators.

\item[$\diamondsuit$] If we take $\gamma(t):=t^d$ or $\gamma:=\textrm{P}$ or a more general curve $\gamma$, the boundedness of these two operators has been stated at the beginning of this paper and in Remark \ref{remark 1.000}, where $P$ is a polynomial of degree $d$ with no linear term and constant term.

\item[$\diamondsuit$] There are some other types of bilinear Hilbert transforms. Let
$$H_{\alpha,\beta}(f,g)(x):=\mathrm{p.\,v.}\int_{-\infty}^{\infty}f(x-\alpha t)g(x-\beta t)\,\frac{\textrm{d}t}{t}.$$
Grafakos and Li \cite{GL} set up the uniform boundedness for $r>1$. Later, Li \cite{L1} extended the index to $r>\frac{2}{3}$. Recently, Dong \cite{D1} considered the \emph{bilinear Hilbert transform $H_{\textrm{P},\textrm{Q}}(f,g)$} along two polynomials
$$H_{\textrm{P},\textrm{Q}}(f,g)(x):=\mathrm{p.\,v.}\int_{-\infty}^{\infty}f\left(x-\textrm{P}(t)\right)g\left(x-\textrm{Q}(t)\right)\,\frac{\textrm{d}t}{t}$$
and the corresponding \emph{maximal operator $M_{\textrm{P},\textrm{Q}}(f,g)$}
$$M_{\textrm{P},\textrm{Q}}(f,g)(x):=\sup_{\varepsilon>0}\frac{1}{2\varepsilon}\int_{-\varepsilon}^{\varepsilon}|f\left(x-\textrm{P}(t)\right)g\left(x-\textrm{Q}(t)\right)|
\,\textrm{d}t.$$
Here, $\textrm{P}$ and $\textrm{Q}$ are polynomials with no constant term. Dong proved that these operators are bounded for $r>\frac{d}{d+1}$, where $d$ is the correlation degree of these two polynomials $\textrm{P}$ and $\textrm{Q}$. For the definition of the correlation degree, we refer the reader to P. $2$ in \cite{D1}. There are many other related works; see, for example, \cite{DT,D2,DL,DM,FL,LL}.

\end{itemize}

The study of the boundedness of $H_{\gamma}(f,g)$ and $M_{\gamma}(f,g)$ originated from $\textrm{Calder}\acute{\textrm{o}}\textrm{n}$ \cite{C} in order to study the Cauchy transform along Lipschitz curves, but there have also been many other motivations:

\begin{itemize}

\item[$\rhd$] One of the motivations arises from ergodic theory. For instance, for $n\in \mathbb{N}$, the $L^r(\mathbb{R})$-norm convergence property of the non-conventional bilinear averages
$$\frac{1}{N}\sum_{n=1}^Nf\left(T^n\right)g\left(T^{n^2}\right)$$
as $N$ tends to $\infty$. Here, $T$ is an invertible and measure-preserving transformation of a finite measure space. For more details, we refer to \cite{De,F,HK}.

\item[$\rhd$] Another motivation is offered by number theory. There are many various nonlinear extensions of Roth's theorem for some sets with positive density; see, for example, \cite{B2,B1,DGR}.

\item[$\rhd$] There have also been many developments during the last few years regarding the \emph{Hilbert transform $H_{\gamma}f$} along the curve $\gamma$ defined as
$$H_{\gamma}f(x_1,x_2):=\mathrm{p.\,v.}\int_{-\infty}^{\infty}f(x_1-t,x_2-\gamma(t))
\,\frac{\textrm{d}t}{t},$$
and the corresponding \emph{maximal function $M_{\gamma}f$} along the curve $\gamma$ defined as
$$M_{\gamma}f(x):=\sup_{\varepsilon>0}\frac{1}{2\varepsilon}\int_{-\varepsilon}^{\varepsilon}|f(x_1-t,x_2-\gamma(t))|
\,\textrm{d}t.$$
These operators were initiated by Fabes and $\textrm{Rivi}\grave{\textrm{e}}\textrm{re}$ \cite{FR} and Jones \cite{J} in order to understand the behavior of the constant-coefficient parabolic differential operators. Later, $H_{\gamma}f$ and $M_{\gamma}f$ were extended to cover more general classes of curves \cite{CCVWW,CVWW,CCCD,CNSW,NVWW,VWW}. $H_{\gamma}(f,g)$ is closely associated with $H_{\gamma}f$ since they have the same multiplier. Indeed, we can rewrite $H_{\gamma}(f,g)(x)$ as
$$\int_{-\infty}^{\infty}\int_{-\infty}^{\infty}\hat{f}(\xi)\hat{g}(\eta)e^{i\xi x}e^{i\eta x} \left(\mathrm{p.\,v.}\int_{-\infty}^{\infty}e^{-i\xi t}e^{-i\eta \gamma(t)}
\,\frac{\textrm{d}t}{t}\right)\,\textrm{d}\xi \,\textrm{d}\eta$$
and $H_{\gamma}f(x_1,x_2)$ can be rewritten as
$$\int_{-\infty}^{\infty}\int_{-\infty}^{\infty}\hat{f}(\xi,\eta)e^{i\xi x_1}e^{i\eta x_2} \left(\mathrm{p.\,v.}\int_{-\infty}^{\infty}e^{-i\xi t}e^{-i\eta \gamma(t)}
\,\frac{\textrm{d}t}{t}\right)\,\textrm{d}\xi \,\textrm{d}\eta.$$
Therefore, we can find many similarities between the approaches of $H_{\gamma}(f,g)$ and $H_{\gamma}f$.

\end{itemize}

\subsection{Organization and notations}

We now present the structure of the rest of this paper.

\begin{itemize}

\item[$\bullet$] In Section \ref{section 2}, we give some preliminaries for our proof. Subsection \ref{subsection 2.1} provides two inequalities about the curve $\gamma$, which will be used repeatedly in our proof. Subsection \ref{subsection 2.2} is devoted to splitting $H_{\gamma}(f,g)$ into the following three parts: the low-frequency part $H_{\gamma}^1(f,g)$; the high-frequency part away from the diagonal part $H_{\gamma}^2(f,g)$ and the high-frequency part near the diagonal part $H_{\gamma}^3(f,g)$; see \eqref{eq:2.14} and \eqref{eq:2.15}.
 At the same time, we split $\frac{1}{t}=\sum_{j\in \mathbb{Z}}2^j\rho(2^jt)$ and set up a uniform boundedness for each $H_{\gamma,j}(f,g)$, where the corresponding kernel is $2^j\rho(2^jt)$. A similar decomposition of $M_{\gamma}(f,g)$ can be found in Subsection \ref{subsection 2.3}: the low-frequency part $M_{\gamma}^1(f,g)$, the high-frequency part far from the diagonal part $M_{\gamma}^2(f,g)$ and the high-frequency part close to the diagonal part $M_{\gamma}^3(f,g)$; see \eqref{eq:2.26}.

\item[$\bullet$] In Section \ref{section 3}, we obtain the full range boundedness for $H_{\gamma}^1(f,g)$ by using Taylor series expansion.

\item[$\bullet$] In Section \ref{section 4}, we establish the full range boundedness for $H_{\gamma}^2(f,g)$. To this aim, we consider two cases according to the function that has the higher frequency.

\item[$\bullet$] In Section \ref{section 5}, we prove the $L^2(\mathbb{R})\times L^2(\mathbb{R})\rightarrow L^1(\mathbb{R})$ boundedness of $H_{\gamma}^3(f,g)$. To this aim, we obtain a $2^{-\varepsilon_0 m}$ decay for $H_{m}(f,g)$ (see \eqref{eq:6.2}) defined on the frequency piece along the diagonal for some positive constants $\varepsilon_0$. Here, we used the $TT^*$ argument, H\"ormander's theorem \cite[Theorem 1.1]{H}, the stationary phase method and $\sigma$-uniformity.

\item[$\bullet$] In Section \ref{section 6}, we obtain the full range weak-$L^p(\mathbb{R})\times L^q(\mathbb{R})\rightarrow L^r(\mathbb{R})$ boundedness for $H_{m}(f,g)$ with a bound $m$. By interpolation with the $L^2(\mathbb{R})\times L^2(\mathbb{R})\rightarrow L^1(\mathbb{R})$ estimate that has a decay bound of $2^{-\varepsilon_0 m}$, we can finish our proof. To obtain the weak boundedness, we need to split $H_{m}(f,g)$ into the following three error terms: $|H^1_m|(f,g)$, $|H^2_m|(f,g)$ and $|H^3_m|(f,g)$, see \eqref{eq:6.9}, and the major term $|H^4_m|(f,g)$ in \eqref{eq:6.12}. Subsection \ref{subsection 6.1} describes the estimation for these three error terms, and Subsection \ref{subsection 6.2} is devoted to establishing the major term $|H^4_m|(f,g)$ by using the method of time frequency analysis, which is the most difficult part.

\item[$\bullet$] In Section \ref{section 7}, we set up the full range boundedness of $M_{\gamma}(f,g)$.

\end{itemize}

Throughout this paper, we denote by $C$ a \emph{positive
constant} that is independent of the main parameters involved, whose exact
value is allowed to change from line to line. The \emph{positive constants with subscripts},
$C_1$ and $ C_2$, are fixed constants. The symbol $a\ls b$ or $b\gs a$ means that there exists a positive constant $C$ such that $a\le Cb$. $a\approx b$ means $a\ls b$ and $b\ls a$. We use $\mathcal{S}(\mathbb{R})$ to denote the \emph{Schwartz class} on $\mathbb{R}$. Let $\mathbb{Z}_-:=\mathbb{Z}\backslash \mathbb{N}$ with $\mathbb{N}:=\{0,1,2,\cdots\}$. $\hat{f}$ denotes the Fourier transform of $f$, and $\check{f}$ is the inverse Fourier transform of $f$. For any $0<p<\infty$, we denote $p'$ as its
\emph{conjugate index} if $\frac{1}{p}+\frac{1}{p'}=1$. It is obvious that $p'<0$ if $0<p<1$. For any set $E$, we use $\chi_E$ to denote its \emph{characteristic function}. $\sharp E$ denotes the \emph{cardinality} of it. $E^{\complement}$ indicates its \emph{complementary set}.

\bigskip

\noindent{\it \textbf{Acknowledgments}}. The second author would like to thank his postdoctoral advisor Prof. Lixin Yan for the many valuable comments and helpful discussions.

\section{Preliminaries}\label{section 2}

\subsection{The curve $\gamma$}\label{subsection 2.1}

We first explain some simple properties of the curve $\gamma$ in Theorem \ref{theorem 1.1} which will be used in this paper. Since $\gamma\in C^{3}(\mathbb{R})$, $\gamma(0)=\gamma'(0)=0$ and $\gamma$ is convex on $(0,\infty)$, we have $\gamma''(t)\geq 0$ on $(0,\infty)$. By \eqref{eq:1.0}, we obtain $\gamma''(t)\neq 0$ on $(0,\infty)$ and thus $\gamma''(t)> 0$ on $(0,\infty)$. Therefore, $\gamma'$ is strictly increasing and $\gamma'(t)>0$ on $(0,\infty)$. On the other hand, let us set $G_1(t):=2C_2t-\gamma'(t)/\gamma''(t)$ and $G_2(t):=\gamma'(t)/\gamma''(t)-C_1t/2$, by \eqref{eq:1.0}, we then have $G'_1(t)\geq C_2$ and $G'_2(t)\geq C_1/2$ on $(0,\infty)$. This, combined with $\lim_{t\rightarrow 0^+}\gamma'(t)/\gamma''(t)=0$, leads to $G_1(t)\geq 0$ and $G_2(t)\geq 0$ on $(0,\infty)$ and therefore
\begin{align}\label{eq:1.00}
\frac{1}{2C_2}\leq\frac{t\gamma''(t)}{\gamma'(t)}\leq \frac{2}{C_1}, \quad  {\rm for \ any \ }t\in (0,\infty).
\end{align}
Since $\gamma(0)=\gamma'(0)=0$, by the Cauchy mean value theorem, for any $t\in(0,\infty)$, there exists $\tau_1\in (0,t)$ such that
$\frac{t\gamma'(t)}{\gamma(t)}=\frac{t\gamma'(t)-0\gamma'(0)}{\gamma(t)-\gamma(0)}=\frac{\gamma'(\tau_1)+\tau_2\gamma''(\tau_1)}{\gamma'(\tau_2)}$.
Thus, by \eqref{eq:1.00},
\begin{align}\label{eq:1.01}
1+\frac{1}{2C_2}\leq \frac{t\gamma'(t)}{\gamma(t)}\leq 1+\frac{2}{C_1}, \quad  {\rm for \ any \ }t\in (0,\infty),
\end{align}
which further implies that $\gamma(t)/t$ is strictly increasing on $(0,\infty)$.

Let $G_3(t):=\log_2 \gamma'(t)$ for any $t\in (0,\infty)$, by \eqref{eq:1.00}, we then have $1/2C_2t\leq G'_3(t)\leq 2/C_1t$ for any $t\in (0,\infty)$. By the Lagrange mean value theorem, there exists a constant $\theta\in[1,2]$ such that $G_3(2t)-G_3(t)=G'_3(\theta t)t\in[1/4C_2, 2/C_1]$, which further leads to
\begin{align}\label{eq:1.02}
2^{\frac{1}{4C_2}}\leq \frac{\gamma'(2t)}{\gamma'(t)}\leq 2^{\frac{2}{C_1}}, \quad  {\rm for \ any \ }t\in (0,\infty).
\end{align}
By the Cauchy mean value theorem and  $\gamma(0)=0$, for any $t\in(0,\infty)$, there exists $\tau_2\in (0,t)$ such that $\frac{\gamma(2t)}{\gamma(t)}=\frac{\gamma(2t)-\gamma(0)}{\gamma(t)-\gamma(0)}=\frac{2\gamma'(2\tau_2)}{\gamma'(\tau_2)}$. This, combined with \eqref{eq:1.02}, implies
\begin{align}\label{eq:1.03}
2^{1+\frac{1}{4C_2}}\leq \frac{\gamma(2t)}{\gamma(t)}\leq 2^{1+\frac{2}{C_1}}, \quad  {\rm for \ any \ }t\in (0,\infty).
\end{align}

\subsection{Decomposition of $H_\gamma(f,g)$}\label{subsection 2.2}

For $H_\gamma(f,g)$, we rewrite it as
\begin{align}\label{eq:2.4}
H_{\gamma}(f,g)(x):=\int_{-\infty}^{\infty}\int_{-\infty}^{\infty}\hat{f}(\xi)\hat{g}(\eta)e^{i\xi x}e^{i\eta x} m(\xi,\eta)\,\textrm{d}\xi \,\textrm{d}\eta,
\end{align}
where
\begin{align}\label{eq:2.5}
m(\xi,\eta):=\mathrm{p.\,v.}\int_{-\infty}^{\infty}e^{-i\xi t}e^{-i\eta \gamma(t)}
\,\frac{\textrm{d}t}{t}.
\end{align}
Let $\rho$ be an odd smooth function supported on $\{t\in \mathbb{R}:\ \frac{1}{2}\leq |t|\leq 2\}$ such that
$\frac{1}{t}=\sum_{j\in \mathbb{Z}}2^j\rho(2^jt)$. Then,
\begin{align}\label{eq:2.6}
m(\xi,\eta)=\sum_{j\in \mathbb{Z}}m_j(\xi,\eta),
\end{align}
where
\begin{align}\label{eq:2.7}
m_j(\xi,\eta):=\int_{-\infty}^{\infty}e^{-i2^{-j}\xi t}e^{-i\eta \gamma(2^{-j}t)}\rho(t)
\,\textrm{d}t.
\end{align}
Therefore, we split $H_{\gamma}(f,g)$ as
\begin{align*}
H_{\gamma}(f,g)(x)=\sum_{j\in \mathbb{Z}}\int_{-\infty}^{\infty}\int_{-\infty}^{\infty}\hat{f}(\xi)\hat{g}(\eta)e^{i\xi x}e^{i\eta x} m_j(\xi,\eta)\,\textrm{d}\xi \,\textrm{d}\eta
=:\sum_{j\in \mathbb{Z}}H_{\gamma,j}(f,g)(x).
\end{align*}

From Proposition \ref{proposition 2.1} below, we need only to consider that $|j|$ large enough. Before giving Proposition \ref{proposition 2.1}, we first state the following lemma which can be found in \cite[Lemma 4.7]{GPRY}.

\begin{lemma}\label{lemma 2.0}
Let $I\subset \mathbb{R}$ be an interval, $k\in \mathbb{N}$, $f\in C^k(I)$, and suppose that for some $\sigma>0$, $|f^{(k)}(x)|\geq \sigma$ for all $x\in I$. Then there exists a positive constant $C$ depending only on $k$ such that
$$\left|\{x\in I:\ |f(x)|\leq \rho\} \right|\leq C \left(\frac{\rho}{\sigma}\right)^{\frac{1}{k}}$$
for all $\rho>0$.
\end{lemma}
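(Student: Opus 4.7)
The plan is to prove Lemma 2.0 by induction on $k$, which is the standard approach for van der Corput–type sublevel set estimates.

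For the base case $k=1$, I would note that $f'$ has constant sign on $I$ (since $|f'|\geq \sigma>0$ and $f'$ is continuous), so $f$ is strictly monotonic, and hence the sublevel set $\{x\in I:|f(x)|\leq \rho\}$ is an interval. If $x_1,x_2$ are its endpoints, the mean value theorem gives $\sigma|x_1-x_2|\leq |f(x_1)-f(x_2)|\leq 2\rho$, so the measure is bounded by $2\rho/\sigma$, which is the desired inequality with $C=2$.

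For the inductive step, I assume the statement for $k-1$ and prove it for $k$. Again $f^{(k)}$ has constant sign on $I$, so $f^{(k-1)}$ is strictly monotonic, hence has at most one zero $x_0$ on $I$; this splits $I$ into at most two subintervals on each of which $|f^{(k-1)}|$ is monotonically increasing as one moves away from $x_0$. Fix a threshold $\tau>0$ to be chosen. On each subinterval, write
\[
\{|f|\leq \rho\}\;=\;\bigl(\{|f|\leq \rho\}\cap\{|f^{(k-1)}|\leq \tau\}\bigr)\cup\bigl(\{|f|\leq \rho\}\cap\{|f^{(k-1)}|> \tau\}\bigr).
\]
On the first piece, applying the base case to $f^{(k-1)}$ (whose derivative $f^{(k)}$ satisfies $|f^{(k)}|\geq \sigma$) gives measure at most $2\tau/\sigma$. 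The second piece is a union of at most two intervals on each of which $|f^{(k-1)}|\geq \tau$; applying the inductive hypothesis to $f$ on each such interval with the bound $\tau$ in place of $\sigma$ yields a contribution of at most $C_{k-1}(\rho/\tau)^{1/(k-1)}$.

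Adding these contributions and optimizing over $\tau$ by choosing $\tau:=\sigma^{(k-1)/k}\rho^{1/k}$, both terms become a constant multiple of $(\rho/\sigma)^{1/k}$, completing the induction. The one technical point to handle carefully is the trivial case when $f^{(k-1)}$ has no zero (or when one of the subintervals is empty), so that the two pieces of $I$ are treated uniformly; aside from that, the argument is entirely routine and the main work is just the bookkeeping of the two splittings (at the zero $x_0$ of $f^{(k-1)}$, and at the level $\tau$). The hardest part is really just picking the right $\tau$ — everything else is standard.
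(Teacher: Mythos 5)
Your induction is correct and complete. The base case $k=1$ (monotonicity plus the mean value theorem, giving constant $2$) and the inductive step are both sound: since $f^{(k)}$ has constant sign, $f^{(k-1)}$ is strictly monotone, so $\{x\in I:\ |f^{(k-1)}(x)|\leq\tau\}$ is an interval of measure at most $2\tau/\sigma$ by the base case applied to $f^{(k-1)}$, while its complement in $I$ consists of at most two intervals on which the hypothesis of the $(k-1)$ case holds for $f$ with $\tau$ in place of $\sigma$; adding $2\tau/\sigma+2C_{k-1}(\rho/\tau)^{1/(k-1)}$ and choosing $\tau=\sigma^{(k-1)/k}\rho^{1/k}$ indeed balances both terms to a multiple of $(\rho/\sigma)^{1/k}$, with $C_k=2+2C_{k-1}$ depending only on $k$ (implicitly one needs $k\geq 1$, which the exponent $1/k$ forces anyway). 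Note, however, that the paper does not prove this lemma at all: it is quoted verbatim from \cite[Lemma 4.7]{GPRY}, so there is no in-paper argument to compare with. Your proof is the classical van der Corput--type sublevel set argument found in the literature; an alternative, non-inductive route is to pick $k+1$ points in the sublevel set with consecutive gaps comparable to its measure divided by $k$ and compare the $k$-th divided difference of $f$ at these points (which equals $f^{(k)}(\xi)/k!$ for some $\xi$, hence has size at least $\sigma/k!$) with the trivial upper bound of order $\rho$ times the inverse of the product of the gaps, which yields the same estimate directly. Either way the statement stands, and your write-up fills the gap the citation leaves.
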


\begin{proposition}\label{proposition 2.1}
Let $\gamma$ and $p,q,r$ be the same as in Theorem \ref{theorem 1.1}. Then there exists a positive constant $C$ independent of $j$ such that
$$\left\|H_{\gamma,j}(f,g)\right\|_{L^{r}(\mathbb{R})}\leq C \|f\|_{L^{p}(\mathbb{R})}\|g\|_{L^{q}(\mathbb{R})}$$
for all $f\in L^{p}(\mathbb{R})$ and $g\in L^{q}(\mathbb{R})$.
\end{proposition}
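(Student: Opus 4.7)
The plan is to exploit the single-scale nature of $H_{\gamma,j}$: after rescaling the kernel becomes a bump of fixed size on the annulus $\{1/2\le|s|\le 2\}$, so uniformity in $j$ reduces to estimating a model single-scale object. I split the argument according to whether $r\ge 1$ or $\frac{1}{2}<r<1$.

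First I would substitute $t=2^{-j}s$ in the definition of $H_{\gamma,j}$ to rewrite
\begin{equation*}
H_{\gamma,j}(f,g)(x)=\int_{\frac{1}{2}\le|s|\le 2} f(x-2^{-j}s)\,g\bigl(x-\gamma(2^{-j}s)\bigr)\,\rho(s)\,ds,
\end{equation*}
so that the $s$-integration is over a compact set independent of $j$. For $r\ge 1$, the uniform bound then follows from Minkowski's integral inequality in $s$ together with the translation invariance of $\|\cdot\|_{L^p}$, $\|\cdot\|_{L^q}$:
\begin{equation*}
\|H_{\gamma,j}(f,g)\|_{L^r}\le \int|\rho(s)|\,\|f(\cdot-2^{-j}s)\,g(\cdot-\gamma(2^{-j}s))\|_{L^r}\,ds \le \|\rho\|_{L^1}\|f\|_{L^p}\|g\|_{L^q},
\end{equation*}
where the inner norm is controlled by Hölder's inequality pointwise in $s$.

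For the remaining range $\frac{1}{2}<r<1$, where Minkowski is unavailable, the plan is to establish the pointwise domination
\begin{equation*}
|H_{\gamma,j}(f,g)(x)|\le C\,M_{\gamma}(f,g)(x),
\end{equation*}
with $C$ independent of $j$. This is immediate: since $2^j\rho(2^jt)$ is supported in $|t|\le 2^{1-j}$ with $L^\infty$ norm at most $\|\rho\|_{L^\infty}\,2^j$, one has $|H_{\gamma,j}(f,g)(x)|\le \|\rho\|_{L^\infty}\cdot 2^j\int_{|t|\le 2^{1-j}}|f(x-t)g(x-\gamma(t))|\,dt$, and the right-hand side is at most a constant times the $\varepsilon=2^{1-j}$ term in the supremum defining $M_{\gamma}(f,g)(x)$. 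Combining with the $L^p\times L^q\to L^r$ boundedness from Theorem \ref{theorem 1.2}, whose proof in Section \ref{section 7} is independent of Proposition \ref{proposition 2.1}, closes the argument for all $r>\frac{1}{2}$.

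The main obstacle is precisely the quasi-Banach range $r<1$. A direct approach via Hölder's inequality in the $s$-variable, combined with the change of variables $v=\gamma(2^{-j}s)$ whose Jacobian is controlled by \eqref{eq:1.00}--\eqref{eq:1.03}, yields only a pointwise bound of the form $(M|f|^{\alpha})^{1/\alpha}(x)\cdot(M|g|^{\alpha'})^{1/\alpha'}(x)$ with $\frac{1}{\alpha}+\frac{1}{\alpha'}=1$, which forces $\alpha<p$ and $\alpha'<q$ and hence $r>1$. Thus the $r<1$ case genuinely requires the extra cancellation coming from the convexity of $\gamma$, encoded in the sub-bilinear maximal function bound of Theorem \ref{theorem 1.2}.
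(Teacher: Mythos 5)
Your $r\ge 1$ argument (rescaling to a single scale, then Minkowski plus H\"older) is exactly the paper's and is fine. The genuine gap is in the range $\frac12<r<1$: deducing Proposition \ref{proposition 2.1} from Theorem \ref{theorem 1.2} is circular within this paper, and your claim that the proof of Theorem \ref{theorem 1.2} in Section \ref{section 7} is independent of Proposition \ref{proposition 2.1} is not correct. The decomposition of $M_\gamma$ already invokes it (``As in Proposition \ref{proposition 2.1} above, we will focus on the case where $|j|$ is large enough'' in Subsection \ref{subsection 2.3}), and the key estimate \eqref{eq:7.9} for $M^3_{\gamma}$ is obtained ``as in'' \eqref{eq:3.37}--\eqref{eq:3.38}, i.e.\ via Propositions \ref{proposition 3.2} and \ref{proposition 3.3}, whose proofs in Section \ref{section 5} explicitly assume $j$ and $m$ sufficiently large \emph{on the strength of Proposition \ref{proposition 2.1}}. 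Even if one only needs the single-scale bound for $M_{j,\gamma}$ rather than for the full supremum, that bound is precisely the content of Proposition \ref{proposition 2.1} (its proof never uses the sign of $\rho$), so nothing is gained: the hard quasi-Banach case is simply deferred to a theorem whose proof requires it.

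What is missing is the paper's direct single-scale argument for $\frac12<r<1$, which also shows that your closing assessment (that $r<1$ forces an appeal to the bilinear maximal theorem) is too pessimistic. Write $H_{\gamma,j}(f,g)(x)=\int f(x-2^{-j}t)\,g(x-\gamma(2^{-j}t))\,\rho(t)\,\textrm{d}t$ and decompose the $t$-support according to the size of $2^{-j}|\gamma'(2^{-j}t)|$ (the sets $E_\lambda$ in \eqref{eq:2.1.1}), with a further decomposition $|\gamma'(2^{-j}t)-1|\approx 2^h$ in the degenerate regime $\gamma'(2^{-j}t)\approx 1$ where the Jacobian of $(x,t)\mapsto(u,v)=(x-2^{-j}t,\,x-\gamma(2^{-j}t))$ degenerates. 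On each piece the Jacobian $|2^{-j}-2^{-j}\gamma'(2^{-j}t)|$ is bounded below, so after localizing $x$ to an interval of the natural length, Cauchy--Schwarz gives an $L^1\times L^1\to L^{1/2}$ bound with constant controlled by $|E_\lambda|$ (resp.\ $2^{-h}|E_\gamma(h)|$), while H\"older gives an $L^p\times L^{p'}\to L^1$ bound; the measures of the pieces are summable by the sublevel-set estimate of Lemma \ref{lemma 2.0} together with the curvature bounds \eqref{eq:1.00}--\eqref{eq:1.02}, and interpolation then yields the full range $\frac12<r<1$ uniformly in $j$. It is this convexity input through sublevel-set bounds, not Theorem \ref{theorem 1.2}, that your proposal needs to supply.
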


\begin{proof}
For $r\geq 1$, by the Minkowski inequality and $\mathrm{H}\ddot{\mathrm{o}}\mathrm{lder}$ inequality, we have
\begin{align*}
\left\|H_{\gamma,j}(f,g)\right\|_{L^{r}(\mathbb{R})}
\leq\int_{-\infty}^{\infty} \left\||f(\cdot-t)|^r\right\|_{L^{\frac{p}{r}}(\mathbb{R})} ^{\frac{1}{r}} \left\||g(\cdot-\gamma(t))|^r\right\|_{L^{\frac{q}{r}}(\mathbb{R})} ^{\frac{1}{r}}     2^j|\rho(2^jt)|\,\textrm{d}t\ls\|f\|_{L^{p}(\mathbb{R})}\|g\|_{L^{q}(\mathbb{R})}.
\end{align*}
For $\frac{1}{2}<r<1$, we rewrite $H_{\gamma,j}(f,g)$ as
\begin{align*}
H_{\gamma,j}(f,g)(x)=\int_{-\infty}^{\infty}f(x-2^{-j}t)g(x-\gamma(2^{-j}t))\rho(t)\,\textrm{d}t.
\end{align*}
Let
$$
\begin{cases}A_1:=\{2^{-j}t:\ t\in\textrm{supp}~\rho\};\\
A_\gamma:=\{\gamma(2^{-j}t):\ t\in\textrm{supp}~\rho\};\\
E_\lambda:=\{t\in\textrm{supp}~\rho:\ 2^{\lambda}<2^{-j}|\gamma'(2^{-j}t)|\leq 2\cdot2^{\lambda}\},
\end{cases}
$$
where $\lambda\in \mathbb{Z}$. Furthermore, let $A_1(\lambda):=\{2^{-j}t:\ t\in E_\lambda\}$ and $A_\gamma(\lambda):=\{\gamma(2^{-j}t):\ t\in E_\lambda\}$; we have
\begin{align}\label{eq:2.1.1}
H_{\gamma,j}(f,g)(x)=\sum_{\lambda\in \mathbb{Z}}\int_{E_\lambda}f(x-2^{-j}t)g(x-\gamma(2^{-j}t))\rho(t)\,\textrm{d}t=:\sum_{\lambda\in \mathbb{Z}}H^{\lambda}_{\gamma,j}(f,g)(x).
\end{align}

To estimate $H_{\gamma,j}(f,g)$, we consider the following three cases:
$$2^{\lambda}\geq 2\cdot2^{-j},\quad 2^{\lambda}\leq \frac{1}{4}2^{-j}\quad \text{and}
\quad \frac{1}{2}2^{-j}\leq2^{\lambda}\leq 2^{-j}. $$

{\bf Case~I:} $2^{\lambda}\geq 2\cdot2^{-j}$

We observe that $$|A_1(\lambda)|=2^{-j}|E_\lambda|\leq 2^{\lambda}|E_\lambda|\quad\text{and}\quad|A_\gamma(\lambda)|\leq 2\cdot2^{\lambda}|E_\lambda|.$$ Without loss of generality, we may restrict $x \in I_\lambda$ of length $2\cdot2^{\lambda}|E_\lambda|$. By the Cauchy-Schwarz inequality, we have
\begin{align}\label{eq:2.1.2}
\left\|H^{\lambda}_{\gamma,j}(f,g)\right\|^{\frac{1}{2}}_{L^{\frac{1}{2}}(\mathbb{R})}
\leq |I_\lambda|^{\frac{1}{2}} \left(\int_{I_\lambda}\int_{E_\lambda}|f(x-2^{-j}t)g(x-\gamma(2^{-j}t))|\,\textrm{d}t\,\textrm{d}x\right)^{\frac{1}{2}}.
\end{align}
 We change the variables $u:=x-2^{-j}t$ and $v:=x-\gamma(2^{-j}t)$, and  $$\left|\frac{\partial(u,v)}{\partial(x,t)}\right|=|2^{-j}-2^{-j}\gamma'(2^{-j}t)|\geq 2^{\lambda}-\frac{2^{\lambda}}{2}=\frac{2^{\lambda}}{2}$$ for all $t\in E_\lambda$. Note that $|I_\lambda|=2\cdot2^{\lambda}|E_\lambda|$; we can control the last term in \eqref{eq:2.1.2} by
\begin{align*}
\left(\frac{|I_\lambda|}{2^{\lambda}}\right)^{\frac{1}{2}} \left(\|f\|_{L^{1}(\mathbb{R})}\|g\|_{L^{1}(\mathbb{R})}\right)^{\frac{1}{2}}\approx |E_\lambda|^{\frac{1}{2}} \left(\|f\|_{L^{1}(\mathbb{R})}\|g\|_{L^{1}(\mathbb{R})}\right)^{\frac{1}{2}}.
\end{align*}
Henceforth, in this case,
\begin{align}\label{eq:2.1.3}
\left\|H^{\lambda}_{\gamma,j}(f,g)\right\|_{L^{\frac{1}{2}}(\mathbb{R})} \ls|E_\lambda|\cdot \|f\|_{L^{1}(\mathbb{R})}\|g\|_{L^{1}(\mathbb{R})}.
\end{align}
For $E_\lambda$, by \eqref{eq:1.00}, we have $$2^{-2j}|\gamma''(2^{-j}t)|=2^{-j}\left|\frac{2^{-j}t\gamma''(2^{-j}t)}{\gamma'(2^{-j}t)}\right|\cdot\left|\frac{\gamma'(2^{-j}t)}{t}\right|\gs 2^{-j} \gamma'\left(\frac{1}{2}2^{-j}\right) $$ for $t\in\textrm{supp}~\rho$. By Lemma \ref{lemma 2.0}, it implies
\begin{align}\label{eq:2.1.4}
|E_\lambda|\leq \frac{2^{\lambda}}{2^{-j} \gamma'\left(\frac{1}{2}2^{-j}\right)}.
\end{align}
On the other hand, since $\gamma'$ is strictly increasing on $(0,\infty)$, we have
\begin{align}\label{eq:2.1.5}
2^{\lambda}\leq 2^{-j} \gamma'(2\cdot2^{-j}).
\end{align}
From \eqref{eq:2.1.4}, \eqref{eq:2.1.5} and \eqref{eq:1.02}, we obtain
\begin{align}\label{eq:2.1.6}
\sum_{\lambda\in \mathbb{Z}:\ 2^{\lambda}\geq 2\cdot2^{-j}} |E_\lambda|^{\frac{1}{2}}\leq \left[\frac{1}{2^{-j} \gamma'\left(\frac{1}{2}2^{-j}\right)}\right]^{\frac{1}{2}}  \sum_{\lambda\in \mathbb{Z}:\ 2^{\lambda}\leq 2^{-j} \gamma'(2\cdot2^{-j})} 2^{\frac{\lambda}{2}}\ls \left[\frac{2^{-j} \gamma'(2\cdot2^{-j})}{2^{-j} \gamma'\left(\frac{1}{2}2^{-j}\right)}\right]^{\frac{1}{2}} \ls 1.
\end{align}
Therefore, from \eqref{eq:2.1.3}, \eqref{eq:2.1.6}, we have
\begin{align}\label{eq:2.1.7}
\left\|\sum_{\lambda\in \mathbb{Z}:\ 2^{\lambda}\geq 2\cdot2^{-j}} H^{\lambda}_{\gamma,j}(f,g)\right\|_{L^{\frac{1}{2}}(\mathbb{R})} \ls \|f\|_{L^{1}(\mathbb{R})}\|g\|_{L^{1}(\mathbb{R})}.
\end{align}
As in \eqref{eq:2.1.6}, we have $\sum_{\lambda\in \mathbb{Z}:\ 2^{\lambda}\geq 2\cdot2^{-j}} |E_\lambda|\ls 1$, by the $\mathrm{H}\ddot{\mathrm{o}}\mathrm{lder}$ inequality, for all $p>1$,
\begin{align}\label{eq:2.1.8}
\left\|\sum_{\lambda\in \mathbb{Z}:\ 2^{\lambda}\geq 2\cdot2^{-j}} H^{\lambda}_{\gamma,j}(f,g)\right\|_{L^{1}(\mathbb{R})} \ls \sum_{\lambda\in \mathbb{Z}:\ 2^{\lambda}\geq 2\cdot2^{-j}} |E_\lambda|\cdot \|f\|_{L^{p}(\mathbb{R})}\|g\|_{L^{p'}(\mathbb{R})}\ls \|f\|_{L^{p}(\mathbb{R})}\|g\|_{L^{p'}(\mathbb{R})}.
\end{align}
By interpolation between \eqref{eq:2.1.7} and \eqref{eq:2.1.8}, for any $\frac{1}{2}<r<1$, we obtain
\begin{align}\label{eq:2.1.9}
\left\|\sum_{\lambda\in \mathbb{Z}:\ 2^{\lambda}\geq 2\cdot2^{-j}} H^{\lambda}_{\gamma,j}(f,g)\right\|_{L^{r}(\mathbb{R})} \ls\|f\|_{L^{p}(\mathbb{R})}\|g\|_{L^{q}(\mathbb{R})}.
\end{align}

{\bf Case~II:} $2^{\lambda}\leq \frac{1}{4}2^{-j}$

Noting that $$|A_\gamma(\lambda)|\leq 2\cdot2^{\lambda}|E_\lambda|\leq \frac{1}{2}2^{-j}|E_\lambda|=\frac{1}{2}|A_1(\lambda)|\leq 2^{-j}|E_\lambda|,$$
we restrict $x$ in an interval $I_\lambda$ of length $2\cdot2^{-j}|E_\lambda|$. On the other hand, we have $$\left|\frac{\partial(u,v)}{\partial(x,t)}\right|=|2^{-j}-2^{-j}\gamma'(2^{-j}t)|\geq 2^{-j}-2\cdot2^{\lambda}\geq \frac{1}{2}2^{-j}$$ for all $t\in E_\lambda$. As in Case I, we also have $$\left\|H^{\lambda}_{\gamma,j}(f,g)\right\|_{L^{\frac{1}{2}}(\mathbb{R})} \ls|E_\lambda|\cdot \|f\|_{L^{1}(\mathbb{R})}\|g\|_{L^{1}(\mathbb{R})}.$$ Furthermore, \begin{align}\label{eq:2.1.10}
\left\|\sum_{\lambda\in \mathbb{Z}:\ 2^{\lambda}\leq \frac{1}{4}2^{-j}} H^{\lambda}_{\gamma,j}(f,g)\right\|_{L^{r}(\mathbb{R})} \ls\|f\|_{L^{p}(\mathbb{R})}\|g\|_{L^{q}(\mathbb{R})}
\end{align}
for all $\frac{1}{2}<r<1$.

{\bf Case~III:} $\frac{1}{2}2^{-j}\leq2^{\lambda}\leq 2^{-j}$

We are free to assume that $2^{-j}\gamma'(2^{-j}t)>0$. Otherwise, it can be handled exactly in the same way as Case II, since $$|A_\gamma(\lambda)|\leq 2\cdot 2^{-j}|E_\lambda|,\quad |A_1(\lambda)|= 2^{-j}|E_\lambda|$$ and $$\left|\frac{\partial(u,v)}{\partial(x,t)}\right|=|2^{-j}-2^{-j}\gamma'(2^{-j}t)|\geq 2^{-j}$$ for $t\in E_\lambda$. We now consider
\begin{align*}
\mathbb{H}_{\gamma,j}(f,g)(x):=\int_{E_\gamma}f(x-2^{-j}t)g(x-\gamma(2^{-j}t))\rho(t)\,\textrm{d}t,
\end{align*}
where $E_\gamma:=\{t\in\textrm{supp}~\rho:\ \frac{1}{2}<\gamma'(2^{-j}t)\leq 2\}$. Let
$E_\gamma(h):=\{t\in E_\gamma:\ 2^h<|\gamma'(2^{-j}t)-1|\leq 2\cdot2^h\}$. By simple calculation, we know that $E_\gamma\subset \bigcup_{h\in \mathbb{Z}_-}E_\gamma(h)$. Let
\begin{align}\label{eq:2.1.11}
\mathbb{H}^{h}_{\gamma,j}(f,g)(x):=\int_{E_\gamma(h)}f(x-2^{-j}t)g(x-\gamma(2^{-j}t))\rho(t)\,\textrm{d}t.
\end{align}
Noting that $|\{2^{-j}t:\ t\in E_\gamma(h)\}|=2^{-j}|E_\gamma(h)|$ and $|\{\gamma(2^{-j}t):\ t\in E_\gamma(h)\}|\ls 2^{-j}|E_\gamma(h)|$, without loss of generality, we restrict $x\in I_h$ of length $2^{-j}|E_\gamma(h)|$. By the $\mathrm{H}\ddot{\mathrm{o}}\mathrm{lder}$ inequality, for all $p>1$, we can bound $\|\mathbb{H}^{h}_{\gamma,j}(f,g)\|_{L^{1}(\mathbb{R})}$ by
\begin{align}\label{eq:2.1.12}
\int_{I_h}\left|\int_{E_\gamma(h)}f(x-2^{-j}t)g(x-\gamma(2^{-j}t))\rho(t)\,\textrm{d}t\right|\,\textrm{d}x
\leq |E_\gamma(h)|\cdot \|f\|_{L^{p}(\mathbb{R})}\|g\|_{L^{p'}(\mathbb{R})}.
\end{align}
The Cauchy-Schwarz inequality allows us to obtain
\begin{align}\label{eq:2.1.13}
\left\|\mathbb{H}^{h}_{\gamma,j}(f,g)\right\|^{\frac{1}{2}}_{L^{\frac{1}{2}}(\mathbb{R})}  \ls |I_h|^{\frac{1}{2}} \left[\int_{I_h}\int_{E_\gamma(h)}|f(x-2^{-j}t)g(x-\gamma(2^{-j}t))|\,\textrm{d}t\,\textrm{d}x\right]^{\frac{1}{2}}.
\end{align}
Let $u:=x-2^{-j}t$ and $v:=x-\gamma(2^{-j}t)$, noting that $|\frac{\partial(u,v)}{\partial(x,t)}|=|2^{-j}-2^{-j}\gamma'(2^{-j}t)|\geq 2^h2^{-j}$ for all $t\in E_\gamma(h)$ and $|I_h|=2^{-j}|E_\gamma(h)|$, we control the last term in \eqref{eq:2.1.13} by $(\frac{|E_\gamma(h)|}{2^h})^{\frac{1}{2}} (\|f\|_{L^{1}(\mathbb{R})}\|g\|_{L^{1}(\mathbb{R})})^{\frac{1}{2}}$. Henceforth, in this case,
\begin{align}\label{eq:2.1.14}
\left\|\mathbb{H}^{h}_{\gamma,j}(f,g)\right\|_{L^{\frac{1}{2}}(\mathbb{R})} \ls\frac{|E_\gamma(h)|}{2^h} \|f\|_{L^{1}(\mathbb{R})}\|g\|_{L^{1}(\mathbb{R})}.
\end{align}
Noting that $|E_\gamma(h)|\leq 4$, from \eqref{eq:2.1.12} and \eqref{eq:2.1.14}, by interpolation, we obtain the boundedness of $\mathbb{H}^{h}_{\gamma,j}(f,g)$ for all $\frac{1}{2}<r<1$ for the case that $h=-1$. Noting that $E_\gamma\subset \bigcup_{h\in \mathbb{Z}_-}E_\gamma(h)$, in what follows, we will  focus on the second case; i.e., $h\in \mathbb{Z}_-$ and $h<-1$.

From the fact that $\gamma'$ is strictly increasing on $(0,\infty)$ and \eqref{eq:1.00}, we have $2^{-j}|\gamma''(2^{-j}t)|\gs  \gamma'(\frac{1}{2}2^{-j}) $ for $t\in E_\gamma(h)$. By Lemma \ref{lemma 2.0}, we have $|E_\gamma(h)|\ls \frac{2^{h}}{ \gamma'(\frac{1}{2}2^{-j})}$. Furthermore, we have $$\gamma'(2\cdot2^{-j})\geq |\gamma'(2^{-j}t)|=|\gamma'(2^{-j}t)-1+1|\geq 1-2\cdot2^h\geq \frac{1}{2}$$ for $t\in E_\gamma(h)$. Therefore, \eqref{eq:1.02} implies $|E_\gamma(h)|\ls \frac{2^{h}}{\gamma'(2\cdot2^{-j}) }\frac{\gamma'(2\cdot2^{-j})}{\gamma'(\frac{1}{2}2^{-j})}\ls 2^{h}$. By interpolation \eqref{eq:2.1.12} and \eqref{eq:2.1.14}, there exists a positive constant $\epsilon$ independent of $j$ such that
\begin{align}\label{eq:2.1.16}
\left\|\mathbb{H}^{h}_{\gamma,j}(f,g)\right\|_{L^{r}(\mathbb{R})} \ls 2^{\epsilon h}\|f\|_{L^{p}(\mathbb{R})}\|g\|_{L^{q}(\mathbb{R})}
\end{align}
for $\frac{1}{2}<r<1$, which leads to
\begin{align}\label{eq:2.1.17}
\left\|\sum_{h\in \mathbb{Z}_-:\ h<-1} \mathbb{H}^{h}_{\gamma,j}(f,g)\right\|_{L^{r}(\mathbb{R})} \ls\|f\|_{L^{p}(\mathbb{R})}\|g\|_{L^{q}(\mathbb{R})}.
\end{align}

Putting all the estimates together, we finish  the proof of Proposition \ref{proposition 2.1}
\end{proof}

Let $\phi$ be a standard bump function supported on $\{t\in \mathbb{R}:\ \frac{1}{2}\leq |t|\leq 2\}$ such that $0\leq\phi(t)\leq1$ and $\Sigma_{l\in \mathbb{Z}} \phi (2^{-l}t)=1$ for all $t\neq 0$. We decompose the unity as
\begin{align}\label{eq:dwfj}
\sum_{m,n,k\in \mathbb{Z}}\phi\left(\frac{\xi}{2^{m+j}}\right)\phi\left(\frac{\eta}{2^{k}}\right)\phi\left(\frac{\gamma'(2^{-j})}{2^{n+j-k}}\right)=1
\end{align}
for all $\xi,\eta\neq 0$. Let
\begin{align}\label{eq:2.8}
m_{j,m,n,k}(\xi,\eta):=m_j(\xi,\eta) \phi\left(\frac{\xi}{2^{m+j}}\right)\phi\left(\frac{\eta}{2^{k}}\right)\phi\left(\frac{\gamma'(2^{-j})}{2^{n+j-k}}\right).
\end{align}
Then
\begin{align}\label{eq:2.9}
m_j(\xi,\eta)=\sum_{m,n,k\in \mathbb{Z}}m_{j,m,n,k}(\xi,\eta).
\end{align}
We denote the diagonal as $$\triangle:=\left\{(m,n)\in \mathbb{Z}^2:\ m,n\geq 0, |m-n|\leq 2/ C_1+1 \right\},$$ and split $m_j$ as the following three parts:
\begin{align}\label{eq:2.10}
m_j(\xi,\eta)=m_j^1(\xi,\eta)+m_j^2(\xi,\eta)+m_j^3(\xi,\eta).
\end{align}
Accordingly, the \emph{low-frequency part $m_j^1$} is
\begin{align}\label{eq:2.11}
m_j^1(\xi,\eta):=\sum_{(m,n)\in (\mathbb{Z_-})^2,k\in \mathbb{Z}}m_{j,m,n,k}(\xi,\eta).
\end{align}
The \emph{high-frequency part away from the diagonal part $m_j^2$} is
\begin{align}\label{eq:2.12}
m_j^2(\xi,\eta):=\sum_{(m,n)\in \mathbb{Z}^2\backslash \((\mathbb{Z_-})^2 \bigcup \triangle  \),k\in \mathbb{Z}}m_{j,m,n,k}(\xi,\eta).
\end{align}
The \emph{high-frequency part near the diagonal part $m_j^3$} is
\begin{align}\label{eq:2.13}
m_j^3(\xi,\eta):=\sum_{(m,n)\in \triangle,k\in \mathbb{Z}}m_{j,m,n,k}(\xi,\eta).
\end{align}
Accordingly, we can split $H_\gamma(f,g)$ into three parts:
\begin{align}\label{eq:2.14}
H_{\gamma}(f,g)(x)=H_{\gamma}^1(f,g)(x)+H_{\gamma}^2(f,g)(x)+H_{\gamma}^3(f,g)(x),
\end{align}
where
\begin{align}\label{eq:2.15}
\begin{cases}H_{\gamma}^1(f,g)(x):=\sum_{j\in \mathbb{Z}}\int_{-\infty}^{\infty}\int_{-\infty}^{\infty}\hat{f}(\xi)\hat{g}(\eta)e^{i\xi x}e^{i\eta x} m_j^1(\xi,\eta)\,\textrm{d}\xi \,\textrm{d}\eta;\\
H_{\gamma}^2(f,g)(x):=\sum_{j\in \mathbb{Z}}\int_{-\infty}^{\infty}\int_{-\infty}^{\infty}\hat{f}(\xi)\hat{g}(\eta)e^{i\xi x}e^{i\eta x} m_j^2(\xi,\eta)\,\textrm{d}\xi \,\textrm{d}\eta;\\
H_{\gamma}^3(f,g)(x):=\sum_{j\in \mathbb{Z}}\int_{-\infty}^{\infty}\int_{-\infty}^{\infty}\hat{f}(\xi)\hat{g}(\eta)e^{i\xi x}e^{i\eta x} m_j^3(\xi,\eta)\,\textrm{d}\xi \,\textrm{d}\eta.
\end{cases}
\end{align}

\subsection{Decomposition of $M_\gamma(f,g)$}\label{subsection 2.3}

$M_{\gamma}(f,g)$ is a positive operator, and we may assume that $f$ and $g$ are non-negative. By simple calculation, we deduce that
\begin{align}\label{eq:2.18}
M_{\gamma}(f,g)(x)\leq \sup_{j\in \mathbb{Z}}  \int_{-\infty}^{\infty}f(x-t)g(x-\gamma(t)) 2^j|\rho(2^jt)|\,\textrm{d}t=:\sup_{j\in \mathbb{Z}}M_{j,\gamma}(f,g)(x).
\end{align}
As in Proposition \ref{proposition 2.1} above, we will focus on the case where $|j|$ is large enough.
 We rewrite $M_{j,\gamma}(f,g)$ as
\begin{align}\label{eq:2.19}
M_{j,\gamma}(f,g)(x):=\int_{-\infty}^{\infty}\int_{-\infty}^{\infty}\hat{f}(\xi)\hat{g}(\eta)e^{i\xi x}e^{i\eta x} \tilde{m}_j(\xi,\eta)\,\textrm{d}\xi \,\textrm{d}\eta,
\end{align}
where
\begin{align}\label{eq:2.20}
\tilde{m}_j(\xi,\eta):=\mathrm{p.\,v.}\int_{-\infty}^{\infty}e^{-i2^{-j}\xi t}e^{-i\eta \gamma(2^{-j}t)}|\rho(t)|
\,\textrm{d}t.
\end{align}
As in \eqref{eq:2.8}, we define $\tilde{m}_{j,m,n,k}$ as
\begin{align}\label{eq:2.21}
\tilde{m}_{j,m,n,k}(\xi,\eta):=\tilde{m}_j(\xi,\eta) \phi\left(\frac{\xi}{2^{m+j}}\right)\phi\left(\frac{\eta}{2^{k}}\right)\phi\left(\frac{\gamma'(2^{-j})}{2^{n+j-k}}\right),
\end{align}
and split $\tilde{m}_j$ as
\begin{align}\label{eq:2.22}
\tilde{m}_j(\xi,\eta)=\tilde{m}_j^1(\xi,\eta)+\tilde{m}_j^2(\xi,\eta)+\tilde{m}_j^3(\xi,\eta).
\end{align}
The \emph{low-frequency part $\tilde{m}_j^1$} is
\begin{align}\label{eq:2.23}
\tilde{m}_j^1(\xi,\eta):=\sum_{(m,n)\in (\mathbb{Z_-})^2,k\in \mathbb{Z}}\tilde{m}_{j,m,n,k}(\xi,\eta).
\end{align}
The \emph{high-frequency part away from the diagonal part $\tilde{m}_j^2$} is
\begin{align}\label{eq:2.24}
\tilde{m}_j^2(\xi,\eta):=\sum_{(m,n)\in \mathbb{Z}^2\backslash \((\mathbb{Z_-})^2 \bigcup \triangle  \),k\in \mathbb{Z}}\tilde{m}_{j,m,n,k}(\xi,\eta).
\end{align}
The \emph{high-frequency part near the diagonal part $\tilde{m}_j^3$} is
\begin{align}\label{eq:2.25}
\tilde{m}_j^3(\xi,\eta):=\sum_{(m,n)\in \triangle,k\in \mathbb{Z}}\tilde{m}_{j,m,n,k}(\xi,\eta).
\end{align}
Similarly,
\begin{align}\label{eq:2.26}
M_{\gamma}(f,g)(x)\leq M^1_{\gamma}(f,g)(x)+M^2_{\gamma}(f,g)(x)+M^3_{\gamma}(f,g)(x).
\end{align}

\section{The boundedness of $H_{\gamma}^1(f,g)$}\label{section 3}

For $m_j^1$ in \eqref{eq:2.11}, we employ the Taylor series expansion, i.e., $e^{-i2^{-j}\xi t}=\sum_{u\in \mathbb{N}}\frac{(-i2^{-j}\xi t)^u}{u\mathrm{!}}$ and $e^{-i\eta \gamma(2^{-j}t)}=\sum_{v\in \mathbb{N}}\frac{(-i\eta \gamma(2^{-j}t))^v}{v\mathrm{!}}$. Since $\phi$ is supported on $\{t\in \mathbb{R}:\ \frac{1}{2}\leq |t|\leq 2\}$, we have $|\frac{\xi}{2^{m+j}}|\leq 2$ and $|\frac{\eta\gamma'(2^{-j})}{2^{n+j}}|=|\frac{\eta}{2^{k}}|\cdot|\frac{\eta\gamma'(2^{-j})}{2^{n+j-k}}|\leq 4$. Noting that $\rho$ is supported on $\{t\in \mathbb{R}:\ \frac{1}{2}\leq |t|\leq 2\}$ and $m\in \mathbb{Z_-}$, we have $|2^{-j}\xi t|\leq 4\cdot2^m\leq 4$. On the other hand, since $\gamma$ is either odd or even and increasing on $(0,\infty)$ and satisfies \eqref{eq:1.02} and \eqref{eq:1.01}, and $\rho$ is supported on $\{t\in \mathbb{R}:\ \frac{1}{2}\leq |t|\leq 2\}$, $n\in \mathbb{Z_-}$, we  have $$|\eta\gamma(2^{-j}t)|\ls|\eta\gamma(2^{-j})|\ls|\eta2^{-j}\gamma'(2^{-j})|\ls 1.$$ Furthermore, let
\begin{align*}
\bar{\phi}_u(\xi):=\xi^u\phi(\xi) \quad\textrm{and}\quad \psi_\lambda(\xi):=2^\lambda\psi(2^\lambda\xi),
\end{align*}
where $u,\lambda\in \mathbb{Z}$. Then, we can write $m_j^1(\xi,\eta)$ as
\begin{align*}
\sum_{m,n\in \mathbb{Z_-}}\sum_{k\in \mathbb{Z}}\sum_{u,v\in \mathbb{N}}  \frac{(-i)^{u+v}2^{mu}2^{nv}}{u\mathrm{!}v\mathrm{!}} \bar{\phi}_u\left(\frac{\xi}{2^{m+j}}\right) \bar{\phi}_v\left(\frac{\eta}{2^{k}}\right) \bar{\phi}_v\left(\frac{\gamma'(2^{-j})}{2^{n+j-k}}\right)\int_{-\infty}^{\infty}t^u\left(\frac{2^j\gamma(2^{-j}t)}{\gamma'(2^{-j})}\right)^v\rho(t)\,\textrm{d}t .
\end{align*}
Therefore, $H_{\gamma}^1(f,g)(x)$ can be rewritten as
\begin{align*}
\sum_{j,k\in \mathbb{Z}}\sum_{m,n\in \mathbb{Z_-}}\sum_{u,v\in \mathbb{N}}\frac{(-i)^{u+v}2^{mu}2^{nv}}{u\mathrm{!}v\mathrm{!}} \bar{\phi}_v\left(\frac{\gamma'(2^{-j})}{2^{n+j-k}}\right) \int_{-\infty}^{\infty}t^u\left(\frac{2^j\gamma(2^{-j}t)}{\gamma'(2^{-j})}\right)^v\rho(t)\,\textrm{d}t \cdot  \check{\bar{\phi}}_{u,m+j} \ast f(x)\cdot \check{\bar{\phi}}_{v,k} \ast g(x),
\end{align*}
where $\check{\bar{\phi}}_{u,\lambda}(\xi):=2^\lambda\check{\bar{\phi}}_u(2^\lambda\xi)$ and $\check{\bar{\phi}}_u$ means the inverse Fourier transform of $\bar{\phi}_u$; by the Cauchy-Schwarz inequality, we dominate  $H_{\gamma}^1(f,g)(x)$  by
\begin{align}\label{eq:3.3}
&\sum_{m,n\in \mathbb{Z_-}}\sum_{u,v\in \mathbb{N}}\frac{2^{mu}2^{nv}}{u\mathrm{!}v\mathrm{!}} \left|\int_{-\infty}^{\infty}t^u\left(\frac{2^j\gamma(2^{-j}t)}{\gamma'(2^{-j})}\right)^v\rho(t)\,\textrm{d}t\right| \\
\times&\left[ \sum_{j,k\in \mathbb{Z}}\left|\bar{\phi}_v\left(\frac{\gamma'(2^{-j})}{2^{n+j-k}}\right)\right|\cdot\left|\check{\bar{\phi}}_{u,m+j} \ast f(x)\right|^2\right]^{\frac{1}{2}}\cdot\left[ \sum_{j,k\in \mathbb{Z}}\left|\bar{\phi}_v\left(\frac{\gamma'(2^{-j})}{2^{n+j-k}}\right)\right|\cdot\left|\check{\bar{\phi}}_{v,k} \ast g(x)\right|^2\right]^{\frac{1}{2}}.\nonumber
\end{align}
For $r\geq1$, by the triangle inequality and $\mathrm{H}\ddot{\mathrm{o}}\mathrm{lder}$ inequality, the $L^{r}(\mathbb{R})$ norm of $H_{\gamma}^1(f,g)$ is at most
\begin{align}\label{eq:3.4}
&\sum_{m,n\in \mathbb{Z_-}}\sum_{u,v\in \mathbb{N}}\frac{2^{mu}2^{nv}}{u\mathrm{!}v\mathrm{!}} \left|\int_{-\infty}^{\infty}t^u\left(\frac{2^j\gamma(2^{-j}t)}{\gamma'(2^{-j})}\right)^v\rho(t)\,\textrm{d}t\right| \\
\times&\left\| \left[ \sum_{j,k\in \mathbb{Z}}\left|\bar{\phi}_v\left(\frac{\gamma'(2^{-j})}{2^{n+j-k}}\right)\right|\cdot\left|\check{\bar{\phi}}_{u,m+j} \ast f\right|^2\right]^{\frac{1}{2}} \right\|_{L^{p}(\mathbb{R})}
\left\| \left[ \sum_{j,k\in \mathbb{Z}}\left|\bar{\phi}_v\left(\frac{\gamma'(2^{-j})}{2^{n+j-k}}\right)\right|\cdot\left|\check{\bar{\phi}}_{v,k} \ast g\right|^2\right]^{\frac{1}{2}}  \right\|_{L^{q}(\mathbb{R})}.\nonumber
\end{align}
From \eqref{eq:1.02} and \eqref{eq:1.01}, it is easy to see that
\begin{align}\label{eq:3.5}
\left|\int_{-\infty}^{\infty}t^u\left(\frac{2^j\gamma(2^{-j}t)}{\gamma'(2^{-j})}\right)^v\rho(t)\,\textrm{d}t\right| \ls 1.
\end{align}
We claim that
\begin{align}\label{eq:3.7}
\sum_{j\in \mathbb{Z}} \left|\phi\left(\frac{\gamma'(2^{-j})}{2^{n+j-k}}\right)\right|\ls 1
\end{align}
holds uniformly for $n\in \mathbb{Z_-}$ and $k\in \mathbb{Z}$. Since $0\leq\phi\leq1$, it is enough to show that the sum in \eqref{eq:3.7} has at most a finite number of terms and the number independent of $n\in \mathbb{Z_-}$ and $k\in \mathbb{Z}$. Indeed, since $\phi$ is supported on $\{t\in \mathbb{R}:\ \frac{1}{2}\leq |t|\leq 2\}$, then, for the $j$-th term in \eqref{eq:3.7}, it implies $\frac{1}{2}\frac{2^j}{\gamma'(2^{-j})}\leq \frac{1}{2^{n-k}}\leq 2\frac{2^j}{\gamma'(2^{-j})} $. For the $j+K$-th term in \eqref{eq:3.7}, it implies $\frac{1}{2}\frac{2^{j+K}}{\gamma'(2^{-j-K})}\leq \frac{1}{2^{n-k}}\leq 2\frac{2^{j+K}}{\gamma'(2^{-j-K})} $, where $K>0$. Therefore, it suffices to show that there exists a positive constant $K$ that depends only on $\gamma$ such that $2\frac{2^j}{\gamma'(2^{-j})}<\frac{1}{2}\frac{2^{j+K}}{\gamma'(2^{-j-K})}$. From \eqref{eq:1.01}, this is a direct consequence of $\frac{\gamma(2^{-j})}{\gamma(2^{-j-K})}>4\frac{1+\frac{2}{C_1}}{1+\frac{1}{2C_2}}$. From \eqref{eq:1.03}, we need only to take $K$ that satisfies $2^{(1+\frac{1}{4C_2})K}>4\frac{1+\frac{2}{C_1}}{1+\frac{1}{2C_2}}$.

As in \eqref{eq:3.7}, we also have that
\begin{align}\label{eq:3.6}
\sum_{k\in \mathbb{Z}} \left|\phi\left(\frac{\gamma'(2^{-j})}{2^{n+j-k}}\right)\right|\ls 1
\end{align}
holds uniformly for $n\in \mathbb{Z_-}$ and $j\in \mathbb{Z}$. \eqref{eq:3.6} and the Littlewood-Paley theory implies
\begin{align}\label{eq:3.8}
\left\| \left[ \sum_{j,k\in \mathbb{Z}}\left|\bar{\phi}_v\left(\frac{\gamma'(2^{-j})}{2^{n+j-k}}\right)\right|\cdot\left|\check{\bar{\phi}}_{u,m+j} \ast f\right|^2\right]^{\frac{1}{2}} \right\|_{L^{p}(\mathbb{R})}\ls \left\| \left[ \sum_{j\in \mathbb{Z}}\left|\check{\bar{\phi}}_{u,m+j} \ast f\right|^2\right]^{\frac{1}{2}} \right\|_{L^{p}(\mathbb{R})}\ls\|f\|_{L^{p}(\mathbb{R})}.
\end{align}
From \eqref{eq:3.7}, as in \eqref{eq:3.8},
\begin{align}\label{eq:3.08}
\left\| \left[ \sum_{j,k\in \mathbb{Z}}\left|\bar{\phi}_v\left(\frac{\gamma'(2^{-j})}{2^{n+j-k}}\right)\right|\cdot\left|\check{\bar{\phi}}_{v,k} \ast g\right|^2\right]^{\frac{1}{2}}  \right\|_{L^{q}(\mathbb{R})}\ls\|g\|_{L^{q}(\mathbb{R})}.
\end{align}
From the fact that $\sum_{m,n\in \mathbb{Z_-}}\sum_{u,v\in \mathbb{N}}\frac{2^{mu}2^{nv}}{u\mathrm{!}v\mathrm{!}}\ls 1$ and the estimates \eqref{eq:3.4}, \eqref{eq:3.5}, \eqref{eq:3.8} and \eqref{eq:3.08}, we have
\begin{align}\label{eq:3.9}
\left\|H_{\gamma}^1(f,g)\right\|_{L^{r}(\mathbb{R})}\ls\|f\|_{L^{p}(\mathbb{R})}\|g\|_{L^{q}(\mathbb{R})}.
\end{align}

For $\frac{1}{2}<r<1$, from \eqref{eq:3.3}, we bound $\|H_{\gamma}^1(f,g)\|^r_{L^{r}(\mathbb{R})}$ by
\begin{align}\label{eq:3.09}
&\sum_{m,n\in \mathbb{Z_-}}\sum_{u,v\in \mathbb{N}}\left(\frac{2^{mu}2^{nv}}{u\mathrm{!}v\mathrm{!}} \right)^r \left|\int_{-\infty}^{\infty}t^u\left(\frac{2^j\gamma(2^{-j}t)}{\gamma'(2^{-j})}\right)^v\rho(t)\,\textrm{d}t\right|^r \\
\times&\left\|\left[ \sum_{j,k\in \mathbb{Z}}\left|\bar{\phi}_v\left(\frac{\gamma'(2^{-j})}{2^{n+j-k}}\right)\right|\cdot\left|\check{\bar{\phi}}_{u,m+j} \ast f\right|^2\right]^{\frac{1}{2}}\cdot\left[ \sum_{j,k\in \mathbb{Z}}\left|\bar{\phi}_v\left(\frac{\gamma'(2^{-j})}{2^{n+j-k}}\right)\right|\cdot\left|\check{\bar{\phi}}_{v,k} \ast g\right|^2\right]^{\frac{1}{2}}\right\|^r_{L^{r}(\mathbb{R})}.\nonumber
\end{align}
The fact $\sum_{m,n\in \mathbb{Z_-}}\sum_{u,v\in \mathbb{N}}\left(\frac{2^{mu}2^{nv}}{u\mathrm{!}v\mathrm{!}} \right)^r\ls 1$ for all $\frac{1}{2}<r<1$ combining \eqref{eq:3.5}, \eqref{eq:3.8}, \eqref{eq:3.08} and \eqref{eq:3.09}, leads to
\begin{align}\label{eq:3.010}
\left\|H_{\gamma}^1(f,g)\right\|_{L^{r}(\mathbb{R})}\ls\|f\|_{L^{p}(\mathbb{R})}\|g\|_{L^{q}(\mathbb{R})}.
\end{align}
This is the desired estimate for the first item $H_{\gamma}^1(f,g)$.

\section{The boundedness of $H_{\gamma}^2(f,g)$}\label{section 4}

Noting that $$\frac{i}{2^{-j}\xi +\eta 2^{-j}\gamma'(2^{-j}t)}\frac{\textrm{d}}{\textrm{d}t}e^{-i2^{-j}\xi t-i\eta \gamma(2^{-j}t)}= e^{-i2^{-j}\xi t-i\eta \gamma(2^{-j}t)},$$ we split $m_{j,m,n,k}$ in \eqref{eq:2.8} as the following two parts:
\begin{align}\label{eq:3.10}
m_{j,m,n,k}(\xi,\eta)=A_{j,m,n,k}(\xi,\eta)+B_{j,m,n,k}(\xi,\eta),
\end{align}
where $A_{j,m,n,k}$ is defined as
\begin{align}\label{eq:3.11}
\left(\int_{-\infty}^{\infty}e^{-i2^{-j}\xi t-i\eta \gamma(2^{-j}t)}\frac{-i\rho'(t)}{2^{-j}\xi +\eta 2^{-j}\gamma'(2^{-j}t)}
\,\textrm{d}t\right)\phi\left(\frac{\xi}{2^{m+j}}\right)\phi\left(\frac{\eta}{2^{k}}\right)\phi\left(\frac{\gamma'(2^{-j})}{2^{n+j-k}}\right)
\end{align}
and $B_{j,m,n,k}$ is defined as
\begin{align}\label{eq:3.12}
\left(\int_{-\infty}^{\infty}e^{-i2^{-j}\xi t-i\eta \gamma(2^{-j}t)}\frac{i\eta2^{-2j}\gamma''(2^{-j}t) }{(2^{-j}\xi +\eta 2^{-j}\gamma'(2^{-j}t))^2}\rho(t)
\,\textrm{d}t\right)\phi\left(\frac{\xi}{2^{m+j}}\right)\phi\left(\frac{\eta}{2^{k}}\right)\phi\left(\frac{\gamma'(2^{-j})}{2^{n+j-k}}\right).
\end{align}
Based on this decomposition, we split $H_{\gamma}^2(f,g)$ as follows:
\begin{align}\label{eq:3.13}
&\sum_{j\in \mathbb{Z}}\sum_{(m,n)\in \mathbb{Z}^2\backslash \((\mathbb{Z_-})^2 \bigcup \triangle  \)}\sum_{k\in \mathbb{Z}}A_{\gamma,j,m,n,k}(f,g)(x)+\sum_{j\in \mathbb{Z}}\sum_{(m,n)\in \mathbb{Z}^2\backslash \((\mathbb{Z_-})^2 \bigcup \triangle  \)}\sum_{k\in \mathbb{Z}}B_{\gamma,j,m,n,k}(f,g)(x)\\
=:&A_{\gamma}^2(f,g)(x)+B_{\gamma}^2(f,g)(x),\nonumber
\end{align}
where
\begin{align}\label{eq:3.14}
\begin{cases}A_{\gamma,j,m,n,k}(f,g)(x):=\int_{-\infty}^{\infty}\int_{-\infty}^{\infty}\hat{f}(\xi)\hat{g}(\eta)e^{i\xi x}e^{i\eta x} A_{j,m,n,k}(\xi,\eta)\,\textrm{d}\xi \,\textrm{d}\eta;\\
B_{\gamma,j,m,n,k}(f,g)(x):=\int_{-\infty}^{\infty}\int_{-\infty}^{\infty}\hat{f}(\xi)\hat{g}(\eta)e^{i\xi x}e^{i\eta x} B_{j,m,n,k}(\xi,\eta)\,\textrm{d}\xi \,\textrm{d}\eta.
\end{cases}
\end{align}
Since $(m,n)\in \mathbb{Z}^2\backslash ((\mathbb{Z_-})^2 \bigcup \triangle  )$, there are two cases: $n>|m|+2/ C_1+1$ and $m>|n|+2/ C_1+1$.

\subsection{Case 1: $n>|m|+2/ C_1+1$}\label{subsection 4.1}

Applying the Taylor series expansion, we have
\begin{align}\label{eq:3.16}
\frac{1}{2^{-j}\xi +\eta 2^{-j}\gamma'(2^{-j}t)}=\frac{1}{2^n}\frac{1}{\frac{\eta\gamma'(2^{-j})}{2^{n+j}}}\sum_{l\in \mathbb{N}}\frac{(-1)^l}{2^{l(n-m)}}\left[\frac{\frac{\xi}{2^{m+j}}}{\frac{\eta\gamma'(2^{-j})}{2^{n+j}}}\right]^l\left( \frac{\gamma'(2^{-j})}{\gamma'(2^{-j}t)}\right)^{l+1}.
\end{align}
Then, $A_{\gamma,j,m,n,k}(f,g)(x)$ can be written as
\begin{align*}
\frac{-i}{2^n}\sum_{l\in \mathbb{N}}\frac{(-1)^l}{2^{l(n-m)}} \bar{\phi}_{-l-1}\left(\frac{\gamma'(2^{-j})}{2^{n+j-k}}\right)\check{\bar{\phi}}_{l,m+j} \ast f(x)\cdot \check{\bar{\phi}}_{-l-1,k} \ast g(x)\cdot \int_{-\infty}^{\infty}e^{-i2^{-j}\xi t-i\eta \gamma(2^{-j}t)}\rho'(t)\left( \frac{\gamma'(2^{-j})}{\gamma'(2^{-j}t)}\right)^{l+1}
\,\textrm{d}t.
\end{align*}
Noting that $\gamma$ is either odd or even, $\gamma'$ is increasing on $(0,\infty)$, $\rho$ is supported on $\{t\in \mathbb{R}:\ \frac{1}{2}\leq |t|\leq 2\}$, and by \eqref{eq:1.02}, we have
\begin{align}\label{eq:3.19}
\left|\int_{-\infty}^{\infty}e^{-i2^{-j}\xi t-i\eta \gamma(2^{-j}t)}\rho'(t)\left( \frac{\gamma'(2^{-j})}{\gamma'(2^{-j}t)}\right)^{l+1}
\,\textrm{d}t\right|\ls 2^{\frac{2l}{C_1}}.
\end{align}
By the Cauchy-Schwarz inequality,
\begin{align}\label{eq:3.019}
A_{\gamma}^2(f,g)(x)\leq &\sum_{(m,n)\in \mathbb{Z}^2\backslash \((\mathbb{Z_-})^2 \bigcup \triangle  \)} \frac{1}{2^n} \sum_{l\in \mathbb{N}}\frac{2^{\frac{2l}{C_1}}}{2^{l(n-m)}}\left[ \sum_{j,k\in \mathbb{Z}}\left|\bar{\phi}_{-l-1}\left(\frac{\gamma'(2^{-j})}{2^{n+j-k}}\right)\right|\cdot\left|\check{\bar{\phi}}_{l,m+j} \ast f(x)\right|^2\right]^{\frac{1}{2}}\\
&\times\left[ \sum_{j,k\in \mathbb{Z}}\left|\bar{\phi}_{-l-1}\left(\frac{\gamma'(2^{-j})}{2^{n+j-k}}\right)\right|\cdot\left|\check{\bar{\phi}}_{-l-1,k} \ast g(x)\right|^2\right]^{\frac{1}{2}}.\nonumber
\end{align}

For $r\geq1$, by the triangle inequality and $\mathrm{H}\ddot{\mathrm{o}}\mathrm{lder}$ inequality, we bound $\|A_{\gamma}^2(f,g)\|_{L^{r}(\mathbb{R})}$ by
\begin{align}\label{eq:3.20}
\sum_{(m,n)\in \mathbb{Z}^2\backslash \((\mathbb{Z_-})^2 \bigcup \triangle  \)} \frac{1}{2^n} \sum_{l\in \mathbb{N}}\frac{2^{\frac{2l}{C_1}}}{2^{l(n-m)}}&\left\| \left[ \sum_{j,k\in \mathbb{Z}}\left|\bar{\phi}_{-l-1}\left(\frac{\gamma'(2^{-j})}{2^{n+j-k}}\right)\right|\cdot\left|\check{\bar{\phi}}_{l,m+j} \ast f\right|^2\right]^{\frac{1}{2}} \right\|_{L^{p}(\mathbb{R})}\\
\times&\left\| \left[ \sum_{j,k\in \mathbb{Z}}\left|\bar{\phi}_{-l-1}\left(\frac{\gamma'(2^{-j})}{2^{n+j-k}}\right)\right|\cdot\left|\check{\bar{\phi}}_{-l-1,k} \ast g\right|^2\right]^{\frac{1}{2}}  \right\|_{L^{q}(\mathbb{R})}.\nonumber
\end{align}
On the other hand, from $n>|m|+2/ C_1+1$, it implies $\sum_{l\in \mathbb{N}}\frac{2^{\frac{2l}{C_1}}}{2^{l(n-m)}}\ls 1$. From $n>|m|+2/ C_1+1$, we have
\begin{align}\label{eq:3.21}\sum_{(m,n)\in \mathbb{Z}^2\backslash \((\mathbb{Z_-})^2 \bigcup \triangle  \)}\frac{1}{2^n}\ls \sum_{(m,n)\in \mathbb{Z}^2\backslash \((\mathbb{Z_-})^2 \bigcup \triangle  \)}\frac{1}{2^{\frac{n}{2}}}\frac{1}{2^{\frac{|m|}{2}}}\ls 1.
\end{align}
As in \eqref{eq:3.6} and \eqref{eq:3.8}, we assert that
\begin{align}\label{eq:3.22}
\left\|A_{\gamma}^2(f,g)\right\|_{L^{r}(\mathbb{R})}
\ls\|f\|_{L^{p}(\mathbb{R})}\|g\|_{L^{q}(\mathbb{R})}.
\end{align}
As in \eqref{eq:3.09}, it is easy to see that \eqref{eq:3.22} also holds for $\frac{1}{2}<r<1$.

Applying the Taylor series expansion again, we have
\begin{align}\label{eq:3.23}
\frac{\eta2^{-2j}\gamma''(2^{-j}t) }{(2^{-j}\xi +\eta 2^{-j}\gamma'(2^{-j}t))^2}=\frac{1}{2^n}\frac{1}{\frac{\xi}{2^{m+j}}}\frac{2^{-j} \gamma''(2^{-j}t) }{\gamma'(2^{-j})}\sum_{l\in \mathbb{N}}\frac{(-1)^l}{2^{l(n-m)}}\left[\frac{\frac{\xi}{2^{m+j}}}{\frac{\eta\gamma'(2^{-j})}{2^{n+j}}}\right]^{l+1}\left( \frac{\gamma'(2^{-j})}{\gamma'(2^{-j}t)}\right)^{l+1}.
\end{align}
We can then write $B_{\gamma,j,m,n}(f,g)(x)$ as
\begin{align*}
&\frac{i}{2^n}\sum_{l\in \mathbb{N}}\frac{(-1)^l}{2^{l(n-m)}} \bar{\phi}_{-l-1}\left(\frac{\gamma'(2^{-j})}{2^{n+j-k}}\right)\check{\bar{\phi}}_{l,m+j} \ast f(x)\cdot \check{\bar{\phi}}_{-l-1,k} \ast g(x) \\
 \times&\int_{-\infty}^{\infty}e^{-i2^{-j}\xi t-i\eta \gamma(2^{-j}t)}\frac{2^{-j} \gamma''(2^{-j}t) }{\gamma'(2^{-j})}\left( \frac{\gamma'(2^{-j})}{\gamma'(2^{-j}t)}\right)^{l+1}\rho(t)\,\textrm{d}t.\nonumber
\end{align*}
Since $\gamma$ is either odd or even, $\gamma'$ is increasing on $(0,\infty)$, $\rho$ is supported on $\{t\in \mathbb{R}:\ \frac{1}{2}\leq |t|\leq 2\}$, and by \eqref{eq:1.00} and \eqref{eq:1.02}, it is easy to see that $|\frac{2^{-j} \gamma''(2^{-j}t) }{\gamma'(2^{-j})}|=|\frac{2^{-j}t \gamma''(2^{-j}t) }{\gamma'(2^{-j}t)} \frac{\gamma'(2^{-j}t)}{\gamma'(2^{-j})}\frac{1}{t}|\ls 1$. Thus
\begin{align}\label{eq:3.26}
\left|\int_{-\infty}^{\infty}e^{-i2^{-j}\xi t-i\eta \gamma(2^{-j}t)}\frac{2^{-j} \gamma''(2^{-j}t) }{\gamma'(2^{-j})}\left( \frac{\gamma'(2^{-j})}{\gamma'(2^{-j}t)}\right)^{l+1}\rho(t)\,\textrm{d}t\right|\ls 2^{\frac{2l}{C_1}}.
\end{align}
As in \eqref{eq:3.20} and \eqref{eq:3.21}, for $r>\frac{1}{2}$, we assert that
\begin{align}\label{eq:3.27}
\left\|B_{\gamma}^2(f,g)\right\|_{L^{r}(\mathbb{R})}
\ls\|f\|_{L^{p}(\mathbb{R})}\|g\|_{L^{q}(\mathbb{R})}.
\end{align}

\subsection{Case 2: $m>|n|+2/ C_1+1$}\label{subsection 4.2}

By the Taylor series expansion, we have
\begin{align}\label{eq:3.28}
\frac{1}{2^{-j}\xi +\eta 2^{-j}\gamma'(2^{-j}t)}=\frac{1}{2^m}\frac{1}{\frac{\xi}{2^{m+j}}}\sum_{l\in \mathbb{N}}\frac{(-1)^l}{2^{l(m-n)}}\left[\frac{\frac{\eta\gamma'(2^{-j})}{2^{n+j}}}{\frac{\xi}{2^{m+j}}}\right]^l\left( \frac{\gamma'(2^{-j}t)}{\gamma'(2^{-j})}\right)^{l}.
\end{align}
As in case 1, we have
\begin{align}\label{eq:3.29}
\left\|A_{\gamma}^2(f,g)\right\|_{L^{r}(\mathbb{R})}
\ls\|f\|_{L^{p}(\mathbb{R})}\|g\|_{L^{q}(\mathbb{R})}
\end{align}
for $r>\frac{1}{2}$. Furthermore,
\begin{align}\label{eq:3.30}
\frac{\eta2^{-2j}\gamma''(2^{-j}t) }{(2^{-j}\xi +\eta 2^{-j}\gamma'(2^{-j}t))^2}=\frac{1}{2^m}\frac{1}{\frac{\xi}{2^{m+j}}}\frac{2^{-j} \gamma''(2^{-j}t) }{\gamma'(2^{-j}t)}\sum_{l\in \mathbb{N}}\frac{(-1)^l}{2^{(l+1)(m-n)}}\left[\frac{\frac{\eta\gamma'(2^{-j})}{2^{n+j}}}{\frac{\xi}{2^{m+j}}}\right]^{l+1}\left( \frac{\gamma'(2^{-j}t)}{\gamma'(2^{-j})}\right)^{l+1}.
\end{align}
As in case 1 , for $r>\frac{1}{2}$, again we have
\begin{align}\label{eq:3.31}
\left\|B_{\gamma}^2(f,g)\right\|_{L^{r}(\mathbb{R})}
\ls\|f\|_{L^{p}(\mathbb{R})}\|g\|_{L^{q}(\mathbb{R})}.
\end{align}

From \eqref{eq:3.13}, \eqref{eq:3.22}, \eqref{eq:3.27}, \eqref{eq:3.29} and \eqref{eq:3.31}, we may obtain
\begin{align}\label{eq:3.32}
\left\|H_{\gamma}^2(f,g)\right\|_{L^{r}(\mathbb{R})}\ls\|f\|_{L^{p}(\mathbb{R})}\|g\|_{L^{q}(\mathbb{R})}
\end{align}
for $r>\frac{1}{2}$. This is the desired estimate for the second item $H_{\gamma}^2(f,g)$.

\section{The $L^2(\mathbb{R})\times L^2(\mathbb{R})\rightarrow L^1(\mathbb{R})$ boundedness of $H_{\gamma}^3(f,g)$}\label{section 5}

Note that $\triangle=\{(m,n)\in \mathbb{Z}^2:\ m,n\geq 0, |m-n|\leq 2/ C_1+1 \}$ and $m_j^3=\sum_{(m,n)\in \triangle,k\in \mathbb{Z}}m_{j,m,n,k}$; without loss of generality, we may write that
\begin{align}\label{eq:3.33}
m_j^3=\sum_{m\in \mathbb{N}}\sum_{k\in \mathbb{Z} }m_{j,m,k}
\end{align}
where $m_{j,m,k}:=m_{j,m,m,k}$. Therefore, we rewrite $H_{\gamma}^3(f,g)$ as
\begin{align}\label{eq:3.34}
H_{\gamma}^3(f,g)(x)=\sum_{j\in \mathbb{Z}}\sum_{m\in \mathbb{N}}\sum_{k\in \mathbb{Z}}H_{j,m,k}(f,g)(x).
\end{align}
where
\begin{align}\label{eq:3.35}
H_{j,m,k}(f,g)(x):= \phi\left(\frac{\gamma'(2^{-j})}{2^{m+j-k}}\right)\int_{-\infty}^{\infty}\int_{-\infty}^{\infty}\hat{f}(\xi)\hat{g}(\eta)e^{i\xi x}e^{i\eta x} m_j(\xi,\eta) \phi\left(\frac{\xi}{2^{m+j}}\right)\phi\left(\frac{\eta}{2^{k}}\right)\,\textrm{d}\xi \,\textrm{d}\eta
\end{align}
with $m_j$ as in \eqref{eq:2.7}.

We observe that in order to obtain
\begin{align}\label{eq:3.36}
\left\|H_{\gamma}^3(f,g)\right\|_{L^{1}(\mathbb{R})}\ls \|f\|_{L^{2}(\mathbb{R})}\|g\|_{L^{2}(\mathbb{R})},
\end{align}
it suffices to prove the following Proposition \ref{proposition 3.2}. Indeed, notice that $\phi$ is a standard bump function supported on $\{t\in \mathbb{R}:\ \frac{1}{2}\leq |t|\leq 2\}$; let $\Phi$ be a bump function supported on $\{t\in \mathbb{R}:\ \frac{1}{8}\leq |t|\leq 8\}$ such that $\Phi(t)=1$ on $\{t\in \mathbb{R}:\ \frac{1}{4}\leq |t|\leq 4\}$; thus, it is safe to insert $\Phi$ into $H_{j,m,k}(f,g)$. In other words, recall that $\psi_\lambda(\xi)=2^\lambda\psi(2^\lambda\xi)$, we have
\begin{align}\label{eq:3.37}
H_{j,m,k}(f,g)(x)=\Phi\left(\frac{\gamma'(2^{-j})}{2^{m+j-k}}\right)H_{j,m,k} \left(\check{\Phi}_{m+j} \ast f, \check{\Phi}_{k} \ast g\right)(x).
\end{align}
By the triangle inequality, the Cauchy-Schwarz inequality and \eqref{eq:3.38}, it implies that $\|H_{\gamma}^3(f,g)\|_{L^{1}(\mathbb{R})}$ can be bounded by
\begin{align*}
\sum_{m\in \mathbb{N}} 2^{-\varepsilon_0 m}  \left[\sum_{j,k\in \mathbb{Z}}\left|\Phi\left(\frac{\gamma'(2^{-j})}{2^{m+j-k}}\right)\right|\cdot\left\|\check{\Phi}_{m+j} \ast f\right\|_{L^{2}(\mathbb{R})}^2\right]^{\frac{1}{2}} \cdot \left[\sum_{j,k\in \mathbb{Z}}\left|\Phi\left(\frac{\gamma'(2^{-j})}{2^{m+j-k}}\right)\right|\cdot\left\|\check{\Phi}_{k} \ast g\right\|_{L^{2}(\mathbb{R})}^2\right]^{\frac{1}{2}}.
\end{align*}
By this estimate with \eqref{eq:3.7}, \eqref{eq:3.6} and the Littlewood-Paley theory, we may obtain \eqref{eq:3.36}.

\begin{proposition}\label{proposition 3.2} There exist positive constants $C$ and $\varepsilon_0$ such that
\begin{align}\label{eq:3.38}
\left\|H_{j,m,k}(f,g)\right\|_{L^{1}(\mathbb{R})}\leq C 2^{-\varepsilon_0 m}\|f\|_{L^{2}(\mathbb{R})}\|g\|_{L^{2}(\mathbb{R})}
\end{align}
holds uniformly for $j,k\in \mathbb{Z}$.
\end{proposition}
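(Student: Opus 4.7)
The plan is to prove the decay estimate \eqref{eq:3.38} by combining rescaling and duality, the $TT^*$ method, H\"ormander's oscillatory integral theorem together with the stationary phase method, and $\sigma$-uniformity in the sense of Lie. First I would rescale the inner integral in \eqref{eq:2.7} so that it remains supported on $\mathrm{supp}\,\rho$, and rescale the frequency variables by $\xi=2^{m+j}\xi'$ and $\eta=2^{k}\eta'$. On the support of the cutoffs in \eqref{eq:3.35} one has $|\xi'|,|\eta'|\sim 1$ and $\gamma'(2^{-j})\sim 2^{m+j-k}$, so the rescaled phase becomes $2^{m}\bigl(\xi' t+\eta'\,\Gamma_j(t)\bigr)$, where $\Gamma_j$ is a smooth function with $\Gamma_j'$ uniformly comparable to $1$ and $\Gamma_j''$ uniformly comparable to a positive constant on $\mathrm{supp}\,\rho$, all uniformly in $j$ and $k$ thanks to \eqref{eq:1.00}, \eqref{eq:1.01} and \eqref{eq:1.0}. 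The large parameter in the oscillation is now $2^m$.

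Next, by duality I would write $\|H_{j,m,k}(f,g)\|_{L^{1}(\mathbb{R})}=\sup_{\|h\|_\infty\le 1}|\Lambda(f,g,h)|$ and apply the $TT^*$ argument to reduce the trilinear form $\Lambda$ to an $L^2\to L^2$ bound for an auxiliary linear operator whose kernel is itself an oscillatory integral with phase $2^m\Phi(t,s;\xi',\eta')$. Where the phase derivative with respect to $t$ is bounded away from zero on $\mathrm{supp}\,\rho$, repeated integration by parts via H\"ormander's theorem \cite[Theorem 1.1]{H} yields arbitrary polynomial decay in $2^m$. Where it does have a critical point $t_0\in\mathrm{supp}\,\rho$, the non-degeneracy $\Gamma_j''(t_0)\asymp 1$ allows the method of stationary phase to extract a factor $2^{-m/2}$ from the multiplier, producing through $TT^*$ an $L^2$ bound with a power gain in $2^m$.

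The remaining ingredient handles the stationary regime, in which the stationary phase output carries the time-frequency structure of a standard bilinear Hilbert transform and so the bare $2^{-m/2}$ gain is not enough to close an $L^2\times L^2\to L^1$ estimate. Following the $\sigma$-uniformity framework of Lie \cite{Lie1, Lie2} (see also Guo and Xiao \cite{GX}), I would introduce a $\sigma$-uniformity norm adapted to the linear characters generated by the critical point $t_0$, decompose $g$ (or $f$) into a $\sigma$-uniform part and a structured part, bound the former through the stationary phase decay via the $TT^*$ step, and treat the latter by exploiting its modulation structure together with orthogonality between distinct characters to pick up a further power of $2^{-m}$. The main obstacle will be verifying that the present class of curves sits inside the $\sigma$-uniformity framework with an exponent $\varepsilon_0>0$ that is independent of $j,k,m$: this is precisely where the quantitative convexity condition \eqref{eq:1.0} (rather than mere convexity) is indispensable, as it forces the rescaled phases $\Gamma_j$ to form a compact family of non-degenerate perturbations of the model curve, so that a single $\sigma$-uniformity inequality serves all scales. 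Once \eqref{eq:3.38} has been proved uniformly in $j,k$, the outer sums over these indices are disposed of by the Littlewood-Paley estimates \eqref{eq:3.7} and \eqref{eq:3.6} already exploited in Section~\ref{section 3}, yielding \eqref{eq:3.36}.
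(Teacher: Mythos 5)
Your plan names the same toolbox the paper uses (rescaling, duality, $TT^*$, H\"ormander's theorem, stationary phase, $\sigma$-uniformity), but as written it has a genuine gap: after your rescaling the problem does \emph{not} depend only on the large parameter $2^m$. The phase is indeed normalized, but the bilinear operator still carries the unbounded scale disparity $2^{m+j-k}\approx\gamma'(2^{-j})$ between the two inputs (this is the dilation $2^{m+j-k}x$ in the rescaled operator \eqref{eq:3.39}), and \eqref{eq:3.38} must be uniform in it. The $TT^*$/H\"ormander/stationary-phase step you describe does not produce a clean $2^{-m/2}$: run quantitatively (Subsection \ref{subsection 5.1}) it yields \eqref{eq:3.50}, a decay of order $2^{-\frac{2m+j-k}{6}}$, which gives nothing when $k-j\ge 2m$. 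Moreover, your duality with $\|h\|_{\infty}\le 1$ cannot be fed into the multiplier-side $TT^*$ reduction to an $L^2$ bound; the paper first localizes to intervals of length $2^{k-j}$, uniformly in the translation parameter $N$ (the Claim reducing \eqref{eq:3.41} to Proposition \ref{proposition 3.3}), precisely so that Cauchy--Schwarz converts the localized $L^1$ norm into an $L^2$ norm at the affordable cost $2^{\frac{k-j}{2}}$. Your sketch contains neither this localization nor any mechanism for the regime $j-k+2m\le 0$, which is exactly where uniformity in $j,k$ is at stake.

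The $\sigma$-uniformity step is also not what you describe. It is not a decomposition of $g$ into a uniform and a structured part with ``orthogonality between characters'': Lemma \ref{lemma 3.1} is applied to $\chi_{\mathbb{I}}\hat{f}$ for fixed $g$, with the structured family $U(\mathbb{I})$ consisting of modulated stationary-phase characters $u_{\eta,r}$, and the structured-part bound $\mathcal{M}$ (Step 2 of Subsection \ref{subsection 5.2}) is itself a second $TT^*$ argument relying on Van der Corput-type estimates and the curvature bounds of Proposition \ref{proposition 3.5}. Its output \eqref{eq:3.123} is not always $O(2^{-\varepsilon m})$ --- in the regime $j-k+2m>0$ it contains the factor $2^{\frac{m+j-k}{2}}$ --- so the proof only closes by playing the two complementary estimates \eqref{eq:3.50} and \eqref{eq:3.82} against each other according to the sign of $j-k+2m$ and interpolating as in \eqref{eq:3.136}. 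In short, you identify the right ingredients, but without the interval localization, the two-regime case analysis, and the final interpolation, the proposal does not deliver the uniformity in $j,k$ that is the actual content of Proposition \ref{proposition 3.2}; the obstruction is the scale disparity $2^{m+j-k}$, not (as you suggest) whether the curve family fits the $\sigma$-uniformity framework uniformly --- that part is handled by \eqref{eq:1.00}--\eqref{eq:1.03} exactly as you anticipate.
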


As in \cite{L2}, we define the  \emph{bilinear operator $B_{j,m,k}(f,g)(x)$} as
\begin{align}\label{eq:3.39}
2^{\frac{m+j-k}{2}} \phi\left(\frac{\gamma'(2^{-j})}{2^{m+j-k}}\right) \int_{-\infty}^{\infty} \check{\phi}\ast f\left(2^{m+j-k}x-2^m t\right) \check{\phi}\ast g\left(x-2^k\gamma(2^{-j}t)\right) \rho(t)\,\textrm{d}t \quad\textrm{if}\quad j\geq 0;
\end{align}
and
\begin{align}\label{eq:3.40}
2^{\frac{k-m-j}{2}} \phi\left(\frac{\gamma'(2^{-j})}{2^{m+j-k}}\right) \int_{-\infty}^{\infty} \check{\phi}\ast f\left(x-2^m t\right) \check{\phi}\ast g\left(2^{k-m-j}x-2^k\gamma(2^{-j}t)\right) \rho(t)\,\textrm{d}t\quad\textrm{if}\quad j< 0.
\end{align}
We observe that \eqref{eq:3.38} is equivalent to
\begin{align}\label{eq:3.41}
\left\|B_{j,m,k}(f,g)\right\|_{L^{1}(\mathbb{R})}\ls 2^{-\varepsilon_0 m}\|f\|_{L^{2}(\mathbb{R})}\|g\|_{L^{2}(\mathbb{R})}.
\end{align}
Indeed, let $\xi:=2^{m+j}\xi$ and $\eta:=2^k\eta$; we then write $H_{j,m,k}(f,g)(x)$ as
\begin{align*}
2^{m+j+k}\phi\left(\frac{\gamma'(2^{-j})}{2^{m+j-k}}\right)\int_{-\infty}^{\infty}\int_{-\infty}^{\infty}\hat{f}(2^{m+j}\xi)\hat{g}(2^k\eta)e^{i2^{m+j}\xi x}e^{i2^k\eta x}
 \left(\int_{-\infty}^{\infty}e^{-i2^{m}\xi t}e^{-i2^k\eta \gamma(2^{-j}t)}\rho(t)
\,\textrm{d}t\right) \phi(\xi)\phi(\eta)\,\textrm{d}\xi \,\textrm{d}\eta.
\end{align*}
For $j\geq 0$, let $x:=2^{-k}x $; we may therefore write
\begin{align}\label{eq:3.43}
2^{-k}H_{j,m,k}(f,g)\left(2^{-k}x\right)=& 2^{\frac{m+j-k}{2}}\phi\left(\frac{\gamma'(2^{-j})}{2^{m+j-k}}\right)\int_{-\infty}^{\infty}\int_{-\infty}^{\infty}2^{\frac{m+j}{2}}\hat{f}(2^{m+j}\xi)2^{\frac{k}{2}}\hat{g}(2^k\eta)e^{i2^{m+j-k}\xi x}e^{i\eta x}\\
 &\times\left(\int_{-\infty}^{\infty}e^{-i2^{m}\xi t}e^{-i2^k\eta \gamma(2^{-j}t)}\rho(t)
\,\textrm{d}t\right) \phi(\xi)\phi(\eta)\,\textrm{d}\xi \,\textrm{d}\eta.\nonumber
\end{align}
Note that $\|2^{\frac{m+j}{2}}\hat{f}(2^{m+j}\cdot)\|_{L^{2}(\mathbb{R})}=\|\hat{f}\|_{L^{2}(\mathbb{R})}$ and $\|2^{\frac{k}{2}}\hat{g}(2^k\cdot)\|_{L^{2}(\mathbb{R})}=\|\hat{g}\|_{L^{2}(\mathbb{R})}$; we conclude that \eqref{eq:3.38} is equivalent to \eqref{eq:3.41}, where $B_{j,m,k}(f,g)$ is defined in \eqref{eq:3.39}. For $j< 0$, the only difference is to make the variable change  $x:=\frac{x}{2^{m+j}}$, and we omit the details.

We now turn to the proof of \eqref{eq:3.41}. In what follows, we will only focus on the first case, i.e., $j\geq 0$, since the second case, i.e., $j< 0$, is similar. From Proposition \ref{proposition 2.1} and \eqref{eq:3.41}, we can assume that $j$ and $m$ are sufficiently large.\\

\textbf{Claim:} \eqref{eq:3.41} is equivalent to Proposition \ref{proposition 3.3} below.

\begin{proof}[Proof of the Claim]
This claim is essentially \cite[Lemma 5.1]{L2}. As in \cite{L2}, let $\psi$ be a nonnegative Schwartz function such that $\hat{\psi}$ is supported on $\{t\in \mathbb{R}:\ |t|\leq \frac{1}{100}\}$ and satisfies $\hat{\psi}(0)=1$. Then, $B_{j,m,k}(f,g)(x)$ can be written as
\begin{align*}
& 2^{\frac{m+j-k}{2}} \phi\left(\frac{\gamma'(2^{-j})}{2^{m+j-k}}\right) \sum_{N\in \mathbb{Z}} \sum_{k_1,k_2\in \mathbb{Z}}\int_{-\infty}^{\infty} \left(\chi_{[2^m(N+k_1),2^m(N+k_1+1)]}\ast \psi_{-m}\cdot \check{\phi}\ast f\right)\left(2^{m+j-k}x-2^m t\right)\\
\times& \left(\chi_{\left[2^{k-j}(N+k_2),2^{k-j}(N+k_2+1)\right]}\ast \psi_{j-k} \cdot\check{\phi}\ast g\right)\left(x-2^k\gamma(2^{-j}t)\right) \cdot\rho(t)\,\textrm{d}t\cdot\chi_{\left[2^{k-j}N, 2^{k-j}(N+1)\right]}(x).\nonumber
\end{align*}

We split $$B_{j,m,k}(f,g)(x):=B^I_{j,m,k}(f,g)(x)+B^{II}_{j,m,k}(f,g)(x),$$ where $B^I_{j,m,k}(f,g)$ sums over $A:=\left\{k_1,k_2\in \mathbb{Z}:\ \max\{|k_1|,|k_2|\}\geq \Theta \right\}$ and $B^{II}_{j,m,k}(f,g)$ sums over $B:=\left\{k_1,k_2\in \mathbb{Z}:\ \max\{|k_1|,|k_2|\}< \Theta \right\}$, where $\Theta:=2^{\frac{\varepsilon_0'}{3} m}$ is sufficiently large.

For $B^I_{j,m,k}(f,g)$,  we have $|(\chi_{[2^m(N+k_1),2^m(N+k_1+1)]}\ast \psi_{-m})(2^{m+j-k}x-2^m t)|\ls \frac{1}{|k_1|^3}$. Note that $\frac{\gamma(t)}{t}$ is increasing on $(0,\infty)$; it implies $2^j\gamma(2^{-j}t)$ is sufficiently small if $j$ is sufficiently large, and we also have $|(\chi_{[2^{k-j}(N+k_2),2^{k-j}(N+k_2+1)]}\ast \psi_{j-k}) (x-2^k\gamma(2^{-j}t))|\ls \frac{1}{|k_2|^3}$. Thus, $B^I_{j,m,k}(f,g)(x)$ is bounded by
\begin{align}\label{eq:3.44}
& 2^{\frac{m+j-k}{2}} \phi\left(\frac{\gamma'(2^{-j})}{2^{m+j-k}}\right) \sum_{N\in \mathbb{Z}} \sum_{k_1,k_2\in A} \frac{1}{|k_1|^3}\frac{1}{|k_2|^3}\int_{-\infty}^{\infty} \left| \check{\phi}\ast f\left(2^{m+j-k}x-2^m t\right)\right|\\
&\times\left|\check{\phi}\ast g\left(x-2^k\gamma(2^{-j}t)\right)\right|\cdot| \rho(t)|\,\textrm{d}t\cdot\chi_{\left[2^{k-j}N, 2^{k-j}(N+1)\right]}(x)\nonumber\\
\ls&\frac{1}{\Theta} 2^{\frac{m+j-k}{2}} \phi\left(\frac{\gamma'(2^{-j})}{2^{m+j-k}}\right)\int_{-\infty}^{\infty} \left| \check{\phi}\ast f\left(2^{m+j-k}x-2^m t\right)\check{\phi}\ast g\left(x-2^k\gamma(2^{-j}t)\right)\right|\cdot| \rho(t)|\,\textrm{d}t.\nonumber
\end{align}
The last inequality is a result of the fact that $\sum_{|k|\geq \Theta}\frac{1}{|k|^3} \ls \frac{1}{\Theta}$ and $\sum_{k\in \mathbb{Z}}\frac{1}{|k|^3} \ls 1$.
 By the $\mathrm{H}\ddot{\mathrm{o}}\mathrm{lder}$ and Young inequalities, it now follows that
\begin{align}\label{eq:3.45}
\left\|B^I_{j,m,k}(f,g)\right\|_{L^{1}(\mathbb{R})}\ls \frac{1}{\Theta}\|f\|_{L^{2}(\mathbb{R})}\|g\|_{L^{2}(\mathbb{R})}.
\end{align}

For $B^{II}_{j,m,k}(f,g)$, let $j$ and $m$ be sufficiently large; we have that
\begin{align*}
\mathcal{F}\left(\chi_{[2^m(N+k_1),2^m(N+k_1+1)]}\ast \psi_{-m}\cdot \check{\phi}\ast f\right)
\quad\textrm{and}\quad
\mathcal{F}\left(\chi_{\left[2^{k-j}(N+k_2),2^{k-j}(N+k_2+1)\right]}\ast \psi_{j-k} \cdot\check{\phi}\ast g\right)
\end{align*}
are supported on $\{t\in \mathbb{R}:\ \frac{1}{4}\leq |t|\leq 4\}$, where $\mathcal{F}(f)$ means the Fourier transform of $f$. Then, $B^{II}_{j,m,k}(f,g)(x)$ can be written as
\begin{align*}
&\sum_{N\in \mathbb{Z}} \sum_{k_1,k_2\in B}B^*_{j,m,k}\left(\chi_{[2^m(N+k_1),2^m(N+k_1+1)]}\ast \psi_{-m}\cdot \check{\phi}\ast f,\chi_{\left[2^{k-j}(N+k_2),2^{k-j}(N+k_2+1)\right]}\ast \psi_{j-k} \cdot\check{\phi}\ast g\right)(x)\\
\times &\chi_{\left[2^{k-j}N, 2^{k-j}(N+1)\right]}(x).\nonumber
\end{align*}
The definition of $B^*_{j,m,k}(f,g)$ will be given in Proposition \ref{proposition 3.3}. From $\sum_{N\in \mathbb{Z}} |\chi_{[2^m(N+k_1),2^m(N+k_1+1)]}\ast \psi_{-m} |^2\ls \|\psi\|_{L^{1}(\mathbb{R})}^2$ and $\sum_{N\in \mathbb{Z}} |\chi_{[2^{k-j}(N+k_2),2^{k-j}(N+k_2+1)]}\ast \psi_{j-k} |^2\ls \|\psi\|_{L^{1}(\mathbb{R})}^2$, by the Cauchy-Schwarz inequality and the Young inequality, \eqref{eq:3.48}, $\|B^{II}_{j,m,k}(f,g)\|_{L^{1}(\mathbb{R})}$ can be bounded by
\begin{align}\label{eq:3.46}
& \Theta^2 2^{-\varepsilon_0' m} \left[\sum_{N\in \mathbb{Z}} \left\|\chi_{[2^m(N+k_1),2^m(N+k_1+1)]}\ast \psi_{-m}\cdot \check{\phi}\ast f\right\|_{L^{2}(\mathbb{R})}^2\right]^{\frac{1}{2}}\\
\times&\left[\sum_{N\in \mathbb{Z}} \left\|\chi_{\left[2^{k-j}(N+k_2),2^{k-j}(N+k_2+1)\right]}\ast \psi_{j-k} \cdot\check{\phi}\ast g\right\|_{L^{2}(\mathbb{R})}^2\right]^{\frac{1}{2}}
\ls\Theta^2 2^{-\varepsilon_0' m}\|f\|_{L^{2}(\mathbb{R})}\|g\|_{L^{2}(\mathbb{R})}.\nonumber
\end{align}
From \eqref{eq:3.45} and \eqref{eq:3.46}, note that $\Theta=2^{\frac{\varepsilon_0'}{3} m}$; we obtain
\begin{align}\label{eq:3.47}
\left\|B_{j,m,k}(f,g)\right\|_{L^{1}(\mathbb{R})}\ls 2^{-\frac{\varepsilon_0'}{3} m}\|f\|_{L^{2}(\mathbb{R})}\|g\|_{L^{2}(\mathbb{R})}.
\end{align}
This is \eqref{eq:3.41} if we let $\varepsilon_0:=\frac{\varepsilon_0'}{3}$.
\end{proof}

\begin{proposition}\label{proposition 3.3} There exist positive constants $C$ and $\varepsilon_0'$ such that
\begin{align}\label{eq:3.48}
\left\|B^*_{j,m,k}(f,g)\cdot\chi_{\left[2^{k-j}N, 2^{k-j}(N+1)\right]}\right\|_{L^{1}(\mathbb{R})}\leq C 2^{-\varepsilon_0' m}\|f\|_{L^{2}(\mathbb{R})}\|g\|_{L^{2}(\mathbb{R})}
\end{align}
holds uniformly for $j\in \mathbb{Z}$ and $N\in \mathbb{Z}$, where $B^*_{j,m,k}(f,g)$ is defined as
\begin{align*}
B^*_{j,m,k}(f,g)(x):=2^{\frac{m+j-k}{2}} \phi\left(\frac{\gamma'(2^{-j})}{2^{m+j-k}}\right) \int_{-\infty}^{\infty} \check{\Phi}\ast f\left(2^{m+j-k}x-2^m t\right) \check{\Phi}\ast g\left(x-2^k\gamma(2^{-j}t)\right) \rho(t)\,\textrm{d}t
\end{align*}
and $\Phi$ is the same bump function as in \eqref{eq:3.37}.
\end{proposition}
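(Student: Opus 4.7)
The plan is to establish the decay estimate by combining duality, stationary phase, $TT^*$, H\"ormander's theorem, and a $\sigma$-uniformity decomposition, in the spirit of \cite{L2, LX}. First I would dualize, reducing the claim to showing $|\langle B^*_{j,m,k}(f,g),h\rangle|\lesssim 2^{-\varepsilon_0'm}\|f\|_{L^2}\|g\|_{L^2}$ for every $h$ supported on $I:=[2^{k-j}N,2^{k-j}(N+1)]$ with $\|h\|_{L^\infty}\le 1$. Unfolding the convolutions $\check\Phi*f$ and $\check\Phi*g$ by Fourier inversion, the trilinear pairing becomes
$$\langle B^*_{j,m,k}(f,g),h\rangle=2^{(m+j-k)/2}\phi\!\left(\tfrac{\gamma'(2^{-j})}{2^{m+j-k}}\right)\iint \Phi(\xi)\Phi(\eta)\hat f(\xi)\hat g(\eta)\,S(\xi,\eta)\,\hat h\!\left(-2^{m+j-k}\xi-\eta\right)d\xi\,d\eta,$$
where $S(\xi,\eta):=\int e^{-i(2^m\xi t+2^k\eta\gamma(2^{-j}t))}\rho(t)\,dt$ carries the fast oscillation.

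Next I would analyze $S(\xi,\eta)$ via stationary phase. The phase $\varphi_{\xi,\eta}(t)=2^m\xi t+2^k\eta\gamma(2^{-j}t)$ has $\varphi''_{\xi,\eta}(t)=2^{k-2j}\eta\gamma''(2^{-j}t)$; using \eqref{eq:1.00} together with the constraint $\gamma'(2^{-j})\sim 2^{m+j-k}$ forced by the $\phi$-support, one finds $|\varphi''_{\xi,\eta}|\sim 2^m$ uniformly on $\supp\rho$. Hence, on the region where the critical point $t^*(\xi,\eta)$ solving $\gamma'(2^{-j}t^*)=-2^{m-k+j}\xi/\eta$ lies in $\supp\rho$, stationary phase yields
$$S(\xi,\eta)=2^{-m/2}a(\xi,\eta)\,e^{i\Psi(\xi,\eta)}+O(2^{-Nm})$$
for every $N$, with $a$ a symbol of order zero and $\Psi$ a smooth phase whose non-degeneracy is controlled by \eqref{eq:1.0}; off this region non-stationary phase delivers rapid decay.

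Then, treating $T_g:f\mapsto B^*_{j,m,k}(f,g)\chi_I$ as linear with $g$ frozen, I would apply $TT^*$: after inserting the stationary-phase asymptotic for $S$ and using Plancherel in the intermediate variable, the composed kernel becomes an oscillatory integral operator on a compact set to which H\"ormander's theorem \cite[Theorem 1.1]{H} applies, producing an $L^2$ operator bound of order $2^{-m/2}$. Combining this with $\|B^*_{j,m,k}(f,g)\chi_I\|_{L^1}\le |I|^{1/2}\|B^*_{j,m,k}(f,g)\chi_I\|_{L^2}$ and $|I|^{1/2}\sim 2^{m/2}\gamma'(2^{-j})^{-1/2}$ exactly cancels the $2^{-m/2}$ gain, yielding only the uniform bound; so the stationary phase input, while essential, is not yet enough. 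To extract genuine decay I would invoke a $\sigma$-uniformity decomposition in the sense of \cite{L2}: write $\hat f=\hat f_{\mathrm{uni}}+\hat f_{\mathrm{str}}$, where $\hat f_{\mathrm{uni}}$ has inner products of magnitude at most $\sigma=2^{-\delta m}$ against every modulation $\Phi(\xi)e^{i\alpha\xi}$ at the scale dictated by $\Psi$, and $\hat f_{\mathrm{str}}$ is a bounded superposition of such modulations. On $\hat f_{\mathrm{uni}}$ the oscillation carried by $e^{i\Psi(\xi,\eta)}$ provides the additional cancellation; on $\hat f_{\mathrm{str}}$ each modulation extraction reduces the trilinear form to a one-variable oscillatory integral in $\eta$ whose phase is amenable to stationary phase using \eqref{eq:1.02}--\eqref{eq:1.03}.

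The hardest part will be the quantitative $\sigma$-uniformity analysis: choosing $\sigma$ so that the gain on the uniform piece outweighs the cost of enumerating modulations in the structured piece requires precise control of the critical-point map $(\xi,\eta)\mapsto t^*$ and of the mixed Hessian of $\Psi$. This is exactly where hypothesis \eqref{eq:1.0} --- the uniform upper and lower bounds on $(\gamma'/\gamma'')'$ --- plays an essential role: it guarantees the non-degeneracy of $\Psi$ and the Lipschitz behaviour of $t^*(\xi,\eta)$, and hence the clean quantitative balance that produces a positive $\varepsilon_0'$.
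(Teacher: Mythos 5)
Your toolkit (stationary phase for the inner oscillatory integral, a $TT^*$ argument, H\"ormander's theorem, and a $\sigma$-uniformity dichotomy in the sense of \cite{L2}) matches the paper's, but the way you combine the pieces has a genuine gap. First, you discard the $TT^*$/H\"ormander step as yielding ``only the uniform bound.'' In the paper this step is not a linear $TT^*$ with $g$ frozen: it is a bilinear $TT^*$ in which the difference variable $\tau$ is split at $\tau_0=2^{-\frac{2m+j-k}{3}}$, H\"ormander's theorem is applied to the phase $Q_\tau$ whose mixed Hessian is bounded below by $|\tau|$ (Proposition \ref{proposition 3.4}), and, crucially, the factor $G_\tau(\cdot)=(\hat g\Phi)(\cdot)\overline{(\hat g\Phi)(\cdot+2^{m+j-k}\tau)}$ is small in $L^2$ on average in $\tau$, giving a gain of $2^{-\frac{m+j-k}{2}}$. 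This produces genuine decay $2^{-\frac{2m+j-k}{6}}$ in \eqref{eq:3.50}, not a mere cancellation against $|I|^{1/2}$; your linearized formulation loses exactly the averaged-in-$\tau$ gain, and that loss matters because this decaying estimate is needed at the end.

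Second, and more seriously, your claim that the $\sigma$-uniformity step alone then yields $2^{-\varepsilon_0' m}$ uniformly in $j,k,N$ fails in the regime $j-k+2m>0$: the decomposition of $[2^{k-j}N,2^{k-j}(N+1)]$ into $2^m$ subintervals forces an overlap loss $\bigl(\sum_{w}\|\dot g_w\|_{L^2(\mathbb{R})}^2\bigr)^{1/2}\lesssim 2^{\frac{2m+j-k}{2}}\|g\|_{L^2(\mathbb{R})}$, so the uniform-part bound becomes $2^{\frac{7m}{16}}2^{\frac{m+j-k}{2}}$ and the structured-part bound $(\max\{2^{m+j-k},2^{-\frac{m}{4}}\})^{\frac{1}{2}}$, as recorded in \eqref{eq:3.82}; both can grow with $m$. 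The paper reaches \eqref{eq:3.48} only through the case split at $j-k+2m=0$ and an interpolation of the $\sigma$-uniformity bound $2^{\frac{15m}{16}+\frac{j-k}{2}}$ against the $TT^*$ bound $2^{-\frac{2m+j-k}{6}}$, in which the $j-k$ dependence cancels and one obtains $2^{-\frac{m}{64}}$. Without retaining the decaying $TT^*$ estimate you have nothing to interpolate with, so your plan cannot close. Two smaller corrections: the $\sigma$-uniformity family must consist of the nonlinearly modulated phases $u_{\eta,r}(\xi)=\upsilon(\xi,\eta)e^{-i2^m\varphi(t_0(\xi,\eta),\xi,\eta)}e^{-ir\xi}$ (parametrized by $\eta$ and $r$), not linear modulations $\Phi(\xi)e^{i\alpha\xi}$; and the structured case is handled by a second $TT^*$ in the physical variable with Van der Corput estimates on the phase $P_{\tau,x}$ (Lemma \ref{lemma 3.2} and Proposition \ref{proposition 3.5}), not by a one-variable stationary phase in $\eta$. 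Also, the stationary-phase remainder is $O(2^{-\frac{3}{2}m})$ after extracting the leading term, not $O(2^{-Nm})$, though this is harmless.
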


We will prove Proposition \ref{proposition 3.3} in three steps.

\subsection{An estimate by using $TT^*$ argument}\label{subsection 5.1}

In this subsection, we show that
\begin{align}\label{eq:3.50}
\left\|B^*_{j,m,k}(f,g)\cdot\chi_{\left[2^{k-j}N, 2^{k-j}(N+1)\right]}\right\|_{L^{1}(\mathbb{R})}\ls  2^{-\frac{2m+j-k}{6}}\|f\|_{L^{2}(\mathbb{R})}\|g\|_{L^{2}(\mathbb{R})}.
\end{align}
By the $\mathrm{H}\ddot{\mathrm{o}}\mathrm{lder}$ inequality, it suffices to show that
\begin{align}\label{eq:3.51}
\left\|B^*_{j,m,k}(f,g)\right\|_{L^{2}(\mathbb{R})}\ls  2^{-\frac{2m+j-k}{6}}2^{\frac{j-k}{2}}\|f\|_{L^{2}(\mathbb{R})}\|g\|_{L^{2}(\mathbb{R})}.
\end{align}
We rewrite $B^*_{j,m,k}(f,g)(x)$ as
\begin{align}\label{eq:3.52}
2^{\frac{m+j-k}{2}} \phi\left(\frac{\gamma'(2^{-j})}{2^{m+j-k}}\right)\int_{-\infty}^{\infty}\int_{-\infty}^{\infty}\hat{f}(\xi)\hat{g}(\eta)
e^{i2^{m+j-k}\xi x}e^{i\eta x}I_{j,m,k}(\xi,\eta) \Phi(\xi)\Phi(\eta)\,\textrm{d}\xi \,\textrm{d}\eta,
\end{align}
where
\begin{align}\label{eq:3.53}
I_{j,m,k}(\xi,\eta):=\int_{-\infty}^{\infty}e^{-i2^m\xi t}e^{-i 2^k\eta \gamma(2^{-j}t)}\rho(t)\,\textrm{d}t.
\end{align}
Let
\begin{align}\label{eq:3.54}
\varphi(t,\xi,\eta):=-\xi t- 2^{k-m}\eta \gamma(2^{-j}t).
\end{align}
We denote $\varphi_1'$ as the derivative of $\varphi$ with respect to the first variable $t$.
 Then
\begin{align}\label{eq:3.55}
\varphi_1'(t,\xi,\eta)=-\xi -2^{k-m-j} \gamma'(2^{-j}t)\eta \quad \textrm{and}\quad \varphi_1''(t,\xi,\eta)= -2^{k-m-2j}\gamma''(2^{-j}t)\eta.
\end{align}
We define $t_0(\xi,\eta)$ as
\begin{align}\label{eq:3.56}
\varphi_1'(t_0(\xi,\eta),\xi,\eta)=0.
\end{align}
For simplicity, we may further assume that $\frac{1}{2}\leq|t_0(\xi,\eta)|\leq 2$. By the method of the stationary phase, we assert that
\begin{align}\label{eq:3.57}
I_{j,m,k}(\xi,\eta)=e^{i2^m \varphi(t_0(\xi,\eta),\xi,\eta)}\left(\frac{2\pi}{-i2^m\varphi_1''(t_0(\xi,\eta),\xi,\eta)} \right)^{\frac{1}{2}} \rho(t_0(\xi,\eta))+ O\left(2^{-\frac{3}{2}m}\right).
\end{align}
For $h\in L^{2}(\mathbb{R})$, we have $\int_{-\infty}^{\infty}B^*_{j,m,k}(f,g)(x)h(x)\,\textrm{d}x$ is equal to
\begin{align}\label{eq:3.58}
2^{\frac{m+j-k}{2}} \phi\left(\frac{\gamma'(2^{-j})}{2^{m+j-k}}\right)\int_{-\infty}^{\infty}\int_{-\infty}^{\infty}\hat{f}(\xi)\Phi(\xi) \hat{g}(\eta)\Phi(\eta)I_{j,m,k}(\xi,\eta) \check{h}\left(2^{m+j-k}\xi+\eta\right)\,\textrm{d}\xi \,\textrm{d}\eta.
\end{align}
Based on \eqref{eq:3.57}, we estimate $\int_{-\infty}^{\infty}B^*_{j,m,k}(f,g)(x)h(x)\,\textrm{d}x$ by considering the following two parts:

{\bf Part A:} $O\left(2^{-\frac{3}{2}m}\right)$

With some abuse of notation, we write $\int_{-\infty}^{\infty}B^*_{j,m,k}(f,g)(x)h(x)\,\textrm{d}x$ as
\begin{align*}
2^{\frac{m+j-k}{2}} \phi\left(\frac{\gamma'(2^{-j})}{2^{m+j-k}}\right)\int_{-\infty}^{\infty}\int_{-\infty}^{\infty}\hat{f}(\xi)\Phi(\xi) \hat{g}(\eta)\Phi(\eta) \check{h}\left(2^{m+j-k}\xi+\eta\right)\cdot O\left(2^{-\frac{3}{2}m}\right)\,\textrm{d}\xi \,\textrm{d}\eta.
\end{align*}
Thus, by the $\mathrm{H}\ddot{\mathrm{o}}\mathrm{lder}$ inequality and Plancherel's formula, we bound $|\int_{-\infty}^{\infty}B^*_{j,m,k}(f,g)(x)h(x)\,\textrm{d}x|$ by
\begin{align*}
2^{-\frac{3}{2}m} 2^{\frac{m+j-k}{2}}\int_{-\infty}^{\infty} |\hat{f}(\xi)\Phi(\xi)| \int_{-\infty}^{\infty} \left|\hat{g}(\eta)\Phi(\eta) \check{h}\left(2^{m+j-k}\xi+\eta\right)\right| \,\textrm{d}\eta \,\textrm{d}\xi
\ls  2^{-\frac{3}{2}m} 2^{\frac{m+j-k}{2}}\|f\|_{L^{2}(\mathbb{R})} \|g\|_{L^{2}(\mathbb{R})}\|h\|_{L^{2}(\mathbb{R})}.
\end{align*}
Furthermore, let $h:=\textrm{sgn}(B^*_{j,m,k}(f,g))\cdot\chi_{[2^{k-j}N, 2^{k-j}(N+1)]}$; we have
\begin{align*}
\left\|B^*_{j,m,k}(f,g)\cdot\chi_{\left[2^{k-j}N, 2^{k-j}(N+1)\right]}\right\|_{L^{1}(\mathbb{R})}\ls 2^{- m}\|f\|_{L^{2}(\mathbb{R})}\|g\|_{L^{2}(\mathbb{R})}.
\end{align*}
This is \eqref{eq:3.48} as desired.

{\bf Part B:} $e^{i2^m \varphi(t_0(\xi,\eta),\xi,\eta)}\left(\frac{2\pi}{-i2^m\varphi_1''(t_0(\xi,\eta),\xi,\eta)} \right)^{\frac{1}{2}} \rho(t_0(\xi,\eta))$

From \eqref{eq:1.00} and \eqref{eq:1.02}, we have $|\frac{2^{-j} \gamma''(2^{-j}t) }{\gamma'(2^{-j})}|=|\frac{2^{-j}t \gamma''(2^{-j}t) }{\gamma'(2^{-j}t)} \frac{\gamma'(2^{-j}t)}{\gamma'(2^{-j})}\frac{1}{t}|\approx 1$. For simplicity, we write $\int_{-\infty}^{\infty}B^*_{j,m,k}(f,g)(x)h(x)\,\textrm{d}x$ as
\begin{align}\label{eq:3.59}
2^{\frac{j-k}{2}} \phi\left(\frac{\gamma'(2^{-j})}{2^{m+j-k}}\right)\int_{-\infty}^{\infty}\int_{-\infty}^{\infty}\hat{f}(\xi)\Phi(\xi) \hat{g}(\eta)\Phi(\eta) e^{i2^m \varphi(t_0(\xi,\eta),\xi,\eta)} \check{h}\left(2^{m+j-k}\xi+\eta\right)\,\textrm{d}\xi \,\textrm{d}\eta.
\end{align}
Changing variables  $\xi:=\frac{\xi-\eta}{2^{m+j-k}}$ and $\eta:=2^{m+j-k}\eta$, we have
\begin{align*}
&2^{\frac{j-k}{2}}\phi\left(\frac{\gamma'(2^{-j})}{2^{m+j-k}}\right)\int_{-\infty}^{\infty}\int_{-\infty}^{\infty}
(\hat{f}\Phi)\left(\frac{\xi}{2^{m+j-k}}-\eta\right) (\hat{g}\Phi)\left(2^{m+j-k}\eta\right) \\
\times& e^{i2^m \varphi\left(t_0\left(\frac{\xi}{2^{m+j-k}}-\eta,2^{m+j-k}\eta\right),\frac{\xi}{2^{m+j-k}}-\eta,2^{m+j-k}\eta\right)} \check{h}(\xi)\,\textrm{d}\xi \,\textrm{d}\eta.\nonumber
\end{align*}
From the H\"older inequality and Plancherel formula, it can be bounded by $2^{\frac{j-k}{2}} \|T_{j,m,k}(f,g)\|_{L^{2}(\mathbb{R})} \|h\|_{L^{2}(\mathbb{R})}$, where $T_{j,m,k}(f,g)(\xi)$ is defined as
\begin{align*}
\phi\left(\frac{\gamma'(2^{-j})}{2^{m+j-k}}\right)\int_{-\infty}^{\infty}(\hat{f}\Phi)\left(\frac{\xi}{2^{m+j-k}}-\eta\right) (\hat{g}\Phi)\left(2^{m+j-k}\eta\right)
e^{i2^m \varphi\left(t_0\left(\frac{\xi}{2^{m+j-k}}-\eta,2^{m+j-k}\eta\right),\frac{\xi}{2^{m+j-k}}-\eta,2^{m+j-k}\eta\right)}  \,\textrm{d}\eta.
\end{align*}
Therefore, \eqref{eq:3.51} can be reduced to
\begin{align}\label{eq:3.62}
\left\|T_{j,m,k}(f,g)\right\|_{L^{2}(\mathbb{R})}\ls  2^{-\frac{2m+k+j}{6}} \|f\|_{L^{2}(\mathbb{R})}\|g\|_{L^{2}(\mathbb{R})}.
\end{align}
By the $TT^*$ argument, we obtain that $\|T_{j,m,k}(f,g)\|^2_{L^{2}(\mathbb{R})}$ equals to
\begin{align*}
\int_{-\infty}^{\infty}\Bigg[\int_{-\infty}^{\infty}\int_{-\infty}^{\infty}\phi^2\left(\frac{\gamma'(2^{-j})}{2^{m+j-k}}\right)F(\xi,\eta_1,\eta_2) & G(\xi,\eta_1,\eta_2) e^{i2^m \varphi\left(t_0\left(\frac{\xi}{2^{m+j-k}}-\eta_1,2^{m+j-k}\eta_1\right),\frac{\xi}{2^{m+j-k}}-\eta_1,2^{m+j-k}\eta_1\right)}\\
\times & e^{-i2^m \varphi\left(t_0\left(\frac{\xi}{2^{m+j-k}}-\eta_2,2^{m+j-k}\eta_2\right),\frac{\xi}{2^{m+j-k}}-\eta_2,2^{m+j-k}\eta_2\right)} \,\textrm{d}\eta_1 \,\textrm{d}\eta_2\Bigg]\,\textrm{d}\xi,\nonumber
\end{align*}
where
$$
\begin{cases}F(\xi,\eta_1,\eta_2):=(\hat{f}\Phi)\left(\frac{\xi}{2^{m+j-k}}-\eta_1\right)\overline{(\hat{f}\Phi)\left(\frac{\xi}{2^{m+j-k}}-\eta_2\right)};\\
G(\xi,\eta_1,\eta_2):=(\hat{g}\Phi)\left(2^{m+j-k}\eta_1\right)\overline{(\hat{g}\Phi)\left(2^{m+j-k}\eta_2\right)}.
\end{cases}
$$
Let $\eta_1:=\eta$ and $\eta_2:=\eta+\tau$; then, $\|T_{j,m,k}(f,g)\|^2_{L^{2}(\mathbb{R})}$ is equal to
\begin{align*}
\int_{-\infty}^{\infty}\Bigg[\int_{-\infty}^{\infty}\int_{-\infty}^{\infty}\phi^2\left(\frac{\gamma'(2^{-j})}{2^{m+j-k}}\right)&F_\tau\left(\frac{\xi}{2^{m+j-k}}-\eta\right) G_\tau\left(2^{m+j-k}\eta\right) e^{i2^m \varphi\left(t_0\left(\frac{\xi}{2^{m+j-k}}-\eta,2^{m+j-k}\eta\right),\frac{\xi}{2^{m+j-k}}-\eta,2^{m+j-k}\eta\right)} \\
\times & e^{-i2^m \varphi\left(t_0\left(\frac{\xi}{2^{m+j-k}}-\eta-\tau,2^{m+j-k}(\eta+\tau)\right),\frac{\xi}{2^{m+j-k}}-\eta-\tau,2^{m+j-k}(\eta+\tau)\right)}\,\textrm{d}\xi \,\textrm{d}\eta\Bigg]\,\textrm{d}\tau,\nonumber
\end{align*}
where $F_\tau(\cdot):=(\hat{f}\Phi)(\cdot)\overline{(\hat{f}\Phi)(\cdot-\tau)}$ and $G_\tau(\cdot):=(\hat{g}\Phi)(\cdot)\overline{(\hat{g}\Phi)\left(\cdot+2^{m+j-k}\tau\right)}$. Furthermore, let $u:=\frac{\xi}{2^{m+j-k}}-\eta$ and $v:=2^{m+j-k}\eta$; we have
\begin{align}\label{eq:3.63}
\left\|T_{j,m,k}(f,g)\right\|^2_{L^{2}(\mathbb{R})}=\phi^2\left(\frac{\gamma'(2^{-j})}{2^{m+j-k}}\right)\int_{-\infty}^{\infty}\left(\int_{-\infty}^{\infty}\int_{-\infty}^{\infty}F_\tau(u) G_\tau(v) e^{i2^m Q_\tau(u,v)}
 \,\textrm{d}u \,\textrm{d}v\right)\,\textrm{d}\tau,
\end{align}
where
\begin{align}\label{eq:3.64}
Q_\tau(u,v):=\varphi(t_0(u,v),u,v)-\varphi\left(t_0\left(u-\tau,v+2^{m+j-k}\tau\right),u-\tau,v+2^{m+j-k}\tau\right).
\end{align}
To estimate the bilinear operator $T_{j,m,k}(f,g)$ and obtain \eqref{eq:3.62}, we use the $\textrm{H}\ddot{\textrm{o}}\textrm{rmander}$ theorem on the nondegenerate phase \cite{H}. To this aim, we need to establish Proposition \ref{proposition 3.4} below. Let us postpone the proof of Proposition \ref{proposition 3.4} for the moment. We now turn to $\|T_{j,m,k}(f,g)\|^2_{L^{2}(\mathbb{R})}$ in \eqref{eq:3.63}. Let $\tau_0:=2^{-\frac{2m+j-k}{3}}$; noting that $\Phi$ is supported on $\{t\in \mathbb{R}:\ \frac{1}{8}\leq |t|\leq 8\}$, we split it as
\begin{align}\label{eq:3.65}
\left\|T_{j,m,k}(f,g)\right\|^2_{L^{2}(\mathbb{R})}=&\phi^2\left(\frac{\gamma'(2^{-j})}{2^{m+j-k}}\right)\int_{|\tau|\leq \tau_0}\left(\int_{-\infty}^{\infty}\int_{-\infty}^{\infty}F_\tau(u) G_\tau(v) e^{i2^m Q_\tau(u,v)}
 \,\textrm{d}u \,\textrm{d}v\right)\,\textrm{d}\tau\\
 &+\phi^2\left(\frac{\gamma'(2^{-j})}{2^{m+j-k}}\right)\int_{\tau_0<|\tau|\leq 16}\left(\int_{-\infty}^{\infty}\int_{-\infty}^{\infty}F_\tau(u) G_\tau(v) e^{i2^m Q_\tau(u,v)}
 \,\textrm{d}u \,\textrm{d}v\right)\,\textrm{d}\tau.\nonumber
\end{align}
By the $\mathrm{H}\ddot{\mathrm{o}}\mathrm{lder}$ inequality and Plancherel formula, it is easy to see that $\|F_\tau\|_{L^{1}(\mathbb{R})} \leq\|f\|^2_{L^{2}(\mathbb{R})}$ and $\|G_\tau\|_{L^{1}(\mathbb{R})}\leq\|g\|^2_{L^{2}(\mathbb{R})}$. On the other hand, we have
\begin{align*}
\left(\int_{\tau_0<|\tau|\leq 16}\|F_\tau\|^2_{L^{2}(\mathbb{R})} \,\textrm{d}\tau\right)^{\frac{1}{2}} =\left(\int_{\tau_0<|\tau|\leq 16}\int_{-\infty}^{\infty} \left|(\hat{f}\Phi)(x)\overline{(\hat{f}\Phi)(x-\tau)}\right|^2 \,\textrm{d}x  \,\textrm{d}\tau\right)^{\frac{1}{2}}\leq \|f\|^2_{L^{2}(\mathbb{R})},
\end{align*}
and $(\int_{\tau_0<|\tau|\leq 16}\|G_\tau\|^2_{L^{2}(\mathbb{R})} \,\textrm{d}\tau)^{\frac{1}{2}} \leq  2^{-\frac{m+j-k}{2}}\|g\|^2_{L^{2}(\mathbb{R})}$. With all the estimates, from \eqref{eq:3.67}, by the $\mathrm{H}\ddot{\mathrm{o}}\mathrm{lder}$ inequality and $\textrm{H}\ddot{\textrm{o}}\textrm{rmander}$ \cite[Theorem 1.1]{H}, we bound $\|T_{j,m,k}(f,g)\|^2_{L^{2}(\mathbb{R})}$ by
\begin{align}\label{eq:3.66}
&\int_{|\tau|\leq \tau_0}\|F_\tau\|_{L^{1}(\mathbb{R})}\|G_\tau\|_{L^{1}(\mathbb{R})} \,\textrm{d}\tau+\int_{\tau_0<|\tau|\leq 16} \left(2^m|\tau_0|\right)^{-\frac{1}{2}}\|F_\tau\|_{L^{2}(\mathbb{R})} \|G_\tau\|_{L^{2}(\mathbb{R})}  \,\textrm{d}\tau.\\
\leq&\tau_0\|f\|^2_{L^{2}(\mathbb{R})}\|g\|^2_{L^{2}(\mathbb{R})}+ \left(2^m|\tau_0|\right)^{-\frac{1}{2}} \left(\int_{\tau_0<|\tau|\leq 16}\|F_\tau\|^2_{L^{2}(\mathbb{R})} \,\textrm{d}\tau\right)^{\frac{1}{2}}\left(\int_{\tau_0<|\tau|\leq 16}\|G_\tau\|^2_{L^{2}(\mathbb{R})} \,\textrm{d}\tau\right)^{\frac{1}{2}}\nonumber\\
\leq&\left(\tau_0+\left(2^m|\tau_0| \right)^{-\frac{1}{2}}2^{-\frac{m+j-k}{2}} \right)\|f\|^2_{L^{2}(\mathbb{R})}\|g\|^2_{L^{2}(\mathbb{R})}
\ls 2^{-\frac{2m+j-k}{3}} \|f\|^2_{L^{2}(\mathbb{R})}\|g\|^2_{L^{2}(\mathbb{R})}.\nonumber
\end{align}
Thus, we obtain \eqref{eq:3.62}, which leads to \eqref{eq:3.50}.

\begin{proposition}\label{proposition 3.4}
Let $u,v,u-\tau,v+2^{m+j-k}\tau \in\textrm{supp}~\Phi$. Then, there exists a positive constant $C$ such that
\begin{align}\label{eq:3.67}
\left|\frac{\textrm{d}^2Q_\tau}{\textrm{d}u\textrm{d}v}(u,v)\right|\geq C |\tau|,
\end{align}
where $j$ is large enough.
\end{proposition}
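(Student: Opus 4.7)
The plan is to introduce the Legendre-type phase $\Psi(u,v):=\varphi(t_0(u,v),u,v)$ so that
\[
Q_\tau(u,v)=\Psi(u,v)-\Psi\!\bigl(u-\tau,\,v+2^{m+j-k}\tau\bigr),
\]
and to reduce the sought inequality to a uniform pointwise lower bound on an explicit third-order derivative of $\Psi$. By the envelope theorem (i.e.\ $\varphi_1(t_0,u,v)=0$), $\Psi_u=-t_0$ and $\Psi_v=-2^{k-m}\gamma(2^{-j}t_0)$, so $\Psi_{uv}=-\partial_v t_0$. Applying the fundamental theorem of calculus along the segment joining $(u,v)$ to $(u-\tau,v+2^{m+j-k}\tau)$ then gives
\[
\frac{\partial^2 Q_\tau}{\partial u\,\partial v}(u,v)=\tau\int_0^1\!\bigl[\Psi_{uuv}-2^{m+j-k}\Psi_{uvv}\bigr]\!\bigl(u-s\tau,\,v+s\cdot 2^{m+j-k}\tau\bigr)\,ds,
\]
so it suffices to bound $|\Psi_{uuv}-2^{m+j-k}\Psi_{uvv}|$ below by a positive absolute constant uniformly along the path.

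The next step is explicit computation by implicit differentiation of $u+2^{k-m-j}\gamma'(2^{-j}t_0)v=0$. Introducing $\beta:=2^{-j}t_0\gamma''(2^{-j}t_0)/\gamma'(2^{-j}t_0)\in[1/(2C_2),2/C_1]$ and $\alpha:=(\gamma'/\gamma'')'(2^{-j}t_0)\in[C_1,C_2]$ (via \eqref{eq:1.00} and the hypothesis \eqref{eq:1.0}), routine calculation yields $\partial_u t_0=t_0/(u\beta)$ and $\partial_v t_0=-t_0/(v\beta)$, and using the algebraic identity $\gamma'\gamma'''/(\gamma'')^2=1-(\gamma'/\gamma'')'$ one also checks $\partial_u\beta=(1-\alpha\beta)/u$ and $\partial_v\beta=-(1-\alpha\beta)/v$. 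Substituting these into the expressions for $\partial_{uv}t_0$ and $\partial_{vv}t_0$ produces the compact formulas $\Psi_{uuv}=t_0\alpha/(uv\beta)$ and $\Psi_{uvv}=-t_0(1+\alpha)/(v^2\beta)$; combining them and using the constraint in the form $2^{m+j-k}/v=-\gamma'(2^{-j}t_0)/u$ (which also forces $uv<0$ since $\gamma'>0$) gives the clean identity
\[
\Psi_{uuv}-2^{m+j-k}\Psi_{uvv}=\frac{t_0\bigl[\alpha-(1+\alpha)\gamma'(2^{-j}t_0)\bigr]}{uv\beta}.
\]

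Finally, since $\gamma'(0)=0$ forces $\gamma'(2^{-j})\to 0$ as $j\to\infty$ and \eqref{eq:1.02} gives $\gamma'(2^{-j}t_0)\leq 2^{2/C_1}\gamma'(2^{-j})$, for $j$ large enough one has $\gamma'(2^{-j}t_0)\leq \alpha/(2(1+\alpha))$, so the bracket satisfies $\alpha-(1+\alpha)\gamma'(2^{-j}t_0)\geq \alpha/2\geq C_1/2$; together with the uniform bounds $|u|,|v|\leq 8$, $t_0\geq 1/2$, $\beta\leq 2/C_1$ this yields the required lower bound $|\Psi_{uuv}-2^{m+j-k}\Psi_{uvv}|\geq C>0$, and therefore $|\partial_u\partial_v Q_\tau(u,v)|\geq C|\tau|$. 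The main obstacle I anticipate is the algebraic bookkeeping in the middle step, specifically the cancellation that isolates the $\alpha$-prefactor in $\Psi_{uuv}$---it is precisely here that the hypothesis \eqref{eq:1.0} enters the argument, via the derivatives of $\beta$. A secondary point is to check that the straight segment stays in a single connected component of $\text{supp}\,\Phi$ in each coordinate, so $t_0$ is smoothly defined throughout the path and the pointwise estimate can be integrated; for $j$ large this is a consequence of the support restrictions on the four evaluation points.
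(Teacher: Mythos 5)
Your argument is correct and coincides in essence with the paper's own proof: both pass to the Legendre-type phase $\Psi(u,v):=\varphi(t_0(u,v),u,v)$, express $\partial_u\partial_v Q_\tau$ through the third derivatives $\Psi_{uuv}$ and $\Psi_{uvv}$ (you via the fundamental theorem of calculus along the joining segment, the paper via the mean value theorem in each variable separately), and conclude because the $\Psi_{uvv}$ contribution carries the factor $2^{m+j-k}\approx\gamma'(2^{-j})$, which is negligible for $j$ large, while \eqref{eq:1.0}, \eqref{eq:1.00} and \eqref{eq:1.02} force the $\Psi_{uuv}$ contribution to have size $\approx 1$. Your closed-form identity $\Psi_{uuv}-2^{m+j-k}\Psi_{uvv}=t_0\bigl[\alpha-(1+\alpha)\gamma'(2^{-j}t_0)\bigr]/(uv\beta)$ is a correct (and slightly cleaner) repackaging of the paper's formulas \eqref{eq:3.76} and \eqref{eq:3.79}, and the connectedness caveat you flag for the $u$-segment possibly leaving a component of $\textrm{supp}~\Phi$ is equally implicit in the paper's mean-value step, so it is not a gap relative to the paper's own argument.
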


\begin{proof}
Recall that
\begin{align}\label{eq:3.68}
Q_\tau(u,v)=\varphi(t_0(u,v),u,v)-\varphi\left(t_0\left(u-\tau,v+2^{m+j-k}\tau\right),u-\tau,v+2^{m+j-k}\tau\right)
\end{align}
and
\begin{align}\label{eq:3.69}
\varphi(t,u,v)=-u t-v 2^{k-m} \gamma(2^{-j}t).
\end{align}
Let $\Psi(u,v):=\varphi(t_0(u,v),u,v)$. Then
\begin{align}\label{eq:3.70}
\Psi(u,v)=-u t_0(u,v)- v 2^{k-m}\gamma(2^{-j}t_0(u,v)),
\end{align}
where $t_0(u,v)$ satisfies $\varphi_1'(t_0(u,v),u,v)=0$. The definition of $t_0(u,v)$ leads to
\begin{align}\label{eq:3.71}
-u - v2^{-m-j+k}\gamma'(2^{-j}t_0(u,v))=0.
\end{align}
Furthermore, we have
\begin{align}\label{eq:3.72}
\frac{\textrm{d}t_0}{\textrm{d}u}(u,v)=-\frac{1}{v}\frac{2^{m+2j-k}}{\gamma''(2^{-j}t_0(u,v)) },
\end{align}
and
\begin{align}\label{eq:3.73}
\frac{\textrm{d}t_0}{\textrm{d}v}(u,v)=-\frac{1}{v}\frac{\gamma'(2^{-j}t_0(u,v) )}{2^{-j}\gamma''(2^{-j}t_0(u,v)) }=\frac{u}{v^2}\frac{2^{m+2j-k}}{\gamma''(2^{-j}t_0(u,v)) }.
\end{align}
Therefore,
\begin{align}\label{eq:3.74}
\frac{\textrm{d}\Psi}{\textrm{d}v}(u,v)=-2^{k-m} \gamma(2^{-j}t_0(u,v))
\end{align}
and from \eqref{eq:3.71},
\begin{align}\label{eq:3.75}
\frac{\textrm{d}^2\Psi}{\textrm{d}u\textrm{d}v}(u,v)= \frac{u}{v}\frac{\textrm{d}t_0}{\textrm{d}u}(u,v).
\end{align}
Then
\begin{align}\label{eq:3.76}
\frac{\textrm{d}^3\Psi}{\textrm{d}^2u\textrm{d}v}(u,v)= \frac{1}{v^2} \frac{2^{m+2j-k}}{\gamma''(2^{-j}t_0(u,v)) } \left(\frac{\gamma'\gamma'''-(\gamma'')^2}{(\gamma'')^2}\right)\left(2^{-j}t_0(u,v)\right).
\end{align}
From \eqref{eq:1.0}, we have
\begin{align}\label{eq:3.77}
\left|\left(\frac{\gamma'\gamma'''-(\gamma'')^2}{(\gamma'')^2}\right)\left(2^{-j}t_0(u,v)\right)\right|\approx 1.
\end{align}
From \eqref{eq:1.02} and \eqref{eq:1.00}, note that $\gamma$ is either odd or even, $\gamma'$ is increasing on $(0,\infty)$ and $\frac{\gamma'(2^{-j})}{2^{m+j-k}}\in \textrm{supp}~\phi$; thus, we have
\begin{align}\label{eq:3.078}
\left|\frac{2^{m+2j-k}}{\gamma''(2^{-j}t_0(u,v)) }\right|=\left|\frac{2^{m+j-k}}{\gamma'(2^{-j})} \frac{\gamma'(2^{-j})t_0(u,v)}{\gamma'(2^{-j}t_0(u,v))} \frac{\gamma'(2^{-j}t_0(u,v))}{2^{-j}t_0(u,v)\gamma''(2^{-j}t_0(u,v))}\right|\approx  1.
\end{align}
Since $v\in \textrm{supp}~\Psi$, from \eqref{eq:3.77} and \eqref{eq:3.078},
\begin{align}\label{eq:3.78}
\left|\frac{\textrm{d}^3\Psi}{\textrm{d}^2u\textrm{d}v}(u,v)\right|\approx 1.
\end{align}
On the other hand, we write $\frac{\textrm{d}^3\Psi}{\textrm{d}v\textrm{d}u\textrm{d}v}(u,v)$ as
\begin{align}\label{eq:3.79}
\frac{1}{v^2} \frac{\gamma'(2^{-j}t_0(u,v))}{2^{-j}\gamma''(2^{-j}t_0(u,v)) } \left(\frac{\gamma'\gamma'''-(\gamma'')^2}{(\gamma'')^2}\right)\left(2^{-j}t_0(u,v)\right)-\frac{1}{v^2} \frac{\gamma'(2^{-j}t_0(u,v))}{2^{-j}\gamma''(2^{-j}t_0(u,v)) }.
\end{align}
As in \eqref{eq:3.78}, we have
\begin{align}\label{eq:3.80}
\left|\frac{\textrm{d}^3\Psi}{\textrm{d}v\textrm{d}u\textrm{d}v}(u,v)\right|\ls 1.
\end{align}
By the mean value theorem, we rewrite $\frac{\textrm{d}^2Q_\tau}{\textrm{d}u\textrm{d}v}(u,v)$ as
\begin{align*}
\frac{\textrm{d}^3\Psi}{\textrm{d}^2u\textrm{d}v}(u-\theta_1\tau,v)\cdot\tau-\frac{\textrm{d}^3\Psi}{\textrm{d}v\textrm{d}u\textrm{d}v}\left(u-\tau,v+\theta_22^{m+j-k}\tau\right)\cdot2^{m+j-k}\tau,
\end{align*}
where $\theta_1, \theta_2\in[0,1]$. From \eqref{eq:3.78},\eqref{eq:3.80}, and $2^{m+j-k}\approx \gamma'(2^{-j})$ when $j$ is large enough, we have
\begin{align}\label{eq:3.81}
\left|\frac{\textrm{d}^2Q_\tau}{\textrm{d}u\textrm{d}v}(u,v)\right|\gs (1-2^{m+j-k})|\tau|\gs |\tau|.
\end{align}
This finishes the proof of Proposition \ref{proposition 3.4}.
\end{proof}

\subsection{Another estimate by $\sigma$-uniformity and the $TT^*$ argument}\label{subsection 5.2}

In this subsection, we set up
\begin{eqnarray}\label{eq:3.82}
\left\|B^*_{j,m,k}(f,g)\cdot\chi_{\left[2^{k-j}N, 2^{k-j}(N+1)\right]}\right\|_{L^{1}(\mathbb{R})}\ls
\left\{\aligned
2^{-\frac{m}{16}}\|f\|_{L^{2}(\mathbb{R})}\|g\|_{L^{2}(\mathbb{R})},\quad  \textrm{if} \quad j-k+2m\leq 0,\\
\Lambda_{j,m,k}\|f\|_{L^{2}(\mathbb{R})}\|g\|_{L^{2}(\mathbb{R})},\quad  \textrm{if} \quad  j-k+2m> 0,
\endaligned\right.
\end{eqnarray}
where $\Lambda_{j,m,k}:=\max\{2^{\frac{15m}{16}+\frac{j}{2}-\frac{k}{2}},  2^{\frac{m}{16}}(\max\{2^{m+j-k},2^{-\frac{m}{4}}\} )^{\frac{1}{2}}  \}$.

We start by quoting a lemma stated in \cite[Lemma 4.4]{DGR} or \cite[Lemma 3.3]{GX}, which is a slight variant of \cite[Theorem 7.1]{L2} and is called the $\sigma$-uniformity argument. Indeed, this argument can be traced back to Christ et al. \cite{CLTT} and Gowers \cite{G1}. Let $\sigma\in (0,1]$, $\mathbb{I}\subset \mathbb{R}$ be a fixed bounded interval, and $U(\mathbb{I})$ be a nontrivial subset of $L^2(\mathbb{I})$ with $\|u\|_{L^2(\mathbb{I})}\leq C$ uniformly for every element of $u\in U(\mathbb{I})$. We say that a function $f\in L^2(\mathbb{I})$ is \emph{$\sigma$-uniform in $U(\mathbb{I})$} if
\begin{align*}
\left| \int_{\mathbb{I}}f(x)\overline{u(x)}\,\textrm{d}x \right|\leq \sigma\|f\|_{L^{2}(\mathbb{I})}
\end{align*}
for all $u\in U(\mathbb{I})$.
\begin{lemma}\label{lemma 3.1}
\cite[Theorem 7.1]{L2} Let $\mathcal{L}$ be a bounded sublinear functional from $L^2(\mathbb{I})$ to $\mathbb{C}$, $S_\sigma$ be the set of all functions that are $\sigma$-uniform in $U(\mathbb{I})$,
\begin{align*}
 \mathcal{A}_\sigma:=\sup_{f\in S_\sigma} \frac{|\mathcal{L}(f)|}{\|f\|_{L^{2}(\mathbb{I})}} \quad \textrm{and}\quad  \mathcal{M}:=\sup_{u\in U(\mathbb{I})} |\mathcal{L}(u)|.
\end{align*}
Then, for all $f\in L^{2}(\mathbb{I})$, we have
$$|\mathcal{L}(f)|\leq \max\left\{\mathcal{A}_\sigma, 2\sigma^{-1}\mathcal{M}\right\}\|f\|_{L^{2}(\mathbb{I})}.$$
\end{lemma}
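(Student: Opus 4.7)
My plan is to prove this abstract lemma via the standard dichotomy-plus-iteration scheme that goes back to Gowers and Christ--Laba--Tao--Tzirakis. The core dichotomy is the following: either $f$ is $\sigma$-uniform in $U(\mathbb{I})$, in which case $|\mathcal{L}(f)| \leq \mathcal{A}_\sigma \|f\|_{L^{2}(\mathbb{I})}$ is immediate from the definition of $\mathcal{A}_\sigma$; or else there exists $u_1 \in U(\mathbb{I})$ with $|\langle f, u_1\rangle| > \sigma\|f\|_{L^{2}(\mathbb{I})}$. In the latter case, the Cauchy--Schwarz inequality forces the quantitative lower bound $\|u_1\|_{L^{2}(\mathbb{I})} > \sigma$, a fact that will be crucial for controlling the final constant.

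In the non-uniform case I would split $f$ orthogonally as $f = \alpha_1 u_1 + g_1$, where $\alpha_1 := \langle f, u_1\rangle/\|u_1\|_{L^{2}(\mathbb{I})}^2$ and $g_1 \perp u_1$. A Pythagoras computation gives
$$
\|g_1\|_{L^{2}(\mathbb{I})}^2 = \|f\|_{L^{2}(\mathbb{I})}^2 - \frac{|\langle f,u_1\rangle|^2}{\|u_1\|_{L^{2}(\mathbb{I})}^2} \leq \left(1 - \frac{\sigma^2}{C^2}\right)\|f\|_{L^{2}(\mathbb{I})}^2,
$$
so the remainder shrinks by a definite multiplicative factor. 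Sublinearity of $\mathcal{L}$ then yields $|\mathcal{L}(f)| \leq |\alpha_1|\mathcal{M} + |\mathcal{L}(g_1)|$. Applying the same dichotomy to $g_1$ and iterating produces a sequence $\{g_k\}$ with $\|g_k\|_{L^{2}(\mathbb{I})} \leq (1-\sigma^2/C^2)^{k/2}\|f\|_{L^{2}(\mathbb{I})}$ and coefficients $|\alpha_k| \leq \|g_{k-1}\|_{L^{2}(\mathbb{I})}/\|u_k\|_{L^{2}(\mathbb{I})} < \sigma^{-1}\|g_{k-1}\|_{L^{2}(\mathbb{I})}$; summing the resulting geometric series and using the $\sigma$-uniformity of the (possibly zero) limit $g_\infty$ gives a bound of the form $|\mathcal{L}(f)| \lesssim \sigma^{-1}\mathcal{M}\|f\|_{L^{2}(\mathbb{I})} + \mathcal{A}_\sigma\|g_\infty\|_{L^{2}(\mathbb{I})}$.

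The cleanest route to the stated constant $2\sigma^{-1}$ is the supremum trick: set $M^\star := \sup_{f \neq 0} |\mathcal{L}(f)|/\|f\|_{L^{2}(\mathbb{I})}$ (finite since $\mathcal{L}$ is assumed bounded), apply a single decomposition step to a near-extremizer, and derive the self-improvement inequality
$$
M^\star \leq \max\left\{\mathcal{A}_\sigma,\ \sigma^{-1}\mathcal{M} + M^\star\sqrt{1 - \sigma^2/C^2}\right\},
$$
which is then solved using the elementary estimate $1-\sqrt{1-x} \geq x/2$ on $[0,1]$. The main obstacle I anticipate is keeping the constants sharp: a naive iteration produces only something like $\sigma^{-3}\mathcal{M}$, and one must exploit the Cauchy--Schwarz lower bound $\|u_k\|_{L^{2}(\mathbb{I})} > \sigma$ carefully at each step, together with a judicious balancing of the two error sources in the self-improvement inequality, in order to recover the claimed $2\sigma^{-1}$ factor.
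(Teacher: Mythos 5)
Your overall architecture (dichotomy, orthogonal decomposition $f=\alpha_1 u_1+g_1$, and a single-step self-improvement applied to a near-extremizer of $M^\star$) is indeed the skeleton of the argument behind \cite[Theorem 7.1]{L2}, which this paper only cites and does not reprove. However, there is a genuine quantitative gap at the heart of your proposal: the self-improvement inequality you display, $M^\star\leq\max\{\mathcal{A}_\sigma,\ \sigma^{-1}\mathcal{M}+M^\star\sqrt{1-\sigma^2/C^2}\}$, does not yield the stated bound. Solving it with $1-\sqrt{1-x}\geq x/2$ gives $M^\star\leq 2C^2\sigma^{-3}\mathcal{M}$, i.e. exactly the ``naive'' $\sigma^{-3}$ loss you say must be avoided, and with an extra $C$-dependence that the lemma does not have. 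The coefficient $\sigma^{-1}\mathcal{M}$ in that inequality comes precisely from the crude bound $|\alpha_1|\leq\|f\|_2/\|u_1\|_2<\sigma^{-1}\|f\|_2$ based on $\|u_1\|_2>\sigma$, so ``exploiting the Cauchy--Schwarz lower bound more carefully'' cannot repair it: that bound is the source of the loss, not the cure. The same defect invalidates the geometric-series bound $|\mathcal{L}(f)|\lesssim\sigma^{-1}\mathcal{M}\|f\|_2+\mathcal{A}_\sigma\|g_\infty\|_2$ claimed in your iteration sketch.

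The missing idea is the correct bookkeeping in terms of the normalized correlation $\theta:=|\langle f,u\rangle|/(\|f\|_2\|u\|_2)\in(0,1]$. Non-uniformity $|\langle f,u\rangle|>\sigma\|f\|_2$ gives $1/\|u\|_2<\theta/\sigma$, hence the coefficient satisfies $|\alpha|=|\langle f,u\rangle|/\|u\|_2^2\leq\theta^{2}\sigma^{-1}\|f\|_2$, while Pythagoras gives $\|g\|_2=\|f\|_2\sqrt{1-\theta^2}\leq\|f\|_2(1-\theta^2/2)$. Then $|\mathcal{L}(f)|\leq\|f\|_2\bigl[\theta^{2}\sigma^{-1}\mathcal{M}+M^\star(1-\theta^{2}/2)\bigr]=\|f\|_2\bigl[M^\star+\theta^{2}(\sigma^{-1}\mathcal{M}-M^\star/2)\bigr]$, and the crucial point is that the two $\theta^{2}$ factors cancel against each other: if $M^\star>2\sigma^{-1}\mathcal{M}$, the bracket is strictly below $M^\star$ by at least $\theta^2(M^\star/2-\sigma^{-1}\mathcal{M})$, and the hypothesis $\|u\|_2\leq C$ is used only to keep $\theta\geq\sigma/C$ bounded away from zero so that testing against a near-extremizer (which is not $\sigma$-uniform once $M^\star>\mathcal{A}_\sigma$) produces a contradiction. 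This yields $M^\star\leq\max\{\mathcal{A}_\sigma,\,2\sigma^{-1}\mathcal{M}\}$ with no $C$-dependence and the correct power of $\sigma$; without this $\theta$-dependent estimate of the coefficient, your proposal does not prove the lemma as stated.
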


We now turn to the proof of \eqref{eq:3.82} by using Lemma \ref{lemma 3.1}. Let $\mathbb{I}:=\textrm{supp}~\Phi$, and for any given $g\in L^{2}(\mathbb{R})$, let $$\mathcal{L}(\chi_{\mathbb{I}}\hat{f}):=\left\|B^*_{j,m,k}(f,g)\cdot\chi_{[2^{k-j}N, 2^{k-j}(N+1)]}\right\|_{L^{1}(\mathbb{R})}.$$

{\bf Step 1: Estimates for $\mathcal{A}_\sigma$}

We split the interval $[2^{k-j}N, 2^{k-j}(N+1)]$ as $\bigcup_{w=1}^{2^m} I_w$, where $$I_w:=[a_w,a_{w+1}]:=\left[2^{k-j}N+\frac{w-1}{2^{m+j-k}}, 2^{k-j}N+\frac{w}{2^{m+j-k}}\right].$$ Furthermore, let us set $$I'_w:=\left[2^{k-j}N+\frac{w-1}{2^{m+j-k}}-2^{2+\frac{2}{C_1}}2^m, 2^{k-j}N+\frac{w}{2^{m+j-k}}+2^{2+\frac{2}{C_1}}2^m\right].$$ It is easy to see that $x-2^k\gamma(2^{-j}t)\in I'_w$ if $x\in I_w$, since $\frac{\gamma'(2t)}{\gamma'(t)}\leq 2^{\frac{2}{C_1}}$ for all $t\in \textrm{supp} ~\rho$. We write $B^*_{j,m,k}(f,g)(x)\cdot\chi_{[2^{k-j}N, 2^{k-j}(N+1)]}(x)$ as
\begin{align*}
2^{\frac{m+j-k}{2}} \phi\left(\frac{\gamma'(2^{-j})}{2^{m+j-k}}\right)\sum_{w=1}^{2^m}\chi_{I_w}(x)\int_{-\infty}^{\infty}\int_{-\infty}^{\infty}\hat{f}(\xi) \Phi(\xi)    \widehat{\dot{g}_w}(\eta)
e^{i2^{m+j-k}\xi x}e^{i\eta x}I_{j,m,k}(\xi,\eta) \,\textrm{d}\xi \,\textrm{d}\eta,
\end{align*}
where $I_{j,m,k}$ can be found in \eqref{eq:3.53} and $\dot{g}_w:=(\chi_{I'_w}\cdot\check{\Phi}\ast g)$.

For $h\in L^{2}(\mathbb{R})$, based on $\xi \in\textrm{supp}~\Phi$ and $2^{m+j-k}|x-a_w|\leq 1$ for all $x\in I_w$, by Taylor's theorem $e^{i2^{m+j-k}\xi (x-a_w)}$, it is safe to split $\int_{-\infty}^{\infty}B^*_{j,m,k}(f,g)(x)\cdot\chi_{[2^{k-j}N, 2^{k-j}(N+1)]}(x)h(x)\,\textrm{d}x$ as the sum of $I$ and $II$, where
$$
\begin{cases}I:=&2^{\frac{m+j-k}{2}} \phi\left(\frac{\gamma'(2^{-j})}{2^{m+j-k}}\right)\sum_{w=1}^{2^m}\sum_{l\in \mathbb{N}}\frac{i^l}{l\mathrm{!}} \int_{-\infty}^{\infty}  \int_{-\infty}^{\infty}\hat{f}(\xi)\Phi(\xi) e^{i2^{m+j-k}\xi a_w} \widehat{\dot{g}_w}(\eta)(1-\mathcal{G}(\eta) )   I_{j,m,k}(\xi,\eta)\xi^l \\
 &\times\mathcal{F}^{-1}\left( \left(2^{m+j-k}(\cdot-a_w)\right)^l\chi_{I_w}(\cdot)h(\cdot)\right)(\eta)   \,\textrm{d}\xi \,\textrm{d}\eta;\\
II:=&2^{\frac{m+j-k}{2}} \phi\left(\frac{\gamma'(2^{-j})}{2^{m+j-k}}\right)\sum_{w=1}^{2^m}\sum_{l\in \mathbb{N}}\frac{i^l}{l\mathrm{!}} \int_{-\infty}^{\infty}  \int_{-\infty}^{\infty}\hat{f}(\xi)\Phi(\xi) e^{i2^{m+j-k}\xi a_w} \widehat{\dot{g}_w}(\eta) \mathcal{G}(\eta) I_{j,m,k}(\xi,\eta)\xi^l \\
 &\times\mathcal{F}^{-1}\left( \left(2^{m+j-k}(\cdot-a_w)\right)^l\chi_{I_w}(\cdot)h(\cdot)\right)(\eta)   \,\textrm{d}\xi \,\textrm{d}\eta,
\end{cases}
$$
$\mathcal{G}$ is a bump function supported on $\{t\in \mathbb{R}:\ 2^{-6-\frac{2}{C_1}}\leq |t|\leq  2^{6+\frac{2}{C_1}}\}$ such that $\mathcal{G}(t)=1$ on $\{t\in \mathbb{R}:\ 2^{-5-\frac{2}{C_1}}\leq |t|\leq 2^{5+\frac{2}{C_1}}\}$, and $\mathcal{F}^{-1}(f)$ means the inverse Fourier transform of $f$.

For $I$. First, we obtain that $|I_{j,m,k}|\leq 2^{-m}$. Indeed, note that $|\eta|\leq 2^{-5-\frac{2}{C_1}}$ or $|\eta|\geq 2^{5+\frac{2}{C_1}}$, and $\xi \in\textrm{supp}~\Phi$, $\frac{\gamma'(2^{-j})}{2^{m+j-k}}\in \textrm{supp}~\phi$; we have $|\varphi_1'(t,\xi,\eta)|\geq |\xi|-|2^{-m-j+k}\gamma'(2^{-j}t)\eta|\geq \frac{1}{16}$ or $|\varphi_1'(t,\xi,\eta)|\geq |2^{-m-j+k}\gamma'(2^{-j}t)\eta|-|\xi|\geq  8$. Note that $\varphi_1''(t,\xi,\eta)=- 2^{-m-j+k}2^{-j}\gamma''(2^{-j}t)\eta$, by the Van der Corput lemma, for example, see \cite[P. 332, Proposition 2]{S}, we have $|I_{j,m}|\leq 2^{-m}$. Therefore, by the $\mathrm{H}\ddot{\mathrm{o}}\mathrm{lder}$ inequality, Plancherel formula and Cauchy-Schwarz inequality, we bound $|I|$ by
\begin{align*}
& 2^{\frac{m+j-k}{2}} \phi\left(\frac{\gamma'(2^{-j})}{2^{m+j-k}}\right)\sum_{w=1}^{2^m}\sum_{l\in \mathbb{N}}\frac{8^l}{l\mathrm{!}} 2^{-m} \left\|\chi_{\mathbb{I}}\hat{f}\right\|_{L^{2}(\mathbb{R})} \|\dot{g}_w\|_{L^{2}(\mathbb{R})}\left\|\left(2^{m+j-k}(\cdot-a_w)\right)^l\chi_{I_w}(\cdot)h(\cdot)\right\|_{L^{2}(\mathbb{R})}\\
\leq& 2^{\frac{m+j-k}{2}} \phi\left(\frac{\gamma'(2^{-j})}{2^{m+j-k}}\right)\sum_{l\in \mathbb{N}}\frac{8^l}{l\mathrm{!}} 2^{-m} \left\|\chi_{\mathbb{I}}\hat{f}\right\|_{L^{2}(\mathbb{R})}
\left[\sum_{w=1}^{2^m}\|\dot{g}_w\|^2_{L^{2}(\mathbb{R})} \right]^{\frac{1}{2}}  \left[\sum_{w=1}^{2^m}\left\|\left(2^{m+j-k}(\cdot-a_w)\right)^l\chi_{I_w}(\cdot)h(\cdot)\right\|^2_{L^{2}(\mathbb{R})}\right]^{\frac{1}{2}}\nonumber\\
\leq& 2^{\frac{-m+j-k}{2}}  \left\|\chi_{\mathbb{I}}\hat{f}\right\|_{L^{2}(\mathbb{R})}
\left[\sum_{w=1}^{2^m}\|\dot{g}_w\|^2_{L^{2}(\mathbb{R})} \right]^{\frac{1}{2}}    \|h\|_{L^{2}(\mathbb{R})},\nonumber
\end{align*}
where the last inequality is a result of the fact that $2^{m+j-k}|x-a_w|\leq 1$ for all $x\in I_w$.
Based on the overlap property of $\{I'_w \}_{w=1}^{2^m}$, we have
$$
\begin{cases}\left[\sum_{w=1}^{2^m}\|\dot{g}_w\|^2_{L^{2}(\mathbb{R})} \right]^{\frac{1}{2}} \ls \|g\|_{L^{2}(\mathbb{R})}, \quad  &\textrm{if} \quad j-k+2m\leq 0;\\
\left[\sum_{w=1}^{2^m}\|\dot{g}_w\|^2_{L^{2}(\mathbb{R})} \right]^{\frac{1}{2}} \ls 2^{\frac{2m+j-k}{2}}\|g\|_{L^{2}(\mathbb{R})},\quad  &\textrm{if} \quad j-k+2m> 0.
\end{cases}
$$
Therefore,
\begin{eqnarray}\label{eq:3.87}
|I|\ls
\left\{\aligned
2^{-\frac{m}{2}} \left\|\chi_{\mathbb{I}}\hat{f}\right\|_{L^{2}(\mathbb{R})}
\|g\|_{L^{2}(\mathbb{R})}  2^{\frac{j-k}{2}} \|h\|_{L^{2}(\mathbb{R})},\quad  \textrm{if} \quad j-k+2m\leq 0,\\
2^{\frac{m+j-k}{2}}\left\|\chi_{\mathbb{I}}\hat{f}\right\|_{L^{2}(\mathbb{R})}
\|g\|_{L^{2}(\mathbb{R})}  2^{\frac{j-k}{2}}\|h\|_{L^{2}(\mathbb{R})},\quad  \textrm{if} \quad  j-k+2m> 0.
\endaligned\right.
\end{eqnarray}
For $II$, we write $II$  as $II_1+II_2$ by using the decomposition in \eqref{eq:3.57}. As in $I$, it is easy to see that
\begin{eqnarray}\label{eq:3.88}
|II_1|\ls
\left\{\aligned
2^{-m} \left\|\chi_{\mathbb{I}}\hat{f}\right\|_{L^{2}(\mathbb{R})}
\|g\|_{L^{2}(\mathbb{R})}  2^{\frac{j-k}{2}} \|h\|_{L^{2}(\mathbb{R})},\quad  \textrm{if} \quad j-k+2m\leq 0,\\
 2^{\frac{j-k}{2}}\left\|\chi_{\mathbb{I}}\hat{f}\right\|_{L^{2}(\mathbb{R})}
\|g\|_{L^{2}(\mathbb{R})}   2^{\frac{j-k}{2}} \|h\|_{L^{2}(\mathbb{R})},\quad  \textrm{if} \quad  j-k+2m> 0.
\endaligned\right.
\end{eqnarray}
$II_2$  can be written as
\begin{align}\label{eq:3.89}
&2^{\frac{m+j-k}{2}} \phi\left(\frac{\gamma'(2^{-j})}{2^{m+j-k}}\right)\sum_{w=1}^{2^m}\sum_{l\in \mathbb{N}}\frac{i^l}{l\mathrm{!}} \int_{-\infty}^{\infty}  \int_{-\infty}^{\infty}\hat{f}(\xi)\Phi(\xi) e^{i2^{m+j-k}\xi a_w} \widehat{\dot{g}_w}(\eta) \mathcal{G}(\eta) e^{i2^m \varphi(t_0(\xi,\eta),\xi,\eta)}\\
&\times\left(\frac{2\pi}{-i2^m\varphi_1''(t_0(\xi,\eta),\xi,\eta)} \right)^{\frac{1}{2}} \rho(t_0(\xi,\eta))\xi^l \mathcal{F}^{-1}\left( \left(2^{m+j-k}(\cdot-a_w)\right)^l\chi_{I_w}(\cdot)h(\cdot)\right)(\eta)   \,\textrm{d}\xi \,\textrm{d}\eta\nonumber\\
 =:&\left(\frac{2\pi}{-i} \right)^{\frac{1}{2}}2^{\frac{j-k}{2}} \phi\left(\frac{\gamma'(2^{-j})}{2^{m+j-k}}\right)\sum_{w=1}^{2^m}\sum_{l\in \mathbb{N}}\frac{i^l}{l\mathrm{!}} \int_{-\infty}^{\infty}\Upsilon(\eta)\widehat{\dot{g}_w}(\eta)\mathcal{F}^{-1}\left( \left(2^{m+j-k}(\cdot-a_w)\right)^l\chi_{I_w}(\cdot)h(\cdot)\right)(\eta)\,\textrm{d}\eta,\nonumber
\end{align}
where
$$
\begin{cases}\Upsilon(\eta):=\int_{\mathbb{I}}\upsilon(\xi,\eta)\hat{f}(\xi)e^{i2^m \varphi(t_0(\xi,\eta),\xi,\eta)}e^{i2^{m+j-k}\xi a_w} \,\textrm{d}\xi;\\
\upsilon(\xi,\eta):=\rho(t_0(\xi,\eta))\left(\frac{1}{\varphi_1''(t_0(\xi,\eta),\xi,\eta)} \right)^{\frac{1}{2}}\mathcal{G}(\eta)\xi^l\Phi(\xi).
\end{cases}
$$
To apply Lemma \ref{lemma 3.1}, we first define
\begin{align*}
U(\mathbb{I}):=\left\{u_{\eta,r}(\xi)\in L^{2}(\mathbb{I}):\ r\in \mathbb{R}, 2^{-6-\frac{2}{C_1}}\leq |\eta|\leq 2^{6+\frac{2}{C_1}}  \right\},
\end{align*}
where $u_{\eta,r}(\xi):=\upsilon(\xi,\eta)e^{-i2^m \varphi(t_0(\xi,\eta),\xi,\eta)}e^{-ir\xi}$. From \eqref{eq:1.00}, \eqref{eq:3.55} and $\frac{\gamma'(2^{-j})}{2^{m+j-k}}\in \textrm{supp}~\phi$, it is easy to see that $\|u_{\eta,r}\|_{L^2(\mathbb{I})}\leq C$ uniformly for every element of $u_{\eta,r}\in U(\mathbb{I})$. To estimate $\mathcal{A}_\sigma$, we first assume that $\chi_{\mathbb{I}}\hat{f}\in L^2(\mathbb{I})$ is $\sigma$-uniform in $U(\mathbb{I})$, which further implies that
\begin{align}\label{eq:3.90}
|\Upsilon(\eta)|\leq \sigma \left\|\chi_{\mathbb{I}}\hat{f}\right\|_{L^{2}(\mathbb{R})}.
\end{align}
Regarding $I$, let $\sigma:=2^{-\frac{m}{16}}$; it is easy to see that
\begin{eqnarray}\label{eq:3.91}
|II_2|\ls
\left\{\aligned
2^{-\frac{m}{16}} \left\|\chi_{\mathbb{I}}\hat{f}\right\|_{L^{2}(\mathbb{R})}
\|g\|_{L^{2}(\mathbb{R})}   2^{\frac{j-k}{2}} \|h\|_{L^{2}(\mathbb{R})}, \quad \textrm{if} \quad j-k+2m\leq 0,\\
2^{\frac{7m}{16}} 2^{\frac{m+j-k}{2}}\left\|\chi_{\mathbb{I}}\hat{f}\right\|_{L^{2}(\mathbb{R})}
\|g\|_{L^{2}(\mathbb{R})}   2^{\frac{j-k}{2}} \|h\|_{L^{2}(\mathbb{R})},  \quad\textrm{if} \quad  j-k+2m> 0.
\endaligned\right.
\end{eqnarray}
From \eqref{eq:3.87}, \eqref{eq:3.88}, and \eqref{eq:3.91}, $|\int_{-\infty}^{\infty}B^*_{j,m,k}(f,g)(x)\cdot\chi_{[2^{k-j}N, 2^{k-j}(N+1)]}(x)h(x)\,\textrm{d}x|$ can be bounded by
\begin{eqnarray*}
\left\{\aligned
2^{-\frac{m}{16}} \left\|\chi_{\mathbb{I}}\hat{f}\right\|_{L^{2}(\mathbb{R})}
\|g\|_{L^{2}(\mathbb{R})}  2^{\frac{j-k}{2}} \|h\|_{L^{2}(\mathbb{R})}, \quad \textrm{if} \quad j-k+2m\leq 0,\\
2^{\frac{7m}{16}}2^{\frac{m+j-k}{2}}\left\|\chi_{\mathbb{I}}\hat{f}\right\|_{L^{2}(\mathbb{R})}
\|g\|_{L^{2}(\mathbb{R})} 2^{\frac{j-k}{2}} \|h\|_{L^{2}(\mathbb{R})},  \quad\textrm{if} \quad  j-k+2m> 0.
\endaligned\right.
\end{eqnarray*}
Let $h:=\textrm{sgn} (B^*_{j,m,k}(f,g))\cdot\chi_{[2^{k-j}N, 2^{k-j}(N+1)]}$.  $\|B^*_{j,m,k}(f,g)\cdot\chi_{[2^{k-j}N, 2^{k-j}(N+1)]}\|_{L^{1}(\mathbb{R})}$ can be bounded by
\begin{eqnarray}\label{eq:3.92}
\left\{\aligned
2^{-\frac{m}{16}} \left\|\chi_{\mathbb{I}}\hat{f}\right\|_{L^{2}(\mathbb{R})}
\|g\|_{L^{2}(\mathbb{R})}, \quad \textrm{if} \quad j-k+2m\leq 0,\\
2^{\frac{7m}{16}}2^{\frac{m+j-k}{2}}\left\|\chi_{\mathbb{I}}\hat{f}\right\|_{L^{2}(\mathbb{R})}
\|g\|_{L^{2}(\mathbb{R})},  \quad\textrm{if} \quad  j-k+2m> 0.\nonumber
\endaligned\right.
\end{eqnarray}
It is easy to see that by the definition of $\mathcal{L}(\chi_{\mathbb{I}}\hat{f})$ that
\begin{eqnarray}\label{eq:3.93}
\mathcal{A}_\sigma\ls
\left\{\aligned
2^{-\frac{m}{16}}
\|g\|_{L^{2}(\mathbb{R})}, \quad \textrm{if} \quad j-k+2m\leq 0,\\
2^{\frac{7m}{16}}2^{\frac{m+j-k}{2}}
\|g\|_{L^{2}(\mathbb{R})},  \quad\textrm{if} \quad  j-k+2m> 0.
\endaligned\right.
\end{eqnarray}

{\bf Step 2: Estimates for $\mathcal{M}$}

For $\varpi\in L^\infty(\mathbb{R})$, let $x:=2^{k-j}x+2^k\gamma(2^{-j}t)$; $\int_{-\infty}^{\infty}B^*_{j,m,k}(f,g)(x)\varpi(x)\,\textrm{d}x$ can be written as
\begin{align}\label{eq:3.94}
2^{\frac{m-j+k}{2}} \phi\left(\frac{\gamma'(2^{-j})}{2^{m+j-k}}\right)\int_{-\infty}^{\infty} \int_{-\infty}^{\infty}& \check{\Phi}\ast f\left(2^mx+2^{m+j}\gamma(2^{-j}t)-2^m t\right) \\
\times&\check{\Phi}\ast g\left(2^{k-j}x\right) \varpi\left(2^{k-j}x+2^k\gamma(2^{-j}t)\right)\rho(t)\,\textrm{d}t\,\textrm{d}x.\nonumber
\end{align}
Therefore,
\begin{align}\label{eq:3.95}
\left|\int_{-\infty}^{\infty}B^*_{j,m,k}(f,g)(x)\varpi(x)\,\textrm{d}x\right|\leq \|g\|_{L^{2}(\mathbb{R})}\|\mathcal{T}(\varpi)\|_{L^{2}(\mathbb{R})},
\end{align}
where $\mathcal{T}(\varpi)(x)$ is defined as
\begin{align*}
2^{\frac{m}{2}}\phi\left(\frac{\gamma'(2^{-j})}{2^{m+j-k}}\right)\int_{-\infty}^{\infty}\check{\Phi}\ast f\left(2^mx+2^{m+j}\gamma(2^{-j}t)-2^m t\right)\varpi\left(2^{k-j}x+2^k\gamma(2^{-j}t)\right)\rho(t)\,\textrm{d}t.
\end{align*}
Let $\chi_{\mathbb{I}}(\xi)\hat{f}(\xi):=\upsilon(\xi,\eta)e^{-i2^m \varphi(t_0(\xi,\eta),\xi,\eta)}e^{-ir\xi}$ and $x:=x+2^{-m}r$, $\|\mathcal{T}(\varpi)\|^2_{L^{2}(\mathbb{R})}$ equals to
\begin{align}\label{eq:3.97}
2^m \phi^2\left(\frac{\gamma'(2^{-j})}{2^{m+j-k}}\right)\int_{-\infty}^{\infty} \left| \int_{-\infty}^{\infty}    \kappa(x,t) \varpi\left(2^{k-j}x+2^{k-j-m}r+2^k\gamma(2^{-j}t)\right) \rho(t)\,\textrm{d}t\right|^2 \,\textrm{d}x,
\end{align}
where $$\kappa(x,t):=\int_{-\infty}^{\infty}\Phi(\xi) \upsilon(\xi,\eta) e^{i2^m\left(x\xi+2^j\gamma(2^{-j}t)\xi-t\xi- \varphi(t_0(\xi,\eta),\xi,\eta)\right)}\,\textrm{d}\xi.$$ Furthermore, let us set
\begin{align}\label{eq:3.98}
K(\xi,x,t):=x\xi+2^j\gamma(2^{-j}t)\xi-t\xi- \varphi(t_0(\xi,\eta),\xi,\eta),
\end{align}
we have $K'_1(\xi,x,t):=x+2^j\gamma(2^{-j}t)-t+t_0(\xi,\eta)$. There exists a positive constant $\delta$ such that $|2^j\gamma(2^{-j}t)-t+t_0(\xi,\eta)|\leq \frac{\delta}{4}$ for $j$ large enough. Therefore, we can split \eqref{eq:3.97} as
\begin{align}\label{eq:3.99}
\|\mathcal{T}(\varpi)\|^2_{L^{2}(\mathbb{R})}=:\|\mathcal{T}(\varpi),a\|^2_{L^{2}(\mathbb{R})}+\|\mathcal{T}(\varpi),b\|^2_{L^{2}(\mathbb{R})}
\end{align}
by $1=(1-\Delta(x))+\Delta(x)$ on the right-hand side of \eqref{eq:3.97}, where $\Delta$ is a bump function supported on $\{x\in \mathbb{R}:\ |x|\leq \delta\}$ such that $\Delta(x)=1$ on $\{x\in \mathbb{R}:\ |x|\leq \frac{\delta}{2}\}$. For $\|\mathcal{T}(\varpi),a\|^2_{L^{2}(\mathbb{R})}$, we have $|K'_1(\xi,x,t)|\geq |x|-|2^j\gamma(2^{-j}t)-t+t_0(\xi,\eta)|\geq |x|-\frac{\delta}{4}\geq \frac{|x|}{2}$. Note that $K''_1(\xi,x,t)=-\frac{1}{\eta}\frac{2^{m+2j-k}}{\gamma''(2^{-j}t_0(\xi,\eta)) }$, by the Van der Corput lemma, for example, see \cite[P. 332, Proposition 2]{S}, we have that $|\kappa(x,t)|\leq 2^{-m}|x|^{-1}$. Then
\begin{align}\label{eq:3.100}
\|\mathcal{T}(\varpi),a\|^2_{L^{2}(\mathbb{R})}\ls 2^{m} \left[ \int_{|x|\geq \frac{\delta}{2}} (2^{-m}|x|^{-1})^2 \,\textrm{d}x\right] \cdot \|\varpi\|^2_{L^\infty(\mathbb{R})}\ls 2^{-m}\|\varpi\|^2_{L^\infty(\mathbb{R})}.
\end{align}
We now turn to $\|\mathcal{T}(\varpi),b\|^2_{L^{2}(\mathbb{R})}$; it has been defined as
\begin{align}\label{eq:3.101}
2^m \phi^2\left(\frac{\gamma'(2^{-j})}{2^{m+j-k}}\right)\int_{-\infty}^{\infty} \left| \int_{-\infty}^{\infty}    \kappa(x,t)\varpi\left(2^{k-j}x+2^{k-j-m}r+2^k\gamma(2^{-j}t)\right) \rho(t)\,\textrm{d}t\right|^2 \Delta(x)\,\textrm{d}x.
\end{align}
For $\kappa(x,t)$, whose phase function is $2^mK(\xi,x,t)$. Let $\xi(x,t)$ satisfy $K'_1(\xi(x,t),x,t)=0$; in other words,
\begin{align}\label{eq:3.102}
x+2^j\gamma(2^{-j}t)-t+t_0(\xi(x,t),\eta)=0.
\end{align}
By the stationary phase method, we assert that
\begin{align}\label{eq:3.103}
\kappa(x,t)=e^{i2^m K(\xi(x,t),x,t)}\left(\frac{2\pi}{-i2^mK_1''(\xi(x,t),x,t)} \right)^{\frac{1}{2}} \Phi(\xi(x,t)) \cdot\upsilon(\xi(x,t),\eta)+ O(2^{-\frac{3}{2}m}).
\end{align}
From \eqref{eq:3.103}, we split $\|\mathcal{T}(\varpi),b\|^2_{L^{2}(\mathbb{R})}$ as
\begin{align}\label{eq:3.104}
\|\mathcal{T}(\varpi),b\|^2_{L^{2}(\mathbb{R})}=:\|\mathcal{T}(\varpi),b,I\|^2_{L^{2}(\mathbb{R})}+\|\mathcal{T}(\varpi),b,II\|^2_{L^{2}(\mathbb{R})}.
\end{align}
Furthermore, it is easy to see that
\begin{align}\label{eq:3.105}
\|\mathcal{T}(\varpi),b,II\|^2_{L^{2}(\mathbb{R})}\ls 2^{-2m}\|\varpi\|^2_{L^\infty(\mathbb{R})}.
\end{align}

For $\|\mathcal{T}(\varpi),b,I\|^2_{L^{2}(\mathbb{R})}$, which can be written as
\begin{align}\label{eq:3.106}
& \phi^2\left(\frac{\gamma'(2^{-j})}{2^{m+j-k}}\right)\int_{-\infty}^{\infty} \Bigg| \int_{-\infty}^{\infty}    e^{i2^m K(\xi(x,t),x,t)}\left(\frac{1}{K_1''(\xi(x,t),x,t)} \right)^{\frac{1}{2}}\Phi(\xi(x,t)) \\ \times & \upsilon(\xi(x,t),\eta)\varpi\left(2^{k-j}x+2^{k-j-m}r+2^k\gamma(2^{-j}t)\right) \rho(t)\,\textrm{d}t\Bigg|^2 \Delta(x)\,\textrm{d}x
=:\|T(\varpi)\|^2_{L^{2}(\mathbb{R})},\nonumber
\end{align}
where $T(\varpi)(x)$ is defined as
\begin{align}\label{eq:3.107}
&\phi\left(\frac{\gamma'(2^{-j})}{2^{m+j-k}}\right)\int_{-\infty}^{\infty}  \sqrt{\Delta(x)}  e^{i2^m K(\xi(x,t),x,t)}\left(\frac{1}{K_1''(\xi(x,t),x,t)} \right)^{\frac{1}{2}} \Phi(\xi(x,t)) \cdot\upsilon(\xi(x,t),\eta)\\
\times&\varpi\left(2^{k-j}x+2^{k-j-m}r+2^k\gamma(2^{-j}t)\right) \rho(t)\,\textrm{d}t.\nonumber
\end{align}
Let
\begin{align}\label{eq:3.0107}
y(x,t):=x+2^j\gamma(2^{-j}t)-t.
\end{align}
Note that from \eqref{eq:3.54}, \eqref{eq:3.102}, and \eqref{eq:3.98}, we have $K(\xi(x,t),x,t)=2^{k-m}\eta \gamma(-2^{-j}y(x,t))$. Let $Q_j(t):=\frac{2^j\gamma(2^{-j}t)}{\gamma'(2^{-j})}$ and $u:=Q_j(t)$. We rewrite $T(\varpi)(x)$ as
\begin{align*}
&\phi\left(\frac{\gamma'(2^{-j})}{2^{m+j-k}}\right)\int_{-\infty}^{\infty}  \sqrt{\Delta(x)}  e^{i2^m2^{k-m-j}\gamma'(2^{-j}) \eta Q_j\left(-y\left(x,Q_j^{-1}(u)\right)\right)}\left(\frac{1}{K_1''\left(\xi\left(x,Q_j^{-1}(u)\right),x,Q_j^{-1}(u)\right)} \right)^{\frac{1}{2}} \\ \times&\Phi\left(\xi\left(x,Q_j^{-1}(u)\right)\right)\cdot\upsilon\left(\xi\left(x,Q_j^{-1}(u)\right),\eta\right)\varpi\left(2^{k-j}x+2^{k-m-j}r+2^m2^{k-m-j}
\gamma'(2^{-j})u\right)\frac{\rho\left(Q_j^{-1}(u)\right)}{(Q_j)'\left(Q_j^{-1}(u)\right)}\,\textrm{d}u.\nonumber
\end{align*}
Furthermore, denote $\mathcal{K}(x,u)$ as
\begin{align}\label{eq:3.0109}
\frac{\phi\left(\frac{\gamma'(2^{-j})}{2^{m+j-k}}\right)\sqrt{\Delta(x)}}{\left[K_1''\left(\xi\left(x,Q_j^{-1}(u)\right),x,Q_j^{-1}(u)\right)\right]^{\frac{1}{2}}}
\Phi\left(\xi\left(x,Q_j^{-1}(u)\right)\right)\cdot\upsilon\left(\xi\left(x,Q_j^{-1}(u)\right),\eta\right) \frac{\rho\left(Q_j^{-1}(u)\right)}{(Q_j)'\left(Q_j^{-1}(u)\right)},
\end{align}
 $T(\varpi)(x)$ can be written as
\begin{align}\label{eq:3.109}
\int_{-\infty}^{\infty} \mathcal{K}(x,u)  e^{i2^m2^{k-m-j}\gamma'(2^{-j}) \eta Q_j\left(-y\left(x,Q_j^{-1}(u)\right)\right)}\varpi\left(2^{k-j}x+2^{k-m-j}r+2^m2^{k-m-j}\gamma'(2^{-j})u\right)\,\textrm{d}u.
\end{align}
By the $TT^*$ argument, $\|T(\varpi)\|^2_{L^{2}(\mathbb{R})}$ is equal to
\begin{align*}
&\int_{-\infty}^{\infty}\Bigg[\int_{-\infty}^{\infty}\int_{-\infty}^{\infty}\mathcal{K}(x,u_1)\overline{\mathcal{K}(x,u_2)} \\
\times &\varpi\left(2^{k-j}x+2^{k-m-j}r+2^m2^{k-m-j}\gamma'(2^{-j})u_1\right)\overline{\varpi\left(2^{k-j}x+2^{k-m-j}r+2^m2^{k-m-j}\gamma'(2^{-j})u_2\right)} \nonumber\\
\times &e^{i2^m2^{k-m-j}\gamma'(2^{-j}) \eta Q_j\left(-y\left(x,Q_j^{-1}(u_1)\right)\right)}e^{-i2^m2^{k-m-j}\gamma'(2^{-j}) \eta Q_j\left(-y\left(x,Q_j^{-1}(u_2)\right)\right)} \,\textrm{d}u_1 \,\textrm{d}u_2\Bigg]\,\textrm{d}x.\nonumber
\end{align*}
By changing the variables $u_1:=v+\tau$, $u_2:=v$ and $x:=x-\gamma'(2^{-j})v$, which can be further written as
\begin{align}\label{eq:3.110}
\int_{-\infty}^{\infty}\int_{-\infty}^{\infty}W_\tau(x)\left(\int_{-\infty}^{\infty} K_{\tau,x}(v)  e^{i2^m 2^{k-m-j}\gamma'(2^{-j})\eta P_{\tau,x}(v)}\,\textrm{d}v \right) \,\textrm{d}x\,\textrm{d}\tau,
\end{align}
where
$$
\begin{cases}K_{\tau,x}(v):=\mathcal{K}(x-\gamma'(2^{-j})v,v+\tau)\cdot\overline{\mathcal{K}(x-\gamma'(2^{-j})v,v)};\\
W_\tau(x):=\varpi\left(2^{k-j}x+2^{k-m-j}r+2^m 2^{k-m-j}\gamma'(2^{-j})\tau\right)\cdot\overline{\varpi\left(2^{k-j}x+2^{k-m-j}r\right)};\\
P_{\tau,x}(v):=  Q_j\left(-y\left(x-\gamma'(2^{-j})v,Q_j^{-1}(v+\tau)\right)\right)- Q_j\left(-y\left(x-\gamma'(2^{-j})v,Q_j^{-1}(v)\right)\right).
\end{cases}
$$
From the definition of $y(x,t)$ in \eqref{eq:3.0107}, we have $-y(x-\gamma'(2^{-j})v,Q_j^{-1}(v))=-x+Q_j^{-1}(v)$ and $-y(x-\gamma'(2^{-j})v,Q_j^{-1}(v+\tau))=-x- \gamma'(2^{-j})\tau +Q_j^{-1}(v+\tau)$. Furthermore, let $P(x,v):=Q_j(-x+Q_j^{-1}(v))$. Then
\begin{align*}
P_{\tau,x}(v)=P(x+\gamma'(2^{-j})\tau,v+\tau)-P(x,v).
\end{align*}

We now turn to $\|T(\varpi)\|^2_{L^{2}(\mathbb{R})}$. Indeed, from the definition of $\mathcal{K}$ in \eqref{eq:3.0109}, we have $Q_j^{-1}(v)\in \textrm{supp}~\rho$,  and $|v|\ls 1$ for $j$ large enough. At the same time, we have $|v+\tau|\ls 1$. Therefore, there exists a positive constant $\epsilon_0>1$ such that
\begin{align}\label{eq:3.111}
|\tau|\leq \epsilon_0.
\end{align}
Furthermore, we also have $|K_{\tau,x}(v)|\ls 1$, which leads to
\begin{align}\label{eq:3.112}
\left|\int_{-\infty}^{\infty} K_{\tau,x}(v)  e^{i2^m 2^{k-m-j}\gamma'(2^{-j})\eta P_{\tau,x}(v)}\,\textrm{d}v \right|\ls 1.
\end{align}
We now consider two cases, i.e., $\frac{\gamma'(2^{-j})}{|x|}\geq \frac{1}{2\epsilon_0}$ and $\frac{\gamma'(2^{-j})}{|x|}< \frac{1}{2\epsilon_0}$. For the first case, note that $\frac{\gamma'(2^{-j})}{2^{m+j-k}}\in \textrm{supp}~\phi$; it is easy to see that
\begin{align}\label{eq:3.113}
\|T(\varpi)\|^2_{L^{2}(\mathbb{R})}\ls 2^{m+j-k}\|\varpi\|^2_{L^\infty(\mathbb{R})}.
\end{align}
For the second case, i.e., $\frac{\gamma'(2^{-j})}{|x|}< \frac{1}{2\epsilon_0}$. We need the following Lemma \ref{lemma 3.2} and Proposition \ref{proposition 3.5}.

\begin{lemma}\label{lemma 3.2}
\cite[Lemma 2.1]{YL} Suppose $\phi$ is real-valued and smooth in $(a,b)$ and that both $|\phi'(x)|\geq \sigma_1$ and $|\phi''(x)|\leq \sigma_2$ for any $x\in (a,b)$. Then,
$$\left|\int_a^b e^{i\phi (t)}\,\textrm{d}t\right|\leq \frac{2}{\sigma_1}+(b-a)\frac{\sigma_2}{\sigma_1^2}.$$
\end{lemma}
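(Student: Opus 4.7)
My plan is to prove Lemma \ref{lemma 3.2} by a single integration by parts, which is the standard trick underlying all van der Corput--type bounds of this flavor. The key observation is that on $(a,b)$ the derivative $\phi'(t)$ is nonvanishing (since $|\phi'(t)|\geq \sigma_1>0$) and, by continuity, it cannot change sign; hence $1/\phi'$ is well-defined and smooth on $(a,b)$, and we may write
\[
e^{i\phi(t)}=\frac{1}{i\phi'(t)}\frac{\textrm{d}}{\textrm{d}t}\left(e^{i\phi(t)}\right).
\]

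First I would integrate by parts, obtaining
\[
\int_a^b e^{i\phi(t)}\,\textrm{d}t=\left[\frac{e^{i\phi(t)}}{i\phi'(t)}\right]_a^b-\int_a^b e^{i\phi(t)}\frac{\textrm{d}}{\textrm{d}t}\left(\frac{1}{i\phi'(t)}\right)\textrm{d}t=\left[\frac{e^{i\phi(t)}}{i\phi'(t)}\right]_a^b+\int_a^b \frac{\phi''(t)}{i(\phi'(t))^2}\,e^{i\phi(t)}\,\textrm{d}t.
\]
Then the triangle inequality splits the problem into two contributions. For the boundary term, since $|e^{i\phi}|=1$ and $|\phi'|\geq \sigma_1$, I bound it by $1/|\phi'(b)|+1/|\phi'(a)|\leq 2/\sigma_1$. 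For the remaining integral, I use both hypotheses simultaneously to bound the integrand pointwise by $|\phi''(t)|/|\phi'(t)|^2\leq \sigma_2/\sigma_1^2$, which upon integration gives $(b-a)\sigma_2/\sigma_1^2$.

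Summing these two contributions yields precisely the claimed bound $\frac{2}{\sigma_1}+(b-a)\frac{\sigma_2}{\sigma_1^2}$. I do not anticipate any real obstacle here: the only subtlety is justifying that $1/\phi'$ is smooth so that the integration by parts is legitimate, and this is immediate from $|\phi'|\geq \sigma_1>0$ together with the smoothness of $\phi$. In the later applications of the lemma within the paper, the hypotheses $|\phi'|\geq \sigma_1$ and $|\phi''|\leq \sigma_2$ will be checked against the concrete oscillatory phases appearing in $T(\varpi)$, so the content of the lemma is really just the above one-line stationary/non-stationary estimate made quantitative.
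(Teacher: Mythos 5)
Your proof is correct: the single integration by parts, with the boundary terms controlled by $2/\sigma_1$ and the remaining integrand bounded pointwise by $\sigma_2/\sigma_1^2$, gives exactly the stated estimate, and this is the standard argument behind the cited result (the paper itself offers no proof, simply quoting \cite[Lemma 2.1]{YL}). The only point worth a word is that smoothness is assumed only on the open interval $(a,b)$, so strictly one integrates by parts on $[a+\varepsilon,b-\varepsilon]$ and lets $\varepsilon\to 0$, which is harmless since the bound is uniform.
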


\begin{proposition}\label{proposition 3.5}
Let all of the variables $x$, $\tau$, $v$, and $\eta$ be the same as in \eqref{eq:3.110}. If $\frac{\gamma'(2^{-j})}{|x|}< \frac{1}{2\epsilon_0}$, then there exists a positive constant $C$ such that
\begin{align}\label{eq:3.124}
\left|\frac{\textrm{d}P_{\tau,x}}{\textrm{d}v}(v)\right|\geq C |x||\tau| \quad \textrm{and}\quad \left|\frac{\textrm{d}^2P_{\tau,x}}{\textrm{d}^2v}(v)\right|\leq C |x|
\end{align}
where $j$ is large enough and $\epsilon_0$ can be found in \eqref{eq:3.111}.
\end{proposition}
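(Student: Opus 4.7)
The plan is to compute $\partial_v P$ and $\partial_{vv}P$ in closed form, and then analyze $\frac{dP_{\tau,x}}{dv}$ as a line integral of these mixed partials. The crucial observation is that $P(0,v)=Q_j(Q_j^{-1}(v))=v$, so $\partial_v P(0,v)\equiv 1$ and therefore $\partial_{vv}P(0,v)\equiv 0$; this flatness at $x=0$ is what ultimately produces the factor $|x|$ in the lower bound.

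Using $Q_j'(t)=\gamma'(2^{-j}t)/\gamma'(2^{-j})$, one obtains the explicit formulas
$$\partial_v P(x,v)=\frac{\gamma'(2^{-j}(Q_j^{-1}(v)-x))}{\gamma'(2^{-j}Q_j^{-1}(v))},\qquad \partial_{xv}P(x,v)=-\frac{2^{-j}\gamma''(2^{-j}(Q_j^{-1}(v)-x))}{\gamma'(2^{-j}Q_j^{-1}(v))}.$$
Since $|x|\leq\delta$ may be shrunk below $\tfrac14$, the arguments $Q_j^{-1}(v)-x$ and $Q_j^{-1}(v)$ lie in a fixed compact subinterval of $\textrm{supp}\,\rho$ away from $0$; combining \eqref{eq:1.00} with \eqref{eq:1.02} then gives $|\partial_{xv}P|\approx 1$ uniformly, with a definite negative sign. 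For the second derivative, since $\partial_{vv}P(0,v)\equiv 0$, the mean value theorem in the first slot gives $\partial_{vv}P(x,v)=x\cdot\partial_{xvv}P(\theta x,v)$ for some $\theta\in(0,1)$. A direct differentiation of $\partial_v P$ together with the algebraic identity
$$\gamma''(t)^{2}-\gamma'(t)\gamma'''(t)=\gamma''(t)^{2}\cdot\Bigl(\frac{\gamma'}{\gamma''}\Bigr)'(t),$$
combined with \eqref{eq:1.0}, \eqref{eq:1.00}, and \eqref{eq:1.02}, shows that $|\partial_{xvv}P|\approx 1$ with a constant sign, and hence $|\partial_{vv}P(x,v)|\approx|x|$ with $\mathrm{sgn}\,\partial_{vv}P=\mathrm{sgn}\,x$.

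Writing
$$\frac{dP_{\tau,x}}{dv}(v)=\int_0^1\bigl[\gamma'(2^{-j})\tau\,\partial_{xv}P+\tau\,\partial_{vv}P\bigr]\Big|_{(x+s\gamma'(2^{-j})\tau,\,v+s\tau)}\,ds,$$
the hypothesis $\gamma'(2^{-j})<|x|/(2\epsilon_0)$ combined with $|\tau|\leq\epsilon_0$ from \eqref{eq:3.111} yields $|\gamma'(2^{-j})\tau|\leq|x|/2$. Hence the first coordinate on the line segment keeps the sign of $x$ and has magnitude in $[|x|/2,3|x|/2]$, so along the entire segment $\partial_{xv}P$ is strictly negative with $|\partial_{xv}P|\approx 1$, while $\partial_{vv}P$ has the sign of $x$ with $|\partial_{vv}P|\gtrsim|x|$. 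A short check on the signs of $x$ and $\tau$—when they differ the two bracketed terms reinforce, and when they agree they partially cancel but the $\partial_{vv}P$-term dominates once $\epsilon_0$ is chosen sufficiently large in terms of the uniform constants above—shows that the integrand has absolute value $\gtrsim|x||\tau|$, so integration gives $\bigl|\frac{dP_{\tau,x}}{dv}(v)\bigr|\gtrsim|x||\tau|$.

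For the second estimate, $\frac{d^2P_{\tau,x}}{dv^2}(v)=\partial_{vv}P(x+\gamma'(2^{-j})\tau,v+\tau)-\partial_{vv}P(x,v)$, so the triangle inequality combined with the pointwise bound $|\partial_{vv}P(y,w)|\lesssim|y|$ and $|x+\gamma'(2^{-j})\tau|\leq 3|x|/2$ immediately gives $\bigl|\frac{d^2P_{\tau,x}}{dv^2}(v)\bigr|\leq C|x|$. The main obstacle is the lower bound on $|\partial_{vv}P|$: this is precisely where the quantitative convexity assumption \eqref{eq:1.0} enters in an essential way through the displayed identity, which converts the coupled dependence on $\gamma',\gamma'',\gamma'''$ into the clean two-sided bound $C_1\leq(\gamma'/\gamma'')'\leq C_2$.
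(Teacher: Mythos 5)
Your route is the paper's route: the explicit formulas for $\partial_v P$ and $\partial_{x}\partial_v P$, the two-sided bound $|\partial_v^2P(x,v)|\approx|x|$ obtained from the mean value theorem plus the quantitative convexity \eqref{eq:1.0} (your identity $\gamma''^2-\gamma'\gamma'''=\gamma''^2(\gamma'/\gamma'')'$ is equivalent to the paper's use of $(\gamma''/\gamma')'=-(\gamma'/\gamma'')'(\gamma''/\gamma')^2$), the representation of $\frac{\mathrm{d}P_{\tau,x}}{\mathrm{d}v}$ along the segment joining $(x,v)$ to $(x+\gamma'(2^{-j})\tau,v+\tau)$, the domination of the $\partial_x\partial_vP$-term by the $\partial_v^2P$-term via $\gamma'(2^{-j})|\tau|\le|x||\tau|/(2\epsilon_0)$, and the identical triangle-inequality argument for $|\frac{\mathrm{d}^2P_{\tau,x}}{\mathrm{d}v^2}|\lesssim|x|$. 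So the overall mechanism is correct and matches \eqref{eq:3.125}--\eqref{eq:3.131}.

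Two justifications need repair. First, the claim that ``$|x|\le\delta$ may be shrunk below $\tfrac14$'' is false: $\delta$ is not at your disposal, since it must satisfy $|2^j\gamma(2^{-j}t)-t+t_0(\xi,\eta)|\le\delta/4$ and $t$, $t_0$ are independent points of modulus in $[\tfrac12,2]$, so $\delta$ is a fixed constant of order one. The fact you actually need --- that the arguments $2^{-j}(Q_j^{-1}(v)-x)$ and $2^{-j}Q_j^{-1}(v)$ are comparable to $2^{-j}$, which underlies both $|\partial_x\partial_vP|\approx1$ and $|\partial_v^2P|\approx|x|$ --- comes instead from the stationary-point relation \eqref{eq:3.102}, which gives $Q_j^{-1}(v)-x=t_0(\xi(x-\gamma'(2^{-j})v,Q_j^{-1}(v)),\eta)$ and hence $\tfrac12\le|Q_j^{-1}(v)-x|\le2$; without invoking this, your two-sided derivative bounds are unsupported. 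Second, the sign bookkeeping is shaky: since $t$ ranges over both components of $\mathrm{supp}\,\rho$ and $\gamma$ is odd or even, neither ``$\partial_x\partial_vP$ strictly negative'' nor ``$\mathrm{sgn}\,\partial_v^2P=\mathrm{sgn}\,x$'' holds on both branches, and the statement that the terms reinforce exactly when $x$ and $\tau$ have opposite signs is not correct; fortunately only magnitudes are needed, and your domination argument already covers every case, so this is cosmetic. Relatedly, ``choosing $\epsilon_0$ sufficiently large'' is not available inside the proposition as stated, because $\epsilon_0$ is fixed by \eqref{eq:3.111} and enters the case split preceding \eqref{eq:3.113}; what is really required is the numerical comparison between the lower constant in $|\partial_v^2P|\gtrsim|x|$ and the upper constant in $|\partial_x\partial_vP|\,\gamma'(2^{-j})\le C|x|/(2\epsilon_0)$ --- the same comparison implicit in the paper's \eqref{eq:3.130} --- and if it fails for the given $\epsilon_0$ one should enlarge the case-split threshold rather than $\epsilon_0$ itself.
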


Let us postpone the proof of Proposition \ref{proposition 3.5} for the moment.
By Lemma \ref{lemma 3.2}, similar to the Corollary on P. $334$ in Stein's book \cite{S}, with \eqref{eq:3.124} in Proposition \ref{proposition 3.5} and the fact that $\frac{\gamma'(2^{-j})}{2^{m+j-k}}\in \textrm{supp}~\phi$ and $ 2^{-6-\frac{2}{C_1}}\leq |\eta|\leq  2^{6+\frac{2}{C_1}}$, we have
\begin{align}\label{eq:3.114}
\left|\int_{-\infty}^{\infty} K_{\tau,x}(v)  e^{i2^m 2^{k-m-j}\gamma'(2^{-j})\eta P_{\tau,x}(v)}\,\textrm{d}v \right|\ls \frac{1}{2^m|x||\tau|}+\frac{1}{2^m|x||\tau|^2}.
\end{align}
From \eqref{eq:3.112} and \eqref{eq:3.114}, we can estimate
\begin{align}\label{eq:3.115}
\left|\int_{-\infty}^{\infty} K_{\tau,x}(v)  e^{i2^m 2^{k-m-j}\gamma'(2^{-j})\eta P_{\tau,x}(v)}\,\textrm{d}v \right|
\ls \left(\frac{1}{2^m|x||\tau|}\right)^{\frac{1}{4}}+\left(\frac{1}{2^m|x||\tau|^2}\right)^{\frac{1}{4}}.
\end{align}
Note that $|v|\ls 1$, from the definition of $\mathcal{K}$, we have $|x|\ls 1$ for $j$ large enough. From \eqref{eq:3.111} and \eqref{eq:3.115}, we have
\begin{align}\label{eq:3.116}
\|T(\varpi)\|^2_{L^{2}(\mathbb{R})}\ls \int_{|\tau|\leq \epsilon_0}\int_{|x|\ls 1}\left(\frac{1}{2^m|x||\tau|}\right)^{\frac{1}{4}}+\left(\frac{1}{2^m|x||\tau|^2}\right)^{\frac{1}{4}} \,\textrm{d}x\,\textrm{d}\tau\cdot\|\varpi\|^2_{L^\infty(\mathbb{R})}\ls 2^{-\frac{m}{4}}\|\varpi\|^2_{L^\infty(\mathbb{R})}.
\end{align}
By combining \eqref{eq:3.113}, \eqref{eq:3.106}, \eqref{eq:3.104}, \eqref{eq:3.105}, \eqref{eq:3.99} and \eqref{eq:3.100}, we have
\begin{align}\label{eq:3.120}
\|\mathcal{T}(\varpi)\|^2_{L^{2}(\mathbb{R})}\ls \left(2^{m+j-k}+2^{-\frac{m}{4}}+2^{-2m}+2^{-m}\right)\|\varpi\|^2_{L^\infty(\mathbb{R})}.
\end{align}
By \eqref{eq:3.95}, we have
\begin{align}\label{eq:3.121}
\left|\int_{-\infty}^{\infty}B^*_{j,m}(f,g)(x)\varpi(x)\,\textrm{d}x\right|\leq \left(2^{m+j-k}+2^{-\frac{m}{4}}+2^{-2m}+2^{-m}\right)^{\frac{1}{2}}\|g\|_{L^{2}(\mathbb{R})}\|\varpi\|_{L^\infty(\mathbb{R})}.
\end{align}
Let $\varpi:=\textrm{sgn} \left(B^*_{j,m}(f,g)\right)\cdot\chi_{[2^{k-j}N, 2^{k-j}(N+1)]}$. We see that
\begin{align}\label{eq:3.122}
\left\|B^*_{j,m,k}(f,g)\cdot\chi_{[2^{k-j}N, 2^{k-j}(N+1)]}\right\|_{L^{1}(\mathbb{R})}\leq \left(2^{m+j-k}+2^{-\frac{m}{4}}+2^{-2m}+2^{-m}\right)^{\frac{1}{2}}\|g\|_{L^{2}(\mathbb{R})}.
\end{align}
Furthermore, it is clear by the definition of $\mathcal{L}(\chi_{\mathbb{I}}\hat{f})$ that
\begin{eqnarray}\label{eq:3.123}
\mathcal{M}\ls
\left\{\aligned
2^{-\frac{m}{8}}
\|g\|_{L^{2}(\mathbb{R})}, \quad \textrm{if} \quad j-k+2m\leq 0,\\
\left(\max\left\{2^{m+j-k},2^{-\frac{m}{4}}\right\}\right)^{\frac{1}{2}}
\|g\|_{L^{2}(\mathbb{R})},  \quad\textrm{if} \quad  j-k+2m> 0.
\endaligned\right.
\end{eqnarray}

We now state the proof of Proposition \ref{proposition 3.5}.

\begin{proof}
By simple calculation, we have
\begin{align}\label{eq:3.125}
\frac{\textrm{d}P}{\textrm{d}v}(x,v)=(Q_j)'\left(Q_j^{-1}(v)-x\right)\cdot(Q_j^{-1})'(v)=\frac{(Q_j)'\left(Q_j^{-1}(v)-x\right)}{(Q_j)'\left(Q_j^{-1}(v)\right)}.
\end{align}
Here, we used the fact that $(Q_j^{-1})'(v)\cdot(Q_j)'(Q_j^{-1}(v))=1$. Furthermore, we have
\begin{align}\label{eq:3.126}
\frac{\textrm{d}^2P}{\textrm{d}x\textrm{d}v}(x,v)=-\frac{(Q_j)''\left(Q_j^{-1}(v)-x\right)}{(Q_j)'\left(Q_j^{-1}(v)\right)}
=-\frac{2^{-j}\gamma''\left(2^{-j}\left(Q_j^{-1}(v)-x\right) \right)}{\gamma'\left(2^{-j}Q_j^{-1}(v)\right)}.
\end{align}
From \eqref{eq:3.102}, we have $Q_j^{-1}(v)-x=t_0(\xi(x-\gamma'(2^{-j})v, Q_j^{-1}(v)),\eta)$, which implies that $\frac{1}{2}\leq|Q_j^{-1}(v)-x|\leq2$. It is also easy to see that $\frac{1}{2}\leq|Q_j^{-1}(v)|\leq2$. From \eqref{eq:1.00} and \eqref{eq:1.02}, $|\frac{\textrm{d}^2P}{\textrm{d}x\textrm{d}v}(x,v) |$ can be bounded by
\begin{align}\label{eq:3.127}
\left|\frac{2^{-j}}{2^{-j}\left(Q_j^{-1}(v)-x\right)} \frac{2^{-j}\left(Q_j^{-1}(v)-x\right)\gamma''\left(2^{-j}\left(Q_j^{-1}(v)-x\right) \right)}{\gamma'\left(2^{-j}\left(Q_j^{-1}(v)-x\right) \right)}        \frac{ \gamma'\left(2^{-j}\left(Q_j^{-1}(v)-x\right) \right)}{\gamma'\left(2^{-j}Q_j^{-1}(v)\right)}\right|
\ls 1.
\end{align}
On the other hand, by the mean value theorem,
\begin{align}\label{eq:3.128}
\frac{\textrm{d}^2P}{\textrm{d}^2v}(x,v)=-\frac{\gamma'\left(2^{-j}\left(Q_j^{-1}(v)-x\right)\right)\gamma'(2^{-j})2^{-j}}{\left(\gamma'\left(2^{-j}Q_j^{-1}(v)\right)\right)^2}\left(\frac{\gamma''}{\gamma'}\right)'\left(2^{-j}\left(Q_j^{-1}(v)-\vartheta x\right)\right)\cdot2^{-j}x,
\end{align}
where $\vartheta\in[0,1]$. Noting that $(\frac{\gamma''}{\gamma'})'(t)=-(\frac{\gamma'}{\gamma''})'(t) (\frac{t\gamma''(t)}{\gamma'(t)})^2\frac{1}{t^2}$, which, together with \eqref{eq:1.0}, \eqref{eq:1.00} and \eqref{eq:1.02}, shows that
\begin{align}\label{eq:3.129}
\left|\frac{\textrm{d}^2P}{\textrm{d}^2v}(x,v)\right|\approx |x|.
\end{align}
By the mean value theorem, we write
\begin{align*}
\frac{\textrm{d}P_{\tau,x}}{\textrm{d}v}(v)=
\frac{\textrm{d}^2P}{\textrm{d}x\textrm{d}v}\left(x+\vartheta_1\gamma'(2^{-j})\tau,v+\tau\right)\cdot\gamma'(2^{-j})\tau+\frac{\textrm{d}^2P}{\textrm{d}^2v}(x,v+\vartheta_2\tau)\cdot\tau,
\end{align*}
where $\vartheta_1, \vartheta_2\in[0,1]$. From \eqref{eq:3.127} and \eqref{eq:3.129}, note that $\frac{\gamma'(2^{-j})}{|x|}< \frac{1}{2\epsilon_0}<\frac{1}{2}$; it follows that
\begin{align}\label{eq:3.130}
\left|\frac{\textrm{d}P_{\tau,x}}{\textrm{d}v}(v)\right|\geq  \left|\frac{\textrm{d}^2P}{\textrm{d}^2v}(x,v+\vartheta_2\tau)\right|\cdot|\tau|-\left|\frac{\textrm{d}^2P}{\textrm{d}x\textrm{d}v}\left(x+\vartheta_1\gamma'(2^{-j})\tau,v+\tau\right)\right|\cdot\gamma'(2^{-j})|\tau|
\gs \frac{|x||\tau|}{2}.
\end{align}
This is the desired result regarding $|\frac{\textrm{d}P_{\tau,x}}{\textrm{d}v}(v)|$. For $|\frac{\textrm{d}^2P_{\tau,x}}{\textrm{d}^2v}(v)|$, from \eqref{eq:3.129} and $\frac{\gamma'(2^{-j})}{|x|}<\frac{1}{2}$, $|\tau|\ls 1$, it implies
\begin{align}\label{eq:3.131}
\left|\frac{\textrm{d}^2P_{\tau,x}}{\textrm{d}^2v}(v)\right|\leq \left|\frac{\textrm{d}^2P}{\textrm{d}^2v}\left(x+\gamma'(2^{-j})\tau,v+\tau\right)\right|+\left|\frac{\textrm{d}^2P}{\textrm{d}^2v}(x,v)\right|\ls |x|.
\end{align}
This finishes the proof of Proposition \ref{proposition 3.5}.
\end{proof}

{\bf Step~3: Estimates for \eqref{eq:3.82}}

From \eqref{eq:3.93} and \eqref{eq:3.123}, note that $\sigma=2^{-\frac{m}{16}}$ and $\|\chi_{\mathbb{I}}\hat{f}\|_{L^{2}(\mathbb{I})}\leq \|f\|_{L^{2}(\mathbb{R})}$; by Lemma \ref{lemma 3.1} and the definition of $\mathcal{L}(\chi_{\mathbb{I}}\hat{f})$, for any $\chi_{\mathbb{I}}\hat{f}\in L^{2}(\mathbb{I})$, it is easy to obtain \eqref{eq:3.82}.

\subsection{Proof of proposition \ref{proposition 3.3}}\label{subsection 5.3}

In Subsection \ref{subsection 5.1}, we obtained (see \eqref{eq:3.50})
\begin{align}\label{eq:3.132}
\left\|B^*_{j,m,k}(f,g)\cdot\chi_{\left[2^{k-j}N, 2^{k-j}(N+1)\right]}\right\|_{L^{1}(\mathbb{R})}\ls  2^{-\frac{2m+j-k}{6}}\|f\|_{L^{2}(\mathbb{R})}\|g\|_{L^{2}(\mathbb{R})}.
\end{align}
In Subsection \ref{subsection 5.2}, we bounded $\|B^*_{j,m,k}(f,g)\cdot\chi_{[2^{k-j}N, 2^{k-j}(N+1)]}\|_{L^{1}(\mathbb{R})}$  (see \eqref{eq:3.82}) by
\begin{eqnarray}\label{eq:3.133}
\left\{\aligned
2^{-\frac{m}{16}}\|f\|_{L^{2}(\mathbb{R})}\|g\|_{L^{2}(\mathbb{R})},\quad  \textrm{if} \quad j-k+2m\leq 0,\\
\Lambda_{j,m,k}\|f\|_{L^{2}(\mathbb{R})}\|g\|_{L^{2}(\mathbb{R})},\quad  \textrm{if} \quad  j-k+2m> 0,
\endaligned\right.\nonumber
\end{eqnarray}
where $\Lambda_{j,m,k}:=\max\{2^{\frac{15m}{16}+\frac{j}{2}-\frac{k}{2}},  2^{\frac{m}{16}}(\max\{2^{m+j-k},2^{-\frac{m}{4}}\} )^{\frac{1}{2}}  \}$. Our goal is to prove that (see Proposition \ref{proposition 3.3}, \eqref{eq:3.48})
\begin{align}\label{eq:3.134}
\left\|B^*_{j,m,k}(f,g)\cdot\chi_{\left[2^{k-j}N, 2^{k-j}(N+1)\right]}\right\|_{L^{1}(\mathbb{R})}\ls 2^{-\varepsilon_0' m}\|f\|_{L^{2}(\mathbb{R})}\|g\|_{L^{2}(\mathbb{R})}.
\end{align}
Indeed, for the case that $j-k+2m\leq 0$, \eqref{eq:3.134} is true with $\varepsilon_0':=\frac{1}{16}$. For the case that $j-k+2m>0$, we have
\begin{align}\label{eq:3.135}
\left\|B^*_{j,m,k}(f,g)\cdot\chi_{\left[2^{k-j}N, 2^{k-j}(N+1)\right]}\right\|_{L^{1}(\mathbb{R})}\ls 2^{\frac{15m}{16}+\frac{j}{2}-\frac{k}{2}}\|f\|_{L^{2}(\mathbb{R})}\|g\|_{L^{2}(\mathbb{R})}.
\end{align}
Thus, combined with \eqref{eq:3.132}, we have $\|B^*_{j,m}(f,g)\cdot\chi_{(\frac{2^m}{\gamma'(2^{-j})}N, \frac{2^m}{\gamma'(2^{-j})}(N+1))}\|_{L^{1}(\mathbb{R})}$ is bounded by
\begin{align}\label{eq:3.136} \left(2^{\frac{15m}{16}+\frac{j}{2}-\frac{k}{2}}\right)^{\frac{1}{4}}\cdot\left(2^{-\frac{2m+j-k}{6}}\right)^{\frac{3}{4}}\|f\|_{L^{2}(\mathbb{R})}\|g\|_{L^{2}(\mathbb{R})}
\ls 2^{-\frac{m}{64}}\|f\|_{L^{2}(\mathbb{R})}\|g\|_{L^{2}(\mathbb{R})}.
\end{align}
This is \eqref{eq:3.134} with $\varepsilon_0':=\frac{1}{64}$. These are the proof of Proposition \ref{proposition 3.3}.

We obtain the $L^2(\mathbb{R})\times L^2(\mathbb{R})\rightarrow L^1(\mathbb{R})$  estimate for  $H^3_{\gamma}(f,g)$.

\section{Weak-$L^p(\mathbb{R})\times L^q(\mathbb{R})\rightarrow L^r(\mathbb{R})$ boundedness of $H_{m}(f,g)$}\label{section 6}

We begin with a lemma which can be found in \cite[Lemma 5.4]{AHMTT}:

\begin{lemma}\label{lemma 6.1}
\cite[Lemma 5.4]{AHMTT} Let $0<p<\infty$ and $A>0$. Then, the following statements are equivalent:
\begin{enumerate}
  \item[\rm(i)] $\|f\|_{L^{p,\infty}(\mathbb{R})}\leq A$.
  \item[\rm(ii)] For every Lebesgue measurable set $E$ with $0<|E|<\infty$, there exists a subset $E'\subset E$ with $|E'|\geq \frac{|E|}{2}$ such that $|\langle f,\chi_{E'}\rangle|\ls A|E|^{\frac{1}{p'}}$.
\end{enumerate}
\end{lemma}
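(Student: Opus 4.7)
The plan is to establish the two implications separately via a layer-cake decomposition and a good-set / bad-set argument.

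For $(i)\Rightarrow(ii)$, fix $E$ with $0<|E|<\infty$ and set the threshold $\lambda_0:=(2A^p/|E|)^{1/p}$. The ``bad set'' $\Omega:=\{x\in E:\ |f(x)|>\lambda_0\}$ satisfies $|\Omega|\leq(A/\lambda_0)^p=|E|/2$ by (i), so its complement $E':=E\setminus\Omega$ in $E$ obeys $|E'|\geq|E|/2$. Then by the layer-cake formula and the elementary bound $|\{x\in E':\ |f(x)|>\lambda\}|\leq\min(|E|,A^p\lambda^{-p})$,
\begin{align*}
|\langle f,\chi_{E'}\rangle|\leq\int_{E'}|f|\,dx\leq\int_0^{\lambda_0}\min\bigl(|E|,A^p\lambda^{-p}\bigr)\,d\lambda.
\end{align*}
Splitting at the balancing level $\lambda_1:=A|E|^{-1/p}$ (below which $|E|$ is the smaller value), the first piece contributes $|E|\cdot\lambda_1=A|E|^{1/p'}$ and the second piece, computed separately in the cases $p<1$, $p=1$, $p>1$, contributes at most a constant multiple of $A|E|^{1/p'}$.

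For the converse $(ii)\Rightarrow(i)$, fix $\lambda>0$, and, after a routine truncation of $f$ in both its range and spatial support, assume the level set $E_\lambda:=\{x:\ |f(x)|>\lambda\}$ satisfies $0<|E_\lambda|<\infty$. To circumvent the fact that $\langle f,\chi_{E'}\rangle$ need not control $\int_{E'}|f|\,dx$ when $f$ is complex-valued, I would first split $E_\lambda$ into four measurable pieces on which $\mathrm{Re}\,f$ and $\mathrm{Im}\,f$ each have fixed sign, pick by pigeonhole one piece $E\subset E_\lambda$ with $|E|\geq|E_\lambda|/4$, and apply (ii) to this $E$ to produce $E'\subset E$ with $|E'|\geq|E|/2$ and $|\langle f,\chi_{E'}\rangle|\ls A|E|^{1/p'}$. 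On $E'$ the real and imaginary parts of $f$ retain their fixed signs, hence so do those of $\langle f,\chi_{E'}\rangle$, and the elementary inequality $\sqrt{a^2+b^2}\geq(|a|+|b|)/\sqrt{2}$ combined with $|\mathrm{Re}\,f|+|\mathrm{Im}\,f|\geq|f|>\lambda$ on $E'$ yields the matching lower bound $|\langle f,\chi_{E'}\rangle|\geq\lambda|E'|/\sqrt{2}$. Comparing this with the upper bound from (ii) and using $|E'|\geq|E|/2\geq|E_\lambda|/8$ gives $\lambda|E_\lambda|^{1/p}\ls A$ uniformly in $\lambda$, which is (i).

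The main obstacle is the complex-valued case of $(ii)\Rightarrow(i)$, where the pairing $\langle f,\chi_{E'}\rangle$ is a priori subject to cancellation; the four-way sign splitting above resolves this up to harmless numerical factors. The other direction is a standard distribution-function computation, with no genuinely delicate point once one checks that the endpoint $p=1$ (where the tail integral is logarithmic) still produces a bound of size $O(A|E|^{1/p'})=O(A)$.
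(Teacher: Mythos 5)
Your proof is correct. Note that the paper itself does not prove this lemma at all: it is quoted verbatim from \cite[Lemma 5.4]{AHMTT}, so there is no internal argument to compare with, and what you have written is essentially the standard self-contained proof of the restricted weak-type characterization that the cited reference uses. Your direction $(i)\Rightarrow(ii)$ is the usual one: choosing the threshold $\lambda_0=(2A^p/|E|)^{1/p}$ makes the bad set have measure at most $|E|/2$, and the layer-cake computation with the balancing level $\lambda_1=A|E|^{-1/p}$ does give $C_p\,A|E|^{1/p'}$ in all three regimes $p<1$, $p=1$, $p>1$ (for $p=1$ the tail is $A\log(\lambda_0/\lambda_1)=A\log 2$, so indeed harmless); the constant depends on $p$, which is consistent with the $\lesssim$ in (ii). Your converse correctly handles the only delicate point, cancellation in $\langle f,\chi_{E'}\rangle$ for complex $f$, via the quadrant sign splitting and pigeonhole; the chain $\lambda|E'|/\sqrt{2}\lesssim A|E|^{1/p'}$ with $|E'|\ge |E|/2$ first gives $\lambda|E|^{1/p}\lesssim A$ and then $\lambda|E_\lambda|^{1/p}\le 4^{1/p}\lambda|E|^{1/p}\lesssim A$, which avoids any sign issue with $1/p'$ when $p\le 1$. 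Two cosmetic remarks: the range truncation of $f$ is unnecessary (intersecting the level set with $[-R,R]$ already suffices), and what $(ii)$ yields is $\|f\|_{L^{p,\infty}(\mathbb{R})}\le C(p)A$ rather than $\le A$, i.e.\ the equivalence holds up to a multiplicative constant; this is exactly how the lemma is applied in Section \ref{section 6} (to pass from \eqref{eq:6.7} to \eqref{eq:6.5}), so it is not a gap.
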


Recall that $\psi_\lambda(\xi)=2^\lambda\psi(2^\lambda\xi)$; we rewrite $H_{\gamma}^3(f,g)$ as
\begin{align}\label{eq:6.1}
H_{\gamma}^3(f,g)(x)=\sum_{j\in \mathbb{Z}}\sum_{m\in \mathbb{N}}\sum_{k\in \mathbb{Z}}\phi\left(\frac{\gamma'(2^{-j})}{2^{m+j-k}}\right)\int_{-\infty}^{\infty} \check{\phi}_{m+j}\ast f\left(x-2^{-j} t\right) \check{\phi}_k\ast g\left(x-\gamma(2^{-j}t)\right) \rho(t)\,\textrm{d}t.
\end{align}
Let
\begin{align}\label{eq:6.2}
H_m(f,g)(x):=\sum_{j\in \mathbb{Z}}\sum_{k\in \mathbb{Z}}\phi\left(\frac{\gamma'(2^{-j})}{2^{m+j-k}}\right)\int_{-\infty}^{\infty} \check{\phi}_{m+j}\ast f\left(x-2^{-j} t\right) \check{\phi}_k\ast g\left(x-\gamma(2^{-j}t)\right) \rho(t)\,\textrm{d}t.
\end{align}
In this section, we show that
\begin{align}\label{eq:6.3}
\|H_m(f,g)\|_{L^{r,\infty}(\mathbb{R})}\ls m\|f\|_{L^{p}(\mathbb{R})}\|g\|_{L^{q}(\mathbb{R})}
\end{align}
for $r>\frac{1}{2}$. We put the absolute value inside the integral and define
\begin{align}\label{eq:6.4}
|H_m|(f,g)(x):=\sum_{j\in \mathbb{Z}}\sum_{k\in \mathbb{Z}}\phi\left(\frac{\gamma'(2^{-j})}{2^{m+j-k}}\right)\int_{-\infty}^{\infty} \left|\check{\phi}_{m+j}\ast f\left(x-2^{-j} t\right) \check{\phi}_k\ast g\left(x-\gamma(2^{-j}t)\right) \rho(t)\right|\,\textrm{d}t.
\end{align}
Indeed, our goal is to obtain
\begin{align}\label{eq:6.5}
\||H_m|(f,g)\|_{L^{r,\infty}(\mathbb{R})}\ls m\|f\|_{L^{p}(\mathbb{R})}\|g\|_{L^{q}(\mathbb{R})}.
\end{align}
The main tool is Lemma \ref{lemma 6.1} above. Therefore, we may assume that $f:=\chi_{F_1}$ and $g:=\chi_{F_2}$ throughout this section, where $F_1$ and $F_2$ are a Lebesgue measurable set satisfying $0<|F_1|, |F_2|<\infty$. Furthermore, let us set $F_3$ as a Lebesgue measurable set with $0<|F_3|<\infty$ and define
\begin{align}\label{eq:6.6}
\Omega:=\left\{x\in \mathbb{R}:\ M\chi_{F_1}(x)>C \frac{|F_1|}{|F_3|}\right\}\bigcup \left\{x\in \mathbb{R}:\ M\chi_{F_2}(x)>C \frac{|F_2|}{|F_3|}\right\},
\end{align}
where $M$ is the \emph{Hardy-Littlewood maximal function}. From the weak-$(1,1)$ boundedness of the uncentered Hardy-Littlewood maximal function $M$, we may assume that $|\Omega|<\frac{|F_3|}{2}$ with $C$ large enough. Let $F'_3:=F_3\backslash \Omega$; we have $|F'_3|>\frac{|F_3|}{2}$. By Lemma \ref{lemma 6.1}, for $r>\frac{1}{2}$, it suffices to prove
\begin{align}\label{eq:6.7}
|\langle |H_m|(f,g),\chi_{E'}\rangle|\ls m|F_1|^{\frac{1}{p}}|F_2|^{\frac{1}{q}}|F_3|^{\frac{1}{r'}}.
\end{align}

Recall that $\psi$ is a nonnegative Schwartz function such that $\hat{\psi}$ is supported on $\{t\in \mathbb{R}:\ |t|\leq \frac{1}{100}\}$ and satisfies $\hat{\psi}(0)=1$, and $\psi_\lambda(x)=2^\lambda\psi(2^\lambda x) $. Furthermore, let $\Omega_\lambda:=\left\{x\in \Omega:\ \textrm{dist}(x, \Omega^{\complement})\geq 2^{-\lambda} \right\}$ and $\tilde{\psi}_\lambda:=\chi_{\Omega_\lambda^{\complement}}\ast \psi_\lambda$. Therefore, we may split $\check{\phi}_{m+j}\ast f$ as:
\begin{align*}
F_{m,j}(x,t):=\left(\tilde{\psi}_{m+j}\cdot\check{\phi}_{m+j}\ast f\right)\left(x-2^{-j} t\right)\quad \textrm{and}\quad F^{\complement}_{m,j}(x,t):=\left(\left(1-\tilde{\psi}_{m+j}\right)\cdot\check{\phi}_{m+j}\ast f\right)\left(x-2^{-j} t\right).
\end{align*}
Similarly, we split $\check{\phi}_k\ast g$ as:
\begin{align*}
G_{k,j}(x,t):=\left(\tilde{\psi}_{k}\cdot\check{\phi}_k\ast g\right)\left(x-\gamma(2^{-j}t)\right)\quad \textrm{and}\quad G^{\complement}_{k,j}(x,t):=\left(\left(1-\tilde{\psi}_{k}\right)\cdot\check{\phi}_k\ast g\right)\left(x-\gamma(2^{-j}t)\right).
\end{align*}
Then, $|H_m|(f,g)$ can be split into three error terms:
\begin{align}\label{eq:6.9}
\begin{cases}|H^1_m|(f,g)(x):=\sum_{j\in \mathbb{Z}}\sum_{k\in \mathbb{Z}}\phi\left(\frac{\gamma'(2^{-j})}{2^{m+j-k}}\right)\int_{-\infty}^{\infty} \left|F^{\complement}_{m,j}(x,t)G_{k,j}(x,t) \rho(t)\right|\,\textrm{d}t;\\
|H^2_m|(f,g)(x):=\sum_{j\in \mathbb{Z}}\sum_{k\in \mathbb{Z}}\phi\left(\frac{\gamma'(2^{-j})}{2^{m+j-k}}\right)\int_{-\infty}^{\infty} \left|F^{\complement}_{m,j}(x,t)G^{\complement}_{k,j}(x,t) \rho(t)\right|\,\textrm{d}t;\\
|H^3_m|(f,g)(x):=\sum_{j\in \mathbb{Z}}\sum_{k\in \mathbb{Z}}\phi\left(\frac{\gamma'(2^{-j})}{2^{m+j-k}}\right)\int_{-\infty}^{\infty} \left|F_{m,j}(x,t)G^{\complement}_{k,j}(x,t) \rho(t)\right|\,\textrm{d}t,
\end{cases}
\end{align}
and a major term:
\begin{align}\label{eq:6.12}
|H^4_m|(f,g)(x):=\sum_{j\in \mathbb{Z}}\sum_{k\in \mathbb{Z}}\phi\left(\frac{\gamma'(2^{-j})}{2^{m+j-k}}\right)\int_{-\infty}^{\infty} \left|F_{m,j}(x,t)G_{k,j}(x,t) \rho(t)\right|\,\textrm{d}t.
\end{align}

\subsection{Error terms $|H^1_m|(f,g)$, $|H^2_m|(f,g)$, $|H^3_m|(f,g)$}\label{subsection 6.1}

In this subsection, we want to prove that
\begin{align}\label{eq:6.13}
|\langle |H^i_m|(f,g),\chi_{E'}\rangle|\ls m|F_1|^{\frac{1}{p}}|F_2|^{\frac{1}{q}}|F_3|^{\frac{1}{r'}}
\end{align}
for all $i\in\{1,2,3\}$ and $r>\frac{1}{2}$.

We will set up \eqref{eq:6.13} for $|H^1_m|(f,g)$. The proofs for $|H^2_m|(f,g)$ and $|H^3_m|(f,g)$ are similar. Let $K\in \mathbb{N}$ be large enough; for any $x,y\in \mathbb{R}$ and any Lebesgue measurable set $E \subset \mathbb{R}$, define
\begin{align*}
\delta_{j,K}(x, y):=\frac{1}{\left(1+2^{j+m}|x-y|\right)^{K}}\quad \textrm{and}\quad \delta_{j,K}(x, E):=\frac{1}{\left(1+2^{j+m}\textrm{dist}(x, E)\right)^K}.
\end{align*}
Noting that $\tilde{\psi}_{m+j}(x-2^{-j} t)=\int_{\Omega_{j+m}^{\complement}}2^{j+m}\psi(2^{j+m}(x-2^{-j} t-y))\,\textrm{d}y$ and $\int_{\Omega_{j+m}^{\complement}\bigcup \Omega_{j+m}}2^{j+m}\psi(2^{j+m}(x-2^{-j} t-y))\,\textrm{d}y=1$ since $\hat{\psi}(0)=1$, we have
$(1-\tilde{\psi}_{m+j})(x-2^{-j} t)=\int_{\Omega_{j+m}}2^{j+m}\psi(2^{j+m}(x-2^{-j} t-y))\,\textrm{d}y\ls\int_{\Omega_{j+m}}2^{j+m} \delta_{j,K}(x-2^{-j} t, y) \,\textrm{d}y$. Therefore, $|\langle |H^1_m|(f,g),\chi_{E'}\rangle|$ can be bounded by
\begin{align}\label{eq:6.16}
\sum_{j\in \mathbb{Z}}\sum_{k\in \mathbb{Z}}\phi\left(\frac{\gamma'(2^{-j})}{2^{m+j-k}}\right)&\int_{\Omega^{\complement}} \int_{-\infty}^{\infty}\int_{\Omega_{j+m}} 2^{j+m} \delta_{j,K}(x-2^{-j} t, y) \\
  \times&\left|\check{\phi}_{m+j}\ast f\left(x-2^{-j} t\right)\check{\phi}_k\ast g\left(x-\gamma(2^{-j}t)\right) \rho(t)\right|\,\textrm{d}y\,\textrm{d}t\,\textrm{d}x.\nonumber
\end{align}
There are two cases: $x-2^{-j} t\in \Omega^{\complement}$ and $x-2^{-j} t\in \Omega$.

{\bf Case I:} $x-2^{-j} t\in \Omega^{\complement}$

It is easy to see that $Mf(x)=M\chi_{F_1}(x)\leq 1$ for all $x\in \mathbb{R}$; from the definition of $\Omega$, it implies $$|\check{\phi}_{m+j}\ast f(x-2^{-j} t)|\ls \left(\frac{|F_1|}{|F_3|}\right)^{\frac{1}{p}}.$$ Furthermore, let us set $\textbf{tr}_j(t):=2^{-j}t-\gamma(2^{-j}t)$ and $u:=x-2^{-j}t$ for any given $t$, then $x-\gamma(2^{-j}t)=u+\textbf{tr}_j(t)$. We also have $$\check{\phi}_k\ast g\ls M\chi_{F_2}\quad\text{and}\quad\delta_{j,K}(x-2^{-j}t, y)\leq \delta_{j,\frac{K}{2}}(y,\Omega^{\complement})\cdot\delta_{j,\frac{K}{2}}(u, y).$$ Therefore, \eqref{eq:6.16} is dominated by
\begin{align*}
\left(\frac{|F_1|}{|F_3|}\right)^{\frac{1}{p}}\sum_{j\in \mathbb{Z}}\sum_{k\in \mathbb{Z}}\phi\left(\frac{\gamma'(2^{-j})}{2^{m+j-k}}\right)\int_{\Omega_{j+m}} \delta_{j,\frac{K}{2}}(y,\Omega^{\complement})\int_{\Omega^{\complement}} & 2^{j+m}\delta_{j,\frac{K}{2}}(u, y)\left(\int_{-\infty}^{\infty}M\chi_{F_2}\left(u+\textbf{tr}_j(t)\right) |\rho(t)|\,\textrm{d}t\right)\,\textrm{d}u\,\textrm{d}y.
\end{align*}
For any given $u\in \Omega^{\complement}$, let $\tau:=u+\textbf{tr}_j(t)$. Then, for $j>0$ large enough, we have $$\textrm{d}t\leq\frac{\textrm{d}\tau}{2^{-j}-2^{-j}\gamma'(2\cdot2^{-j})}\ls2^j\,\textrm{d}\tau.$$ On the other hand, note that $\gamma(t)$ and $\frac{\gamma(t)}{t}$ are increasing on $(0,\infty)$ and \eqref{eq:1.03}; we have $$|2^{-j}t-\gamma(2^{-j}t)|\leq 2\cdot2^{-j}+2^{1+\frac{2}{C_1}}\gamma(2^{-j})\ls 2^{-j}.$$
Therefore, by the fact that $\left(M\chi_{F_2}\right)^{\frac{1}{q}}$ is an $A_1$ weight, we conclude that
\begin{align}\label{eq:6.019}
\int_{-\infty}^{\infty}M\chi_{F_2}\left(u+\textbf{tr}_j(t)\right) |\rho(t)|\,\textrm{d}t \ls 2^j\int_{u-2^{-j}}^{u+2^{-j}}\left(M\chi_{F_2}(\tau)\right)^{\frac{1}{q}}\,\textrm{d}\tau \ls \left(M\chi_{F_2}(u)\right)^{\frac{1}{q}}.
\end{align}
Note that $u\in \Omega^{\complement}$, which further implies that $$\int_{-\infty}^{\infty}M\chi_{F_2}(u+\textbf{tr}_j(t)) |\rho(t)|\,\textrm{d}t \ls \left(\frac{|F_2|}{|F_3|}\right)^{\frac{1}{q}}.$$ Then, \eqref{eq:6.16} can be bounded by
\begin{align}\label{eq:6.19}
\left(\frac{|F_1|}{|F_3|}\right)^{\frac{1}{p}}\left(\frac{|F_2|}{|F_3|}\right)^{\frac{1}{q}}\sum_{j\in \mathbb{Z}}\sum_{k\in \mathbb{Z}}\phi\left(\frac{\gamma'(2^{-j})}{2^{m+j-k}}\right)\int_{\Omega_{j+m}} \delta_{j,\frac{K}{2}}(y,\Omega^{\complement})\int_{\Omega^{\complement}}  2^{j+m}\delta_{j,\frac{K}{2}}(u, y)\,\textrm{d}u\,\textrm{d}y.
\end{align}

When $j<0$ and $|j|$ are large enough, note that $\frac{\gamma(t)}{t}$ is strictly increasing on $(0,\infty)$ and $\gamma'(0)=0$; we have $\frac{\gamma(2^{-j})}{2^{-j}}>\frac{\gamma(J_0)}{J_0}$ with $\frac{\gamma(J_0)}{J_0}>2^{2+\frac{2}{C_1}}/(1+\frac{1}{2C_2})$ for $|j|$ large enough. From \eqref{eq:1.03} and \eqref{eq:1.01}, we have $$\textrm{d}t\leq\frac{\textrm{d}\tau}{\left|\frac{2^{-j}t\gamma'(2^{-j}t)}{\gamma(2^{-j}t)}\cdot\frac{\gamma(2^{-j}t)}{t}\right|-2^{-j}}
\leq\frac{\textrm{d}\tau}{(1+\frac{1}{2C_2})2^{-2-\frac{2}{C_1}}\gamma(2^{-j})-\frac{J_0\gamma(2^{-j})}{\gamma(J_0)}}\ls\frac{\textrm{d}\tau}{\gamma(2^{-j})}.$$ It is also easy to see that $|2^{-j}t-\gamma(2^{-j}t)|\ls \gamma(2^{-j})$. Therefore, we can control \eqref{eq:6.16} by \eqref{eq:6.19}, as in the case $j>0$.

As in \eqref{eq:3.6} and noting that $\int_{\Omega^{\complement}}   \frac{2^{j+m}}{(1+2^{j+m}|u-y|)^{\frac{K}{2}}} \,\textrm{d}u\ls1$, \eqref{eq:6.19}  can be bounded by
\begin{align*}
&\left(\frac{|F_1|}{|F_3|}\right)^{\frac{1}{p}}\left(\frac{|F_2|}{|F_3|}\right)^{\frac{1}{q}}\sum_{j\in \mathbb{Z}}\int_{\Omega_{j+m}} \frac{1}{\left(1+2^{j+m}\textrm{dist}(y, \Omega^{\complement})\right)^{\frac{K}{2}}} \int_{\Omega^{\complement}}   \frac{2^{j+m}}{\left(1+2^{j+m}|u-y|\right)^{\frac{K}{2}}} \,\textrm{d}u\,\textrm{d}y\\
\leq&\left(\frac{|F_1|}{|F_3|}\right)^{\frac{1}{p}}\left(\frac{|F_2|}{|F_3|}\right)^{\frac{1}{q}}\sum_{j\in \mathbb{Z}}\int_{\Omega_{j+m}} \frac{1}{\left(1+2^{j+m}\textrm{dist}(y, \Omega^{\complement})\right)^{\frac{K}{2}}} \,\textrm{d}y
\leq\left(\frac{|F_1|}{|F_3|}\right)^{\frac{1}{p}}\left(\frac{|F_2|}{|F_3|}\right)^{\frac{1}{q}}|\Omega|.\nonumber
\end{align*}
This is the desired estimate, since $|\Omega|<\frac{|F_3|}{2}$. The last inequality  above  follows from the fact
\begin{align*}
\sum_{j\in \mathbb{Z}}\int_{\Omega_{j+m}} \frac{1}{\left(1+2^{j+m}\textrm{dist}(y, \Omega^{\complement})\right)^{\frac{K}{2}}} \,\textrm{d}y
\leq& \int_{\Omega}\sum_{l\in \mathbb{N}}\sum_{j\in \mathbb{Z}:\ 2^{-j-m+l}\leq \textrm{dist}(y, \Omega^{\complement})\leq 2^{-j-m+l+1}  } \frac{1}{\left(1+2^{j+m}\textrm{dist}(y, \Omega^{\complement})\right)^{\frac{K}{2}}} \,\textrm{d}y\\
 \ls& |\Omega|\sum_{l\in \mathbb{N}} \frac{1}{\left(1+2^{l}\right)^{\frac{K}{2}}}
  \ls |\Omega|.\nonumber
\end{align*}
Here, we used the fact that $\sharp\left\{j\in \mathbb{Z}:\ 2^{-j-m+l}\leq \textrm{dist}(y, \Omega^{\complement})\leq 2^{-j-m+l+1}\right\}\ls 1$ for any given $y\in \Omega$ and $l,m\in \mathbb{N}$.

{\bf Case II:} $x-2^{-j} t\in \Omega$

In this case, we use the Whitney decomposition theorem to the open set $\Omega$ (see, for example, \cite[P. 609]{G}). Let $\mathcal{F}$ be a collection of pairwise disjoint dyadic interval $J$'s such that $\Omega=\bigcup_{J\in \mathcal{F}}J$, Then, for each $J\in \mathcal{F}$, we have $|J|\leq\textrm{dist}(J, \Omega^{\complement})\leq 4 |J|$; thus, $10J$ meets $\Omega^{\complement}$. Furthermore, for each $J\in \mathcal{F}$ and $i\in\{1,2\}$, we have
\begin{align*}
\frac{1}{|10J|}\int_{10J}\chi_{F_i}(x)\,\textrm{d}x\ls \frac{|F_i|}{|F_3|}.
\end{align*}
We now introduce the following two lemmas which can be found in \cite{LX}:

\begin{lemma}\label{lemma 6.2}
\cite[Lemma 8.1]{LX} Let $I_1$, $I_2$ be two intervals in $\mathcal{F}$. Suppose that $a\in I_1$ and $b\in I_2$. If $\textrm{dist}(I_1,I_2)\geq 100 \min\{|I_1|,|I_2|\}$, then
$$\delta_{j,K}(a,b)\ls\delta_{j,\frac{K}{2}}(a,\Omega^{\complement})\cdot\delta_{j,\frac{K}{2}}(b,\Omega^{\complement}). $$
\end{lemma}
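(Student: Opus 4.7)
The plan is to use the Whitney property in two ways: first to show that $a$ sits within a distance comparable to $|I_1|$ of $\Omega^{\complement}$, and second to propagate that closeness to $b$ by the triangle inequality, with the assumed separation $\mathrm{dist}(I_1,I_2)\ge 100\min\{|I_1|,|I_2|\}$ absorbing the loss. Without loss of generality I will assume $|I_1|\leq |I_2|$.

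First, since $I_1$ is a Whitney interval, $|I_1|\leq\mathrm{dist}(I_1,\Omega^{\complement})\leq 4|I_1|$, and $a\in I_1$ gives
$$\mathrm{dist}(a,\Omega^{\complement})\leq \mathrm{diam}(I_1)+\mathrm{dist}(I_1,\Omega^{\complement})\leq 5|I_1|.$$
Combined with the hypothesis $|a-b|\geq \mathrm{dist}(I_1,I_2)\geq 100|I_1|$, I obtain
$$|a-b|\geq 20\,\mathrm{dist}(a,\Omega^{\complement}).$$

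Next I transfer this to $b$. By the triangle inequality $\mathrm{dist}(b,\Omega^{\complement})\leq |a-b|+\mathrm{dist}(a,\Omega^{\complement})$, hence with the previous step
$$\mathrm{dist}(b,\Omega^{\complement})\leq \left(1+\tfrac{1}{20}\right)|a-b|=\tfrac{21}{20}|a-b|.$$
Using the elementary inequality $1+cx\leq c(1+x)$ for $c\geq 1$, $x\geq 0$, these two bounds yield
$$1+2^{j+m}\mathrm{dist}(a,\Omega^{\complement})\leq 1+2^{j+m}|a-b|\quad\text{and}\quad 1+2^{j+m}\mathrm{dist}(b,\Omega^{\complement})\leq \tfrac{21}{20}\bigl(1+2^{j+m}|a-b|\bigr).$$
Multiplying these after raising each side to the power $K/2$ and inverting gives exactly
$$\delta_{j,K}(a,b)\ls \delta_{j,\frac{K}{2}}(a,\Omega^{\complement})\cdot\delta_{j,\frac{K}{2}}(b,\Omega^{\complement}),$$
with an implicit constant depending only on $K$.

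There is no serious obstacle here: the whole argument is just the Whitney property $|I|\approx\mathrm{dist}(I,\Omega^{\complement})$ plus the triangle inequality, tuned to the separation constant $100$. The only minor point to be careful about is the case $|I_2|<|I_1|$, which is handled symmetrically by swapping the roles of $a$ and $b$ in the above chain.
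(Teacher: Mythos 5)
Your proof is correct: the paper does not reprove this lemma (it is quoted from Li--Xiao \cite[Lemma 8.1]{LX}), and your argument is exactly the standard one behind it, namely the Whitney property $|I_1|\leq\mathrm{dist}(I_1,\Omega^{\complement})\leq 4|I_1|$ giving $\mathrm{dist}(a,\Omega^{\complement})\leq 5|I_1|\leq\frac{1}{20}|a-b|$, followed by the triangle inequality to bound $\mathrm{dist}(b,\Omega^{\complement})\leq\frac{21}{20}|a-b|$, so that both factors $1+2^{j+m}\mathrm{dist}(\cdot,\Omega^{\complement})$ are dominated by a constant times $1+2^{j+m}|a-b|$ and the claim follows with a constant depending only on $K$. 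The symmetric treatment of the case $|I_2|<|I_1|$ is also handled correctly.
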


\begin{lemma}\label{lemma 6.3}
\cite[Lemma 8.2]{LX} Let $I_1$, $I_2$ be two intervals in $\mathcal{F}$. Suppose that $\textrm{dist}(I_1,I_2)\leq 100 \min\{|I_1|,|I_2|\}$, then
$$\frac{|I_2|}{2000}\leq |I_1|\leq 2000 |I_2|. $$
\end{lemma}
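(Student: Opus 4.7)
The plan is to read off the conclusion directly from the two-sided Whitney inequalities $|J| \le \textrm{dist}(J, \Omega^{\complement}) \le 4|J|$ that every $J \in \mathcal{F}$ satisfies. By the symmetric roles of $I_1$ and $I_2$ in the hypothesis, I would reduce to the case $|I_1| \leq |I_2|$ and focus on proving $|I_2| \leq 2000 |I_1|$; the reverse inequality would then follow from the analogous argument with the indices exchanged.

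Next I would produce an anchor point $c \in \Omega^{\complement}$ lying near $I_1$. The upper Whitney bound $\textrm{dist}(I_1, \Omega^{\complement}) \leq 4|I_1|$ guarantees some such $c$ with $\textrm{dist}(c, I_1) \leq 4|I_1|$ (up to arbitrarily small slack, which does not affect the final constant). One application of the triangle inequality, using $\textrm{diam}(I_1) = |I_1|$ and the hypothesis $\textrm{dist}(I_1, I_2) \leq 100 |I_1|$, then yields
\[
\textrm{dist}(c, I_2) \;\leq\; \textrm{dist}(c, I_1) + |I_1| + \textrm{dist}(I_1, I_2) \;\leq\; 4|I_1| + |I_1| + 100|I_1| \;=\; 105|I_1|.
\]

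On the other hand, the lower Whitney bound applied to $I_2$ forces $\textrm{dist}(I_2, \Omega^{\complement}) \geq |I_2|$, and since $c \in \Omega^{\complement}$ we obtain $\textrm{dist}(c, I_2) \geq |I_2|$. Comparing the two estimates produces $|I_2| \leq 105 |I_1|$, comfortably inside the claimed bound $|I_2| \leq 2000 |I_1|$. I do not foresee any real obstacle: the argument is purely elementary Whitney-cube geometry, and the round constant $2000$ in the statement is generous enough to absorb any slack coming from slight variants in the normalization of the Whitney decomposition (for instance, if one were to work with the closures of the intervals, or with a dilated family $cJ$ at one of the steps, the same one-line triangle inequality still lands well below $2000$).
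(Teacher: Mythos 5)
Your proof is correct. The paper itself does not prove Lemma~\ref{lemma 6.3}; it merely quotes it from Li and Xiao \cite[Lemma 8.2]{LX}, so there is no in-paper proof to compare against. Your argument is the standard Whitney-decomposition calculation: use the upper Whitney bound $\textrm{dist}(I_1,\Omega^{\complement})\leq 4|I_1|$ to locate a point $c\in\Omega^{\complement}$ near $I_1$, pass it across to $I_2$ by the triangle inequality (picking up $4|I_1|+|I_1|+100|I_1|=105|I_1|$), and then invoke the lower Whitney bound $|I_2|\leq\textrm{dist}(I_2,\Omega^{\complement})\leq\textrm{dist}(c,I_2)$ to conclude $|I_2|\leq 105|I_1|\leq 2000|I_1|$. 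The reduction to $|I_1|\leq|I_2|$ by symmetry of both the hypothesis and the conclusion is legitimate, and the slack between $105$ and $2000$ absorbs any normalization issues, exactly as you note.
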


We now turn to \eqref{eq:6.16}; we rewrite it as
\begin{align}\label{eq:6.21}
\sum_{j\in \mathbb{Z}}\sum_{k\in \mathbb{Z}}\phi\left(\frac{\gamma'(2^{-j})}{2^{m+j-k}}\right)\sum_{I_1,I_2\in \mathcal{F}} &\int_{\Omega^{\complement}} \int_{-\infty}^{\infty}\int_{\Omega_{j+m}} 2^{j+m} \chi_{I_1}(y)\chi_{I_2}(x-2^{-j} t)\delta_{j,K}(x-2^{-j} t, y) \\
  \times&\left|\check{\phi}_{m+j}\ast f\left(x-2^{-j} t\right)\check{\phi}_k\ast g\left(x-\gamma(2^{-j}t)\right) \rho(t)\right|\,\textrm{d}y\,\textrm{d}t\,\textrm{d}x.\nonumber
\end{align}

{\bf Case IIa:} $\textrm{dist}(I_1,I_2)\geq 100 \min\{|I_1|,|I_2|\}$

In this case, by Lemma \ref{lemma 6.2}, we have $\delta_{j,K}(x-2^{-j} t, y)\ls\delta_{j,\frac{K}{2}}(x-2^{-j} t,\Omega^{\complement})\cdot\delta_{j,\frac{K}{2}}(y,\Omega^{\complement})$. Then, \eqref{eq:6.21} can be bounded by
\begin{align}\label{eq:6.22}
&\sum_{j\in \mathbb{Z}}\sum_{k\in \mathbb{Z}}\phi\left(\frac{\gamma'(2^{-j})}{2^{m+j-k}}\right)\sum_{I_1,I_2\in \mathcal{F}} \int_{\Omega^{\complement}} \int_{-\infty}^{\infty}\left(\int_{\Omega_{j+m}} 2^{j+m} \chi_{I_1}(y)\delta_{j,\frac{K}{2}}(y,\Omega^{\complement}) \,\textrm{d}y\right)\\
  \times&\chi_{I_2}(x-2^{-j} t)\cdot\delta_{j,\frac{K}{2}}(x-2^{-j} t,\Omega^{\complement})\left|\check{\phi}_{m+j}\ast f\left(x-2^{-j} t\right)\check{\phi}_k\ast g\left(x-\gamma(2^{-j}t)\right) \rho(t)\right|\,\textrm{d}t\,\textrm{d}x.\nonumber
\end{align}
For each $x-2^{-j} t\in I_2$, we choose $z\in \Omega^{\complement}$ such that $\textrm{dist}(x-2^{-j} t, \Omega^{\complement})\approx |x-2^{-j} t-z|$. The definition of $\Omega$ implies that $\delta_{j,\frac{K}{2}}(x-2^{-j} t,\Omega^{\complement})\cdot|\check{\phi}_{m+j}\ast f(x-2^{-j} t)|$ can be bounded by
\begin{align}\label{eq:6.023}
&\int_{-\infty}^{\infty} f(w)\frac{2^{j+m}}{\left(1+2^{j+m}|x-2^{-j} t-w|\right)^{\frac{K}{2}}}\,\textrm{d}w\cdot \frac{1}{\left(1+2^{j+m}|x-2^{-j} t-z|\right)^{\frac{K}{2}}}\\
\ls &\int_{-\infty}^{\infty} f(w)\frac{2^{j+m}}{\left(1+2^{j+m}|z-w|\right)^{\frac{K}{2}}}\,\textrm{d}w
\ls Mf(z)
\ls\left(\frac{|F_1|}{|F_3|}\right)^{\frac{1}{p}}.\nonumber
\end{align}
Furthermore, let $u:=x-2^{-j}t$; we have $x-\gamma(2^{-j}t)=u+\textbf{tr}_j(t)$. Note that  $\check{\phi}_k\ast g\ls M\chi_{F_2}$; we can bound \eqref{eq:6.22} by
\begin{align}\label{eq:6.24}
\left(\frac{|F_1|}{|F_3|}\right)^{\frac{1}{p}}\sum_{j\in \mathbb{Z}}\sum_{k\in \mathbb{Z}}\phi\left(\frac{\gamma'(2^{-j})}{2^{m+j-k}}\right)\sum_{I_1,I_2\in \mathcal{F}} \int_{-\infty}^{\infty} \int_{-\infty}^{\infty}&\left[\int_{\Omega_{j+m}} 2^{j+m} \chi_{I_1}(y)\delta_{j,\frac{K}{2}}(y,\Omega^{\complement}) \,\textrm{d}y\right]\\
  \times&\chi_{I_2}(u)\cdot M\chi_{F_2}\left(u+\textbf{tr}_j(t)\right)| \rho(t)|\,\textrm{d}t\,\textrm{d}u.\nonumber
\end{align}
Note that \eqref{eq:6.019} holds for all $j\in \mathbb{Z}$; we have
\begin{align}\label{eq:6.26}
&\sum_{I_2\in \mathcal{F}} \int_{-\infty}^{\infty}\int_{-\infty}^{\infty}\chi_{I_2}(u)\cdot M\chi_{F_2}\left(u+\textbf{tr}_j(t)\right)| \rho(t)|\,\textrm{d}t\,\textrm{d}u\\
\ls&\sum_{I_2\in \mathcal{F}} |10I_2| \inf_{w\in 10I_2} M\left(\left(M\chi_{F_2}\right)^{\frac{1}{q}}\right)(w)
\ls\sum_{I_2\in \mathcal{F}} |10I_2| \inf_{w\in 10I_2} \left(M\chi_{F_2}\right)^{\frac{1}{q}}(w)
\ls|\Omega| \left(\frac{|F_2|}{|F_3|}\right)^{\frac{1}{q}}.\nonumber
\end{align}

On the other hand, $|I_1|\leq\textrm{dist}(I_1, \Omega^{\complement})\leq 4 |I_1|$ implies that $|I_1|\leq\textrm{dist}(y, \Omega^{\complement})\leq 5 |I_1|$ for all $y\in I_1$. Combining it with the fact that $\sharp\left\{j\in \mathbb{Z}:\ 2^{-j-m+l}\leq \textrm{dist}(y, \Omega^{\complement})\leq 2^{-j-m+l+1}\right\}\ls 1$ and $I_1$ is a dyadic interval, we obtain
\begin{align}\label{eq:6.27}
&\sum_{j\in \mathbb{Z}}\sum_{I_1\in \mathcal{F}}\int_{\Omega_{j+m}} 2^{j+m} \chi_{I_1}(y)\cdot\delta_{j,\frac{K}{2}}(y,\Omega^{\complement}) \,\textrm{d}y\\
\ls& \sum_{I_1\in \mathcal{F}}\int_{I_1} \sum_{l\in \mathbb{N}}\sum_{j\in \mathbb{Z}:\ 2^{-j-m+l}\leq \textrm{dist}(y, \Omega^{\complement})\leq 2^{-j-m+l+1}  } \frac{1}{\left(1+2^{j+m}\cdot2^{-j-m+l}\right)^{\frac{K}{4}}}\frac{2^{j+m}}{\left(1+2^{j+m}|I_1|\right)^{\frac{K}{4}}}
\,\textrm{d}y\nonumber\\
\ls& \sum_{l\in \mathbb{N}} \frac{1}{\left(1+2^{l}\right)^{\frac{K}{4}}}\sum_{j\in \mathbb{Z}:\ \sharp\{j\}\ls 1 } \sum_{I_1\in \mathcal{F}} \frac{2^{j+m}|I_1|}{\left(1+2^{j+m}|I_1|\right)^{\frac{K}{4}}}
\ls 1 .\nonumber
\end{align}
Therefore, for this case, we also have $$|\langle |H^1_m|(f,g),\chi_{E'}\rangle|\ls \left(\frac{|F_1|}{|F_3|}\right)^{\frac{1}{p}}\left(\frac{|F_2|}{|F_3|}\right)^{\frac{1}{q}}|\Omega|.$$

{\bf Case IIb:} $\textrm{dist}(I_1,I_2)\leq 100 \min\{|I_1|,|I_2|\}$

By Lemma \ref{lemma 6.3}, we have $\frac{|I_2|}{2000}\leq |I_1|\leq 2000 |I_2|$ in this case. We may assume that $\max\{|I_1|,|I_2|\}\leq 2^{50}2^{-j}$. Otherwise, we have $|I_2|\geq2^{8}2^{-j}$. Since $x-2^{-j} t\in I_2$ and $x\in \Omega^{\complement}$, we have $$\textrm{dist}(x, I_2)\leq |x-(x-2^{-j} t)|\leq 2\cdot2^{-j}\leq \frac{|I_2|}{2^7}$$ which further implies that $\textrm{dist}(\Omega^{\complement}, I_2)\leq \frac{|I_2|}{2^7}$. This is a contradiction from $I_2\in \mathcal{F}$.

 On the other hand, we may also assume that $|I_1|\geq 2^{-10}2^{-j-m}$. If not, $y\in I_1\bigcap \Omega_{j+m}$  implies that $2^{-j-m}\leq \textrm{dist}(y,\Omega^{\complement})\leq 4|I_1|\leq 2^{-8}2^{-j-m}$. It is a contradiction. Therefore, for any $I_1\in \mathcal{F}$, we have
\begin{align}\label{eq:6.30}
\frac{1}{2^{m+10}|I_1|}\leq 2^j\leq \frac{2^{50}}{|I_1|}.
\end{align}
Then, for any given $I_1\in \mathcal{F}$, there are at most $10m$ many $j$'s when $m$ is large enough.

Furthermore, for any given $I_1\in \mathcal{F}$, from $\frac{|I_2|}{2000}\leq |I_1|\leq 2000 |I_2|$, the number of interval $I_2$'s with $\textrm{dist}(I_1,I_2)\leq 100 \min\{|I_1|,|I_2|\}$ is finite and independent of $m$. Let $u:=x-2^{-j}t$, then $x-\gamma(2^{-j}t)=u+\textbf{tr}_j(t)$. Without loss of generality, \eqref{eq:6.21} can be controlled by
\begin{align}\label{eq:6.32}
&\sum_{I_1\in \mathcal{F}} \sum_{j\in \mathbb{Z}:\ \frac{1}{2^{m+10}|I_1|}\leq 2^j\leq \frac{2^{50}}{|I_1|} }\sum_{k\in \mathbb{Z}}\phi\left(\frac{\gamma'(2^{-j})}{2^{m+j-k}}\right)\int_{-\infty}^{\infty} \int_{-\infty}^{\infty}\int_{\Omega_{j+m}} 2^{j+m} \chi_{I_1}(y)\chi_{I_1}(u)\chi_{\Omega^{\complement}}(u+2^{-j}t) \\
  \times&\delta_{j,K}(u, y)\left|\check{\phi}_{m+j}\ast f(u)\cdot\check{\phi}_k\ast g\left(u+\textbf{tr}_j(t)\right) \rho(t)\right|\,\textrm{d}y\,\textrm{d}t\,\textrm{d}u.\nonumber
\end{align}
As in \eqref{eq:6.023}, we have $\delta_{j,\frac{K}{2}}(u, y)\cdot|\check{\phi}_{m+j}\ast f(u)|\ls (M\chi_{F_1})^{\frac{1}{p}}(y)$. Noting that $\check{\phi}_k\ast g\ls (M\chi_{F_2})^{\frac{1}{q}}$ and \eqref{eq:3.6}, we bound \eqref{eq:6.32} by
\begin{align}\label{eq:6.33}
&\sum_{I_1\in \mathcal{F}} \sum_{j\in \mathbb{Z}:\ \frac{1}{2^{m+10}|I_1|}\leq 2^j\leq \frac{2^{50}}{|I_1|} }\int_{-\infty}^{\infty} \int_{-\infty}^{\infty}\int_{\Omega_{j+m}} 2^{j+m} \chi_{I_1}(y)\chi_{I_1}(u)\chi_{\Omega^{\complement}}(u+2^{-j}t) \\
  \times&\delta_{j,\frac{K}{2}}(u, y)\cdot(M\chi_{F_1})^{\frac{1}{p}}(y)\cdot\left(M\chi_{F_2}\right)^{\frac{1}{q}}\left(u+\textbf{tr}_j(t)\right) \cdot|\rho(t)|\,\textrm{d}y\,\textrm{d}t\,\textrm{d}u.\nonumber
\end{align}
As in \eqref{eq:6.019}, by the fact that $2^{-j}t\ls 2^{-j}$ for $j>0$ and $2^{-j}t\ls \gamma(2^{-j})$ for $j\leq0$, it is easy to see that
$$\int_{-\infty}^{\infty}\chi_{\Omega^{\complement}}(u+2^{-j}t)\cdot(M\chi_{F_2})^{\frac{1}{q}}(u+\textbf{tr}_j(t)) \cdot |\rho(t)|\,\textrm{d}t\ls(M\chi_{F_2})^{\frac{1}{q}}(w)$$ for some $w\in \Omega^{\complement}.$ From the definition of $\Omega$, it implies that $$\int_{-\infty}^{\infty}\chi_{\Omega^{\complement}}(u+2^{-j}t)\cdot(M\chi_{F_2})^{\frac{1}{q}}(u+\textbf{tr}_j(t)) \cdot|\rho(t)|\,\textrm{d}t\ls \left(\frac{|F_2|}{|F_3|}\right)^{\frac{1}{q}}.$$ We also have $$\int_{-\infty}^{\infty}2^{j+m}\chi_{I_1}(u)\cdot\delta_{j,\frac{K}{2}}(u, y)\,\textrm{d}u\ls M\chi_{I_1}(y)\ls 1.$$ Therefore, as in \eqref{eq:6.26}, we bound \eqref{eq:6.33} by
\begin{align*}
&\left(\frac{|F_2|}{|F_3|}\right)^{\frac{1}{q}}\sum_{I_1\in \mathcal{F}} \sum_{j\in \mathbb{Z}:\ \frac{1}{2^{m+10}|I_1|}\leq 2^j\leq \frac{2^{50}}{|I_1|} } \int_{\Omega_{j+m}} \chi_{I_1}(y)(M\chi_{F_1})^{\frac{1}{p}}(y)\,\textrm{d}y\\
\leq& m \left(\frac{|F_2|}{|F_3|}\right)^{\frac{1}{q}} \sum_{I_1\in \mathcal{F}}|10I_1|\frac{1}{|10I_1|}  \int_{10I_1 } (M\chi_{F_1})^{\frac{1}{p}}(y)\,\textrm{d}y
\leq m |\Omega|\left(\frac{|F_1|}{|F_3|}\right)^{\frac{1}{p}}\left(\frac{|F_2|}{|F_3|}\right)^{\frac{1}{q}}. \nonumber
\end{align*}
This is the desired estimate, since $|\Omega|<\frac{|F_3|}{2}$. Therefore, we obtain \eqref{eq:6.13}.

\subsection{Major term $|H^4_m|(f,g)$}\label{subsection 6.2}

In this subsection, we want to prove that
\begin{align}\label{eq:6.37}
|\langle |H^4_m|(f,g),\chi_{E'}\rangle|\ls m|F_1|^{\frac{1}{p}}|F_2|^{\frac{1}{q}}|F_3|^{\frac{1}{r'}}
\end{align}
for $r>\frac{1}{2}$. It is easy to see that $|\langle |H^4_m|(f,g),\chi_{E'}\rangle|$ can be bounded by
\begin{align*}
\sum_{j\in \mathbb{Z}}\sum_{k\in \mathbb{Z}}\phi\left(\frac{\gamma'(2^{-j})}{2^{m+j-k}}\right)\int_{-\infty}^{\infty} \int_{-\infty}^{\infty} \left|\left(\tilde{\psi}_{m+j}\cdot\check{\phi}_{m+j}\ast f\right)\left(x-2^{-j} t\right)\left(\tilde{\psi}_{k}\cdot\check{\phi}_k\ast g\right)\left(x-\gamma(2^{-j}t)\right) \rho(t)\right|\,\textrm{d}t\,\textrm{d}x.
\end{align*}
It will be bounded by $m|F_1|^{\frac{1}{p}}|F_2|^{\frac{1}{q}}|F_3|^{\frac{1}{r'}}$. In what follows, we give the proof for the case $j> 0$. The case $j\leq0$ can be handled similarly. According to the value range of $r$, we consider the following two cases: $r\geq 1$ and $\frac{1}{2}<r<1$.
The case $r\geq 1$ follows from  the following proposition.

\begin{proposition}\label{proposition 6.1}
There exists a positive constant $C$ such that
$$|\langle |H^4_m|(f,g),\chi_{E'}\rangle|\leq C |F_1|^{\frac{1}{p}}|F_2|^{\frac{1}{q}}|F_3|^{\frac{1}{r'}}$$
holds for $\frac{1}{p}+\frac{1}{q}=\frac{1}{r}$, and $r\geq 1$, $p>1$, $q>1$.
\end{proposition}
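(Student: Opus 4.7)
The plan is to dominate $|H_m^4|(f,g)$ pointwise by a product of square-type functions of Hardy--Littlewood maximal operators applied to Littlewood--Paley pieces of $f$ and $g$, and then conclude via the standard square-function/vector-valued-maximal machinery, losing nothing in $m$. In the first step I would apply Cauchy--Schwarz in $t$ to separate the $F$- and $G$-contributions:
\begin{align*}
\int \left|F_{m,j}(x,t)G_{k,j}(x,t)\rho(t)\right|\,\textrm{d}t \leq \left(\int |F_{m,j}(x,t)|^2|\rho(t)|\,\textrm{d}t\right)^{\frac{1}{2}}\left(\int |G_{k,j}(x,t)|^2|\rho(t)|\,\textrm{d}t\right)^{\frac{1}{2}}.
\end{align*}

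Next, since $|\tilde{\psi}|\leq 1$, the substitution $u=x-2^{-j}t$ turns the $F$-integral into a local $L^2$-average of $|\check{\phi}_{m+j}\ast f|^2$ over a ball of radius $\approx 2^{-j}$ about $x$, hence $\int|F_{m,j}|^2|\rho|\,\textrm{d}t \lesssim M(|\check{\phi}_{m+j}\ast f|^2)(x)$. For the $G$-integral I would substitute $v=x-\gamma(2^{-j}t)$, noting that by \eqref{eq:1.02} the Jacobian is comparable to $2^{-j}\gamma'(2^{-j})$ on $\supp\rho$ and that \eqref{eq:1.01} implies $\gamma(2^{-j})\lesssim 2^{-j}\gamma'(2^{-j})$, so the whole $v$-range is contained in a ball of radius $\approx 2^{-j}\gamma'(2^{-j})$ centered at $x$ (the translation is absorbed into the averaging ball). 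This yields $\int|G_{k,j}|^2|\rho|\,\textrm{d}t \lesssim M(|\check{\phi}_k\ast g|^2)(x)$. Using the coupling induced by $\phi(\gamma'(2^{-j})/2^{m+j-k})$ together with \eqref{eq:3.6} and \eqref{eq:3.7} to handle the double sum (at most $O(1)$ partners in each variable), Cauchy--Schwarz in $(j,k)$ produces
\begin{align*}
|H_m^4|(f,g)(x) \lesssim \left(\sum_{j\in\mathbb{Z}} M(|\check{\phi}_{m+j}\ast f|^2)(x)\right)^{\frac{1}{2}} \left(\sum_{k\in\mathbb{Z}} M(|\check{\phi}_k\ast g|^2)(x)\right)^{\frac{1}{2}}.
\end{align*}

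To conclude, I would take the $L^r$-norm and apply H\"older with $\frac{1}{p}+\frac{1}{q}=\frac{1}{r}$, reducing matters to the vector-valued estimate
\begin{align*}
\left\|\left(\sum_{j\in\mathbb{Z}} M(|\check{\phi}_{m+j}\ast f|^2)\right)^{\frac{1}{2}}\right\|_{L^p(\mathbb{R})} \lesssim \|f\|_{L^p(\mathbb{R})}
\end{align*}
and its $g$-analogue on $L^q$. When $p>2$ this follows immediately from the pointwise bound $\sum_j M(|f_j|^2) \leq M(\sum_j |f_j|^2)$, the $L^{p/2}$-boundedness of $M$, and the Littlewood--Paley inequality; when $1<p\leq 2$ one invokes the Fefferman--Stein vector-valued maximal inequality for the quadratic operator $M_2 h:=(M|h|^2)^{1/2}$. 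A final H\"older estimate against $\chi_{E'}$ with $|E'|\leq |F_3|$ gives
\begin{align*}
|\langle |H_m^4|(f,g),\chi_{E'}\rangle| \leq \||H_m^4|(f,g)\|_{L^r(\mathbb{R})}|F_3|^{\frac{1}{r'}} \lesssim |F_1|^{\frac{1}{p}}|F_2|^{\frac{1}{q}}|F_3|^{\frac{1}{r'}}.
\end{align*}

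The main obstacle will be the vector-valued maximal step in the range $1<p\leq 2$ (or $q\leq 2$), where the naive commutation of $\sum_j$ and $M$ is unavailable and one must rely on the full quadratic Fefferman--Stein inequality rather than the simpler subadditivity-plus-$L^{p/2}$ argument; the rest of the proof is a relatively routine combination of change of variables, \eqref{eq:1.01}--\eqref{eq:1.02}, \eqref{eq:3.6}--\eqref{eq:3.7}, and Littlewood--Paley theory.
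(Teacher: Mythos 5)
Your reduction breaks down at the vector-valued maximal step, and not only in the range you flag as delicate: after the symmetric Cauchy--Schwarz in $t$, both factors carry maximal functions of \emph{squares}, and the estimate you need, $\bigl\|\bigl(\sum_{j}M(|\check{\phi}_{m+j}\ast f|^2)\bigr)^{1/2}\bigr\|_{L^p(\mathbb{R})}\lesssim\|f\|_{L^p(\mathbb{R})}$, is false in the range required by the proposition. At $p=2$ the left-hand side equals $\bigl(\sum_j\|M(|\check{\phi}_{m+j}\ast f|^2)\|_{L^1(\mathbb{R})}\bigr)^{1/2}$, which is infinite whenever $f\neq0$, since $Mh\notin L^1(\mathbb{R})$ for $h\neq0$; so the case $p=q=2$, $r=1$ already cannot be reached this way. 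More generally, writing $M_2h:=(M|h|^2)^{1/2}$, the ``quadratic Fefferman--Stein inequality'' you invoke, $\|(\sum_j(M_2f_j)^2)^{1/2}\|_{L^p}\lesssim\|(\sum_j|f_j|^2)^{1/2}\|_{L^p}$, is the same as the $\ell^1$-valued Fefferman--Stein inequality at exponent $p/2$, which fails (the classical Fefferman--Stein theorem needs $\ell^q$ with $q>1$; same-scale translated bumps produce a logarithmic loss in the number of pieces). Also, the pointwise inequality you propose for $p>2$, $\sum_jM(|f_j|^2)\leq M(\sum_j|f_j|^2)$, is backwards: for nonnegative $h_j$ one has $M(\sum_jh_j)\leq\sum_jMh_j$, not the reverse. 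So the asymmetric loss created by putting an $L^2(\rho\,\mathrm{d}t)$ average on \emph{both} functions is fatal.

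The paper avoids exactly this. Before any Cauchy--Schwarz it changes variables $u:=x-\gamma(2^{-j}t)$ in the pairing, so that the $g$-factor becomes $(\tilde{\psi}_k\cdot\check{\phi}_k\ast g)(u)$, independent of $t$, while the $f$-factor is evaluated at $u-\textbf{tr}_j(t)$ with $|\textbf{tr}_j(t)|\lesssim|t|$; the $t$-integral is then absorbed into a maximal function of $f$ alone, giving $M(\tilde{\psi}_{m+j}\cdot\check{\phi}_{m+j}\ast f)(u)$. Cauchy--Schwarz in $(j,k)$ (with \eqref{eq:3.6}--\eqref{eq:3.7}) then yields the product of $\bigl(\sum_{j,k}\phi(\cdot)\,|M(\check{\phi}_{m+j}\ast f)|^2\bigr)^{1/2}$ and the plain Littlewood--Paley square function of $g$; the first is handled by the ordinary $\ell^2$ Fefferman--Stein inequality (maximal function \emph{inside} the square, valid for all $1<p<\infty$), the second by Littlewood--Paley theory, and the factor $|F_3|^{1/r'}$ comes from the remaining $L^{r'}$ factor exactly as in your last display. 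Your change-of-variables computations, the use of \eqref{eq:1.01}--\eqref{eq:1.03} and of the $O(1)$ coupling between $j$ and $k$, and the final H\"older step against $\chi_{E'}$ are all fine; what must change is the order of operations, so that only one of the two functions ever sees a maximal operator, and it sees $M$ of the function rather than $M$ of its square.
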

\begin{proof}
Let $u:=x-\gamma(2^{-j}t)$; then $x-2^{-j} t=u-\textbf{tr}_j(t)$. We can bound $|\langle |H^4_m|(f,g),\chi_{E'}\rangle|$ by
\begin{align*}
\sum_{j\in \mathbb{Z}}\sum_{k\in \mathbb{Z}}\phi\left(\frac{\gamma'(2^{-j})}{2^{m+j-k}}\right)\int_{-\infty}^{\infty} \int_{-\infty}^{\infty} \left|\left(\tilde{\psi}_{m+j}\cdot\check{\phi}_{m+j}\ast f\right)\left(u-\textbf{tr}_j(t)\right)\left(\tilde{\psi}_{k}\cdot\check{\phi}_k\ast g\right)(u) \rho(t)\right|\,\textrm{d}t\,\textrm{d}u.
\end{align*}
Noting that $|\textbf{tr}_j(t)|\ls |t|$, we have $$\int_{-\infty}^{\infty} | (\tilde{\psi}_{m+j}\cdot\check{\phi}_{m+j}\ast f)(u-\textbf{tr}_j(t))\rho(t) |\,\textrm{d}t\ls  M(\tilde{\psi}_{m+j}\cdot\check{\phi}_{m+j}\ast f)(u).$$ By the Cauchy-Schwarz inequality, $|\langle |H^4_m|(f,g),\chi_{E'}\rangle|$ can be bounded by
\begin{align*}
\int_{-\infty}^{\infty} \left[ \sum_{j\in T}\sum_{k\in \mathbb{Z}}\phi\left(\frac{\gamma'(2^{-j})}{2^{m+j-k}}\right)\left|M\left(\tilde{\psi}_{m+j}\cdot\check{\phi}_{m+j}\ast f\right)(u)\right|^2\right]^{\frac{1}{2}}\cdot \left[ \sum_{j\in T}\sum_{k\in \mathbb{Z}}\phi\left(\frac{\gamma'(2^{-j})}{2^{m+j-k}}\right)\left|\left(\tilde{\psi}_{k}\cdot \check{\phi}_k\ast g\right)(u)\right|^2\right]^{\frac{1}{2}}     \,\textrm{d}u.
\end{align*}
Furthermore, we also have $|\tilde{\psi}_{m+j}(u)|\ls\sup_{k\in \mathbb{Z}}|\tilde{\psi}_{k}(u)|\ls M(\chi_\Omega)(u)\leq 1$. By the $\mathrm{H}\ddot{\mathrm{o}}\mathrm{lder}$ inequality, then $|\langle |H^4_m|(f,g),\chi_{E'}\rangle|$ is controlled by
\begin{align*}
\left\| \left[ \sum_{j\in T}\sum_{k\in \mathbb{Z}}\phi\left(\frac{\gamma'(2^{-j})}{2^{m+j-k}}\right)\left|M(\check{\phi}_{m+j}\ast f)\right|^2\right]^{\frac{1}{2}} \right\|_{L^{p}(\mathbb{R})} \left\| \left[ \sum_{j\in T}\sum_{k\in \mathbb{Z}}\phi\left(\frac{\gamma'(2^{-j})}{2^{m+j-k}}\right)\left| \check{\phi}_k\ast g\right|^2\right]^{\frac{1}{2}}  \right\|_{L^{q}(\mathbb{R})} \|M(\chi_\Omega)\|_{L^{r'}(\mathbb{R})},
\end{align*}
which can be further bounded by
$\|f\|_{L^{p}(\mathbb{R})}\|g\|_{L^{q}(\mathbb{R})}\|M(\chi_\Omega)\|_{L^{r'}(\mathbb{R})}$ by the Fefferman-Stein inequality, the Littlewood-Paley Theory, \eqref{eq:3.7} and \eqref{eq:3.6}. Noting that $f=\chi_{F_1}$, $g=\chi_{F_2}(x)$ and $|\Omega|<\frac{|F_3|}{2}$, from the $L^{r'}(\mathbb{R})$ boundedness of $M$ for all $1<r'\leq\infty$, $|\langle |H^4_m|(f,g),\chi_{E'}\rangle|$ can be bounded by $|F_1|^{\frac{1}{p}}|F_2|^{\frac{1}{q}}|F_3|^{\frac{1}{r'}}$. Therefore, we complete the proof of Proposition \ref{proposition 6.1}.
\end{proof}

The rest of this subsection is devoted to the case $\frac{1}{2}<r<1$. Let $\theta$ be a nonnegative Schwartz function such that $\hat{\theta}$ is supported on $\{t\in \mathbb{R}:\  |t|\leq 2^{-10}\}$ and $\hat{\theta}(0)=1$, $\theta_\lambda(x):=2^\lambda\theta(2^\lambda x)$, $\lambda\in \mathbb{Z}$. Let $I_{n,j}:=[\frac{n}{2^j}, \frac{n+1}{2^j}]$ and $\chi^*_{I_{n,j}}:=\chi_{I_{n,j}}\ast \theta_{j+m}$. We can make a partition of unity $1=\sum_{n\in \mathbb{Z}}\chi^*_{I_{n,j}}(x)$. Denote
$$F_{n,m,j}(x,t):=(\chi^*_{I_{n,j}}\cdot\tilde{\psi}_{m+j}\cdot\check{\phi}_{m+j}\ast f)(x-2^{-j} t)\quad \textrm{and}\quad G_{n,m,j,k}(x,t):=(\chi^*_{I_{n,j}}\cdot\tilde{\psi}_{k}\cdot\check{\phi}_k\ast g)(x-\gamma(2^{-j}t)),$$
and define
\begin{align}\label{eq:6.39}
T_m(f,g)(x):=\sum_{j\in \mathbb{N}}\sum_{k\in \mathbb{Z}}\phi\left(\frac{\gamma'(2^{-j})}{2^{m+j-k}}\right)\int_{-\infty}^{\infty} \left| \left[\sum_{n\in \mathbb{Z}} F_{n,m,j}(x,t)\right]\cdot\left[\sum_{n\in \mathbb{Z}} G_{n,m,j,k}(x,t)\right]\cdot \rho(t)\right|\,\textrm{d}t.
\end{align}
Next, we introduce the definition of a tree.
\begin{definition}\label{definition 6.4}
Let $S\subset S_0:=\{(j,n); j\in \mathbb{N}, n\in \mathbb{Z}\}$. A subset $T\subset S$ is called a \emph{tree} of $S$ with top $(j_0,n_0)\in S$ if $I_{n,j}\subset I_{n_0,j_0}$ for all $(j,n)\in T$. $T$ is called a \emph{maximal tree} with top $(j_0,n_0)$ in $S$ if there is no tree in $T'\subset S$ with the same top but strictly containing $T$.
\end{definition}

We still need several notations. For any fixed set $S\subset S_0$, we abuse the notation $j\in S$ if and only if $(j,n)\in S$. For any $j\in S$, we denote  $S_j:=\{n\in \mathbb{Z}:\ (j,n)\in S\}$. An operator $\Lambda_S[f,g]$ based on the set $S$ is defined as
\begin{align}\label{eq:6.41}
\sum_{j\in S}\sum_{k\in \mathbb{Z}}\phi\left(\frac{\gamma'(2^{-j})}{2^{m+j-k}}\right)\int_{-\infty}^{\infty}\int_{-\infty}^{\infty} \left| \left[\sum_{n\in S_j} F_{n,m,j}(x,t)\right]\cdot\left[\sum_{n\in S_j} G_{n,m,j,k}(x,t)\right]\cdot \rho(t)\right|\,\textrm{d}t\,\textrm{d}x.
\end{align} We can use this philosophy to define other operators based on any set $U\subset S_0$.
Then, our aim is to show that
\begin{align}\label{eq:6.42}
\Lambda_{S_0}[f,g]\ls m|F_1|^{\frac{1}{p}}|F_2|^{\frac{1}{q}}|F_3|^{\frac{1}{r'}}.
\end{align}

Let $T$ be a tree; we rewrite $\Lambda_T[f,g]$ as
\begin{align*}
\sum_{j\in T}\sum_{k\in \mathbb{Z}}\phi\left(\frac{\gamma'(2^{-j})}{2^{m+j-k}}\right)\int_{-\infty}^{\infty}\int_{-\infty}^{\infty} \left| \left[\sum_{n\in T_j} f_{n,m,j}\left(u-\textbf{tr}_j(t)\right)\right]\cdot\left[\sum_{n\in T_j} g_{n,m,j,k}(u)\right]\cdot \rho(t)\right|\,\textrm{d}t\,\textrm{d}u.
\end{align*}
where $f_{n,m,j}:=\chi^*_{I_{n,j}}\cdot\tilde{\psi}_{m+j}\cdot\check{\phi}_{m+j}\ast f$ and $g_{n,m,j,k}:=\chi^*_{I_{n,j}}\cdot\tilde{\psi}_{k}\cdot\check{\phi}_k\ast g$. Noting that $|\textbf{tr}_j(t)|\ls |t|$, we have $$\int_{-\infty}^{\infty} \left| \left[\sum_{n\in T_j} f_{n,m,j}(u-\textbf{tr}_j(t))\right] \rho(t)\right|\,\textrm{d}t\ls  M\left(\sum_{n\in T_j} f_{n,m,j}\right)(u).$$  By the Cauchy-Schwarz inequality and $\mathrm{H}\ddot{\mathrm{o}}\mathrm{lder}$ inequality, it is bounded by
\begin{align*}
\left\| \left[ \sum_{j\in T}\sum_{k\in \mathbb{Z}}\phi\left(\frac{\gamma'(2^{-j})}{2^{m+j-k}}\right)\left|M\left[\sum_{n\in T_j} f_{n,m,j}\right]\right|^2\right]^{\frac{1}{2}} \right\|_{L^{q'}(\mathbb{R})} \left\| \left[ \sum_{j\in T}\sum_{k\in \mathbb{Z}}\phi\left(\frac{\gamma'(2^{-j})}{2^{m+j-k}}\right)\left|\sum_{n\in T_j} g_{n,m,k,j}\right|^2\right]^{\frac{1}{2}}  \right\|_{L^{q}(\mathbb{R})}.
\end{align*}
By the Fefferman-Stein inequality and \eqref{eq:3.6}, it is controlled by
\begin{align}\label{eq:6.44}
&\left\| \left[ \sum_{j\in T}\left|\sum_{n\in T_j} f_{n,m,j}\right|^2\right]^{\frac{1}{2}} \right\|_{L^{q'}(\mathbb{R})} \left\| \left[ \sum_{j\in T}\sum_{k\in \mathbb{Z}}\phi\left(\frac{\gamma'(2^{-j})}{2^{m+j-k}}\right)\left|\sum_{n\in T_j} g_{n,m,k,j}\right|^2\right]^{\frac{1}{2}}  \right\|_{L^{q}(\mathbb{R})}\\
=:&\| S_{1,T}(f)\|_{L^{q'}(\mathbb{R})} \| S_{2,T}(g)\|_{L^{q}(\mathbb{R})},\nonumber
\end{align}
where we have defined $S_{1,T}(f)$ and $S_{2,T}(g)$ by, respectively,
\begin{align}\label{eq:6.45}
\begin{cases}S_{1,T}(f)(x):=\left[ \sum_{j\in T}\left|\sum_{n\in T_j} f_{n,m,j}(x)\right|^2\right]^{\frac{1}{2}};\\
S_{2,T}(g)(x):=\left[ \sum_{j\in T}\sum_{k\in \mathbb{Z}}\phi\left(\frac{\gamma'(2^{-j})}{2^{m+j-k}}\right)\left|\sum_{n\in T_j} g_{n,m,k,j}(x)\right|^2\right]^{\frac{1}{2}}.
\end{cases}
\end{align}

\subsubsection{Sizes and BMO estimates}\label{subsubsection 6.2.1}

For any positive integer $K$ large enough, we have
$$
\chi^*_{I_{n,j}}(x)\ls \int_{I_{n,j}}\frac{2^{j+m}}{(1+2^{j+m}|x-y|)^K}\,\textrm{d}y=:\chi^{**}_{I_{n,j}}(x)\quad \textrm{and}\quad
\chi^*_{I_{n,j}}(x)\ls \frac{1}{\left(1+2^{j+m}\textrm{dist}(x,I_{n,j})\right)^K}.
$$
\begin{definition}\label{definition 6.6}
Let $T\subset S$ be a tree, $I_T$ be the \emph{time interval of the top of tree $T$}, and $Df$ be the derivative of $f$. We define the following $\textbf{1-size}(T)$ and $\textbf{2-size}(T)$ as
\begin{align}\label{eq:6.47}
\textbf{1-size}(T):=|I_T|^{-\frac{1}{p}}\Bigg\{ &\left\| \left[ \sum_{j\in T}\left|\sum_{n\in T_j} \left(\chi^{**}_{I_{n,j}}\cdot\chi_{\Omega_{m+j}^{\complement}}\ast \psi_{m+j}\cdot \check{\phi}_{m+j}\ast f\right)\right|^2\right]^{\frac{1}{2}} \right\|_{L^{p}(\mathbb{R})} \\
+&\left\| \left[ \sum_{j\in T}\left|\sum_{n\in T_j} \left(\chi^{**}_{I_{n,j}}\cdot\chi_{\Omega_{m+j}^{\complement}}\ast \psi_{m+j}\cdot (D\check{\phi})_{m+j}\ast f\right)\right|^2\right]^{\frac{1}{2}} \right\|_{L^{p}(\mathbb{R})}\nonumber\\
+&\left\| \left[ \sum_{j\in T}\left|\sum_{n\in T_j} \left(\chi^{**}_{I_{n,j}}\cdot\chi_{\Omega_{m+j}^{\complement}}\ast (D\psi)_{m+j}\cdot \check{\phi}_{m+j}\ast f\right)\right|^2\right]^{\frac{1}{2}} \right\|_{L^{p}(\mathbb{R})}
     \Bigg\},\nonumber
\end{align}
\begin{align}\label{eq:6.48}
\textbf{2-size}(T):=|I_T|^{-\frac{1}{q}}\left\| \left[ \sum_{j\in T}\sum_{k\in \mathbb{Z}}\phi\left(\frac{\gamma'(2^{-j})}{2^{m+j-k}}\right)\left|\sum_{n\in T_j} \left(\chi^{**}_{I_{n,j}}\cdot\tilde{\psi}_{k}\cdot\check{\phi}_k\ast g\right)\right|^2\right]^{\frac{1}{2}} \right\|_{L^{q}(\mathbb{R})}.
\end{align}
\end{definition}

\begin{definition}\label{definition 6.7}
For any  subset $U\subset S_0$ and $i\in\{1,2\}$, the $\textbf{size}_\textbf{i}(U)$ is defined as
\begin{align}\label{eq:6.49}
\textbf{size}_\textbf{i}(U):=\sup_{T\subset U}|\textbf{i-size}(T)|,
\end{align}
where $T\subset U$ is a tree.
\end{definition}

\begin{lemma}\label{lemma 6.8}
Let $\rho\in(1,\infty)$, $M_\rho f:=(M(|f|^\rho))^{\frac{1}{\rho}}$. Then, for any tree $T\subset S$, there exists a positive constant $C$ such that
\begin{align}\label{eq:6.52}
\textbf{1-size}(T)\leq C \inf_{x\in I_T}M_p(Mf)(x)\quad \textrm{and}\quad \textbf{2-size}(T)\leq C \inf_{x\in I_T}M_q(Mg)(x).
\end{align}
\end{lemma}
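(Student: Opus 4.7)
The plan is to treat the two size bounds simultaneously, as they have the same structure; I focus on $\textbf{1-size}(T)$, the analogous argument for $\textbf{2-size}(T)$ using the uniform bound \eqref{eq:3.6} to collapse the extra $k$-sum. All three summands defining $\textbf{1-size}(T)$ share the form
\[
\Bigl\|\Bigl[\sum_{j\in T}\Bigl|\sum_{n\in T_j}\chi^{**}_{I_{n,j}}\cdot h_{j}\Bigr|^{2}\Bigr]^{1/2}\Bigr\|_{L^p(\mathbb{R})},
\]
where $h_j$ is a Littlewood--Paley type piece of $f$ at frequency $2^{m+j}$ multiplied by the bounded cut-off $\tilde{\psi}_{m+j}$; the derivative variants $(D\check{\phi})_{m+j}\ast f$ and $\chi_{\Omega_{m+j}^{\complement}}\ast(D\psi)_{m+j}$ are of the same Littlewood--Paley type and may be handled in parallel.

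First, I would collapse the $n$-sum by exploiting the tree hypothesis $I_{n,j}\subset I_T$: for each $j\in T$,
\[
\sum_{n\in T_j}\chi^{**}_{I_{n,j}}(x)\leq \int_{I_T}\frac{2^{j+m}}{(1+2^{j+m}|x-y|)^K}\,dy=:\Psi_T^{(j)}(x).
\]
Since $j\geq j_0$ forces $2^{j+m}|I_T|\geq 2^m\geq 1$, a direct estimate gives $\Psi_T^{(j)}(x)\lesssim (1+\dist(x,I_T)/|I_T|)^{-K+1}=:\Psi_T(x)$ uniformly in $j$ for $K$ large; a dyadic decomposition then yields $\Psi_T\lesssim \chi_{2I_T}+\sum_{l\geq 1}2^{-l(K-1)}\chi_{2^{l+1}I_T\setminus 2^l I_T}$. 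Using $|\tilde{\psi}_{m+j}|\lesssim 1$ and pulling $\Psi_T^{(j)}$ outside the square function pointwise, the remaining expression is dominated by $\|\Psi_T\cdot S(f)\|_{L^p}$, where $S(f):=\bigl(\sum_{j\in\mathbb{Z}}|\check{\phi}_{m+j}\ast f|^2\bigr)^{1/2}+\bigl(\sum_{j\in\mathbb{Z}}|(D\check{\phi})_{m+j}\ast f|^2\bigr)^{1/2}$ is a standard Littlewood--Paley square function.

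Second, a localized vector-valued Littlewood--Paley estimate gives $\|\chi_{J}\,S(f)\|_{L^p}\lesssim \|\chi_{J^{*}}Mf\|_{L^p}$ on any interval $J$ with bounded enlargement $J^{*}$; applied on each annulus $2^{l+1}I_T\setminus 2^l I_T$ and summed against the dyadic tail of $\Psi_T$, this yields $\|\Psi_T\cdot S(f)\|_{L^p}\lesssim \|\chi_{I_T^{*}}Mf\|_{L^p}$ provided $K$ is taken large enough that $\sum_l 2^{-l(K-1)}(2^l)^{1/p}$ converges. Finally, for any $x_0\in I_T$, $\frac{1}{|I_T|}\int_{I_T^{*}}|Mf|^{p}\lesssim M_p(Mf)(x_0)^p$, and dividing through by $|I_T|^{1/p}$ gives $\textbf{1-size}(T)\lesssim M_p(Mf)(x_0)$, which is the desired infimum bound; the $\textbf{2-size}(T)$ estimate follows from the same argument, with \eqref{eq:3.6} absorbing the sum over $k$. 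The main obstacle I anticipate is the localized Littlewood--Paley inequality $\|\chi_J S(f)\|_{L^p}\lesssim \|\chi_{J^*}Mf\|_{L^p}$: one must reconcile the spatial truncation with the frequency localization of $\check{\phi}_{m+j}\ast f$, which can be done by splitting $f=f\chi_{J^*}+f\chi_{(J^*)^c}$ and using the rapid decay of the kernel $\check{\phi}_{m+j}$ together with the global Fefferman--Stein vector-valued inequality to handle the two pieces separately.
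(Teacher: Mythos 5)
Your overall strategy is sound and is essentially a reorganized version of the paper's argument: the paper splits $f=f\chi_{2I_T}+f\chi_{(2I_T)^{\complement}}$, keeps the square function only for the near part, and for the far part sums $L^p$ norms tile by tile using the counting bound \eqref{eq:6.64}; you instead collapse the $n$-sum pointwise into the weight $\Psi_T(x)\lesssim(1+\mathrm{dist}(x,I_T)/|I_T|)^{-(K-1)}$ (valid because $j\geq j_0$ gives $2^{j+m}|I_T|\geq 2^m$) and then localize a Littlewood--Paley square function on dyadic annuli around $I_T$. Both roads lead to the composed maximal function $M_p(Mf)$, and your annular scheme does work \emph{provided} you keep the square function restricted to $j\in T$ (so every scale satisfies $2^{j+m}\geq 2^m/|I_T|\geq 1/|2^lI_T|$, which is what makes the far-field sums over $j$ geometric on each annulus); with the sum over all $j\in\mathbb{Z}$, as you wrote $S(f)$, the coarse scales spoil the pointwise far-field decay and the localized estimate becomes unnecessary trouble. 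For the $\textbf{2-size}$ you also need the observation, made in the paper, that on $\mathrm{supp}\,\phi\bigl(\gamma'(2^{-j})/2^{m+j-k}\bigr)$ with $j>0$ one has $2^{k}\gtrsim 2^{m+j}$, so the kernels $\check{\phi}_k$ decay at least at the scale $2^{-j-m}$; "analogous with \eqref{eq:3.6}" alone does not give this.

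The one step that fails as written is the intermediate claim $\|\Psi_T\cdot S(f)\|_{L^p}\lesssim\|\chi_{I_T^{*}}Mf\|_{L^p}$ for a \emph{bounded} enlargement $I_T^{*}$. Take $I_T=[0,1]$, let $T$ contain arbitrarily fine tiles inside $I_T$, and let $f=A\chi_{[D,D+\epsilon]}$ with $D\gg1$: near the spike $S_T(f)\approx A$ on a set of measure $\epsilon$ and $\Psi_T\approx D^{-(K-1)}$ there, so $\|\Psi_T S_T(f)\|_{L^p}^p\gtrsim D^{-(K-1)p}A^p\epsilon$, whereas $\|\chi_{I_T^{*}}Mf\|_{L^p}^p\approx(A\epsilon/D)^p$; since $p>1$, letting $\epsilon\to0$ with $A\epsilon$ fixed makes the ratio blow up. The repair is exactly what your convergence condition and your final sentence already suggest: do not pass through $\|\chi_{I_T^{*}}Mf\|_{L^p}$ at all, but bound each annulus directly by $\|\chi_{2^{l+2}I_T}Mf\|_{L^p}\leq(2^{l+2}|I_T|)^{1/p}\inf_{x\in I_T}M_p(Mf)(x)$, using that the average of $(Mf)^p$ over $2^{l+2}I_T$ is controlled by $M\bigl((Mf)^p\bigr)$ at any point of $I_T$, and then sum $\sum_{l}2^{-l(K-1)}2^{l/p}<\infty$. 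This lands on $|I_T|^{1/p}\inf_{I_T}M_p(Mf)$, which is the stated bound and also explains why the composition $M_p\circ M$, rather than $M_p$ on a fixed dilate, is the right quantity: it is what absorbs contributions from arbitrarily far annuli. With these corrections your proof goes through.
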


\begin{proof}
By the Littlewood-Paley theory,
\begin{align}\label{eq:6.56}
\left\| \left[ \sum_{j\in T}\left|\sum_{n\in T_j} \left(\chi^{**}_{I_{n,j}}\cdot\chi_{\Omega_{m+j}^{\complement}}\ast \psi_{m+j}\cdot \check{\phi}_{m+j}\ast f\right)\right|^2\right]^{\frac{1}{2}} \right\|_{L^{p}(\mathbb{R})}\ls& \left\| \left[ \sum_{j\in T}\left|\check{\phi}_{m+j}\ast f\right|^2\right]^{\frac{1}{2}} \right\|_{L^{p}(\mathbb{R})}
\ls \| f \|_{L^{p}(\mathbb{R})}.
\end{align}
For the first part of \eqref{eq:6.52}, we split $f$ into $f\chi_{2I_T}$ and $f\chi_{(2I_T)^{\complement}}$. It is easy to see that
\begin{align}\label{eq:6.57}
\left\| \left[ \sum_{j\in T}\left|\sum_{n\in T_j} \left(\chi^{**}_{I_{n,j}}\cdot\chi_{\Omega_{m+j}^{\complement}}\ast \psi_{m+j}\cdot \check{\phi}_{m+j}\ast (f\chi_{2I_T})\right)\right|^2\right]^{\frac{1}{2}} \right\|_{L^{p}(\mathbb{R})}\ls \| f \chi_{2I_T}\|_{L^{p}(\mathbb{R})}
\ls |I_T|^{\frac{1}{p}}\inf_{x\in I_T}M_pf(x),
\end{align}
and
\begin{align}\label{eq:6.58}
&\left\| \left[ \sum_{j\in T}\left|\sum_{n\in T_j}\left( \chi^{**}_{I_{n,j}}\cdot\chi_{\Omega_{m+j}^{\complement}}\ast \psi_{m+j}\cdot \check{\phi}_{m+j}\ast (f\chi_{(2I_T)^{\complement}})\right)\right|^2\right]^{\frac{1}{2}} \right\|_{L^{p}(\mathbb{R})}\\
\ls& \sum_{j\in T}\sum_{n\in T_j} \left\|  \chi^{**}_{I_{n,j}}\cdot\chi_{\Omega_{m+j}^{\complement}}\ast \psi_{m+j}\cdot \check{\phi}_{m+j}\ast (f\chi_{(2I_T)^{\complement}}) \right\|_{L^{p}(\mathbb{R})}.\nonumber
\end{align}
On the other hand, $(\chi^{**}_{I_{n,j}}\cdot\chi_{\Omega_{m+j}^{\complement}}\ast \psi_{m+j}\cdot \check{\phi}_{m+j}\ast (f\chi_{(2I_T)^{\complement}}))(x)$ can be bounded by
\begin{align*}\frac{1}{(1+2^{j+m}\textrm{dist}(x,I_{n,j}))^{\frac{K}{2}}}  \frac{1}{(1+2^{j+m}\textrm{dist}(I_{n,j}, (2I_T)^{\complement}))^{\frac{K}{2}}} Mf(x).
\end{align*}
\eqref{eq:6.58} is bounded by
\begin{align}\label{eq:6.60}
\sum_{j\in T}\sum_{n\in T_j} \left[\int_{-\infty}^{\infty}\frac{1}{\left(1+2^{j+m}\textrm{dist}(x,I_{n,j})\right)^{\frac{pK}{2}}} \frac{1}{\left(1+2^{j+m}\textrm{dist}(I_{n,j}, (2I_T)^{\complement})\right)^{\frac{pK}{2}}}(Mf)^p(x)\,\textrm{d}x \right]^{\frac{1}{p}}.
\end{align}
For the integral in the square bracket above, which is further bounded by the sum of
\begin{align*}
\int_{I_T}\frac{1}{\left(1+2^{j+m}\textrm{dist}(x,I_{n,j})\right)^{\frac{pK}{2}}} \frac{1}{\left(1+2^{j+m}\textrm{dist}(I_{n,j}, (2I_T)^{\complement})\right)^{\frac{pK}{2}}}(Mf)^p(x)\,\textrm{d}x=:\Theta
\end{align*}
and
\begin{align*}
\sum_{l\in\mathbb{N} }\int_{2^{l+1}I_T\setminus 2^{l}I_T}\frac{1}{\left(1+2^{j+m}\textrm{dist}(x,I_{n,j})\right)^{\frac{pK}{2}}} \frac{1}{\left(1+2^{j+m}\textrm{dist}(I_{n,j}, (2I_T)^{\complement})\right)^{\frac{pK}{2}}}(Mf)^p(x)\,\textrm{d}x=:\sum_{l\in\mathbb{N} }\Theta_l.
\end{align*}
Therefore, \eqref{eq:6.60} is bounded by
\begin{align}\label{eq:6.61}
\sum_{j\in T}\sum_{n\in T_j}\Theta^{\frac{1}{p}}+\sum_{j\in T}\sum_{n\in T_j}\left(\sum_{l\in\mathbb{N} }\Theta_l\right)^{\frac{1}{p}}.
\end{align}

For the first part in \eqref{eq:6.61}, we have
$$\frac{\frac{|I_T|}{|I_{n,j}|} }{\left(1+2^{j+m}\textrm{dist}(I_{n,j}, (2I_T)^{\complement})\right)^{\frac{pK}{4}}}\leq \frac{\frac{|I_T|}{|I_{n,j}|} }{\left(1+\frac{|I_T|}{|I_{n,j}|} \right)^{\frac{pK}{4}}}\ls 1~~\textrm{and}~~
\frac{1}{|I_T|}\int_{I_T} (Mf)^p(x)\,\textrm{d}x \ls \inf_{x\subseteq I_T} (M_p(Mf))^p(x).$$
Thus
\begin{align*}
\Theta\leq&\frac{|I_{n,j}|}{\left(1+2^{j+m}\textrm{dist}(I_{n,j}, (2I_T)^{\complement})\right)^{\frac{pK}{2}}} \frac{|I_T|}{|I_{n,j}|}  \frac{1}{|I_T|}\int_{I_T} (Mf)^p(x)\,\textrm{d}x\\
\ls&\frac{|I_{n,j}|}{\left(1+2^{j+m}\textrm{dist}(I_{n,j}, (2I_T)^{\complement})\right)^{\frac{pK}{4}}}\inf_{x\subseteq I_T} (M_p(Mf))^p(x). \nonumber
\end{align*}
Furthermore, we have
\begin{align*}
\sum_{j\in T}\sum_{n\in T_j}\Theta^{\frac{1}{p}}  \ls\sum_{j\in T}\sum_{n\in T_j} \frac{|I_{n,j}|^{\frac{1}{p}}}{\left(1+2^{j+m}\textrm{dist}(I_{n,j}, (2I_T)^{\complement})\right)^{\frac{K}{4}}}  \inf_{x\subseteq I_T} M_p(Mf)(x).
\end{align*}
It suffices to show that
\begin{align}\label{eq:6.64}
\sum_{j\in T}\sum_{n\in T_j} \frac{|I_{n,j}|^{\frac{1}{p}}}{\left(1+2^{j+m}\textrm{dist}(I_{n,j}, (2I_T)^{\complement})\right)^{\frac{K}{4}}}  \ls |I_T|^{\frac{1}{p}}.
\end{align}
Indeed, note that $\textrm{dist}(I_{n,j}, (2I_T)^{\complement})\gs |I_T|$; the left-hand side of \eqref{eq:6.64} is bounded by
\begin{align*}
\sum_{j\in T}\sum_{n\in T_j} \frac{|I_{n,j}|^{\frac{1}{p}}}{\left(1+\frac{|I_T|}{|I_{n,j}|}\right)^{\frac{K}{4}}} \ls \sum_{l\in \mathbb{Z}_-}\sum_{I_{n,j}:\ 2^l|I_T|\leq|I_{n,j}|\leq 2^{l+1}|I_T| } \frac{(2^{l+1}|I_T|)^{\frac{1}{p}}}{\left(1+\frac{|I_T|}{2^{l+1}|I_T|}\right)^{\frac{K}{4}}}
\ls |I_T|^{\frac{1}{p}} \sum_{l\in \mathbb{Z}_-}2^{-l} \frac{(2^{l+1})^{\frac{1}{p}}}{\left(1+\frac{1}{2^{l+1}}\right)^{\frac{K}{4}}}
\ls |I_T|^{\frac{1}{p}}.
\end{align*}
This is the desired estimate.

For the second part in \eqref{eq:6.61}, note that
$$
\begin{cases}\sum_{l\in\mathbb{N} }\left(1+\frac{2^{l}|I_T|}{|I_{n,j}|}\right)^{-\frac{pK}{2}} 2^{l+1}\frac{|I_T|}{|I_{n,j}|} \ls 1;\\
\frac{1}{2^{l+1}|I_T|}\int_{2^{l+1}I_T\backslash 2^{l}I_T} (Mf)^p(x)\,\textrm{d}x \ls \inf_{x\subseteq 2^{l+1}I_T} (M_p(Mf))^p(x)\ls \inf_{x\subseteq I_T} (M_p(Mf))^p(x).
\end{cases}
$$
Then, $\sum_{l\in\mathbb{N} }\Theta_l$ can be bounded by
\begin{align*}
&\frac{|I_{n,j}|}{\left(1+2^{j+m}\textrm{dist}(I_{n,j}, (2I_T)^{\complement})\right)^{\frac{pK}{2}}} \sum_{l\in\mathbb{N} }\left(1+\frac{2^{l}|I_T|}{|I_{n,j}|}\right)^{-\frac{pK}{2}} 2^{l+1}\frac{|I_T|}{|I_{n,j}|} \frac{1}{2^{l+1}|I_T|}\int_{2^{l+1}I_T\setminus 2^{l}I_T} (Mf)^p(x)\,\textrm{d}x\\
\ls &\frac{|I_{n,j}|}{\left(1+2^{j+m}\textrm{dist}(I_{n,j}, (2I_T)^{\complement})\right)^{\frac{pK}{2}}}\inf_{x\subseteq I_T} (M_p(Mf))^p(x).\nonumber
\end{align*}
As in the first part in \eqref{eq:6.61}, we can obtain \eqref{eq:6.52}.

We now turn to the second part of \eqref{eq:6.52}. From \eqref{eq:3.7} and the Littlewood-Paley theory
\begin{align}\label{eq:6.67}
\left\| \left[ \sum_{j\in T}\sum_{k\in \mathbb{Z}}\phi\left(\frac{\gamma'(2^{-j})}{2^{m+j-k}}\right)\left|\sum_{n\in T_j} \left(\chi^{**}_{I_{n,j}}\cdot\tilde{\psi}_{k}\cdot\check{\phi}_k\ast g\right)\right|^2\right]^{\frac{1}{2}} \right\|_{L^{q}(\mathbb{R})}\ls\left\| \left[ \sum_{k\in \mathbb{Z}}\left(\check{\phi}_k\ast g\right)^2\right]^{\frac{1}{2}} \right\|_{L^{q}(\mathbb{R})}
\ls\| g\|_{L^{q}(\mathbb{R})}.
\end{align}
We also split $g$ into $g\chi_{2I_T}$ and $g\chi_{(2I_T)^{\complement}}$. As in \eqref{eq:6.57},
\begin{align*}
\left\| \left[ \sum_{j\in T}\sum_{k\in \mathbb{Z}}\phi\left(\frac{\gamma'(2^{-j})}{2^{m+j-k}}\right)\left|\sum_{n\in T_j} \left(\chi^{**}_{I_{n,j}}\cdot\tilde{\psi}_{k}\cdot\check{\phi}_k\ast (g\chi_{2I_T})\right)\right|^2\right]^{\frac{1}{2}} \right\|_{L^{q}(\mathbb{R})}\ls|I_T|^{\frac{1}{q}}\inf_{x\in I_T}M_q(Mg)(x).
\end{align*}
For the $g\chi_{(2I_T)^{\complement}}$ part, from the fact that $\frac{\gamma'(2^{-j})}{2^{m+j-k}}\in \textrm{supp}~\phi$, $\gamma'$ is strictly increasing on $(0,\infty)$, $j> 0$. it implies that $\frac{1}{2}\leq \frac{\gamma'(2^{-j})}{2^{m+j-k}}\leq \frac{\gamma'(1)}{2^{m+j-k}}$, which further implies that $2^{m+j}\ls 2^k$. Therefore,
\begin{align*}
\left(\chi^{**}_{I_{n,j}}\cdot\tilde{\psi}_{k}\cdot\check{\phi}_k\ast (g\chi_{(2I_T)^{\complement}})\right)(x)\ls \frac{1}{\left(1+2^{j+m}\textrm{dist}(x,I_{n,j})\right)^{\frac{K}{2}}} \frac{1}{\left(1+2^{j+m}\textrm{dist}(I_{n,j}, (2I_T)^{\complement})\right)^{\frac{K}{2}}}Mg(x).
\end{align*}
As in \eqref{eq:3.6}, we also have $\sum_{k\in \mathbb{Z}} |\phi^{\frac{1}{2}}(\frac{\gamma'(2^{-j})}{2^{n+j-k}})|\ls 1$. As in \eqref{eq:6.60}, we have
\begin{align*}
\left\| \left[ \sum_{j\in T}\sum_{k\in \mathbb{Z}}\phi\left(\frac{\gamma'(2^{-j})}{2^{m+j-k}}\right)\left|\sum_{n\in T_j} \left(\chi^{**}_{I_{n,j}}\cdot\tilde{\psi}_{k}\cdot\check{\phi}_k\ast (g\chi_{(2I_T)^{\complement}})\right)\right|^2\right]^{\frac{1}{2}} \right\|_{L^{q}(\mathbb{R})}\ls|I_T|^{\frac{1}{q}}\inf_{x\in I_T}M_q(Mg)(x).
\end{align*}
Therefore, we obtain \eqref{eq:6.52}. Hence, we complete the proof of Lemma \ref{lemma 6.8}.
\end{proof}

\begin{lemma}\label{lemma 6.9}
For the general subset $U$ of $S_0$ and $p\in(1,\infty)$, there exists a positive constant $C$ such that
\begin{align}\label{eq:6.72}
 \| S_{1,U}(f) \|_{BMO}\leq C \min\left\{1, \frac{|F_1|}{|F_3|}\right\}^{\frac{1}{p}}.
\end{align}
\end{lemma}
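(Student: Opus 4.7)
\emph{Proof plan.} It suffices to establish the two bounds $\|S_{1,U}(f)\|_{BMO}\lesssim 1$ and $\|S_{1,U}(f)\|_{BMO}\lesssim(|F_1|/|F_3|)^{1/p}$ separately; the stated minimum then follows immediately (the second estimate is only nontrivial when $|F_1|\leq|F_3|$).

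The bound by $1$ will follow from a pointwise majorization. Setting $\mathcal{X}_{U,j}(x):=\sum_{n:(j,n)\in U}\chi^*_{I_{n,j}}(x)$, one has $0\leq\mathcal{X}_{U,j}(x)\leq 1$ since $\{\chi^*_{I_{n,j}}\}_n$ is a partition of unity, and $0\leq\tilde\psi_{m+j}(x)\leq 1$ since $\tilde\psi_{m+j}$ is the convolution of $\chi_{\Omega_{m+j}^\complement}$ with the nonnegative Schwartz kernel $\psi_{m+j}$ of integral $1$. Consequently,
$$S_{1,U}(f)(x)\leq\Big(\sum_{j\geq 0}|\check\phi_{m+j}\ast f(x)|^2\Big)^{1/2}=:\tilde Sf(x),$$
and the classical $L^\infty\to BMO$ mapping property of the Littlewood-Paley square function yields $\|\tilde Sf\|_{BMO}\lesssim\|f\|_{L^\infty}=\|\chi_{F_1}\|_{L^\infty}=1$.

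For the bound by $(|F_1|/|F_3|)^{1/p}$, I fix any interval $I$ and estimate $\frac{1}{|I|}\int_I|S_{1,U}(f)-c_I|$ for a suitable constant $c_I$. Decompose $f=f\chi_{3I}+f\chi_{(3I)^\complement}$; by subadditivity of the square function, $S_{1,U}(f)\leq S_{1,U}(f\chi_{3I})+S_{1,U}(f\chi_{(3I)^\complement})$. For the local part, the $L^p$-boundedness obtained above gives
$$\Big(\tfrac{1}{|I|}\int_I S_{1,U}(f\chi_{3I})^p\Big)^{1/p}\lesssim\frac{\|f\chi_{3I}\|_{L^p}}{|I|^{1/p}}=\Big(\frac{|F_1\cap 3I|}{|I|}\Big)^{1/p}.$$
If $3I\cap\Omega^\complement\neq\emptyset$, pick $y\in 3I\cap\Omega^\complement$, so $\frac{|F_1\cap 3I|}{|3I|}\leq M\chi_{F_1}(y)\lesssim|F_1|/|F_3|$ by the definition of $\Omega$, yielding the desired estimate. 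If $3I\subset\Omega$, then the rapid decay of $\tilde\psi_{m+j}(x)$ in the quantity $2^{m+j}\,\mathrm{dist}(x,\Omega^\complement)$ forces $S_{1,U}(f\chi_{3I})$ to be pointwise negligible on $I$. For the non-local part, take $c_I:=S_{1,U}(f\chi_{(3I)^\complement})(x_I)$ at the center $x_I$ of $I$ and use the smoothness and rapid spatial decay of the kernels $\check\phi_{m+j}$ to control the oscillation of this piece on $I$ by a summable tail in $j$; the $\tilde\psi_{m+j}$ factors again localize the result near $\Omega^\complement$, producing the uniform bound $(|F_1|/|F_3|)^{1/p}$.

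The main obstacle will be the case of intervals $I$ that straddle $\partial\Omega$: there the $\tilde\psi_{m+j}$ factor transitions between $\approx 1$ and rapid decay precisely at the scales $2^{-(m+j)}\approx\mathrm{dist}(I,\Omega^\complement)$, so one cannot cleanly separate ``interior'' from ``boundary'' scales. Summing the contributions of these boundary scales requires a careful balancing of $\tilde\psi_{m+j}$ decay against the size of $\check\phi_{m+j}\ast f$, in a manner parallel to the Whitney-type case analysis carried out for the error term $|H^1_m|$ in Case IIb of Subsection \ref{subsection 6.1}.
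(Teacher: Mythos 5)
There is a genuine gap at the very first step of your plan. The bound $\|S_{1,U}(f)\|_{BMO}\lesssim 1$ cannot be deduced from the pointwise majorization $S_{1,U}(f)\leq \tilde S f:=\bigl(\sum_{j}|\check\phi_{m+j}\ast f|^2\bigr)^{1/2}$ together with $\|\tilde S f\|_{BMO}\lesssim\|f\|_{L^\infty}$: BMO is not monotone under pointwise domination (a function oscillating between $0$ and a large constant is dominated by that constant, which has BMO norm zero). The whole difficulty of Lemma \ref{lemma 6.9} is precisely that the rough factors $\sum_{n\in U_j}\chi^*_{I_{n,j}}$ (an arbitrary sub-collection, so the sum is not identically $1$) and $\tilde\psi_{m+j}$ create oscillation that the smooth majorant $\tilde S f$ does not see, so this oscillation must be estimated by hand. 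The paper does this by fixing a dyadic interval $\textbf{J}$ and splitting the $j$-sum at the scale of $\textbf{J}$: for fine scales ($2^{-j-m}\leq|\textbf{J}|$) it bounds the plain $L^1$ average, playing the gain $\|\tilde\psi_{m+j}\|_{L^\infty(\textbf{J})}\lesssim 2^{-Ks}$ (where $2^s\textbf{J}$ is the smallest dilate meeting $\Omega^{\complement}$, see \eqref{eq:6.80}) against the loss $2^{s/p}$ incurred in \eqref{eq:6.79} when enlarging $\textbf{J}$ to reach a point where $M\chi_{F_1}\lesssim|F_1|/|F_3|$; for coarse scales it uses the Poincar\'e inequality \eqref{eq:6.83}, where differentiating the cutoffs costs only $2^{j+m}\lesssim|\textbf{J}|^{-1}$, summable geometrically. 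Both halves deliver $\min\{1,|F_1|/|F_3|\}^{1/p}$ simultaneously, since the point of $\Omega^{\complement}$ produced by the $\tilde\psi_{m+j}$ localization satisfies $M\chi_{F_1}\leq\min\{1,C|F_1|/|F_3|\}$ there.

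Your second half (the bound by $(|F_1|/|F_3|)^{1/p}$) is closer in spirit to the paper, but it is also incomplete at exactly the point you flag yourself: intervals straddling $\partial\Omega$, where one must sum over the scales with $2^{-(m+j)}$ comparable to $\mathrm{dist}(I,\Omega^{\complement})$. This is not a side issue to be "balanced carefully" later; it is the content of the estimate \eqref{eq:6.78}--\eqref{eq:6.81} ($2^{s/p}$ versus $2^{-Ks}$), and without it neither the local part on intervals with $3I\subset\Omega$ nor the claimed bound by $1$ is established. Likewise, the oscillation of the coarse-scale piece involves derivatives of $\chi^*_{I_{n,j}}$ and $\tilde\psi_{m+j}$, not only of $\check\phi_{m+j}$, so the constant-subtraction step needs the explicit Poincar\'e/derivative computation of \eqref{eq:6.83}--\eqref{eq:6.84} rather than an appeal to kernel smoothness alone.
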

\begin{proof}
Let $\textbf{J}$ be a dyadic interval of length $2^{-J}$. It suffices to bound the following formula:
\begin{align}\label{eq:6.73}
 \inf_{c\in \mathbb{R}} \int_{\textbf{J}}\left| \left[\sum_{j\in U}\left|\sum_{n\in U_j} \left(\chi^*_{I_{n,j}}\cdot\tilde{\psi}_{m+j}\cdot\check{\phi}_{m+j}\ast f\right)(x)\right|^2\right]^{\frac{1}{2}}-c  \right|\,\textrm{d}x,
\end{align}
which further is bounded by a sum of the following two parts:
\begin{align}\label{eq:6.74}
\begin{cases}J_1:= \int_{\textbf{J}} \left[ \sum_{j\in U:\ J\leq j+m }\left|\sum_{n\in U_j} \left(\chi^*_{I_{n,j}}\cdot\tilde{\psi}_{m+j}\cdot\check{\phi}_{m+j}\ast f\right)(x)\right|^2\right]^{\frac{1}{2}}\,\textrm{d}x;\\
J_2:=\inf_{c\in \mathbb{R}} \int_{\textbf{J}}\left| \left[ \sum_{j\in U:\ J> j+m }\left|\sum_{n\in U_j} \left(\chi^*_{I_{n,j}}\cdot\tilde{\psi}_{m+j}\cdot\check{\phi}_{m+j}\ast f\right)(x)\right|^2\right]^{\frac{1}{2}}-c\right|\,\textrm{d}x.
\end{cases}
\end{align}

For $J_1$ in \eqref{eq:6.74}, we bound it by $J_{1,1}+J_{1,2}$, where
\begin{align*}
\begin{cases}J_{1,1}:= \int_{\textbf{J}} \left[ \sum_{j\in U:\ J\leq j+m }\left|\sum_{n\in U_j} \left(\chi^*_{I_{n,j}}\cdot\tilde{\psi}_{m+j}\cdot\check{\phi}_{m+j}\ast (f\chi_{2\textbf{J}})\right)(x)\right|^2\right]^{\frac{1}{2}}\,\textrm{d}x;\\
J_{1,2}:= \int_{\textbf{J}} \left[ \sum_{j\in U:\ J\leq j+m }\left|\sum_{n\in U_j} \left(\chi^*_{I_{n,j}}\cdot\tilde{\psi}_{m+j}\cdot\check{\phi}_{m+j}\ast (f\chi_{(2\textbf{J})^{\complement}})\right)(x)\right|^2\right]^{\frac{1}{2}}\,\textrm{d}x.
\end{cases}
\end{align*}

For $J_{1,1}$, by the $\mathrm{H}\ddot{\mathrm{o}}\mathrm{lder}$ inequality, which is bounded by
\begin{align*}
|\textbf{J}|^{\frac{1}{p'}} \left\| \left[ \sum_{j\in U:\ J\leq j+m }\left|\sum_{n\in U_j} \left(\chi^*_{I_{n,j}}\cdot\tilde{\psi}_{m+j}\cdot\check{\phi}_{m+j}\ast (f\chi_{2\textbf{J}})\right)\right|^2\right]^{\frac{1}{2}} \right\|_{L^{p}(\textbf{J})}.
\end{align*}
As in \eqref{eq:6.56}, the above expression is bounded by
\begin{align}\label{eq:6.78}
|\textbf{J}|^{\frac{1}{p'}} \| f\chi_{2\textbf{J}} \|_{L^{p}(\mathbb{R})}\sup_{j\in U:\ J\leq j+m }\| \tilde{\psi}_{m+j} \|_{L^{\infty}(\textbf{J})}.
\end{align}
Let $s\in \mathbb{N}$ be the least integer such that $2^s\textbf{J}\bigcap \Omega^{\complement}\neq \emptyset$, where $2^s\textbf{J}$ denotes the interval of length $2^s|\textbf{J}|$ whose center is the same as that of $\textbf{J}$, then
\begin{align}\label{eq:6.79}
|\textbf{J}|^{-\frac{1}{p}} \| f\chi_{2\textbf{J}} \|_{L^{p}(\mathbb{R})}\ls 2^{\frac{s}{p}} \inf_{x\in2^s\textbf{J} }M_pf(x)\ls 2^{\frac{s}{p}}\min\left\{1, \frac{|F_1|}{|F_3|}\right\}^{\frac{1}{p}}.
\end{align}
On the other hand, $|\textbf{J}|=2^{-J}\geq2^{-j-m} $; it implies
\begin{align}\label{eq:6.80}
\sup_{j\in U:\ J\leq j+m }\| \tilde{\psi}_{m+j} \|_{L^{\infty}(\textbf{J})}\ls \sup_{j\in U:\ J\leq j+m }\frac{1}{\left(1+2^{j+m}\textrm{dist}(x,\Omega_{j+m}^{\complement})\right)^K}\ls\frac{1}{\left(1+2^{j+m}2^s|\textbf{J}|\right)^K}\ls 2^{-Ks}.
\end{align}
From \eqref{eq:6.78}, \eqref{eq:6.79} and \eqref{eq:6.80}, we have
\begin{align}\label{eq:6.81}
J_{1,1}\ls |\textbf{J}|\min\left\{1, \frac{|F_1|}{|F_3|}\right\}^{\frac{1}{p}}.
\end{align}

For $J_{1,2}$, for each $x\in \textbf{J}$, we choose $z\in \Omega^{\complement}$ such $\textrm{dist}(x,\Omega^{\complement})\approx |x-z|$. It implies that $\check{\phi}_{m+j}\ast (f\chi_{(2\textbf{J})^{\complement}})(x)$ can be bounded by
\begin{align*}
\int_{(2\textbf{J})^{\complement}} |f(y)| 2^{j+m} \delta_{j,K}(x,y)\,\textrm{d}y
\leq \int_{(2\textbf{J})^{\complement}} |f(y)|  \frac{\delta_{j,\frac{K}{2}}(z,y)}{\delta_{j,\frac{K}{2}}(z,x)\cdot\delta_{j,\frac{K}{2}}(x,y)}  2^{j+m} \delta_{j,K}(x,y)\,\textrm{d}y.
\end{align*}
On the other hand, for each $x\in \textbf{J}$ and $y\in (2\textbf{J})^{\complement}$, it implies that $|x-y|\geq 2^{-J-1}$. Furthermore,
\begin{align*}
\check{\phi}_{m+j}\ast \left(f\chi_{(2\textbf{J})^{\complement}}\right)(x)\leq Mf(z) \frac{\left(1+2^{j+m}\textrm{dist}(x,\Omega^{\complement})\right)^{\frac{K}{2}}}{(1+2^{j+m}|x-y|)^{\frac{K}{2}}}\leq Mf(z) \frac{\left(1+2^{j+m}\textrm{dist}(x,\Omega^{\complement})\right)^{\frac{K}{2}}}{(1+2^{j+m-J-1})^{\frac{K}{2}}}.
\end{align*}
Note that $Mf(z)\ls \min\{1, \frac{|F_1|}{|F_3|}\}^{\frac{1}{p}}$ and $\tilde{\psi}_{m+j}(x)(1+2^{j+m}\textrm{dist}(x,\Omega^{\complement}))^{\frac{K}{2}}\ls 1$; we have
\begin{align}\label{eq:6.82}
J_{1,2}\ls \min\left\{1, \frac{|F_1|}{|F_3|}\right\}^{\frac{1}{p}}\cdot\int_{\textbf{J}} \left[ \sum_{j\in U:\ J\leq j+m }\left|\frac{1}{(1+2^{j+m-J-1})^{\frac{K}{2}}}\right|^2\right]^{\frac{1}{2}}\,\textrm{d}x \ls|\textbf{J}|\min\left\{1, \frac{|F_1|}{|F_3|}\right\}^{\frac{1}{p}}.
\end{align}

By the H\"older inequality and $\mathrm{Poincar}\acute{\mathrm{e}}$ inequality, $J_2$ in \eqref{eq:6.74} is bounded by
\begin{align}\label{eq:6.83}
&|\textbf{J}|^{\frac{1}{2}}\inf_{c\in \mathbb{R}}\left\{ \int_{\textbf{J}}\left| \left[ \sum_{j\in U:\ J> j+m }\left|\sum_{n\in U_j} \left(\chi^*_{I_{n,j}}\cdot\tilde{\psi}_{m+j}\cdot\check{\phi}_{m+j}\ast f\right)(x)\right|^2\right]^{\frac{1}{2}}-c\right|^2\,\textrm{d}x\right\}^{\frac{1}{2}}\\
\ls&|\textbf{J}|\left\{ \int_{\textbf{J}} \left|D\left[\sum_{j\in U:\ J> j+m }\left[\sum_{n\in U_j} \left(\chi^*_{I_{n,j}}\cdot\tilde{\psi}_{m+j}\cdot\check{\phi}_{m+j}\ast f\right)\right]^2\right](x)\right|\,\textrm{d}x\right\}^{\frac{1}{2}}\nonumber.
\end{align}
Note that $|D[\sum_{n\in U_j} (\chi^*_{I_{n,j}}\cdot\tilde{\psi}_{m+j}\cdot\check{\phi}_{m+j}\ast f)]^2(x)|$ can be written as
$2| \sum_{n\in U_j} (\chi^*_{I_{n,j}}\cdot\tilde{\psi}_{m+j}\cdot\check{\phi}_{m+j}\ast f)(x)|\cdot|\sum_{n\in U_j} D(\chi^*_{I_{n,j}}\cdot\tilde{\psi}_{m+j}\cdot\check{\phi}_{m+j}\ast f)(x)|$. Regarding $J_{1,2}$, we have $\check{\phi}_{m+j}\ast f(x)\ls Mf(z) \left(1+2^{j+m}\textrm{dist}(x,\Omega^{\complement})\right)^{\frac{K}{2}}$, where $z\in \Omega^{\complement}$, which further implies that $|\sum_{n\in U_j} (\chi^*_{I_{n,j}}\cdot\tilde{\psi}_{m+j}\cdot\check{\phi}_{m+j}\ast f)(x)|\ls Mf(z)\ls \min\{1, \frac{|F_1|}{|F_3|}\}$. At the same time, we also have that $|\sum_{n\in U_j} D(\chi^*_{I_{n,j}}\cdot\tilde{\psi}_{m+j}\cdot\check{\phi}_{m+j}\ast f)(x)|\ls 2^{j+m} \min\{1, \frac{|F_1|}{|F_3|}\}$. Therefore, from \eqref{eq:6.83},
\begin{align}\label{eq:6.84}
J_2\ls|\textbf{J}|   \left[ \int_{\textbf{J}}  \sum_{j\in U:\ J> j+m } 2^{j+m} \left(\min\left\{1, \frac{|F_1|}{|F_3|}\right\}\right)^2 \,\textrm{d}x\right]^{\frac{1}{2}}\ls|\textbf{J}|\min\left\{1, \frac{|F_1|}{|F_3|}\right\}^{\frac{1}{p}}.
\end{align}
From \eqref{eq:6.81}, \eqref{eq:6.82} and \eqref{eq:6.84}, we obtain \eqref{eq:6.72}.
\end{proof}

\begin{lemma}\label{lemma 6.10}
There exists a positive constant $C$ such that
\begin{align}\label{eq:6.85}
 \left\| \chi^{**}_{I_{n,j}}\cdot\tilde{\psi}_{m+j}\cdot\check{\phi}_{m+j}\ast f \right\|_{BMO}\leq C 2^{m}\textbf{size}_\textbf{1}(I_{n,j}).
\end{align}
\end{lemma}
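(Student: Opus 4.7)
The plan is to control $F:=\chi^{**}_{I_{n,j}}\cdot\tilde{\psi}_{m+j}\cdot\check{\phi}_{m+j}\ast f$ by specializing Definition \ref{definition 6.6} to the singleton tree $T=\{(j,n)\}$, for which $|I_T|=|I_{n,j}|=2^{-j}$. Since $\textbf{size}_\textbf{1}(I_{n,j})$ dominates $\textbf{1-size}(\{(j,n)\})$, this furnishes the estimate $\|F\|_{L^p(\mathbb{R})}\ls 2^{-j/p}\textbf{size}_\textbf{1}(I_{n,j})$ together with analogous bounds on the two auxiliary expressions $\chi^{**}_{I_{n,j}}\tilde{\psi}_{m+j}\bigl((D\check{\phi})_{m+j}\ast f\bigr)$ and $\chi^{**}_{I_{n,j}}\bigl(\chi_{\Omega_{m+j}^{\complement}}\ast(D\psi)_{m+j}\bigr)\check{\phi}_{m+j}\ast f$. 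Up to the factor $2^{-m-j}$, these coincide with $\chi^{**}_{I_{n,j}}\tilde{\psi}_{m+j}D(\check{\phi}_{m+j}\ast f)$ and $\chi^{**}_{I_{n,j}}(D\tilde{\psi}_{m+j})\check{\phi}_{m+j}\ast f$ respectively, by the scaling identity $D\check{\phi}_{m+j}=2^{m+j}(D\check{\phi})_{m+j}$ and its analogue for $D\tilde{\psi}_{m+j}$.

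To estimate $\|F\|_{BMO}$, I will test against an arbitrary interval $\textbf{J}$ and split according to whether $|\textbf{J}|$ exceeds the intrinsic scale $2^{-j-m}$. When $|\textbf{J}|\geq 2^{-j-m}$, I choose the centering constant $c=0$ and apply Hölder's inequality to get $\frac{1}{|\textbf{J}|}\int_\textbf{J}|F|\,dx\leq |\textbf{J}|^{-1/p}\|F\|_{L^p(\mathbb{R})}$; combining with the above $L^p$ bound and $|\textbf{J}|^{-1/p}\leq 2^{(j+m)/p}$ yields the estimate $2^{m/p}\textbf{size}_\textbf{1}(I_{n,j})$. When $|\textbf{J}|<2^{-j-m}$, I center at the mean $F_\textbf{J}$ and apply the one-dimensional Poincaré inequality to obtain $\frac{1}{|\textbf{J}|}\int_\textbf{J}|F-F_\textbf{J}|\,dx\ls |\textbf{J}|^{1-1/p}\|DF\|_{L^p(\mathbb{R})}$. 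Expanding $DF$ by the Leibniz rule into three pieces and using the identifications above, each piece is bounded in $L^p$ by a constant times $2^{m+j}\cdot 2^{-j/p}\textbf{size}_\textbf{1}(I_{n,j})$. A short exponent count $-(j+m)(1-1/p)+(m+j)-j/p=m/p$ produces again the bound $2^{m/p}\textbf{size}_\textbf{1}(I_{n,j})$. Since $p>1$, both cases absorb into the stated $2^m\textbf{size}_\textbf{1}(I_{n,j})$.

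The main technical point is the Leibniz term in which the derivative falls on the localizer $\chi^{**}_{I_{n,j}}$, since the scaling identities available for $\tilde{\psi}_{m+j}$ and $\check{\phi}_{m+j}\ast f$ do not apply directly. I will handle it by differentiating under the integral that defines $\chi^{**}_{I_{n,j}}$ and absorbing the polynomial loss into a slightly smaller admissible decay exponent $K$, obtaining the pointwise bound $|D\chi^{**}_{I_{n,j}}(x)|\ls 2^{m+j}\chi^{**}_{I_{n,j}}(x)$ (with a different admissible $K$ on the right-hand side). This reduces the corresponding product-rule term to $2^{m+j}$ times a quantity of the same structural type as $F$, whose $L^p$ norm is controlled by the singleton-tree estimate. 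I note that the factor $2^m$ in the statement is slightly wasteful compared to the sharper $2^{m/p}$ that the argument actually gives, but the weaker form avoids tracking the specific value of $p>1$ fixed in the size definition.
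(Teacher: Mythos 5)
Your argument is correct and follows essentially the same route as the paper: test on an interval $\textbf{J}$, handle large $\textbf{J}$ by H\"older together with the $L^p$ bound coming from the singleton-tree size, and handle small $\textbf{J}$ by the Poincar\'e inequality plus the Leibniz rule, with two of the derivative pieces controlled exactly by the $(D\check{\phi})_{m+j}$ and $(D\psi)_{m+j}$ terms built into Definition \ref{definition 6.6} and the third by $|D\chi^{**}_{I_{n,j}}|\lesssim 2^{m+j}\chi^{**}_{I_{n,j}}$. The only differences are minor: you split at the scale $2^{-j-m}$ rather than at $|I_{n,j}|=2^{-j}$, which gives the slightly sharper factor $2^{m/p}$, and you spell out the localizer-derivative term that the paper dismisses with ``without loss of generality.''
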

\begin{proof}
Let $\textbf{J}$ be a dyadic interval. It suffices to bound the following formula:
\begin{align}\label{eq:6.86}
 \inf_{c\in \mathbb{R}} \int_{\textbf{J}}\left| \left(\chi^{**}_{I_{n,j}}\cdot\tilde{\psi}_{m+j}\cdot\check{\phi}_{m+j}\ast f\right)(x)-c  \right|\,\textrm{d}x.
\end{align}
If $|I_{n,j}|\leq |\textbf{J}|$, by the $\mathrm{H}\ddot{\mathrm{o}}\mathrm{lder}$ inequality, we have
\begin{align*}
 \inf_{c\in \mathbb{R}} \int_{\textbf{J}}\left| \left(\chi^{**}_{I_{n,j}}\cdot\tilde{\psi}_{m+j}\cdot\check{\phi}_{m+j}\ast f\right)(x)-c  \right|\,\textrm{d}x\leq \left\| \chi^{**}_{I_{n,j}}\cdot\tilde{\psi}_{m+j}\cdot\check{\phi}_{m+j}\ast f\right \|_{L^{p}(\mathbb{R})}|\textbf{J}|^{\frac{1}{p'}}
 \leq|\textbf{J}|\textbf{size}_\textbf{1}(I_{n,j}).
\end{align*}
If $|I_{n,j}|> |\textbf{J}|$, by the $\mathrm{Poincar}\acute{\mathrm{e}}$ inequality, we have
\begin{align}\label{eq:6.88}
 \inf_{c\in \mathbb{R}} \int_{\textbf{J}}\left|\left( \chi^{**}_{I_{n,j}}\cdot\tilde{\psi}_{m+j}\cdot\check{\phi}_{m+j}\ast f\right)(x)-c  \right|\,\textrm{d}x\leq |\textbf{J}|\int_{\textbf{J}}\left| D\left(\chi^{**}_{I_{n,j}}\cdot\tilde{\psi}_{m+j}\cdot\check{\phi}_{m+j}\ast f\right)(x)\right|\,\textrm{d}x.
\end{align}
By the $\mathrm{H}\ddot{\mathrm{o}}\mathrm{lder}$ inequality, without loss of generality, the right-hand side of \eqref{eq:6.88} can be bounded by
\begin{align*}
 |\textbf{J}|2^{j+m}|I_{n,j}|^{-\frac{1}{p}}\left\| \chi^{**}_{I_{n,j}}\cdot\tilde{\psi}_{m+j}\cdot(D\check{\phi})_{m+j}\ast f\right\|_{L^{p}(\mathbb{R})} |\textbf{J}|^{\frac{1}{p'}}|I_{n,j}|^{\frac{1}{p}}\leq |\textbf{J}|2^{m}\textbf{size}_\textbf{1}(I_{n,j}).
\end{align*}
Therefore, we obtain \eqref{eq:6.85}.
\end{proof}

\begin{lemma}\label{lemma 6.11}
Let $T\subset S_0$ be a tree; then there exists a positive constant $C$ such that
\begin{align}\label{eq:6.89}
 \| S_{1,T}(f) \|_{BMO}\leq C 2^m\textbf{size}_\textbf{1}(T).
\end{align}
\end{lemma}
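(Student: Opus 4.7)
The plan is to follow the scale-splitting strategy of Lemma~\ref{lemma 6.9}, with the universal bound $\min\{1,|F_1|/|F_3|\}^{1/p}$ replaced by the tree-specific quantity $\textbf{size}_\textbf{1}(T)$, and to pass from the BMO oscillation of the $\ell^2$-aggregate $S_{1,T}(f)=(\sum_{j\in T}|F_j|^2)^{1/2}$, with $F_j:=\sum_{n\in T_j}f_{n,m,j}$, to $\ell^2$-valued oscillations of the individual $F_j$ via the Minkowski-type inequality
$$\left|\Big(\sum_j|a_j|^2\Big)^{1/2}-\Big(\sum_j|b_j|^2\Big)^{1/2}\right|\leq \Big(\sum_j|a_j-b_j|^2\Big)^{1/2}.$$
Fixing a dyadic interval $\textbf{J}$ of length $2^{-J}$ and choosing centering constants $c_j$ for each $j\in T$, this reduces the BMO estimate to bounding
$$\inf_{c\in\mathbb{R}}\int_\textbf{J}|S_{1,T}(f)(x)-c|\,\textrm{d}x\leq \int_\textbf{J}\Big(\sum_{j\in T}|F_j(x)-c_j|^2\Big)^{1/2}\textrm{d}x.$$

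I would split $T=T_1\sqcup T_2$ as in \eqref{eq:6.74}, where $T_1:=\{j\in T:j+m\geq J\}$ (take $c_j=0$) and $T_2:=\{j\in T:j+m<J\}$ (take $c_j=\fint_\textbf{J}F_j$). For $T_1$, H\"older's inequality combined with the pointwise majorization $\chi^*_{I_{n,j}}\leq C\chi^{**}_{I_{n,j}}$ and the first term of the size definition \eqref{eq:6.47} give
$$\int_\textbf{J}\Big[\sum_{j\in T_1}|F_j|^2\Big]^{1/2}\textrm{d}x\leq |\textbf{J}|^{1/p'}\Big\|\Big[\sum_{j\in T}|\textstyle\sum_{n\in T_j}\chi^{**}_{I_{n,j}}\tilde\psi_{m+j}\check\phi_{m+j}*f|^2\Big]^{1/2}\Big\|_{L^p(\mathbb{R})}\leq |\textbf{J}|^{1/p'}|I_T|^{1/p}\textbf{size}_\textbf{1}(T).$$
For $T_2$, one applies Poincar\'e's inequality to each summand on $\textbf{J}$, expands $DF_j$ through the product rule into three derivative terms each carrying a factor $2^{j+m}$ (since $D\chi^*_{I_{n,j}}$, $D\tilde\psi_{m+j}$, and $D(\check\phi_{m+j}*f)$ all produce this scale), and controls the resulting $L^p$ norms by the remaining two components of \eqref{eq:6.47}. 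The extra factor $2^{j+m}$ is absorbed by the Poincar\'e gain $|\textbf{J}|$ because $j\in T_2$ forces $2^{j+m}\leq 2^J=|\textbf{J}|^{-1}$.

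The main technical obstacle is that this naive bookkeeping produces the oscillation bound $(|I_T|/|\textbf{J}|)^{1/p}\textbf{size}_\textbf{1}(T)$, which exceeds the target $2^m\textbf{size}_\textbf{1}(T)$ when $\textbf{J}$ is much smaller than $I_T$. The remedy, exactly as in the proof of Lemma~\ref{lemma 6.10}, is to redistribute the derivative weight $2^{j+m}$ against the intrinsic scale $|I_{n,j}|=2^{-j}$ rather than against the tree-top scale $|I_T|$, invoking the identity $|I_{n,j}|\cdot 2^{j+m}=2^m$. In the $T_2$ regime, one additionally has $|I_{n,j}|=2^{-j}\geq 2^{m-J}=2^m|\textbf{J}|\geq|\textbf{J}|$, so the leftover factor $(|\textbf{J}|/|I_{n,j}|)^{1/p'}$ is bounded by $1$ (in fact by $2^{-m/p'}$), and aggregating these per-scale estimates in $\ell^2$ over $j\in T_2$ (using the rapid decoupling of the scales $2^{-j-m}$) yields
$$\int_\textbf{J}\Big[\sum_{j\in T_2}|F_j-c_j|^2\Big]^{1/2}\textrm{d}x\leq C|\textbf{J}|\cdot 2^m\textbf{size}_\textbf{1}(T).$$

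Combining $J_1$ and $J_2$ and taking the supremum over dyadic $\textbf{J}$ produces $\|S_{1,T}(f)\|_{BMO}\leq C 2^m\textbf{size}_\textbf{1}(T)$, completing the proof. The delicate step is the $T_2$ estimate, where the Poincar\'e/H\"older constants must be carefully assigned to the individual scales $|I_{n,j}|$ and then aggregated in $\ell^2$ using the tree structure $I_{n,j}\subset I_T$, and where the three derivative components of $\textbf{size}_\textbf{1}$ are used in parallel to handle the product rule cleanly.
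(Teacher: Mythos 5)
Your reduction via the $\ell^2$ reverse triangle inequality and the scale splitting $j+m\gtrless J$ is a reasonable start, but the fine-scale part ($T_1$, i.e.\ $j+m\geq J$) has a genuine gap that your stated remedy does not close. Your $T_1$ bound is $|\textbf{J}|^{1/p'}|I_T|^{1/p}\textbf{size}_\textbf{1}(T)$, i.e.\ an oscillation of size $(|I_T|/|\textbf{J}|)^{1/p}\textbf{size}_\textbf{1}(T)$, which is unbounded as $|\textbf{J}|\to 0$; you notice this, but the fix you propose (trading the derivative factor $2^{j+m}$ against $|I_{n,j}|$ via $|I_{n,j}|\cdot2^{j+m}=2^m$) only makes sense where a derivative factor is present, i.e.\ in the Poincar\'e/coarse-scale regime, and says nothing about the fine-scale H\"older step, where the loss comes solely from invoking the size over the whole top $I_T$. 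Nor can you repair $T_1$ by switching to per-interval size bounds $\|\chi^{**}_{I_{n,j}}\cdot\tilde{\psi}_{m+j}\cdot\check{\phi}_{m+j}\ast f\|_{L^p}\leq|I_{n,j}|^{1/p}\textbf{size}_\textbf{1}(I_{n,j})$ and summing in $\ell^1$ over $n$: at scales $2^{-j}\ll|\textbf{J}|$ there are about $|\textbf{J}|2^{j}$ intervals inside $3\textbf{J}$ and the resulting sum diverges in $j$. What is missing is a \emph{spatial} decomposition of the tree relative to $\textbf{J}$, which is exactly how the paper proceeds: it sets $T_{\textbf{J}}:=\{(j,n)\in T:\ I_{n,j}\subset 3\textbf{J}\}$, so that the size (being a supremum over all subtrees, in particular those with tops inside $3\textbf{J}$) yields the local bound $\textbf{size}_\textbf{1}(T)|\textbf{J}|^{1/p}$ instead of $|I_T|^{1/p}$, and the remaining fine-scale pairs with $I_{n,j}\not\subset 3\textbf{J}$, $|I_{n,j}|\leq|\textbf{J}|$ are summed using the decay $(1+2^{j+m}\mathrm{dist}(\textbf{J},I_{n,j}))^{-K}$ carried by $\chi^*_{I_{n,j}}$. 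Your purely scale-based splitting, copied from Lemma \ref{lemma 6.9}, works there only because that lemma has pointwise bounds ($Mf\lesssim\min\{1,|F_1|/|F_3|\}^{1/p}$ on $\Omega^{\complement}$); here the size is an $L^p$ quantity over the tree top and must be localized spatially.

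A secondary remark on the coarse scales: the paper does not argue per $j$ with Poincar\'e on $F_j$; it applies Poincar\'e to the squared expression, and since the size only provides $L^p$ control while the product rule produces a product of two pieces, it interpolates the $L^p$ size bound with the single-piece BMO bound of Lemma \ref{lemma 6.10} to get $L^{2p}$ bounds with the factor $2^{m/2}$ (this is where the factor $2^m$ in \eqref{eq:6.89} actually enters), and then closes the sum with the geometric estimate \eqref{eq:6.97} quoted from \cite{L3}. Your per-$j$ Poincar\'e route could plausibly be made to work for that regime, but as written it is only a sketch: you still need the tail decay of $\chi^*_{I_{n,j}}$ to reduce to $O(1)$ intervals per coarse scale, the per-interval size bounds, and a careful geometric summation in $j$; the bare absorption ``$2^{j+m}\leq 2^{J}$'' against the Poincar\'e gain reproduces the same $(|I_T|/|\textbf{J}|)^{1/p}$ loss if the size is again applied over $I_T$.
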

\begin{proof}
Let $\textbf{J}$ be a dyadic interval and $T_{\textbf{J}}:=\left\{(j,n)\in T:\ I_{n,j}\subset 3\textbf{J}\right\}$. It suffices to bound the following formula:
\begin{align}\label{eq:6.90}
 \inf_{c\in \mathbb{R}} \int_{\textbf{J}}\left| \left[ \sum_{j\in T}\left|\sum_{n\in T_j} \left(\chi^*_{I_{n,j}}\cdot\tilde{\psi}_{m+j}\cdot\check{\phi}_{m+j}\ast f\right)(x)\right|^2\right]^{\frac{1}{2}}-c  \right|\,\textrm{d}x.
\end{align}
Furthermore, let
$$
T_{\textbf{J}}^1:=\left\{(j,n)\in T\backslash{T_{\textbf{J}}}:\ |I_{n,j}|\leq |\textbf{J}|\right\}\quad \textrm{and}\quad
T_{\textbf{J}}^2:=\left\{(j,n)\in T\backslash{T_{\textbf{J}}}:\ |I_{n,j}|> |\textbf{J}|\right\}.
$$
Then, \eqref{eq:6.90} can be bounded by a sum of the following three parts:
\begin{align*}
\begin{cases}J^1:= \int_{\textbf{J}} \left[ \sum_{j\in T_{\textbf{J}}} \left|\sum_{n\in T_{\textbf{J},j}}\left(\chi^*_{I_{n,j}}\cdot\tilde{\psi}_{m+j}\cdot\check{\phi}_{m+j}\ast f\right)(x)\right|^2\right]^{\frac{1}{2}}\,\textrm{d}x;\\
J^2:= \int_{\textbf{J}} \left[ \sum_{j\in T_{\textbf{J}}^1} \left|\sum_{n\in T_{\textbf{J},j}^1}\left(\chi^*_{I_{n,j}}\cdot\tilde{\psi}_{m+j}\cdot\check{\phi}_{m+j}\ast f\right)(x)\right|^2\right]^{\frac{1}{2}}\,\textrm{d}x;\\
J^3:= \inf_{c\in \mathbb{R}} \int_{\textbf{J}}\left| \left[\sum_{j\in T_{\textbf{J}}^2}\left(\sum_{n\in T_{\textbf{J},j}^2} \left(\chi^*_{I_{n,j}}\cdot\tilde{\psi}_{m+j}\cdot\check{\phi}_{m+j}\ast f\right)(x)\right)^2\right]^{\frac{1}{2}}-c  \right|\,\textrm{d}x.
\end{cases}
\end{align*}

By the H\"older inequality, $J^1$ can be bounded by
\begin{align*}
\left\| \left[ \sum_{j\in T_{\textbf{J}}} \left|\sum_{n\in T_{\textbf{J},j}}\left(\chi^*_{I_{n,j}}\cdot\tilde{\psi}_{m+j}\cdot\check{\phi}_{m+j}\ast f\right)\right|^2\right]^{\frac{1}{2}} \right\|_{L^{p}(\textbf{J})} |\textbf{J}|^{\frac{1}{p'}} \leq\textbf{size}_\textbf{1}(T)|\textbf{J}|^{\frac{1}{p}}|\textbf{J}|^{\frac{1}{p'}}=\textbf{size}_\textbf{1}(T)|\textbf{J}|.
\end{align*}
By the H\"older inequality, $J^2$ can be bounded by
\begin{align*}
&\left\| \left[ \sum_{j\in T_{\textbf{J}}^1} \left|\sum_{n\in T_{\textbf{J},j}^1}\left(\chi^*_{I_{n,j}}\cdot\tilde{\psi}_{m+j}\cdot\check{\phi}_{m+j}\ast f\right)\right|^2\right]^{\frac{1}{2}} \right\|_{L^{p}(\textbf{J})} |\textbf{J}|^{\frac{1}{p'}}\\
\leq&\textbf{size}_\textbf{1}(T)\sum_{j\in T_{\textbf{J}}^1} \sum_{n\in T_{\textbf{J},j}^1}\frac{ |I_{n,j}|^{\frac{1}{p}} }{\left(1+2^{j+m}\textrm{dist}(\textbf{J},I_{n,j})\right)^{K}} |\textbf{J}|^{\frac{1}{p'}}
\ls \textbf{size}_\textbf{1}(T)|\textbf{J}|.\nonumber
\end{align*}
By the H\"older inequality and $\mathrm{Poincar}\acute{\mathrm{e}}$ inequality, as in \eqref{eq:6.83}, we bound $J^3$ by
\begin{align}\label{eq:6.96}
|\textbf{J}|\left\{ \int_{\textbf{J}} \left|D\left[\sum_{j\in T_{\textbf{J}}^2 }\left(\sum_{n\in T_{\textbf{J},j}^2} \chi^*_{I_{n,j}}\cdot\tilde{\psi}_{m+j}\cdot\check{\phi}_{m+j}\ast f\right)^2\right](x)\right|\,\textrm{d}x\right\}^{\frac{1}{2}}.
\end{align}
Without loss of generality, by the Cauchy-Schwarz inequality, $D[\sum_{n\in T_{\textbf{J},j}^2} (\chi^*_{I_{n,j}}\cdot\tilde{\psi}_{m+j}\cdot\check{\phi}_{m+j}\ast f)]^2$  can be bounded by the sum of
\begin{equation}\label{new1}
2^{\frac{m}{2}}|\textbf{J}|\left\{\int_{\textbf{J}}\sum_{j\in T_{\textbf{J}}^2 } \left[ \sum_{n\in T_{\textbf{J},j}^2} \frac{1}{|I_{n,j}|^{\frac{1}{2}}} |\chi^{*}_{I_{n,j}}\cdot\tilde{\psi}_{m+j}\cdot\check{\phi}_{m+j}\ast f(x)|\right]^2 \,\textrm{d}x\right\}^{\frac{1}{2}}
\end{equation}
and
\begin{equation}\label{new2}
2^{\frac{m}{2}}|\textbf{J}|\left\{\int_{\textbf{J}}\sum_{j\in T_{\textbf{J}}^2 } \left[\sum_{n\in T_{\textbf{J},j}^2} \frac{1}{|I_{n,j}|^{\frac{1}{2}}} |\chi^{*}_{I_{n,j}}\cdot\tilde{\psi}_{m+j}\cdot(D\check{\phi})_{m+j}\ast f(x)|\right]^2 \,\textrm{d}x\right\}^{\frac{1}{2}}.
\end{equation}
It is suffices to bound \eqref{new1}. Note that $$\left\|\chi^{**}_{I_{n,j}}\cdot\tilde{\psi}_{m+j}\cdot\check{\phi}_{m+j}\ast f\right\|_{L^{p}(\mathbb{R})} \leq |I_{n,j}|^{\frac{1}{p}}\textbf{size}_\textbf{1}(I_{n,j})$$ and \eqref{eq:6.85}, by interpolating, we obtain $$\left\|\chi^{**}_{I_{n,j}}\cdot\tilde{\psi}_{m+j}\cdot\check{\phi}_{m+j}\ast f\right\|_{L^{2p}(\mathbb{R})} \leq 2^{\frac{m}{2}}|I_{n,j}|^{\frac{1}{2p}}\textbf{size}_\textbf{1}(I_{n,j}).$$ By the H\"older inequality, \eqref{new1} can be bounded by
\begin{align*}
&2^{\frac{m}{2}}|\textbf{J}| \sum_{j\in T_{\textbf{J}}^2 } \sum_{n\in T_{\textbf{J},j}^2}\frac{1}{|I_{n,j}|^{\frac{1}{2}}}\frac{\left\|\chi^{**}_{I_{n,j}}\cdot\tilde{\psi}_{m+j}\cdot\check{\phi}_{m+j}\ast f \right\|_{L^{2p}(\textbf{J})} |J|^{\frac{1}{2}-\frac{1}{2p}}}{ \left(1+2^{j+m}\textrm{dist}(\textbf{J},I_{n,j})\right)^{K} }\\
\leq&2^{\frac{m}{2}}|\textbf{J}| \sum_{j\in T_{\textbf{J}}^2 } \sum_{n\in T_{\textbf{J},j}^2}\frac{1}{|I_{n,j}|^{\frac{1}{2}}}\frac{2^{\frac{m}{2}}|I_{n,j}|^{\frac{1}{2p}}\textbf{size}_\textbf{1}(I_{n,j}) |J|^{\frac{1}{2}-\frac{1}{2p}} }{ \left(1+2^{j+m}\textrm{dist}(\textbf{J},I_{n,j})\right)^{K} }
\ls 2^{m}|\textbf{J}|\textbf{size}_\textbf{1}(T).
\end{align*}
The last inequality follows from
\begin{align}\label{eq:6.97}
 \sum_{j\in T_{\textbf{J}}^2 } \sum_{n\in T_{\textbf{J},j}^2}\frac{ |I_{n,j}|^{\frac{1}{2p}-\frac{1}{2}}|J|^{\frac{1}{2}-\frac{1}{2p}}}{ \left(1+2^{j+m}\textrm{dist}(\textbf{J},I_{n,j})\right)^{K} }\ls 1,
\end{align}
which can be found in Li \cite{L3}. Hence, we complete the proof of Lemma \ref{lemma 6.11}.
\end{proof}

From \eqref{eq:6.45} to \eqref{eq:6.49}, we have
\begin{align}\label{eq:6.50}
\|S_{1,T}(f)\|_{L^{p}(\mathbb{R})}\leq \textbf{size}_\textbf{1}(T)\cdot|I_T|^{\frac{1}{p}}\quad \textrm{and}\quad  \|S_{2,T}(f)\|_{L^{q}(\mathbb{R})}\leq \textbf{size}_\textbf{2}(T)\cdot|I_T|^{\frac{1}{q}}.
\end{align}
From \eqref{eq:6.72} in Lemma \ref{lemma 6.9} and \eqref{eq:6.89} Lemma \ref{lemma 6.11}, by interpolation with \eqref{eq:6.50}, respectively, we obtain
\begin{align*}
\begin{cases}\| S_{1,T}(f)\|_{L^{q'}(\mathbb{R})}\ls |I_T|^{\frac{1}{q'}}\textbf{size}_\textbf{1}(T)^{\frac{p}{q'}}\min\left\{1, \frac{|F_1|}{|F_3|}\right\}^{\frac{1}{p}-\frac{1}{q'}} ;\\
\| S_{1,T}(f)\|_{L^{q'}(\mathbb{R})}\ls |I_T|^{\frac{1}{q'}}\textbf{size}_\textbf{1}(T)2^{m(1-\frac{p}{q'})}.
\end{cases}
\end{align*}
Furthermore, we have
\begin{align}\label{eq:6.102}
\| S_{1,T}(f)\|_{L^{q'}(\mathbb{R})}\ls |I_T|^{\frac{1}{q'}}\textbf{size}^*_\textbf{1}(T),
\end{align}
where $\textbf{size}^*_\textbf{1}(T):=\min\{\textbf{size}_\textbf{1}(T)^{\frac{p}{q'}}\min\{1, \frac{|F_1|}{|F_3|}\}^{\frac{1}{p}-\frac{1}{q'}}, \textbf{size}_\textbf{1}(T)2^{m(1-\frac{p}{q'})} \}$.

\subsubsection{The estimates for $|H^4_m|(f,g)$}\label{subsubsection 6.2.2}

For $S\subset S_0$, we rewrite $\Lambda_S[f,g]$ as
\begin{align}\label{eq:6.103}
\sum_{j\in S}\sum_{k\in \mathbb{Z}}\phi\left(\frac{\gamma'(2^{-j})}{2^{m+j-k}}\right)\int_{-\infty}^{\infty}\int_{-\infty}^{\infty} \left| \left[\sum_{n\in S_j} f_{n,m,j}(x-\textbf{tr}_j(t))\right]\cdot\left[\sum_{n\in S_j} g_{n,m,j,k}(x)\right]\cdot \rho(t)\right|\,\textrm{d}x\,\textrm{d}t.
\end{align}

\begin{lemma}\label{lemma 6.12}
Let $T\subset S_0$ be a tree and $P\subset S_0$ be a subset; if $T\cap P=\emptyset$ and $T$ is a maximal tree in $P\cup T$, then there exists a positive constant $C$ such that
\begin{align}\label{eq:6.104}
|\Lambda_{P\bigcup T}[f,g]-\Lambda_{P}[f,g]-\Lambda_{ T}[f,g]|\leq C \textbf{size}^*_\textbf{1}(P\cup T)\textbf{size}_\textbf{2}(P\cup T)|I_T|,
\end{align}
where $\textbf{size}^*_\textbf{1}(P\cup T)$ is defined naturally as $\textbf{size}^*_\textbf{1}(T)$.
\end{lemma}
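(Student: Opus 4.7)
The plan is to reduce $\Lambda_{P\cup T}-\Lambda_P-\Lambda_T$ to two bilinear "cross" forms and then dispatch each one with the square-function/H\"older machinery already set up in Subsection \ref{subsubsection 6.2.1}.

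\emph{Pointwise reduction to cross terms.} For each fixed $j$ and $k$, write $F=F_{P_j}+F_{T_j}$ and $G=G_{P_j}+G_{T_j}$, where $F_{P_j}(x,t):=\sum_{n\in P_j}F_{n,m,j}(x,t)$ (and similarly for the others), using $P\cap T=\emptyset$. For $j\notin P$ we have $F_{P_j}=G_{P_j}=0$, and the $j$-th term of $\Lambda_{P\cup T}$ coincides with that of $\Lambda_T$; symmetrically for $j\notin T$. So only $j$'s belonging to both projections contribute to the difference, and there the integrand is
\[
\bigl|(F_{P_j}+F_{T_j})(G_{P_j}+G_{T_j})\bigr|-|F_{P_j}G_{P_j}|-|F_{T_j}G_{T_j}|.
\]
The elementary inequality
\[
\bigl||X_1+X_2|\,|Y_1+Y_2|-|X_1||Y_1|-|X_2||Y_2|\bigr|\le |X_1||Y_2|+|X_2||Y_1|,
\]
which follows from $(|X_1|-|X_2|)(|Y_1|-|Y_2|)\le |X_1+X_2|\,|Y_1+Y_2|$ together with the triangle inequality, then gives pointwise domination of the above expression by $|F_{P_j}G_{T_j}|+|F_{T_j}G_{P_j}|$. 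Summing over $j,k$ yields
\[
\bigl|\Lambda_{P\cup T}[f,g]-\Lambda_P[f,g]-\Lambda_T[f,g]\bigr|\le \Lambda^{\mathrm{cr}}[P,T]+\Lambda^{\mathrm{cr}}[T,P],
\]
where $\Lambda^{\mathrm{cr}}[A,B]$ has the same shape as $\Lambda$ but with the $f$-factor summed over $A_j$ and the $g$-factor over $B_j$.

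\emph{Bounding each cross term.} Focus on $\Lambda^{\mathrm{cr}}[P,T]$; the other is symmetric. Exactly as in the derivation of \eqref{eq:6.44}, we change variables $u=x-\gamma(2^{-j}t)$, use $|\mathbf{tr}_j(t)|\lesssim|t|$ to dominate the $t$-integral of $F_{P_j}(u-\mathbf{tr}_j(t))$ by a Hardy--Littlewood maximal average, and then apply Cauchy--Schwarz in $(j,k)$ followed by H\"older's inequality in $u$:
\[
\Lambda^{\mathrm{cr}}[P,T]\lesssim \bigl\|S_{1,P}(f)\bigr\|_{L^{q'}(\mathbb{R})}\bigl\|S_{2,T}(g)\bigr\|_{L^{q}(\mathbb{R})},
\]
with the Fefferman--Stein inequality and \eqref{eq:3.7} absorbing the maximal function and the $k$-sum, and $S_{1,P}$, $S_{2,T}$ defined as in \eqref{eq:6.45} but with index sets $P$ and $T$ respectively.

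\emph{Applying sizes.} For the second factor, $T$ is a tree with top $I_T$, so \eqref{eq:6.50} gives $\|S_{2,T}(g)\|_{L^{q}}\lesssim \mathbf{size}_2(T)\,|I_T|^{1/q}\le \mathbf{size}_2(P\cup T)\,|I_T|^{1/q}$. For the first factor, the essential support of $S_{2,T}(g)$ lies (up to Schwartz tails of $\chi^{*}_{I_{n,j}}$ with $I_{n,j}\subset I_T$) in a dilate of $I_T$, so the H\"older pairing can be localized to $I_T$; combining this localization with \eqref{eq:6.102} applied to $P\cup T$ (whose projected trees restricted to intervals inside $I_T$ are contained in $T$ by maximality of $T$) yields $\|S_{1,P}(f)\chi_{I_T^*}\|_{L^{q'}}\lesssim |I_T|^{1/q'}\,\mathbf{size}^{*}_1(P\cup T)$, where $I_T^{*}$ is a fixed dilate of $I_T$. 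Multiplying and using $\tfrac1{q}+\tfrac1{q'}=1$ gives the required bound by $\mathbf{size}^{*}_1(P\cup T)\,\mathbf{size}_2(P\cup T)\,|I_T|$.

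The main obstacle is the last step: honestly localizing $\|S_{1,P}(f)\|_{L^{q'}}$ to the spatial scale $|I_T|$. Here the maximality hypothesis is crucial, since it forbids any $(j,n)\in P$ from having $I_{n,j}\subset I_T$; this prevents the $P$-sum from producing an $I_T$-concentrated mass and lets us pay only Schwartz-tail prices of the form $(1+2^{j+m}\mathrm{dist}(I_{n,j},I_T))^{-K}$, which can then be summed against the tree geometry exactly as in the proof of Lemma \ref{lemma 6.11} (see in particular the estimate \eqref{eq:6.97}).
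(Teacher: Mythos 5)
Your first step --- the reduction of $\Lambda_{P\cup T}-\Lambda_P-\Lambda_T$ to the two cross forms with $f$ over $P_j$, $g$ over $T_j$ and vice versa --- is correct and is exactly the paper's starting point. The gap is in the last step, which is where the whole content of the lemma lies. After your global Cauchy--Schwarz/H\"older you are left with $\|S_{1,P}(f)\|_{L^{q'}}\|S_{2,T}(g)\|_{L^{q}}$, and everything then rests on the claimed localized bound $\|S_{1,P}(f)\chi_{I_T^{*}}\|_{L^{q'}}\ls |I_T|^{\frac{1}{q'}}\,\textbf{size}^*_\textbf{1}(P\cup T)$. This is not proved, and as stated it is false: maximality of $T$ only forbids tiles of $P$ with $I_{n,j}\subset I_T$; it does not forbid tiles of $P$ whose intervals \emph{contain} $I_T$ or abut it. For a coarse tile with $I_{n,j}\supsetneq I_T$ one has $\mathrm{dist}(I_{n,j},I_T)=0$, so the Schwartz-tail price $(1+2^{j+m}\mathrm{dist}(I_{n,j},I_T))^{-K}$ you invoke equals $1$, and there can be unboundedly many such scales, so $S_{1,P}(f)$ restricted to $I_T^{*}$ need not be controlled by $|I_T|^{1/q'}$ times any size. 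To have a chance you must first restrict the $f$-square function to the scales $j$ actually present in the cross term (those with $T_j\neq\emptyset$, hence $2^{-j}\leq|I_T|$, for which disjointness plus dyadic nesting does force $I_{n,j}\cap I_T=\emptyset$); your definition of $S_{1,P}$ makes no such restriction, and your prose does not flag it.

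Even with that repair, summing the boundary-adjacent $P$-tiles over all fine scales, at the exponent $q'$ and for the BMO-interpolated quantity $\textbf{size}^*_\textbf{1}$, is precisely the combinatorial core of the lemma, and your appeal to \eqref{eq:6.97} does not supply it: that estimate concerns the tiles of a single tree measured against one dyadic interval, not the bilinear pairing of decay toward $P_j$ with decay toward $T_j$. The paper keeps these two decay factors paired at the \emph{same} scale: it applies H\"older locally on each dyadic interval $I$ with $|I|=2^{-j}$, extracts $d_j(5I,P_j)$ from the $f$-factor (via $\chi^{*}_{I_{n,j}}(x-\textbf{tr}_j(t))\ls d_j(5I,I_{n,j})\chi^{**}_{I_{n,j}}(x-\textbf{tr}_j(t))$ together with \eqref{eq:6.102} and the maximal function) and $d_j(I,T_j)$ from the $g$-factor as in \eqref{eq:6.107}, and then the factor $|I_T|$ is produced by the geometric summation \eqref{eq:6.1040}, quoted from \cite[Lemma 10.1]{LX}, which uses both $T\cap P=\emptyset$ and the tree structure. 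Your global Cauchy--Schwarz discards exactly this per-scale spatial separation between the $P$-tiles and the $T$-tiles, so the missing localized estimate would in effect amount to reproving \eqref{eq:6.1040} in another guise; as written, the proposal does not close this step.
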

\begin{proof} We begin our proof by a definition and a lemma. Let
\begin{align*}
d_j(S_1,S_2):=\left(1+2^{j+m}\textrm{dist}(S_1,S_2)\right)^{-K}\quad \textrm{for} ~\textrm{any} ~S_1,S_2\subset S_0,
\end{align*}
where $S_1$ and $S_2$ are defined as the union of all intervals $I_{n,j}$ with $(j,n)\in S_1$ and $(j,n)\in S_2$, respectively. As in \cite[Lemma 10.1]{LX}, if $T\cap P=\emptyset$, we have
\begin{align}\label{eq:6.1040}
\begin{cases} \sum_{j\in \mathbb{N}}\sum_{I:\ |I|=2^{-j}} |I|d_j(5I,T_j)d_j(I,P_j)\ls  |I_T|;\\
\sum_{j\in \mathbb{N}}\sum_{I:\ |I|=2^{-j}} |I|d_j(5I,P_j)d_j(I,T_j)\ls  |I_T|.
\end{cases}
\end{align}
Since $T$ is a maximal tree in $P\cup T$, then the left hand side of \eqref{eq:6.104} can be bounded by
\begin{align*}
\mathbb{A}+\mathbb{B}:=&\sum_{j\in \mathbb{N}}\sum_{k\in \mathbb{Z}}\phi\left(\frac{\gamma'(2^{-j})}{2^{m+j-k}}\right)\int_{-\infty}^{\infty}\int_{-\infty}^{\infty} \left| \left[\sum_{n\in P_j} f_{n,m,j}\left(x-\textbf{tr}_j(t)\right)\right]\cdot\left[\sum_{n\in T_j} g_{n,m,j,k}(x)\right]\cdot \rho(t)\right|\,\textrm{d}x\,\textrm{d}t\\
&+\sum_{j\in \mathbb{N}}\sum_{k\in \mathbb{Z}}\phi\left(\frac{\gamma'(2^{-j})}{2^{m+j-k}}\right)\int_{-\infty}^{\infty}\int_{-\infty}^{\infty} \left| \left[\sum_{n\in T_j} f_{n,m,j}\left(x-\textbf{tr}_j(t)\right)\right]\cdot\left[\sum_{n\in P_j} g_{n,m,j,k}(x)\right]\cdot \rho(t)\right|\,\textrm{d}x\,\textrm{d}t.\nonumber
\end{align*}

We here give the estimate of $\mathbb{A}$; $\mathbb{B}$ can be handled similarly. From the H\"older inequality, $\mathbb{A}$ is bounded by
\begin{align*}
\sum_{j\in \mathbb{N}}\sum_{k\in \mathbb{Z}}\phi^{\frac{1}{2}}\left(\frac{\gamma'(2^{-j})}{2^{m+j-k}}\right)\sum_{I:\ |I|=2^{-j}}&\left\|\int_{-\infty}^{\infty}\left|\sum_{n\in P_j} f_{n,m,j}(\cdot-\textbf{tr}_j(t))\cdot \rho(t)\right|\,\textrm{d}t \right\|_{L^{q'}(I)}
\left\| \phi^{\frac{1}{2}}\left(\frac{\gamma'(2^{-j})}{2^{m+j-k}}\right)\cdot \sum_{n\in T_j} g_{n,m,j,k} \right\|_{L^{q}(I)}.
\end{align*}
For $x\in I$, note that $\textbf{tr}_j(t)\ls 2^{-j}$; without loss of generality, we may write $\chi^{*}_{I_{n,j}}(x-\textbf{tr}_j(t))\ls d_j(5I,I_{n,j})\cdot\chi^{**}_{I_{n,j}}(x-\textbf{tr}_j(t))$. From \eqref{eq:6.102},  $\|\int_{-\infty}^{\infty}|\sum_{n\in P_j} f_{n,m,j}(\cdot-\textbf{tr}_j(t)) \cdot\rho(t)|\,\textrm{d}t \|_{L^{q'}(I)}$ is controlled by
\begin{align}\label{eq:6.106}
& \sum_{n\in P_j} d_j(5I,I_{n,j}) \left\|\int_{-\infty}^{\infty}\left|\left(\chi^{**}_{I_{n,j}}\cdot\tilde{\psi}_{m+j}\cdot\check{\phi}_{m+j}\ast f\right)(\cdot-\textbf{tr}_j(t))\cdot \rho(t)\right|\,\textrm{d}t \right\|_{L^{q'}(I)}\\
\leq& \sum_{n\in P_j} d_j(5I,I_{n,j}) \left\|M\left(\chi^{**}_{I_{n,j}}\cdot\tilde{\psi}_{m+j}\cdot\check{\phi}_{m+j}\ast f\right) \right\|_{L^{q'}(\mathbb{R})}
\leq d_j(5I,P_j) |I|^{\frac{1}{q'}}\textbf{size}^*_\textbf{1}(P\cup T).\nonumber
\end{align}
It is easy to see that
\begin{align}\label{eq:6.107}
\left\| \phi^{\frac{1}{2}}\left(\frac{\gamma'(2^{-j})}{2^{m+j-k}}\right) \cdot\sum_{n\in T_j} g_{n,m,j,k} \right\|_{L^{q}(I)}\ls d_j(I,T_j)|I|^{\frac{1}{q}}\textbf{size}_\textbf{2}(P\cup T).
\end{align}
As in \eqref{eq:3.6}, we have $\sum_{k\in \mathbb{Z}}\phi^{\frac{1}{2}}\left(\frac{\gamma'(2^{-j})}{2^{m+j-k}}\right)\ls 1$. This, combined with \eqref{eq:6.106}, \eqref{eq:6.107} and \eqref{eq:6.1040}, implies that Part $\mathbb{A}$ can be bounded by
\begin{align*}
\sum_{j\in \mathbb{N}}\sum_{I:\ |I|=2^{-j}}|I|d_j(5I,P_j) d_j(I,T_j)\textbf{size}^*_\textbf{1}(P\cup T)\textbf{size}_\textbf{2}(P\cup T)\ls \textbf{size}^*_\textbf{1}(P\cup T)\textbf{size}_\textbf{2}(P\cup T)|I_T|.
\end{align*}
Hence, we complete the proof of Lemma \ref{lemma 6.12}.
\end{proof}

For any $S\subset S_0$, $k\in\{1,2\}$, as in \cite[Lemma 6.12]{LX}, by \eqref{eq:6.52} in Lemma \ref{lemma 6.8}, we can always split $S$ into $S_1$ and $S_2$:
\begin{enumerate}
  \item[\rm(i)] $S_1:=\bigcup_{T\in\mathcal{F}}T$ with $\bigcup_{T\in\mathcal{F}}|I_T|\ls \frac{|F_1|}{\textbf{size}_\textbf{1}(S)^p}$ and $\bigcup_{T\in\mathcal{F}}|I_T|\ls \frac{|F_2|}{\textbf{size}_\textbf{2}(S)^q}$, where $T$ is maximal tree;
  \item[\rm(ii)] $S_2:=S\backslash S_1$ with $\textbf{size}_\textbf{1}(S_2)\leq (\frac{1}{2})^{\frac{1}{p}} \textbf{size}_\textbf{1}(S)$ and $\textbf{size}_\textbf{2}(S_2)\leq (\frac{1}{2})^{\frac{1}{q}} \textbf{size}_\textbf{2}(S)$,
\end{enumerate}
which further implies that we can write $S_0$ as
\begin{align}\label{eq:6.108}
S_0=\bigcup_{\sigma\leq 1}S_\sigma,
\end{align}
where $\sigma$ ranges over positive dyadic numbers, and $S_\sigma$ is a union of maximal trees such that for each $T\in S_\sigma$, we have
\begin{align}\label{eq:6.109}
\textbf{size}_\textbf{1}(T)\leq \sigma^{\frac{1}{p}}\left(\frac{|F_1|}{|F_3|}\right)^{\frac{1}{p}}\quad \textrm{and}\quad \textbf{size}_\textbf{2}(T)\leq \sigma^{\frac{1}{q}}\left(\frac{|F_2|}{|F_3|}\right)^{\frac{1}{q}},
\end{align}
and
\begin{align}\label{eq:6.110}
\textbf{size}_\textbf{1}(T)\geq \left(\frac{\sigma}{2}\right)^{\frac{1}{p}}\left(\frac{|F_1|}{|F_3|}\right)^{\frac{1}{p}}\quad \textrm{or}\quad \textbf{size}_\textbf{2}(T)\geq \left(\frac{\sigma}{2}\right)^{\frac{1}{q}}\left(\frac{|F_2|}{|F_3|}\right)^{\frac{1}{q}}.
\end{align}

We now turn to the proof of \eqref{eq:6.42}. Indeed, it is easy to see that
\begin{align}\label{eq:6.111}
\sum_{T\in S_\sigma}|I_T|\ls \frac{|F_1|}{\left(\left(\frac{\sigma}{2}\right)^{\frac{1}{p}}\left(\frac{|F_1|}{|F_3|}\right)^{\frac{1}{p}}\right)^p}
+\frac{|F_2|}{\left(\left(\frac{\sigma}{2}\right)^{\frac{1}{q}}\left(\frac{|F_2|}{|F_3|}\right)^{\frac{1}{q}}\right)^q}\ls \frac{|F_3|}{\sigma}.
\end{align}
Furthermore, the fact that $\Lambda_T[f,g]$ is bounded by \eqref{eq:6.44}, combined with the second part in \eqref{eq:6.50} and \eqref{eq:6.102}, gives
\begin{align}\label{eq:6.112}
\Lambda_T[f,g]\ls |I_T|\textbf{size}^*_\textbf{1}(T)\textbf{size}_\textbf{2}(T).
\end{align}
From Lemma \ref{lemma 6.12}, \eqref{eq:6.108} and \eqref{eq:6.112}, we conclude
\begin{align}\label{eq:6.113}
\Lambda_{S_0}[f,g]\ls \sum_{\sigma\leq1}\sum_{T\in S_\sigma}|I_T|\textbf{size}^*_\textbf{1}(T)\textbf{size}_\textbf{2}(T).
\end{align}
From the definition of $\textbf{size}^*_\textbf{1}(T)$, \eqref{eq:6.109} and \eqref{eq:6.111}, the above expression can be bounded by
\begin{align*}
|F_1|^{\frac{1}{p}}|F_2|^{\frac{1}{q}}|F_3|^{\frac{1}{r'}}\sum_{\sigma\leq1}\sigma^{\frac{1}{q}-1} \min\left\{2^{m(1-\frac{p}{q'})}\sigma^{\frac{1}{p}},\sigma^{\frac{1}{q'}}    \right\}  \ls m|F_1|^{\frac{1}{p}}|F_2|^{\frac{1}{q}}|F_3|^{\frac{1}{r'}}.
\end{align*}
This is the desired estimate.

\section{The boundedness of $M_{\gamma}(f,g)$}\label{section 7}

For $M^1_{\gamma}(f,g)$, as $H^1_{\gamma}(f,g)$, we write $\tilde{m}_j^1(\xi,\eta)$ as
\begin{align*}
\sum_{m,n\in \mathbb{Z_-}}\sum_{k\in \mathbb{Z}}\sum_{u,v\in \mathbb{N}}  \frac{(-i)^{u+v}2^{mu}2^{nv}}{u\mathrm{!}v\mathrm{!}} \bar{\phi}_u\left(\frac{\xi}{2^{m+j}}\right) \bar{\phi}_v\left(\frac{\eta}{2^{k}}\right) \bar{\phi}_v\left(\frac{\gamma'(2^{-j})}{2^{n+j-k}}\right)\int_{-\infty}^{\infty}t^u\left(\frac{2^j\gamma(2^{-j}t)}{\gamma'(2^{-j})}\right)^v|\rho(t)|\,\textrm{d}t.
\end{align*}
Furthermore, $M^1_{\gamma}(f,g)$ can be written as
\begin{align*}
M^1_{\gamma}(f,g)(x)=\sup_{j\in \mathbb{Z}} \Bigg|\sum_{m,n\in \mathbb{Z_-}}\sum_{k\in \mathbb{Z}}\sum_{u,v\in \mathbb{N}}&\frac{(-i)^{u+v}2^{mu}2^{nv}}{u\mathrm{!}v\mathrm{!}} \bar{\phi}_v\left(\frac{\gamma'(2^{-j})}{2^{n+j-k}}\right) \\ \times&\left(\int_{-\infty}^{\infty}t^u\left(\frac{2^j\gamma(2^{-j}t)}{\gamma'(2^{-j})}\right)^v|\rho(t)|\,\textrm{d}t\right)   \check{\bar{\phi}}_{u,m+j} \ast f(x) \cdot \check{\bar{\phi}}_{v,k} \ast g(x)\Bigg|.
\end{align*}
From \cite[P. 24 Proposition]{S}, there exists a positive constant $C$ such that $\sup_{j\in \mathbb{Z}} |\check{\bar{\phi}}_{u,m+j} \ast f |\ls C^u Mf$ and $|\check{\bar{\phi}}_{v,k} \ast g|\ls C^v Mg$. As in \eqref{eq:3.5} and \eqref{eq:3.6}, combining with the fact that $\sum_{m,n\in \mathbb{Z_-}}\sum_{u,v\in \mathbb{N}}\frac{C^u2^{mu}C^v2^{nv}}{u\mathrm{!}v\mathrm{!}}\ls 1$, we conclude that
\begin{align*}
M^1_{\gamma}(f,g)(x)\ls Mf(x)Mg(x).
\end{align*}
For $r>\frac{1}{2}$, by the $\mathrm{H}\ddot{\mathrm{o}}\mathrm{lder}$ inequality, it leads to
\begin{align}\label{eq:7.4}
\left\|M^1_{\gamma}(f,g)\right\|_{L^{r}(\mathbb{R})}\ls\|f\|_{L^{p}(\mathbb{R})}\|g\|_{L^{q}(\mathbb{R})}.
\end{align}

For $M^2_{\gamma}(f,g)$, as in $M^1_{\gamma}(f,g)$, we also have $M^2_{\gamma}(f,g)(x)\ls Mf(x)Mg(x)$. Therefore, for all $r>\frac{1}{2}$,
\begin{align}\label{eq:7.6}
\left\|M^2_{\gamma}(f,g)\right\|_{L^{r}(\mathbb{R})}\ls\|f\|_{L^{p}(\mathbb{R})}\|g\|_{L^{q}(\mathbb{R})}.
\end{align}

For $M^3_{\gamma}(f,g)$, without loss of generality, we may assume that $m=n$. Therefore, we rewrite $M^3_{\gamma}(f,g)$ as
\begin{align}\label{eq:7.7}
M^3_{\gamma}(f,g)(x)=\sup_{j\in \mathbb{Z}} \left|\int_{-\infty}^{\infty}\int_{-\infty}^{\infty}\hat{f}(\xi)\hat{g}(\eta)e^{i\xi x}e^{i\eta x} \tilde{m}_j^3(\xi,\eta)\,\textrm{d}\xi \,\textrm{d}\eta\right|,
\end{align}
where $\tilde{m}_j^3(\xi,\eta):=\sum_{m\in \mathbb{N},k\in \mathbb{Z} }\tilde{m}_{j,m,m,k}(\xi,\eta)$. Then \eqref{eq:7.7} can be bounded by
\begin{align}\label{eq:7.8}
\sum_{m\in \mathbb{N} }\sum_{j,k\in \mathbb{Z}}\left|\int_{-\infty}^{\infty}\int_{-\infty}^{\infty}\hat{f}(\xi)\hat{g}(\eta)e^{i\xi x}e^{i\eta x} \tilde{m}_{j,m,m,k}(\xi,\eta)\,\textrm{d}\xi \,\textrm{d}\eta\right|.
\end{align}
As in \eqref{eq:3.37} and \eqref{eq:3.38}, we have that there exists a positive constant $\varepsilon_0$ such that
\begin{align}\label{eq:7.9}
\left\|\sum_{j,k\in \mathbb{Z}}\left|\int_{-\infty}^{\infty}\int_{-\infty}^{\infty}\hat{f}(\xi)\hat{g}(\eta)e^{i\xi x}e^{i\eta x} \tilde{m}_{j,m,m,k}(\xi,\eta)\,\textrm{d}\xi \,\textrm{d}\eta\right|\right\|_{L^{1}(\mathbb{R})}\ls 2^{-\varepsilon_0 m}\|f\|_{L^{2}(\mathbb{R})}\|g\|_{L^{2}(\mathbb{R})}
\end{align}
holds uniformly for $j,k\in \mathbb{Z}$. As in \eqref{eq:6.3}, we have
\begin{align}\label{eq:7.10}
\left\|\sum_{j,k\in \mathbb{Z}}\left|\int_{-\infty}^{\infty}\int_{-\infty}^{\infty}\hat{f}(\xi)\hat{g}(\eta)e^{i\xi x}e^{i\eta x} \tilde{m}_{j,m,m,k}(\xi,\eta)\,\textrm{d}\xi \,\textrm{d}\eta\right|\right\|_{L^{r,\infty}(\mathbb{R})}\ls m\|f\|_{L^{p}(\mathbb{R})}\|g\|_{L^{q}(\mathbb{R})}
\end{align}
holds uniformly for $j,k\in \mathbb{Z}$. By interpolating between \eqref{eq:7.9} and \eqref{eq:7.10}, we conclude that
\begin{align}\label{eq:7.11}
\left\|M^3_{\gamma}(f,g)\right\|_{L^{r}(\mathbb{R})}\ls\|f\|_{L^{p}(\mathbb{R})}\|g\|_{L^{q}(\mathbb{R})}
\end{align}
for $r>\frac{1}{2}$. From \eqref{eq:7.4}, \eqref{eq:7.6} and \eqref{eq:7.11}, we obtain the $L^p(\mathbb{R})\times L^q(\mathbb{R})\rightarrow L^r(\mathbb{R})$ boundedness for the (sub)bilinear maximal function $M_{\gamma}(f,g)$ for $r>\frac{1}{2}$.

\bigskip

\noindent Junfeng Li

\smallskip

\noindent
School of Mathematical Sciences, Dalian University of Technology, Dalian, 116024, People's Republic of China

\smallskip

\noindent{\it E-mail}: \texttt{junfengli@dlut.edu.cn}

\bigskip

\noindent Haixia Yu (Corresponding author)

\smallskip

\noindent
Department of Mathematics, Sun Yat-sen University, Guangzhou, 510275,  People's Republic of China

\smallskip

\noindent{\it E-mail}: \texttt{yuhx26@mail.sysu.edu.cn}

\end{document}